\documentclass[10pt]{amsart}

\usepackage[utf8]{inputenc}
\usepackage{multirow}
\usepackage{longtable}

\usepackage[backend=bibtex,style=alphabetic,sorting=anyt]{biblatex} %Imports biblatex package
\addbibresource{mybiblio.bib} %Import the bibliography file

\usepackage{amssymb,amscd,amsthm, verbatim,amsmath,color,fancyhdr, mathrsfs}
\usepackage{graphicx}
\usepackage{turnstile}
\usepackage{changepage}   % for the adjustwidth environment

%\usepackage[letterpaper, left=2cm, right=2cm, top=2cm,
%bottom=1.75cm,dvips]{geometry}

\usepackage{tikz-cd} 
\usepackage{enumitem}

\setcounter{section}{0}

\newtheorem{thm}{Theorem}[section]
\newtheorem{prop}[thm]{Proposition}
\newtheorem{lem}[thm]{Lemma}
\newtheorem{cor}[thm]{Corollary}

\newtheorem{conj}[thm]{Conjecture}
\theoremstyle{definition}
\newtheorem{ex}[thm]{Example}
\newtheorem{defn}[thm]{Definition}

\theoremstyle{remark}
\newtheorem{rem}[thm]{Remark}

%Simple Mathematics Template
\title{Extreme Divisors on $\overline{M}_{0,7}$ and differences over characteristic 2}%, $\text{Eff}(\text{Bl}_e \overline{LM}_7)$, and $\text{Eff}(\overline{M}_{0,(\frac{1}{3}, \frac{1}{3}, \frac{1}{3}, \frac{1}{3}, \frac{1}{3}, \frac{1}{3}, 1)})$}

\author{Mathieu Dutour Sikiri\'c, Eric Jovinelly}

\date{\today}

\begin{document}
\maketitle
\addtocontents{toc}{\setcounter{tocdepth}{1}} 

\begin{abstract}
We find 101,052 new extreme divisors on $\overline{M}_{0,7}$ (in 31 $S_7$-orbits) and millions of extreme nef curves over characteristic 0.  %, supplementing the 1,071 (6 $S_7$-orbits of) divisors found by \cite{VERMEIRE2002780}, \cite{castravet2010hypertrees}, \cite{opie2016extremal}, and \cite{doran2016simplicial}.
Over characteristic 2, we identify two more $S_7$-orbits of extreme divisors, and prove $\overline{\text{Eff}}^k (\overline{M}_{0,n})$ is strictly larger over characteristic 2 than it is over characteristic 0, for all $1\leq k \leq n-6$.  %For each such $k$, we provide explicit cycles which are effective and extreme in we provide explici 
%By constructing extreme cycles of higher codimension on $\overline{M}_{0,n}$ from those on $\overline{M}_{0,n-k+1}$, 
For each such $k$ we provide explicit cycles which are extreme in $\text{Eff}^k(\overline{M}_{0,n})$ over characteristic 2 but external to $\overline{\text{Eff}}^k(\overline{M}_{0,n})$ over characteristic 0.  
%Over characteristic 2, we identify two more $S_7$-orbits of extreme divisors, and prove $\overline{\text{Eff}}^k (\overline{M}_{0,n})$ is strictly larger over characteristic 2 than it is over characteristic 0, for all $1\leq k \leq n-6$.  %Moreover, we prove nonpolyhedrality of $\text{Eff}^k (\overline{M}_{0,n})$ for $1\leq k \leq n - 9$ by constructing extreme cycles of higher codimension on $\overline{M}_{0,n}$ from those on $\overline{M}_{0,n-k+1}$.  
We apply our method of finding new extreme divisors to compute $\text{Eff}(\overline{M}_{0,\mathcal{A}})$ for $\mathcal{A}=(\frac{1}{3}, \frac{1}{3}, \frac{1}{3}, \frac{1}{3}, \frac{1}{3}, \frac{1}{3}, 1)$, proving it is polyhedral over any field, and conjecture a description of $\text{Eff}(\text{Bl}_e \overline{LM}_7)$.
%We develop an alternative algorithm for verifying given divisors span the entire effective cone, and use this to provide a computer-free proof of a result by \cite{Hassett_2002}.
%After developing an alternative algorithm for verifying given divisors generate the entire effective cone, we provide a computer-free proof of a result by \cite{Hassett_2002} %.
%about $\text{Eff}(\overline{M}_{0,6})$.  Our methods show the corresponding claim for $\text{Eff}(\overline{M}_{0,7})$ is false.
%
%Our methods for finding new extreme divisors on $\overline{M}_{0,7}$ apply more generally to other varieties.  In particular, we prove polyhedrality of $\text{Eff}(\overline{M}_{0,\mathcal{A}})$ for $\mathcal{A}=(\frac{1}{3}, \frac{1}{3}, \frac{1}{3}, \frac{1}{3}, \frac{1}{3}, \frac{1}{3}, 1)$, conjecturally describe $\text{Eff}(\text{Bl}_e \overline{LM}_7)$, and illustrate simplifications to traditional computational methods by providing a computer-free proof of a result by \cite{Hassett_2002}.% Hassett and Tschinkel.
\end{abstract}
% 3 main results: 1 new divisors on M07.  2 approach to finding new divisors on M07. 3 incidental study of M06 and new proof.

\section{Introduction}
For any variety $X$, extreme rays of $\overline{\text{Eff}}(X)$ 
%Let $X$ be a variety and $\text{Eff}(X)$ be the full-dimensional, pointed cone of effective divisors inside $N^1(X)$.  Its dual, $\text{Nef}_1(X)\subset N_1(X)$ is the cone of nef (free) curves on $X$.  Extreme rays of $\text{Eff}(X)$, 
 are important to the study of the birational geometry of $X$.  %, as they govern the behavior of its birational models.  
Even when given an explicit projective embedding of $X$, these divisors may be challenging to identify if the Picard rank of $X$ is large.  Such is the case for $X=\overline{M}_{0,n}$, with $n\geq 7$.

For $n\leq 5$, the extreme divisors on $\overline{M}_{0,n}$ are irreducible components of its boundary, $\overline{M}_{0,n}\setminus M_{0,n}$.  In the late twentieth century, Fulton asked \cite{keel1996contractible} whether this remained true for $n>5$.  Keel and Vermiere \cite{VERMEIRE2002780} independently found the same counterexample to this conjecture: an extreme divisor on $\overline{M}_{0,6}$, called the Keel-Vermiere divisor, which cannot be written as an effective sum of boundary divisors.  Hassett and Tschinkel \cite{Hassett_2002} proved that $\text{Eff}(\overline{M}_{0,6})$ is generated by Keel-Vermiere and boundary divisors via a computer check.  Castravet \cite{castravet2008cox} later showed these divisors generate the Cox ring of $\overline{M}_{0,6}$, which proves $\overline{M}_{0,6}$ is a Mori Dream Space \cite{hu2000mori}.  Though not explored in this paper, Fulton's conjecture has many adaptations \cite{2013} \cite{fedorchuk2020symmetric} \cite{Bolognesi2017AVO} \cite{Moon2019OnTS} \cite{Gibney2009NumericalCF}, the most famous of which addresses curves \cite{Gibney2000TowardsTA} and is confirmed for $n\leq 7$ \cite{fontanari2016fultons}.

%(Add paragraph about Log Fano?).

Fulton's initial conjecture continues to motivate research on $\overline{\text{Eff}}(\overline{M}_{0,n})$.  Castravet and Tevelev realized the Keel-Vermiere divisor as the simplest of the \textit{hypertree divisors}, a family of extreme divisors they constructed \cite{castravet2010hypertrees} on $\overline{M}_{0,n}$ whose equations were later connected to $N = 4$ super-symmetric Yang–Mills theory \cite{arkanihamed2014onshell} \cite{tevelev2021scattering}.  It was initially believed \cite{castravet2010hypertrees} that hypertree divisors were the only extreme, nonboundary divisors on $\overline{M}_{0,n}$.
%After realizing the Keel-Vermiere divisor as the simplest member of a family of extreme divisors on $\overline{M}_{0,n}$ called hypertree divisors, Castravet and Tevelev \cite{castravet2010hypertrees} conjectured that these were the only extreme, nonboundary divisors on $\overline{M}_{0,n}$.  
This conjecture was first disproved by Opie in \cite{opie2016extremal} using extreme rays of $\overline{\text{Eff}}(\overline{\mathcal{M}}_{1,n})$ found in \cite{chen2013extremal}.  Doran, Giansiracusa, and Jensen \cite{doran2016simplicial} constructed the only other previously known counterexamples to this conjecture using simplicial methods. Simplifications and limitations to their methods over characteristic 0 appear in \cite{gonzález2017balanced}. %, who each found other extreme divisors on $\overline{M}_{0,7}$ that extend to infinite families of divisors on $\overline{M}_{0,n}$.  %(Castravet mentioned Gonzalez found more ?). 

Castravet and Tevelev \cite{Castravet_2015} subsequently showed that $\overline{M}_{0,n}$ is not a Mori Dream Space for $n\geq 134$.  González and Karu \cite{2015}, followed by Hausen, Keicher, and Laface \cite{hausen2016blowing}, lowered this bound to $n\geq 13$ and $n\geq 10$, respectively.  Castravet \cite{castravet2017mori} provides a detailed survey of these results.
More recently, Castravet, Laface, Tevelev, and Ugaglia \cite{castravet2020blownup} proved that $\overline{\text{Eff}}(\overline{M}_{0,n})$ is not polyhedral for all $n\geq 10$, both in characteristic 0 and characteristic $p$ for all primes $p$, by studying blow-ups of Toric surfaces at their identity.  For $7\leq n\leq 9$, it is unknown whether $\text{Eff}(\overline{M}_{0,n})$ is polyhedral.  We show that $\overline{\text{Eff}}(\overline{M}_{0,7})$ has many more extreme rays than $\overline{\text{Eff}}(\overline{M}_{0,6})$. 

\begin{thm}\label{divisorThm}
There are more than  $102,123$ ($37$ $S_7$-orbits of) extreme divisors on $\overline{M}_{0,7}$ over characteristic 0 and characteristic $p$ for all but finitely many prime $p$. 
\end{thm}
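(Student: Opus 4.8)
The plan is to exhibit $37$ explicit $S_7$-orbits of divisor classes on $\overline{M}_{0,7}$, check that the orbit sizes sum to more than $102{,}123$, and then verify, for one representative $D$ of each orbit, that (A) $D$ is the class of an honest effective divisor and (B) $D$ spans an extreme ray of $\overline{\mathrm{Eff}}(\overline{M}_{0,7})$ --- doing both in a way that is insensitive to the characteristic away from finitely many primes. I would work throughout in a fixed model of $\overline{M}_{0,7}$ over $\mathrm{Spec}\,\mathbb{Z}$ (e.g.\ a Kapranov/GIT model), with $N^1(\overline{M}_{0,7})_{\mathbb{Q}}\cong\mathbb{Q}^{42}$ spanned by boundary classes, the dual space $N_1$, the $F$-curve classes, and the $S_7$-action all written down explicitly; since intersection numbers and the group action are constant in families, base-changing to $\mathbb{F}_p$ leaves this combinatorial data unchanged.

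The $37$ orbit representatives are: the two orbits of boundary divisors, the previously known non-boundary extreme divisors that live on $\overline{M}_{0,7}$ (the hypertree divisor(s) and the examples of Opie and of Doran, Giansiracusa, and Jensen), and $31$ new classes, which I would write down by explicit equations --- closures of loci cut out by polynomial conditions on cross-ratios, together with their images and pullbacks under the natural birational maps --- obtained from the search described later in the paper. For (A): for the new classes one exhibits the defining equation of an irreducible reduced hypersurface and computes its class by intersecting with enough $F$-curves to determine all $42$ coordinates (a finite check); the construction being over $\mathbb{Z}$, reduction mod $p$ remains effective and irreducible for all but finitely many $p$.

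For (B) I would use the moving-curve criterion. For each representative $D$ I would produce curve classes $\gamma_1,\dots,\gamma_m\in N_1(\overline{M}_{0,7})$ such that: (i) each $\gamma_j$ is represented by an irreducible curve moving in an algebraic family that dominates $\overline{M}_{0,7}$, so that $\gamma_j\cdot E\ge 0$ for every effective divisor $E$; (ii) $\gamma_j\cdot D=0$ for all $j$; and (iii) the $m\times 42$ intersection matrix has rank $41$, so that $\{E\in N^1 : \gamma_j\cdot E=0\ \text{for all }j\}=\mathbb{Q}\cdot D$. Given (i)--(iii), if $D=E_1+E_2$ with the $E_i$ effective then $0=\gamma_j\cdot D=\gamma_j\cdot E_1+\gamma_j\cdot E_2$ with both summands $\ge 0$, forcing $\gamma_j\cdot E_i=0$ for all $i,j$; hence each $E_i\in\mathbb{Q}\cdot D$, and since $\overline{\mathrm{Eff}}(\overline{M}_{0,7})$ contains no line (the variety is projective) while $D$ itself lies in it, in fact $E_i\in\mathbb{Q}_{\ge 0}\cdot D$, so $D$ is extreme. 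For the $\gamma_j$ I would take $F$-curve--type classes realized by explicit covering families (fix $n-1$ general marked points and let the last vary, together with the $S_7$-translates), supplemented by a few further explicitly parametrized covering curves when more relations are needed to cut the perpendicular down to a line.

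The hard part is step (B): for each of the $37$ representatives one must actually find a family of dominating curves whose perpendiculars meet exactly along the ray $\mathbb{Q}_{\ge 0}\cdot D$ --- a search in a $42$-dimensional space that is only tractable after reducing modulo the $S_7$-action and running exact polyhedral and linear-algebra computations --- and, crucially, one must \emph{certify} that each witness curve genuinely moves in a dominating family, so that it pairs non-negatively with the whole effective cone and not merely with the divisors already on our list. For the characteristic-$p$ statement, every divisor, curve, and family used is defined over $\mathbb{Z}$; outside the finitely many primes at which some witness family ceases to be flat and dominant or some candidate hypersurface becomes reducible, the identical argument shows the same $37$ orbits are extreme over $\mathbb{F}_p$, and the union of these finite bad sets over the $37$ orbits is still finite.
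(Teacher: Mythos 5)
Your extremality criterion in step (B) is logically valid, but it is a genuinely different — and, as a proof strategy here, gapped — route from the one the paper takes. The paper certifies each of the $37$ orbits by the \emph{negative curve} criterion (\cite[Corollary~6.4]{opie2016extremal}, via Lemma \ref{deformationLemma} and Proposition \ref{liftingProp}): for each non-boundary representative $D$ one exhibits a \emph{single} curve class $c$ with $c\cdot D<0$ represented by a map $f:\mathbb{P}^1\to D$, defined over a finite field, whose normal bundle splits as $V\oplus\mathcal{O}(-n)$ with $V$ globally generated; deformations of $f$ then sweep out $D$, every other integral divisor pairs nonnegatively with $c$, and unobstructedness of $[f]$ in $\mathrm{Mor}_{\mathbb{Z}}(\mathbb{P}^1,\overline{M}_{0,7})$ lifts the whole picture to characteristic $0$ and to all but finitely many $p$ (and, via Proposition \ref{liftingProp}, even certifies geometric integrality of $D$ for free). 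Your plan instead asks, for each $D$, for a collection of curve classes moving in families dominating all of $\overline{M}_{0,7}$, orthogonal to $D$, whose common perpendicular is exactly $\mathbb{Q}\cdot D$.

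The gap is that such a collection need not exist and you give no workable way to produce one. Requiring $41$ linearly independent dominating curve classes in $D^{\perp}$ is precisely the statement that the dual face of $\mathbb{R}_{\ge0}D$ inside $\overline{\mathrm{Mov}}_1(\overline{M}_{0,7})$ is a full facet; since $\overline{\mathrm{Eff}}(\overline{M}_{0,7})$ is not known to be polyhedral, an extreme ray need not be exposed in this strong sense, and nothing in your plan establishes it for the $31$ new classes. Moreover, the witnesses you propose cannot suffice: the fibers of the forgetful maps (``fix $n-1$ marked points and vary the last'') give only the $7$ classes $\psi_i^{\vee}$ up to symmetry, and $F$-curves are \emph{not} dominating families at all for $n=7$ (their general member is a nodal, hence boundary, curve, and they pair negatively with boundary divisors), so the entire burden falls on the unspecified ``few further explicitly parametrized covering curves.'' This is not a detail: even the paper, after computing millions of extreme nef classes, could not certify $\boldsymbol{D_{31}}$ as extreme in $\overline{\mathrm{Eff}}$ by any means — which is exactly why the theorem asserts $37$ rather than $38$ orbits. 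The feasible certificate is one negative curve on $D$ (dominating $D$, not $\overline{M}_{0,7}$) per divisor, not $41$ independent nef curves orthogonal to it.
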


There are 38 $S_7$-inequivalent divisors appearing in Tables \ref{ExtDivs0}, \ref{ExtDivs1}, and \ref{boundaryCurves}.  We prove 37 of these are uniruled and extreme in $\overline{\text{Eff}}(\overline{M}_{0,7})$ by identifying families of rational curves that cover and pair negatively with each divisor.  We call such curves \textit{negative curves}.  Similarly, we compute free representatives for thousands of $S_7$-inequivalent curve classes to demonstrate that the dual of $\overline{\text{Eff}}(\overline{M}_{0,7})$, $\text{Nef}_1(\overline{M}_{0,7})$, is perhaps even more complex.  
\begin{thm}\label{nefCurvesThm}
There are over $9,875,000$ (2,135 $S_7$-orbits of) extreme nef curves on $\overline{M}_{0,7}$ in characteristic 0 and characteristic $p$ for all but finitely many prime $p$. %$\text{Nef}_1(\overline{M}_{0,7})$
%There are millions (thousands of $S_7$-orbits) of extreme nef curves on $\overline{M}_{0,7}$  over characteristic 0 and characteristic $p$ for all but finitely many prime $p$. %$\text{Nef}_1(\overline{M}_{0,7})$
\end{thm}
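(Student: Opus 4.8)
The plan is a large duality-driven computation that mirrors the treatment of divisors in Theorem~\ref{divisorThm}. Write $\rho=\rho(\overline{M}_{0,7})=42$ (Kapranov's model exhibits $\overline{M}_{0,7}$ as $\mathbb P^4$ blown up at $6$ points, $15$ lines and $20$ planes). Let $E_1,\dots,E_m$ be the library of effective divisor classes at hand: the $56$ boundary divisors together with the $37$ $S_7$-orbits of extreme nonboundary divisors furnished by Theorem~\ref{divisorThm} (and all their $S_7$-translates). The cone $\sigma=\langle E_1,\dots,E_m\rangle\subseteq\overline{\text{Eff}}^1(\overline{M}_{0,7})$ is an inner approximation of the pseudoeffective cone, so its dual $\sigma^\vee$ is an outer approximation of $\mathrm{Nef}_1(\overline{M}_{0,7})=\overline{\text{Eff}}^1(\overline{M}_{0,7})^\vee$. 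I would enumerate the extreme rays of $\sigma^\vee$ — each is a class $\gamma$ for which $\{E_i:E_i\cdot\gamma=0\}$ has rank $\rho-1=41$ — and for each such $\gamma$ attempt to certify $\gamma\in\mathrm{Nef}_1$ by exhibiting a free rational curve in class $\gamma$. If this succeeds, then since $\gamma$ is extreme in the cone $\sigma^\vee\supseteq\mathrm{Nef}_1$ and lies in $\mathrm{Nef}_1$, it is automatically extreme in $\mathrm{Nef}_1$; tallying the resulting $S_7$-orbit sizes via orbit–stabilizer (a few orbits have nontrivial stabilizer, so the total falls somewhat below $2{,}135\cdot|S_7|$) yields more than $9{,}875{,}000$ curves in $2{,}135$ orbits.

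The geometric content lies in producing the free representatives, i.e.\ rational curves $f\colon\mathbb P^1\to\overline{M}_{0,7}$ with $f^*T_{\overline{M}_{0,7}}$ globally generated. For such an $f$ the space $\mathrm{Mor}(\mathbb P^1,\overline{M}_{0,7})$ is smooth at $[f]$ and the evaluation differential $H^0(f^*T_{\overline{M}_{0,7}})\to T_{f(t)}\overline{M}_{0,7}$ is surjective for every $t$, so $f$ deforms in a family of irreducible rational curves whose union dominates $\overline{M}_{0,7}$; hence for any effective divisor $D$ a general member avoids $\mathrm{Supp}\,D$ and $f\cdot D=\deg f^*\mathcal O(D)\ge 0$, i.e.\ $[f]\in\mathrm{Nef}_1$. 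I would assemble these curves from the combinatorial architecture of $\overline{M}_{0,7}$: pushforwards of $F$-curves and of free curves on $\overline{M}_{0,5}$ and $\overline{M}_{0,6}$ under gluing and boundary maps, images of the universal curve over $\overline{M}_{0,6}$ under sections of the forgetful morphism, $S_7$-translates of these, and — for classes not reached this way — one-parameter families of $7$-pointed plane conics obtained by fixing the conic and all but one of the blown-up points and letting that point vary, via Kapranov's model. Freeness is checked orbit by orbit by computing the splitting type of $f^*T_{\overline{M}_{0,7}}$, which is read off from finite linear-algebraic data; the same representative then works in every characteristic except those dividing finitely many ``bad'' determinants. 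Likewise the rank-$41$ extremality certificates, and the effectivity of the $E_i$ (which for the nonboundary classes rests on the negative curves of Theorem~\ref{divisorThm}, valid for all but finitely many $p$), are identities over $\mathbb Z[1/N]$ for a single integer $N$ built from finitely many resultants and minors across all $2{,}135$ orbits — this is exactly the source of the ``all but finitely many prime $p$'' clause.

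The principal obstacle is scale and completeness of the search rather than any single hard argument: one must find curve classes realizing all $2{,}135$ orbits, and there is no a priori guarantee that the present divisor library $\sigma$ is rich enough to expose all of them as vertices of $\sigma^\vee$. In practice I would iterate — compute $\sigma^\vee$, certify whichever vertices admit free representatives, and use the remaining vertices to guide a search (random sampling, face-flipping on the approximating polytope, linear-programming separation) for further nef curves and, when that fails, for further effective divisors to enlarge $\sigma$ — until the orbit count stabilizes. The delicate step inside this loop is certifying nefness: a class that is numerically a vertex of the outer approximation need not be nef, and a candidate free curve can degenerate (its deformation family ceasing to dominate, or $f^*T_{\overline{M}_{0,7}}$ acquiring a negative summand) in a way that is easy to overlook, so every certificate must be backed by an honest splitting-type computation. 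Extremality, by contrast, reduces to an integer rank computation once the curve and the divisor library are in hand, and I would not expect any individual orbit to demand a genuinely new idea.
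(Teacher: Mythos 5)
Your proposal is correct and follows essentially the same route as the paper: the extreme nef classes are obtained as facets of the cone $\mathfrak{E}$ generated by the known effective divisors (equivalently, extreme rays of its dual outer approximation of $\mathrm{Nef}_1$), certified nef by computing a free representative $f:\mathbb{P}^1\to\overline{M}_{0,7}$ whose class over $\mathrm{Spec}(\mathbb{Z})$ deforms to all but finitely many characteristics, with extremality following because each class annihilates a rank-$41$ set of effective divisors. The only cosmetic difference is in how the free representatives are produced (the paper uses its Algorithm \ref{curveAlg} on Kapranov's model rather than gluing/forgetful constructions), which does not change the argument.
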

\noindent A portion of these extreme nef curves appear in Table \ref{nefCurves}.  Each curve class is the image of a free map $f:\mathbb{P}^1\rightarrow \overline{M}_{0,7}$ of anticanonical degree at most $4$ (at most 3 with 5 exceptions) and $\psi_i$-degree between $2$ and $6$ for some $i$.  We recorded only those which were not positive sums of other nef and \textit{negative} curve classes.

The last divisor in Table \ref{ExtDivs0}, $\boldsymbol{D_{31}}$, is extreme in $\text{Eff}(\overline{M}_{0,7})$ over characteristic 0, but specializes to a reducible divisor in characteristic 2.  The unique component $\boldsymbol{D_{37}} = \boldsymbol{D_{31}}-E_{012}$ intersecting $M_{0,7}\subset\overline{M}_{0,7}$ is one of two additional orbits of extreme divisors (also uniruled) we find over characteristic 2 which are not effective over characteristic 0.  We describe how the existence of certain curves implies the second such divisor class, $\boldsymbol{D_{38}}$, lies outside $\overline{\text{Eff}}(\overline{M}_{0,7})$ over characteristic 0.  By extending our construction to $\overline{M}_{0,n}$, we show that $\overline{\text{Eff}}(\overline{M}_{0,n})$ depends on the characteristic of the basefield of $\overline{M}_{0,n}$ for all $n\geq 7$.  

%There are 37 $S_7$-inequivalent divisors appearing in Tables \ref{ExtDivs0}, \ref{ExtDivs1}, and \ref{boundaryCurves}.  Their orbits contain $102,123$ divisors.  By identifying families of rational curves that cover and pair negatively with each divisor, we prove these divisors are extreme and uniruled. Moreover, we find two additional orbits of extreme divisors (also uniruled) over characterstic 2 which are not effective over characteristic 0.  Using one of these, we describe divisor classes, effective over all characteristics, that lie outside the cone generated by the aforementioned $102,123$ divisors.  Using the other, we show that $\overline{\text{Eff}}(\overline{M}_{0,n})$ depends on the characteristic of the basefield of $\overline{M}_{0,n}$ for all $n\geq 7$.  

\begin{thm}\label{charThm}
For all $n\geq 7$, there are divisors $D\in N^1(\overline{M}_{0,n})$ which are effective over characteristic 2, but $D\not\in \overline{\text{Eff}}(\overline{M}_{0,n})$ over characteristic 0.
\end{thm}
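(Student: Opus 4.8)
The plan is to prove the statement first for $n = 7$, using the two classes $\boldsymbol{D_{37}}$ and $\boldsymbol{D_{38}}$ isolated in the introduction, and then to propagate it to every $n \geq 7$ by pulling back along forgetful morphisms. On the effective side over characteristic $2$ there is little to do for $\boldsymbol{D_{37}}$: the class $\boldsymbol{D_{31}}$ is effective over every field by Theorem~\ref{divisorThm}, and over characteristic $2$ its defining equations cut out the reducible divisor $\boldsymbol{D_{37}} + E_{012}$ with both summands effective, so $[\boldsymbol{D_{37}}] = [\boldsymbol{D_{31}}] - [E_{012}]$ in $N^1(\overline{M}_{0,7})$ is effective in characteristic $2$. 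For $\boldsymbol{D_{38}}$ I would instead exhibit an explicit locus of $7$-pointed stable rational curves cut out by an incidence (``Fano-plane type'') condition on the induced configuration that can only be satisfied in characteristic $2$, and verify by a dimension count that this locus is a reduced irreducible divisor lying in the class $[\boldsymbol{D_{38}}]$ recorded in Table~\ref{ExtDivs0}.

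The substance of the argument is to show that neither class is pseudo-effective in characteristic $0$. For $\boldsymbol{D_{37}}$ this is formal once Theorem~\ref{divisorThm} is granted: $[\boldsymbol{D_{31}}]$ spans an extreme ray of $\overline{\text{Eff}}(\overline{M}_{0,7})$ in characteristic $0$, while $[E_{012}]$ is a nonzero effective (boundary) class not proportional to it; so a hypothetical membership $[\boldsymbol{D_{37}}] \in \overline{\text{Eff}}(\overline{M}_{0,7})$ would express the extreme class $[\boldsymbol{D_{31}}] = [\boldsymbol{D_{37}}] + [E_{012}]$ as a sum of two pseudo-effective classes, forcing $[E_{012}]$ to be a nonnegative multiple of $[\boldsymbol{D_{31}}]$ --- a contradiction. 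For $\boldsymbol{D_{38}}$, and as the mechanism that will generalize, I would produce a free rational curve $C$ on $\overline{M}_{0,7}$, realized by an explicit map $\mathbb{P}^1 \to \overline{M}_{0,7}$ of the kind tabulated for Theorem~\ref{nefCurvesThm} (Table~\ref{nefCurves}), with $\boldsymbol{D_{38}} \cdot C < 0$. Being free, $C$ moves in a family covering $\overline{M}_{0,7}$, hence meets every pseudo-effective divisor non-negatively, and the negativity of $\boldsymbol{D_{38}} \cdot C$ then rules out $\boldsymbol{D_{38}} \in \overline{\text{Eff}}(\overline{M}_{0,7})$ over characteristic $0$.

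To pass from $n = 7$ to arbitrary $n \geq 7$, fix a forgetful morphism $\pi : \overline{M}_{0,n} \to \overline{M}_{0,7}$ and set $D = \pi^{*}\boldsymbol{D_{38}}$ (or $\pi^{*}\boldsymbol{D_{37}}$). Since $\pi$ is surjective, the pullback of an effective representative is effective, so $D$ is effective over characteristic $2$. Over characteristic $0$ I would pull back the covering family: for a general member $C \subset \overline{M}_{0,7}$, the preimage $\pi^{-1}(C)$ is irreducible of dimension $1 + (n-7)$, and cutting it with $n - 7$ general hyperplanes yields, by Bertini, an irreducible curve $C'$ dominating $C$; as the hyperplanes and $C$ vary these $C'$ cover $\overline{M}_{0,n}$, and $D \cdot C' = (\deg \pi|_{C'})\,(\boldsymbol{D_{38}} \cdot C) < 0$, so $D \notin \overline{\text{Eff}}(\overline{M}_{0,n})$ over characteristic $0$. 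Alternatively, and more in keeping with ``extending our construction,'' one generalizes the combinatorial data defining $\boldsymbol{D_{38}}$ and its negative curves directly to $n$ marked points and re-runs the characteristic-$2$ effectivity and covering-family computations for all $n$ simultaneously, which has the bonus of retaining uniruledness and extremality.

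The main obstacle, already at $n = 7$ and a fortiori in the uniform construction, is the production and control of the negative curves: one must exhibit rational curves $C$ with $\boldsymbol{D_{38}} \cdot C < 0$ and, crucially, prove that their deformations dominate the moduli space rather than lying in a proper closed subset --- which amounts to a freeness (globally generated normal bundle) statement or an explicit parametrization of the sweeping family, and this is where I expect the real work to lie. By contrast, the intersection numbers $\boldsymbol{D_{38}} \cdot C$ in a fixed basis of $N^1$, the dimension count certifying that the Fano-type locus is a divisor in class $[\boldsymbol{D_{38}}]$, the decomposition of $\boldsymbol{D_{31}}$ in characteristic $2$, and the Bertini step in the pullback argument are all routine finite computations once the relevant classes are written down.
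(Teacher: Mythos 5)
Your overall architecture (witness classes on $\overline{M}_{0,7}$, then pull back along forgetful maps) matches the paper, and the reduction to $n=7$ is sound in spirit: once a movable curve class on $\overline{M}_{0,7}$ pairing negatively with $D$ is in hand, its preimages give movable classes on $\overline{M}_{0,n}$ pairing negatively with $\pi^*D$. But the central step --- showing $\boldsymbol{D_{38}}\notin\overline{\text{Eff}}(\overline{M}_{0,7})$ over characteristic $0$ --- is where your proposed mechanism fails. You want a \emph{free} rational curve $C$ with $\boldsymbol{D_{38}}\cdot C<0$. The negative curve class $c$ the paper attaches to $\boldsymbol{D_{38}}$ satisfies $c\cdot\boldsymbol{D_{38}}=-1$ and has a representative over $\mathbb{F}_{101}$ with $\mathcal{N}_f\cong\mathcal{O}^2\oplus\mathcal{O}(-1)$; since $\deg\mathcal{N}_f=-1$, i.e. $-K\cdot c=1$, no representative of this class can ever be free, and its characteristic-$0$ lifts sweep out only a proper divisor $D'$ (necessarily of class different from $\boldsymbol{D_{38}}$, because $h^0(\boldsymbol{D_{38}})=0$ over $\mathbb{Q}$). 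No nef class with negative pairing against $\boldsymbol{D_{38}}$ is exhibited anywhere in the paper, and one should not expect to find such a curve easily. The paper instead closes the argument with Proposition~\ref{chardif}: assuming $D_n\to\boldsymbol{D_{38}}$ with $D_n$ effective, one has $a_n:=c\cdot D_n<0$ for $n\gg 0$, so $D'$ sits in the base locus of $D_n$ with multiplicity at least $-a_n$, hence $D_n+a_nD'$ is still effective; passing to the limit gives $\boldsymbol{D_{38}}-D'=-E\in\overline{\text{Eff}}$ for a nonzero effective $E$, contradicting that the pseudo-effective cone contains no lines. This limiting argument (together with the verification that $c$ pairs nonnegatively with every other integral divisor, done via a nodal representative over $\mathbb{F}_{32}$) is the missing idea; without it, or an actual covering family, your proof does not close.

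Your fallback via $\boldsymbol{D_{37}}$ also has a gap. You argue that $\boldsymbol{D_{37}}\in\overline{\text{Eff}}$ would force $E_{012}$ onto the ray of $\boldsymbol{D_{31}}$ because $\boldsymbol{D_{31}}$ is extreme. But the paper only proves $\boldsymbol{D_{31}}$ is extreme in $\text{Eff}(\overline{M}_{0,7})$ in the sense that $h^0(n\boldsymbol{D_{31}})=1$ for all $n$ (Proposition~\ref{chardif4}), and explicitly records in Table~\ref{ExtDivs0} that it is ``possibly not extreme in $\overline{\text{Eff}}(\overline{M}_{0,7})$.'' Rigidity of all multiples does not prevent $\boldsymbol{D_{31}}$ from being a non-extreme boundary point of the closed cone, so the decomposition $\boldsymbol{D_{31}}=\boldsymbol{D_{37}}+E_{012}$ into pseudo-effective classes yields no contradiction. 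Indeed the paper never claims $\boldsymbol{D_{37}}\notin\overline{\text{Eff}}$ over characteristic $0$; it only proves this for $\boldsymbol{D_{38}}$, which is why everything rests on the argument of the previous paragraph.
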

%Proposition \ref{char2Div1} shows 

\noindent Theorem \ref{charThm2} provides a more explicit statement.  While \cite{doran2016simplicial} previously found a divisor class $Q$ that is effective over characteristic 2 but not over characteristic 0, $\boldsymbol{D_5}=2Q$ is effective over characteristic 0 and appears in Table \ref{ExtDivs0}.  Our result is the first evidence that $\overline{\text{Eff}}(\overline{M}_{0,n})$ depends on characteristic.
Algorithmic searches found no difference between $\overline{\text{Eff}}(\overline{M}_{0,7})$ over characteristic 0 and characteristic $p\neq 2$, suggesting it may be different only over characteristic 2. 

Theorem \ref{charThm} implies $\overline{\text{Eff}}^k(\overline{M}_{0,n})$ is strictly larger over characteristic 2 than it is over characteristic 0 for all $1\leq k \leq n-6$.  %This follows directly from the observation that an embedding of a boundary divisor $i:\overline{M}_{0,n}\rightarrow \overline{M}_{0,n+1}$ provides a section of the forgetful morphism $\overline{M}_{0,n+1}\rightarrow \overline{M}_{0,n}$.
However, this does not identify explicit extreme rays of $\overline{\text{Eff}}^k(\overline{M}_{0,n})$ or $\text{Eff}^k(\overline{M}_{0,n})$ over characteristic 2 which are external to $\overline{\text{Eff}}^k(\overline{M}_{0,n})$ over characteristic 0.  Following work by \cite{Chen_2015}, \cite{blankers2021extremality}, and \cite{schaffler2015cone}, we show how to construct extreme effective cycles of higher codimension using clutching morphisms.

\begin{thm}\label{extCycles}
Let $i: \overline{M}_{0,n-m+2}\times \overline{M}_{0,m} \rightarrow \overline{M}_{0,n}$ be the embedding of a boundary divisor.  If $Z\in \text{Eff}^k(\overline{M}_{0,n-m+2})$ is extreme, then $i_*( Z \times \overline{M}_{0,m}) \in \text{Eff}^{k+1}(\overline{M}_{0,n})$ is extreme.
%Let $i: \overline{M}_{g,n-m+2}\times \overline{M}_{0,m} \rightarrow \overline{M}_{g,n}$ be a gluing morphism.  If $Z\in \text{Eff}^k(\overline{M}_{g,n-m+2})$ is extreme, and $m > 3$ or $g=0$, then $i_*( Z \times \overline{M}_{0,m}) \in \text{Eff}^{k+1}(\overline{M}_{g,n})$ is extreme.
\end{thm}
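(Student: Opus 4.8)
The plan is to realise $i_*(Z\times\overline{M}_{0,m})$ as a pullback class times the boundary divisor, transfer all intersection numbers down to $\overline{M}_{0,n-m+2}$, and exploit the extremality of $Z$ there. Write $D=i(\overline{M}_{0,n-m+2}\times\overline{M}_{0,m})$ and let $p_1$ denote the first projection of the product. First I would fix the forgetful morphism $f\colon\overline{M}_{0,n}\to\overline{M}_{0,n-m+2}$ that drops exactly the $m-2$ markings carried by the $\overline{M}_{0,m}$–factor other than the node; a check on stable curves shows $f$ is flat, $f\circ i=p_1$ (up to relabelling the gluing marking), and $f|_D=p_1$ as a bundle projection. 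The projection formula then gives
\[ i_*\bigl(Z\times\overline{M}_{0,m}\bigr)\;=\;i_*\bigl(i^*f^*Z\bigr)\;=\;f^*Z\cdot[D], \]
and, more generally, $\langle\, i_*(Z\times\overline{M}_{0,m}),\gamma\,\rangle=\langle\, Z,\,p_{1*}(i^*\gamma)\,\rangle$ for every $\gamma\in N^{\,n-k-4}(\overline{M}_{0,n})$. Taking $\gamma$ to be a suitable monomial in $\psi$–classes of the dropped markings and pullbacks of ample classes makes the right-hand side a positive multiple of an ample degree of $Z$, so the class is nonzero.

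Now suppose $i_*(Z\times\overline{M}_{0,m})=A+B$ with $A,B\in\text{Eff}^{k+1}(\overline{M}_{0,n})$. Whenever $\gamma$ is a monomial in nef divisors of $\overline{M}_{0,n}$ with $\langle i_*(Z\times\overline{M}_{0,m}),\gamma\rangle=0$, the inequalities $\langle A,\gamma\rangle,\langle B,\gamma\rangle\ge 0$ together with $\langle A,\gamma\rangle+\langle B,\gamma\rangle=0$ force $\langle A,\gamma\rangle=0$. I would build such probes of two kinds: classes $\gamma$ with $p_{1*}(i^*\gamma)=0$ (built from $f^*(\text{nef})$ and $\psi$–classes of the dropped markings), which probe ``along the fibres of $f$''; and classes $f^*\theta\cdot\psi_{j_1}\cdots\psi_{j_{m-3}}$, where $j_1,\dots,j_{m-3}$ are dropped markings and $\theta$ is a monomial in nef divisors of $\overline{M}_{0,n-m+2}$ pairing to $0$ with $Z$ (so that $p_{1*}i^*$ of this class is a positive multiple of $\theta$), which probe ``along $\overline{M}_{0,n-m+2}$'' — these exist because $Z$ is extreme. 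Using Keel's theorem that $A^*$ of the moduli space is generated by boundary divisors, monomials in nef divisors span each $N^{\,j}(\overline{M}_{0,n})$, and the key point is that the two families of probes span the hyperplane $\{i_*(Z\times\overline{M}_{0,m})\}^{\perp}\subseteq N^{\,n-k-4}(\overline{M}_{0,n})$. Granting this, $\langle A,-\rangle$ annihilates that hyperplane, so $[A]=a\,i_*(Z\times\overline{M}_{0,m})$ and $[B]=(1-a)\,i_*(Z\times\overline{M}_{0,m})$ with $a\in[0,1]$, and the ray is extreme.

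The main obstacle is precisely this spanning statement. Making it precise requires the free–module structure of $N^*(\overline{M}_{0,n})$ over $f^*N^*(\overline{M}_{0,n-m+2})$ coming from the cellular geometry of these forgetful maps, together with a bookkeeping argument identifying the kernel of $p_{1*}\circ i^*$ (exhausted by the fibre probes) and a complement mapping isomorphically onto $Z^{\perp}$ (exhausted by the $\overline{M}_{0,n-m+2}$–probes). Two subtleties deserve care: the fibre probes must also reach the relative classes of $f$ of top fibre degree, whose $p_{1*}i^*$–image involves the normal bundle of $D$ and is not annihilated by $p_{1*}i^*$; and for $k\ge 2$ the dual of $\overline{\text{Eff}}^k(\overline{M}_{0,n-m+2})$ is strictly larger than the cone generated by monomials in nef divisors, so the supporting functionals of $\mathbb{R}_{\ge 0}Z$ may only be reached after pulling back through birational modifications of $\overline{M}_{0,n-m+2}$ — in other words one should work with the cone of movable cycles rather than with nef monomials. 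A convenient way to organise the proof, in the spirit of the cited work of Chen--Coskun, Blankers, and Schaffler, is to first record the auxiliary fact that $Z\times\overline{M}_{0,m}$ is extreme in $\text{Eff}^k(\overline{M}_{0,n-m+2}\times\overline{M}_{0,m})$ — immediate from extremality of $Z$ by restricting effective cycles to general $p_1$–fibres and invoking Künneth together with Keel — and then to show that the closed embedding $i$ transports this extreme ray to an extreme ray of $\text{Eff}^{k+1}(\overline{M}_{0,n})$, where the $\psi$–class functionals of the dropped markings do the essential work.
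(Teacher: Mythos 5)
There is a genuine gap, and you have in fact put your finger on it yourself without resolving it: the ``spanning statement.'' Your strategy is to detect extremality of $i_*(Z\times\overline{M}_{0,m})$ by exhibiting functionals $\gamma$ that are nonnegative on all of $\text{Eff}^{k+1}(\overline{M}_{0,n})$, vanish on $i_*(Z\times\overline{M}_{0,m})$, and span the hyperplane $\{i_*(Z\times\overline{M}_{0,m})\}^{\perp}$. If such a family existed, you would have shown the ray is an \emph{exposed} face of a cone containing $\text{Eff}^{k+1}$, which is strictly stronger than extremality. The hypothesis only gives you that $Z$ is extreme in $\text{Eff}^k(\overline{M}_{0,n-m+2})$; this does not supply even one nonzero monomial $\theta$ in nef divisors with $Z\cdot\theta=0$, let alone a collection whose lifts $f^*\theta\cdot\psi_{j_1}\cdots\psi_{j_{m-3}}$ together with the ``fibre probes'' span a codimension-one subspace of $N^{\,n-k-4}(\overline{M}_{0,n})$. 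For $k\geq 2$ the dual of the pseudoeffective cone of cycles is poorly understood and is not generated by nef monomials, and even for $k=1$ the extreme divisors of this paper are certified by covering families of negative curves, not by spanning sets of nef curves vanishing on them. So the argument as proposed cannot close without an input that is at least as hard as the theorem itself.

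The paper avoids duality entirely. It uses the Chen--Coskun index inequality: for a morphism $\pi$ and an effective decomposition $W=\sum a_i Z_i$, every $Z_i$ satisfies $e_\pi(Z_i)\geq e_\pi(W)$, where $e_\pi$ measures the drop in dimension under $\pi$. Taking $\pi$ to be the reduction to a Hassett space collapsing the $m-1$ light markings (for $m>3$), the locus where fibres of $\pi$ have dimension $\geq m-3$ is exactly the boundary divisor $i(\overline{M}_{0,n-m+2}\times\overline{M}_{0,m})$, so every $Z_i$ is forced to lie in it; pushing forward by the forgetful section then reduces the claim to extremality of $Z\times\overline{M}_{0,m}$ in the product, which is \cite[Corollary~2.4]{Chen_2015}. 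For $m=3$ one uses the Losev--Manin reduction instead: each $Z_i$ is contracted to a point, and a rational-equivalence (``sliding the heavy marking along the chain'') moves each $Z_i$ into the boundary divisor before pushing forward. This is a geometric constraint on \emph{every} effective decomposition, which is exactly what a proof of extremality needs, and it is the ingredient your probing scheme is missing. If you want to salvage your approach, you would need to replace the nef-monomial probes with this index-type argument — at which point you have reproduced the paper's proof.
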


\noindent In Proposition \ref{Proposition: codim2cycles} we show that when $Z\subset\overline{M}_{0,n}$ is a nonboundary divisor swept out by \textit{negative} curves, the pushforward of $Z$ via $i:\overline{M}_{0,n}\rightarrow \overline{M}_{0,n+1}$ is extreme in $\overline{\text{Eff}}^2(\overline{M}_{0,n+1})$.  We obtain the following explicit corollary of Theorems \ref{extCycles} and \ref{charThm2}.
%It is well known that the pullback of an extreme effective cycle $Z\subset \overline{M}_{0,n}$ with $Z\cap M_{0,n}\neq \emptyset$ via a forgetful morphism is again extreme and of the same codimension.  In Proposition \ref{Proposition: codim2cycles} we show that when $Z$ is a divisor swept out by \textit{negative} curves, the pushforward of $Z$ via $i:\overline{M}_{0,n}\rightarrow \overline{M}_{0,n+1}$ is extreme in $\overline{\text{Eff}}^2(\overline{M}_{0,n+1})$.  This allows us to state the following explicit corollary of Theorems \ref{extCycles} and \ref{charThm2}.
%\noindent This allows us to extend Theorem \ref{charThm} and results in \cite{castravet2020blownup} to cones of higher codimension cycles.

\begin{cor}\label{higherCodim}
Let $k \leq n-5$, $D\in N^1(\overline{M}_{0,n-k})$ be as in Theorem \ref{charThm2}, and $i:\overline{M}_{0,n-k}\rightarrow \overline{M}_{0,n}$ be the embedding of a codimension $k$ boundary strata.  Over characteristic 0, $i_* D$ is external to $\overline{\text{Eff}}^{k+1}(\overline{M}_{0,n})$.  Over characteristic 2, $i_* D \in \text{Eff}^{k+1}(\overline{M}_{0,n})$ is extreme, while if $k\leq 1$, $i_* D \in \overline{\text{Eff}}^{k+1}(\overline{M}_{0,n})$ is extreme.
%Let $1\leq k \leq n-6$, $D\in N^1(\overline{M}_{0,n-k+1})$ be the pullback of $\boldsymbol{D_{38}}$ under a composition of forgetful maps $\overline{M}_{0,n-k+1}\rightarrow \overline{M}_{0,7}$, and $i:\overline{M}_{0,n-k+1}\rightarrow \overline{M}_{0,n}$ be the composition of $k-1$ embeddings of boundary divisors $\overline{M}_{0,m}\rightarrow \overline{M}_{0,m+1}$.  Over characteristic 0, $i_* D$ is external to $\overline{\text{Eff}}^k(\overline{M}_{0,n})$.  Over characteristic 2, $i_* D \in \text{Eff}^k(\overline{M}_{0,n})$ is extreme, while if $k\leq 2$, $i_* D \in \overline{\text{Eff}}^k(\overline{M}_{0,n})$ is extreme.
%$\overline{\text{Eff}}^k(\overline{M}_{0,n})$ is strictly larger over characteristic 2 than characteristic 0, for $k=1,2$ if $n\geq 8$ and $k=1$ if $n=7$.
%For all $1 \leq k \leq n-6$, the cone $\text{Eff}^k(\overline{M}_{0,n})$ is strictly larger over characteristic 2 than characteristic 0.  If $k\leq 2$, the corresponding statement holds for $\overline{\text{Eff}}^k(\overline{M}_{0,n})$.
\end{cor}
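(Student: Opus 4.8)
The plan is to reduce all three assertions to the properties of $D$ recorded in Theorem~\ref{charThm2}: over characteristic $2$, $D$ is represented by a nonboundary divisor of $\overline{M}_{0,n-k}$ swept out by negative curves, so it generates an extreme ray of $\overline{\mathrm{Eff}}^1(\overline{M}_{0,n-k})$ and, being effective, also of $\mathrm{Eff}^1(\overline{M}_{0,n-k})$; over characteristic $0$, $D\notin\overline{\mathrm{Eff}}^1(\overline{M}_{0,n-k})$. First I would present $i$ as an iterated boundary embedding: attaching a rigid $3$-pointed tail at a marked point exhibits $\overline{M}_{0,m}\cong\overline{M}_{0,m}\times\overline{M}_{0,3}$ as a boundary divisor of $\overline{M}_{0,m+1}$, and the forgetful morphism $\overline{M}_{0,m+1}\to\overline{M}_{0,m}$ erasing one of the two marked points on that tail contracts the tail and restricts to the identity on it. Writing the given codimension $k$ boundary stratum as a composite $i=i_k\circ\cdots\circ i_1$ of such embeddings and letting $\pi\colon\overline{M}_{0,n}\to\overline{M}_{0,n-k}$ be the corresponding composite of forgetful maps, we get $\pi\circ i=\mathrm{id}$.

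For the characteristic $0$ statement, suppose toward a contradiction that $i_*D\in\overline{\mathrm{Eff}}^{k+1}(\overline{M}_{0,n})$. On a smooth projective variety, proper pushforward of cycle classes descends to numerical equivalence, is $\mathbb{R}$-linear (hence continuous), and sends an effective class to an effective class or to $0$; therefore $\pi_*$ carries $\overline{\mathrm{Eff}}^{k+1}(\overline{M}_{0,n})$ into $\overline{\mathrm{Eff}}^1(\overline{M}_{0,n-k})$. Since $\pi$ restricts to an isomorphism on $\mathrm{supp}(i_*D)\subseteq i(\overline{M}_{0,n-k})$, no dimension is lost, and by functoriality $\pi_*i_*D=(\pi\circ i)_*D=D$. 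Hence $D\in\overline{\mathrm{Eff}}^1(\overline{M}_{0,n-k})$ over characteristic $0$, contradicting Theorem~\ref{charThm2}; so $i_*D$ is external to $\overline{\mathrm{Eff}}^{k+1}(\overline{M}_{0,n})$.

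For the characteristic $2$ statements, I would start from $D$ extreme in $\mathrm{Eff}^1(\overline{M}_{0,n-k})$ and apply Theorem~\ref{extCycles} to the boundary divisor $i_1$; since $\overline{M}_{0,3}$ is a point this gives $(i_1)_*D\in\mathrm{Eff}^2(\overline{M}_{0,n-k+1})$ extreme, and iterating along $i_2,\dots,i_k$ (feeding the previous pushforward in as the new extreme class) yields $i_*D\in\mathrm{Eff}^{k+1}(\overline{M}_{0,n})$ extreme. When $k=0$ the statement about $\overline{\mathrm{Eff}}^1$ is Theorem~\ref{charThm2} itself; when $k=1$, $i_*D$ is the pushforward of the nonboundary divisor $D$, swept by negative curves, along a single boundary embedding, so Proposition~\ref{Proposition: codim2cycles} (applied with $n-1$ in place of $n$) promotes the extremality to the closed cone $\overline{\mathrm{Eff}}^2(\overline{M}_{0,n})$.

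I expect the principal obstacle to be conceptual rather than computational. Theorem~\ref{extCycles} controls only the cone $\mathrm{Eff}^{k+1}$ generated by effective classes, not its closure, and for $k\ge 2$ there is no evident moving-curve argument on $\overline{M}_{0,n}$ playing the role that the sweeping negative curves play for $k\le 1$; this is exactly why the $\overline{\mathrm{Eff}}^{k+1}$ conclusion is stated only for $k\le 1$. A routine point still to verify is that every codimension $k$ boundary stratum isomorphic to $\overline{M}_{0,n-k}$---whose rigid tails may be attached at distinct points or nested along one another---admits such a retraction $\pi$; one erases one marked point from each tail, proceeding from the outermost tails inward, so that the resulting composite of forgetful maps restricts to the identity on the stratum.
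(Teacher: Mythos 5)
Your proposal is correct and follows essentially the same route as the paper: the characteristic-$0$ externality is obtained by pushing forward along the composite of forgetful maps retracting the boundary stratum (contradicting Theorem \ref{charThm2}), the characteristic-$2$ extremality in $\text{Eff}^{k+1}$ comes from iterating Theorem \ref{extCycles} with $m=3$, and the $k\leq 1$ statement about the closed cone is handled by Theorem \ref{charThm2} ($k=0$) and Proposition \ref{Proposition: codim2cycles} applied to the divisor swept out by the lifted negative curves $\tilde{c}$ of Proposition \ref{chardif3} ($k=1$). The only point you leave implicit that the paper makes explicit is that for $n-k>7$ the sweeping negative curves on $\pi^*\boldsymbol{D_{38}}$ are supplied by the lifting construction of Proposition \ref{chardif3}.
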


%\begin{cor}\label{nonPolyhedral}
%For all $n\geq 10$ and $6\leq k < n-3$, the cone $\text{Eff}_k(\overline{M}_{0,n})$ is not polyhedral.
%For all $ 1\leq k \leq n-9$, the cone $\text{Eff}^k(\overline{M}_{0,n})$ is not polyhedral.
%\end{cor}

We outline the algorithm used to find new extreme divisors on $\overline{M}_{0,7}$ in Section \ref{Algorithms}.  This algorithm may be applied to blow-ups of smooth Fano hypersurfaces, or more generally blow-ups of any smooth, quasi-projetive variety $X$ whose Cartier divisors are all hypersurface sections.
%Our approach to finding new extreme divisors on $\overline{M}_{0,7}$ is algorithmic and may be applied to blow-ups of any smooth, quasi-projective variety $X$ whose Cartier divisors are all hypersurface sections.  Examples of such varieties include.... 
Using \textit{negative curve classes} (resp. known extreme divisors), we bound an exterior (resp. interior) polyhedral approximation of $\text{Eff}(\overline{M}_{0,7})$.  By feeding these two approximations to certain linear programs, we either determine that $\text{Eff}(\overline{M}_{0,7})$ is polyhedral and generated by known divisors, or generate candidates for new extreme divisors and negative curve classes.  To test these candidates, we require an algorithm that tests for effectivity of a given divisor class, and an algorithm that computes the splitting type of $\mathcal{N}_f$ for general maps $f:\mathbb{P}^1\rightarrow \overline{M}_{0,7}$ of fixed numeric class.  The algorithms we describe for $\overline{M}_{0,7}$ may be adapted to other blow-ups of $\mathbb{P}^n$.  In particular, %for the Hassett space \cite{Hassett_2003} $X=\overline{M}_{0,\mathcal{A}}$ with $\mathcal{A}=(\frac{1}{3}, \frac{1}{3}, \frac{1}{3}, \frac{1}{3}, \frac{1}{3}, \frac{1}{3}, 1)$,\footnote{i.e. the blow-up of $\mathbb{P}^4$ along 6 linearly general points and the strict transforms of the 15 lines they span} we show $\text{Eff}(X)$ is polyhedral.
we show the Hassett space \cite{Hassett_2003} $\overline{M}_{0,\mathcal{A}}$ with $\mathcal{A}=(\frac{1}{3}, \frac{1}{3}, \frac{1}{3}, \frac{1}{3}, \frac{1}{3}, \frac{1}{3}, 1)$\footnote{i.e. the blow-up of $\mathbb{P}^4$ along 6 linearly general points and the strict transforms of the 15 lines they span.} has polyhedral effective cone.

\begin{thm}\label{contractionThm}
Let $\mathcal{A}=(\frac{1}{3}, \frac{1}{3}, \frac{1}{3}, \frac{1}{3}, \frac{1}{3}, \frac{1}{3}, 1)$ and $\phi : \overline{M}_{0,7}\rightarrow \overline{M}_{0,\mathcal{A}}$ be the natural reduction map.
Over any basefield, $\text{Eff}(\overline{M}_{0,\mathcal{A}})$ is polyhedral with 976 extreme rays.  It is generated by the $\phi$-pushforward of divisors $S_7$-equivalent to those in Tables \ref{ExtDivs0}, \ref{ExtDivs1}, and \ref{boundaryCurves}.%\footnote{A result by \cite{he2021birational} shows that the blow up of $X$ in Theorem \ref{contractionThm} at one additional, very-general point has nonpolyhedral effective cone.}
\end{thm}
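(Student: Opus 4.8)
\section*{Proof proposal for Theorem~\ref{contractionThm}}

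The plan is to realize $\text{Eff}(\overline{M}_{0,\mathcal{A}})$ as the $\phi_*$-image of $\text{Eff}(\overline{M}_{0,7})$ and to pin it down exactly by sandwiching it between the inner and outer polyhedral approximations already constructed for $\overline{M}_{0,7}$ in Section~\ref{Algorithms}; this is what lets us conclude even though $\text{Eff}(\overline{M}_{0,7})$ itself is not known to be polyhedral. First I would record the geometry of $\phi$. It is the Hassett weight-reduction morphism, hence defined over $\mathbb{Z}$, and a routine stability computation shows it contracts exactly the $20$ boundary divisors $\delta_{0,S}$ with $S\subseteq\{1,\dots,6\}$, $|S|=3$ --- precisely the exceptional divisors over the $20$ planes $\langle p_i,p_j,p_k\rangle$ in Kapranov's model of $\overline{M}_{0,7}$ as an iterated blow-up of $\mathbb{P}^4$; every other boundary divisor, and every class in Tables~\ref{ExtDivs0}, \ref{ExtDivs1} and \ref{boundaryCurves}, maps onto a divisor of $\overline{M}_{0,\mathcal{A}}$. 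Since $\phi$ is a birational morphism of smooth projective varieties, $\phi^*\colon N^1(\overline{M}_{0,\mathcal{A}})\hookrightarrow N^1(\overline{M}_{0,7})$ is split by $\phi_*$ with kernel the span of the $20$ contracted classes, and pullback of effective divisors is effective, so $\text{Eff}(\overline{M}_{0,\mathcal{A}})=\phi_*\,\text{Eff}(\overline{M}_{0,7})$ (hence also for the closures once the right-hand side is shown polyhedral).

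Next I would set up the two approximations exactly as in the $\overline{M}_{0,7}$ algorithm. Let $\mathcal{I}\subseteq N^1(\overline{M}_{0,7})$ be the finitely generated cone spanned by all $S_7$-translates of the classes in Tables~\ref{ExtDivs0}, \ref{ExtDivs1} and \ref{boundaryCurves}, each of which is effective; and let $\mathcal{E}\subseteq N^1(\overline{M}_{0,7})$ be the polyhedral cone dual to the cone spanned by the finite list of negative and nef curve classes from Theorem~\ref{nefCurvesThm} together with the boundary curves, so that $\mathcal{I}\subseteq\text{Eff}(\overline{M}_{0,7})\subseteq\mathcal{E}$. Applying $\phi_*$ gives $\phi_*\mathcal{I}\subseteq\text{Eff}(\overline{M}_{0,\mathcal{A}})\subseteq\phi_*\mathcal{E}$. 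Both outer cones are rational polyhedral: $\phi_*\mathcal{I}$ is generated by the finitely many images of the generators of $\mathcal{I}$, while $\phi_*\mathcal{E}$ is the linear image of a polyhedron and is computed by Fourier--Motzkin elimination of the coefficients of the contracted classes $\delta_{0,S}$ from the inequalities defining $\mathcal{E}$ (equivalently: $D'\in\phi_*\mathcal{E}$ iff the lift $\phi^*D'+\sum_S c_S\,\delta_{0,S}$ can be made to pair nonnegatively with every listed curve for some $c_S\in\mathbb{Q}$). The proof is then finished by the exact polyhedral verification $\phi_*\mathcal{I}=\phi_*\mathcal{E}$, after which $\text{Eff}(\overline{M}_{0,\mathcal{A}})=\phi_*\mathcal{I}$; discarding from the generating set the classes that become $0$ (the $\delta_{0,S}$ with $|S|=3$) or that are redundant leaves the claimed $976$ extreme rays, whose extremality I would certify either from the matching facet structure of $\phi_*\mathcal{E}$, or geometrically by checking that each surviving $\phi_*D$ is still swept out by the images of the negative curves sweeping out $D$ and still meets them negatively (none of those curves lies in the locus contracted by $\phi$).

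For the uniformity in the characteristic, I would note that $\phi$, the table divisors, and the curve classes are all defined over $\mathbb{Z}$, so the sandwich $\phi_*\mathcal{I}\subseteq\text{Eff}(\overline{M}_{0,\mathcal{A}})\subseteq\phi_*\mathcal{E}$ persists over every field with the same $\mathcal{I}$ and $\mathcal{E}$; in particular the finitely many ``bad primes'' of Theorems~\ref{divisorThm} and \ref{nefCurvesThm}, and the characteristic-$2$ phenomena of Theorems~\ref{charThm} and \ref{charThm2}, do not enlarge $\text{Eff}(\overline{M}_{0,\mathcal{A}})$: for instance $\phi_*\boldsymbol{D_{37}}=\phi_*(\boldsymbol{D_{31}}-E_{012})=\phi_*\boldsymbol{D_{31}}$ already lies in $\phi_*\mathcal{I}$ because $E_{012}$ is one of the contracted divisors, and one checks $\phi_*\boldsymbol{D_{38}}$ lies there as well.

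The main obstacle is the exact polyhedral computation $\phi_*\mathcal{I}=\phi_*\mathcal{E}$: the ambient space $N^1(\overline{M}_{0,\mathcal{A}})$ has rank $22$, $\mathcal{I}$ has on the order of $10^{5}$--$10^{6}$ generators and $\mathcal{E}$ a comparably large facet list, so the double-description and Fourier--Motzkin steps must be done in exact arithmetic and made tractable by exploiting the residual $S_6$-symmetry --- the subgroup of $S_7$ fixing the heavy point, which acts on $\overline{M}_{0,\mathcal{A}}$ and is respected by $\phi$. A secondary obstacle is bookkeeping: one must verify that $\phi$ contracts precisely the claimed $20$ boundary divisors and no curve used for $\mathcal{E}$, and that the bad primes and the characteristic-$2$ classes are genuinely absorbed after pushforward, so that the conclusion is uniform over all fields.
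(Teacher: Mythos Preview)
Your sandwich strategy is the right idea and is essentially what the paper does, but the paper takes a simpler and more decisive computational route that you should adopt. Rather than projecting the huge outer cone $\mathcal{E}$ via Fourier--Motzkin, the paper computes only the inner cone $\mathfrak{E}=\phi_*\mathcal{I}\subset N^1(\overline{M}_{0,\mathcal{A}})$ (a 976-ray cone in a 22-dimensional space), finds its dual description, and then for each facet $f$ lifts to $\phi^*(f)\in N_1(\overline{M}_{0,7})$ and verifies by linear programming that $\phi^*(f)$ is a \emph{nonnegative combination of the boundary curves in Table~\ref{boundaryCurves} alone}. This single check replaces your $\phi_*\mathcal{I}=\phi_*\mathcal{E}$ computation and is what yields the ``over any basefield'' conclusion directly: boundary curves are negative only against boundary divisors, and this holds in every characteristic, so each $\phi^*(f)$ is nef over every field (it already pairs nonnegatively with the surviving boundary divisors since those lie in $\mathfrak{E}$, and it pairs zero with the contracted $E_{ijk}$ by construction). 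Your universality argument has a genuine gap here: the curves defining your $\mathcal{E}$ include the nef curves of Theorem~\ref{nefCurvesThm} and the negative curves of Tables~\ref{ExtDivs0}--\ref{ExtDivs1}, which are only verified in characteristic $0$ and all but finitely many primes, so the sandwich $\text{Eff}(\overline{M}_{0,\mathcal{A}})\subseteq\phi_*\mathcal{E}$ is not known to hold over every field. Checking the char-$2$ classes $\boldsymbol{D_{37}},\boldsymbol{D_{38}}$ by hand does not address the unspecified bad primes. The paper's use of boundary curves only is exactly what closes this gap.
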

Section \ref{Section8} lists all $S_6$-orbits of extreme divisors on $\overline{M}_{0,\mathcal{A}}$.  Recently, He and Yang \cite{he2021birational} showed the blow-up of $\overline{M}_{0,\mathcal{A}}$ in Theorem \ref{contractionThm} at a very-general point has nonpolyhedral effective cone.  He \cite{HeZhuang2020Bgob} further showed the blow-up of the Losev-Manin Space $\overline{LM}_7$ at the identity of its embedded torus, $\text{Bl}_e \overline{LM}_7$, is log-Fano.  This implies $\text{Eff}(\text{Bl}_e \overline{LM}_7)$ is polyhedral.  In Section \ref{Section8}, we also provide a conjectural list of all $S_2\times S_5$-orbits of extreme divisors on $\text{Bl}_e \overline{LM}_7$.  Our conjectured description for $\text{Eff}(\text{Bl}_e \overline{LM}_7)$ has $3.2$ million facets, and we verify that all but a few hundred $S_2 \times S_5$-orbits of these are nef curve classes. %given by  of the 3.2 million facets of our conjectural description are nef curve classes:
%We improve upon this result by describing $\text{Bl}_e \overline{LM}_7$ explicitly in Section \ref{Section8}:

\begin{conj}\label{losev_manin_conj}
Let $\phi : \overline{M}_{0,7}\rightarrow \text{Bl}_e \overline{LM}_7$ be the natural reduction map.
Over characteristic 0 and all but finitely many prime characteristics, $\text{Eff}(\text{Bl}_e \overline{LM}_7)$ is polyhedral with 138 extreme rays.  It is generated by the $\phi$-pushforward of divisors $S_7$-equivalent to those in Tables \ref{ExtDivs0}, \ref{ExtDivs1}, and \ref{boundaryCurves}.
\end{conj}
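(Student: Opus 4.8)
The plan is to follow the strategy behind Theorem~\ref{contractionThm}, now applied to the birational morphism $\phi\colon\overline{M}_{0,7}\to\text{Bl}_e\overline{LM}_7$ in place of $\overline{M}_{0,7}\to\overline{M}_{0,\mathcal{A}}$: establish polyhedrality, then pinch $\text{Eff}(\text{Bl}_e\overline{LM}_7)$ between an inner cone built from pushed-forward known divisors and an outer cone cut out by covering curve classes. Polyhedrality is the easy input: He~\cite{HeZhuang2020Bgob} shows $\text{Bl}_e\overline{LM}_7$ is log-Fano, hence a Mori Dream Space~\cite{hu2000mori}, so $\text{Eff}(\text{Bl}_e\overline{LM}_7)$ is rational polyhedral over characteristic $0$; the variety, its boundary, and all divisor and curve classes used below are defined over $\operatorname{Spec}\mathbb{Z}$, so flatness and semicontinuity give that the cone does not grow for all but finitely many primes. (As for $\overline{M}_{0,7}$ itself one expects only $p=2$ to be exceptional, and confirming this would require running the effectivity and $\mathcal{N}_f$-splitting algorithms of Section~\ref{Algorithms} in each residual characteristic.)

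For the inner approximation, since $\phi$ is a birational morphism, $\phi_*$ sends $\text{Eff}(\overline{M}_{0,7})$ onto $\text{Eff}(\text{Bl}_e\overline{LM}_7)$ (using $\phi_*\phi^*=\mathrm{id}$ and that pullback and pushforward of divisors preserve effectivity); thus the conjecture would follow at once if the divisors of Tables~\ref{ExtDivs0}, \ref{ExtDivs1}, \ref{boundaryCurves} were known to generate $\text{Eff}(\overline{M}_{0,7})$. But the polyhedrality of $\overline{\text{Eff}}(\overline{M}_{0,7})$ is open, so one instead works directly on $\text{Bl}_e\overline{LM}_7$, the point being that any as-yet-undiscovered extreme divisor of $\overline{M}_{0,7}$ should push forward into the cone $C$ below or be $\phi$-contracted. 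Concretely, let $C\subseteq N^1(\text{Bl}_e\overline{LM}_7)$ be generated by the classes $\phi_*D$, with $D$ ranging over the divisors of those three tables and their $S_7$-translates; then $C\subseteq\overline{\text{Eff}}(\text{Bl}_e\overline{LM}_7)$. Applying Fourier--Motzkin elimination to the finitely many generators of $C$, grouped into $S_2\times S_5$-orbits, extracts its extreme rays — the assertion is that there are exactly $138$ — and its dual facet description, of which there are about $3.2\times 10^{6}$.

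The reverse inclusion $\overline{\text{Eff}}(\text{Bl}_e\overline{LM}_7)\subseteq C$ is the substance. Dually it demands that every facet normal $\gamma$ of $C$ lie in $\overline{\text{Eff}}(\text{Bl}_e\overline{LM}_7)^{\vee}$, i.e.\ $\gamma\cdot D\ge 0$ for all effective $D$. The certificate is to realize $\gamma$, or a positive combination of facet normals spanning the same face of $C$, as the class of a curve moving in a family that dominates $\text{Bl}_e\overline{LM}_7$ — a \emph{nef curve} in the sense of Theorem~\ref{nefCurvesThm} — since a dominating family pairs nonnegatively with any effective divisor. One produces such curves either by pushing forward along $\phi$ the free rational curves on $\overline{M}_{0,7}$ catalogued as in Theorem~\ref{nefCurvesThm}, or by constructing them directly on $\text{Bl}_e\overline{LM}_7$, matching $S_2\times S_5$-orbits of facet normals; this succeeds for all but a few hundred orbits.

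The main obstacle — and the reason the statement is only a conjecture — is precisely that residue of a few hundred $S_2\times S_5$-orbits of facet normals of $C$ for which no dominating curve family is yet known. Closing it would require one of: (a) explicitly producing covering families in those classes, e.g.\ by locating suitable components of the Kontsevich spaces of stable maps to $\text{Bl}_e\overline{LM}_7$ and checking they sweep out the variety; (b) expressing each missing $\gamma$ as a nonnegative combination of verified nef-curve classes together with pullbacks of nef classes along contractions of $\text{Bl}_e\overline{LM}_7$; or (c) a Zariski-type decomposition / negative-curve argument directly ruling out effective divisors outside $C$. Once $\overline{\text{Eff}}(\text{Bl}_e\overline{LM}_7)=C$ is in hand, the asserted $138$ extreme rays are exactly the extreme rays of $C$ already computed, and one should still check none is redundant — e.g.\ that each $\phi_*D$ is swept out by the $\phi$-images of the negative curves covering $D$ on $\overline{M}_{0,7}$, so that it must occur among the generators. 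I expect part (a), carried out uniformly over the characteristic, to be the crux.
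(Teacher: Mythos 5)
Your proposal matches the paper's own strategy for this statement: an inner cone $\mathfrak{E}$ generated by the $\phi_*$-pushforwards of the tabulated divisors (with extremality of the $138$ rays certified by negative curves), an outer bound obtained by testing whether each facet of $\mathfrak{E}$, lifted via $\phi^*$, is a nonnegative combination of known nef and negative curve classes, and the same residual obstruction — the $S_2\times S_5$-orbits of facets (Table~\ref{LMcurves}) whose nefness is unverified, which is exactly why the statement remains a conjecture in the paper as well. Your suggested routes for closing the gap (explicit covering families, or decomposing the missing facet classes as sums of negative and nef curves) are also the ones the paper proposes in the remark following Table~\ref{cone27}.
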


% Moreover, we develop techniques to study exterior approximations (cones) bounded by curves classes as above.  We apply these to produce new proofs that $\overline{\text{Eff}}(\overline{M}_{0,n})$ ($n\leq 6$) is generated by boundary (and Keel-Vermiere) divisors, highlighting a difference between $\overline{\text{Eff}}(\overline{M}_{0,7})$ and its predecessors.

In Section \ref{Section2}, we define \textit{negative curves} and review Kapranov's construction of $\overline{M}_{0,n}$ as an iterated blow-up of $\mathbb{P}^{n-3}$.  %In section \ref{Section3}, we provide criteria for showing a given curve $f:\mathbb{P}^1\rightarrow X$ deforms in family that covers a divisor.  We use this to prove Theorem \ref{divisorThm}.  
In Section \ref{Section3}, we provide deformation-theoretic criteria for showing $f_*[\mathbb{P}^1]$ is a negative curve class on a given divisor.  We extend our study to curves on flat varieties $X_\mathbb{Z} \rightarrow \mathbb{Z}$ and prove Theorem \ref{divisorThm}.  
Section \ref{Section4} contains the proof of Theorem \ref{charThm}, along with a more refined statement, Theorem \ref{charThm2}.  The results in Section \ref{Section4} describe methods for finding differences between the effective cones of special and general fibers of $X_\mathbb{Z}\rightarrow \mathbb{Z}$.
Section \ref{Section5} gives proofs of Theorem \ref{extCycles} and Corollary \ref{higherCodim}.  Section \ref{Algorithms} describes the algorithmic procedure we developed for finding new extreme divisors on arbitrary varieties $X$, %$\overline{M}_{0,7}$, 
while Section \ref{M07alg} contains subsidiary algorithms for testing divisor and curve classes on $\overline{M}_{0,7}$.  Section \ref{Section8} proves Theorem \ref{contractionThm} and summarizes progress towards Conjecture \ref{losev_manin_conj}.  Tables of extreme divisors, nef curves, and more appear in Section \ref{Tables}, along with a proof that none of the newly found divisors are proper transforms of Chen-Coskun divisors \cite{opie2016extremal}.  A short proof of Theorem \ref{nefCurvesThm} appears before Table \ref{nefCurves}.

\textbf{Acknowledgments:}  The authors would like to thank Eric Riedl, Ana-Maria Castravet, Izzet Coskun, Jonathan Hauenstein, Gerhard Pfister, Geoff Smith, Paolo Gianni, and countless others for helpful discussions about this project.

%2 PIECES OF INTRODUCTION USUALLY INCLUDED: tABLE OF CONTENTS, AND INFORMAL DESCRIPTION OF WHAT SECTIONS DO WHAT.
%ACKNOWLEDGEMENTS SECTION.
%THESE GO AT THE END. (ACKNOWLEDGE CASTRAVET, IZZET, ERIC

%\section{Contents}

\tableofcontents
%Make table of contents later.

\section{Background and Notation}\label{Section2}
\subsection{Background}  Both \cite{alma9927409813902959} and \cite{lazarsfeld2004positivity} offer reasonable background for this paper.  We let $X$ be a variety and $\text{Eff}(X)$ be the full-dimensional, pointed cone of effective divisors inside $N^1(X)$, with closure $\overline{\text{Eff}}(X)$.  Its dual, $\text{Nef}_1(X)\subset N_1(X)$ is the cone of nef (free) curves on $X$.  %\footnote{Both \cite{alma9927409813902959} and \cite{lazarsfeld2004positivity} provides a reasonable account of background material for our paper.}     
Let $D\in N^1(X)$ be the class of an effective, rigid, irreducible divisor.  To verify that $D$ is an extreme ray of $\overline{\text{Eff}}(X)$, it suffices to find a curve class $c \in N_1(X)$, satisfying $c . D < 0$, that admits a family of generically integral curves covering $D$ (see \cite[Corollary~6.4]{opie2016extremal} for details).  Such a curve $c$ provides a hyperplane in $N^1(X)$ that separates $D$ from every other extreme ray of $\overline{\text{Eff}}(X)$.  We call such curves \textit{negative curves} (on/sweeping out $D$).

\begin{defn}
Let $D\subset X$ be a Cartier divisor.  A curve class $c\in N_1(X)$ is called a \textit{negative curve} (on/sweeping out $D$) if $c.D < 0$ and there exists a family of integral curves, numerically equivalent to $c$, whose image dominates $D$.  
\end{defn}

In Lemma \ref{deformationLemma} and Proposition \ref{liftingProp}, we describe how to verify that a single curve $f:\mathbb{P}^1\rightarrow X$ satisfying $f_*[\mathbb{P}^1] . D < 0$ deforms in such a family by studying the normal bundle $\mathcal{N}_{f}$.

\begin{defn}
For smooth, quasi-projective $X$, the \textit{(virtual) normal bundle} of $f:\mathbb{P}^1\rightarrow X$ is $\mathcal{N}_{f} = \text{coker}(\mathcal{T}_{\mathbb{P}^1}\rightarrow f^*\mathcal{T}_X)$.
\end{defn}

%Statements about the global generation of subsheaves of $\mathcal{N}_{f}$ imply similar statements about $f^*\mathcal{T}_X$. In Lemma \ref{deformationLemma} and Proposition \ref{liftingProp}, %we describe how to verify that a single curve $f:\mathbb{P}^1\rightarrow X$ satisfying $f_*[\mathbb{P}^1] . D < 0$ deforms in such a family by studying the normal bundle $\mathcal{N}_{f}$. %$\mathcal{N}_{f} := \text{coker}(\mathcal{T}_{\mathbb{P}^1}\rightarrow f^*\mathcal{T}_X)$.
%we describe properties of $\mathcal{N}_f$ that verify $c=f_*[\mathbb{P}^1]$ is a negative curve on $D$.

%Given a collection $\mathcal{D}$ of extreme divisors on $X$, it is natural to wonder whether the cone $\mathfrak{E}$ they generate is the entirety of $\text{Eff}(X)$.  To determine this, if $\mathcal{C}$ is a collection of negative curves on $\mathcal{D}$, we may ask if the cone of divisors $\mathfrak{M}\subset N^1(X)$ pairing nonnegatively with all $c \in \mathcal{C}$ is contained in $\mathfrak{E}$.  If it is, this proves $\mathfrak{E}=\text{Eff}(X)$.  Otherwise, we may study rays of $\mathfrak{M}\setminus\mathfrak{E}$ to find new extreme divisors on $X$.  A precise description of this algorithm is given in section BLANK.

\subsection{Notation}\label{Notation}  Throughout our paper, $\mathcal{D}$ will be a finite set of extreme divisors on a variety $X$ and $\mathcal{C}$ will be a finite collection of negative curves sweeping out divisors in $\mathcal{D}$.  We let $\mathfrak{E}\subset N^1(X)$ denote the cone generated $\mathcal{D}$, and define the cone 
$$\mathfrak{M}=\{ D\in N^1(X) | D.c \geq 0 \text{ for all } c\in \mathcal{C}\}.$$

%To determine whether $\mathcal{D}$ contains every extreme divisor on $X$, we may check whether $\mathfrak{M}$ is contained in $\mathfrak{E}$.  If it is, this proves $\mathfrak{E}=\text{Eff}(X)$.  Otherwise, we may study/search rays in $\mathfrak{M}\setminus\mathfrak{E}$ to find new extreme divisors on $X$.  A precise description of this algorithm is given in section BLANK.

%We focus primarily upon applying this algorithm to $X=\overline{M}_{0,7}$.  This works particularly well because $\overline{M}_{0,7}$ is realized as a simple blow up of $\mathbb{P}^4$ via Kapranov's morphism, with lots of symmetry.

We focus primarily upon $X=\overline{M}_{0,7}$.  Kapranov's construction realizes $\overline{M}_{0,n}$ as the iterated blow-up of $\mathbb{P}^{n-3}$ along $n-1$ linearly general points and the strict transforms of their linear spans (of codimension at least 2), in order of increasing dimension.  In particular, this realizes $\overline{M}_{0,7}$ as the iterated blow-up of $\mathbb{P}^4$ at $6$ points $p_i$, followed by the strict transforms of the 15 lines $l_{ij}=\overline{p_i p_j}$ they span, and lastly the strict transforms of the 20 planes $\Delta_{ijk}=\Delta p_i p_j p_k$ they span.  
%Let these 6 points be 
%\begin{align*}
%p_0=& [1:0:0:0:0]\\
%p_1=& [0:1:0:0:0]\\
%\vdots & \\
%p_4= & [0:0:0:0:1]\\
%p_5= & [1:1:1:1:1].
%\end{align*}
%This allows us to realize the space of sections $|D|$ of a divisor class $D\in\text{Pic}(\overline{M}_{0,7})$ as an explicit subspace of $\text{H}^0(\mathbb{P}^4 , \mathcal{O}(d))$ for some $d$, i.e. as homogeneous polynomials in $x_0,\dots, x_4$ of degree $d$ whose coefficients satisfy certain linear constraints.  More specifically, let $H$ denote the pullback of the hyperplane class from $\mathbb{P}^4$ under the iterated blow up $\overline{M}_{0,7}\rightarrow \mathbb{P}^4$, and $E_i,E_{ij},$ and $E_{ijk}$ denote the exceptional divisors lying over $p_i$, $l_{ij}$, and $\Delta_{ijk}$ respectively.  

Let $H$ denote the pullback of the hyperplane class from $\mathbb{P}^4$ under the iterated blow-up $\overline{M}_{0,7}\rightarrow \mathbb{P}^4$, and $E_i,E_{ij},$ and $E_{ijk}$ denote the exceptional divisors lying over $p_i$, the strict transform of $l_{ij}$, and the strict transorm of $\Delta_{ijk}$, respectively.\footnote{$H$ is a $\psi$-class, while $E_i$, $E_{ij}$, and $E_{ijk}$ are components of the boundary $\overline{M}_{0,7}\setminus M_{0,7}$.}  
These comprise a basis for $N^1(\overline{M}_{0,7})\cong\text{Pic}(\overline{M}_{0,7})$.  %, and the space of sections of the divisor class
%$$D= dH -\sum_{0\leq i\leq 5} m_i E_i - \sum_{0\leq i < j \leq 5} m_{ij} E_{ij} -\sum_{0\leq i < j < k\leq 5} m_{ijk} E_{ijk}$$
%can be identified with the subspace of $\text{H}^0(\mathbb{P}^4 , \mathcal{O}(d))$ that vanishes at $p_i$ with multiplicity at least $m_i$, vanishes along $l_{ij}$ with multiplicity at least $m_{ij}$, and vanishes along $\Delta_{ijk}$ with multiplicity at least $m_{ijk}$.

A corresponding basis for $N_1(\overline{M}_{0,7})$ consists of $l, e_i ,e_{ij}, \text{ and } e_{ijk}$, where $l$ is the strict transform of a general line under $\overline{M}_{0,7}\rightarrow \mathbb{P}^4$, $e_i$ is the strict transform of a general line under the blow-up $E_i\rightarrow \mathbb{P}^3$, $e_{ij}$ is the strict transform of a general line in $\mathbb{P}^2$ under the blow-up $E_{ij}\rightarrow \mathbb{P}^1 \times \mathbb{P}^2$, and $e_{ijk}$ is the fiber over a general point in the restriction of Kapranov's map to $E_{ijk}\rightarrow \mathbb{P}^2\subset\mathbb{P}^4$.  Note $l.H =1$, $e_i .E_i = -1$, $e_{ij} .E_{ij} = -1$, and $e_{ijk} . E_{ijk} = -1$.  All other pairing of basis elements are $0$.
%Kapranov's construction also provides an explicit basis for $N_1(\overline{M}_{0,7})$.  Let $l$ be the strict transform of a general line under $\overline{M}_{0,7}\rightarrow \mathbb{P}^4$, $e_i$ the strict transform of a general line under the blow up $E_i\rightarrow \mathbb{P}^3$, $e_{ij}$ the strict transform of a general line in $\mathbb{P}^2$ under the blow up $E_{ij}\rightarrow \mathbb{P}^1 \times \mathbb{P}^2$, and $e_{ijk}$ the fiber over a general point in the restriction of Kapranov's map to $E_{ijk}\rightarrow \mathbb{P}^2\subset\mathbb{P}^4$.  Then any curve class $c\in N_1(\overline{M}_{0,7})$ may be written as
%$$c= dl -\sum_{0\leq i\leq 5} m_i e_i - \sum_{0\leq i < j \leq 5} m_{ij} e_{ij} -\sum_{0\leq i < j < k\leq 5} m_{ijk} e_{ijk}.$$
%When $m_i,m_{ij},m_{ijk}\geq 0$ for all $i,j,k$, the space of irreducible curves numerically equivalent to $c$ may be identified with the space of irreducible curves in $\mathbb{P}^4$ passing through $p_i$, $l_{ij}$, and $\Delta_{ijk}$ with multiplicities given by $m_i$, $m_{ij}$, and $m_{ijk}$.

Choosing the $6$ points $p_i$ as in Section \ref{M07alg}, we may realize $\overline{M}_{0,7}$ as a blow-up of $\mathbb{P}^4_\mathbb{Z}$, flat over $\mathbb{Z}$.  Our choice of basis for $N^1(\overline{M}_{0,7})$ then consists of relative Cartier divisors.  Letting $\overline{M}_{0,n,k}$ denote $\overline{M}_{0,n}$ defined over a field $k$, we therefore obtain compatible isomorphisms $N^1(\overline{M}_{0,n,k})\cong N^1(\overline{M}_{0,n,\mathbb{Q}})$ for all fields $k$.  By convention, we omit the subscript $k$ and let $\overline{M}_{0,n}$ refer to the appropriate space over a characteristic $0$ field, unless otherwise indicated.% otherwise.

\section{Negative Curves and Deformations}\label{Section3}
Given a divisor $D\subset X$ and a curve $f:\mathbb{P}^1 \rightarrow X$ with $f_*[\mathbb{P}^1] . D < 0$, it is desirable to provide criteria on $f$ and $D$ that imply $f_*[\mathbb{P}^1]$ is a negative curve on $D$.  The following outlines such criteria.  Note that statements about the global generation of subsheaves of $\mathcal{N}_{f}$ imply similar statements about $f^*\mathcal{T}_X$.

\begin{lem}\label{deformationLemma}
Let $X$ be a smooth quasi-projective variety defined over a field $k$, $f:\mathbb{P}^1_k\rightarrow X$ be a nonconstant map, and $\mathcal{N}_f := \text{coker}(\mathcal{T}_{\mathbb{P}^1}\rightarrow f^*\mathcal{T}_{X})$. Suppose that $\mathcal{N}_f$ has splitting type
$$\mathcal{N}_f\cong \mathcal{O}_{\mathbb{P}^1}(-n) \oplus V$$
with $V$ globally generated and $n>0$.  Let $D\subset X$ be an effective divisor such that $\text{deg}(f^*D)=-n$.  Then $f(\mathbb{P}^1)$ is either contained inside the singular locus of $D$, or disjoint from it.  %In the latter case, or when $n=1$, $f_*[\mathbb{P}^1]$ is a negative curve on some integral component of $D$. 
If $n=1$, or $D$ is smooth at any point of $f(\mathbb{P}^1)$, then $f_*[\mathbb{P}^1]$ is a negative curve on some integral component of $D$.
\end{lem}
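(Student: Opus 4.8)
The plan is to use the splitting type $\mathcal{N}_f \cong \mathcal{O}_{\mathbb{P}^1}(-n)\oplus V$ with $V$ globally generated to control the deformations of $f$ inside $X$, and then to intersect those deformations with $D$. First I would observe that since $\deg(f^*\mathcal{O}_X(D)) = -n < 0$, the image $f(\mathbb{P}^1)$ must meet $D$; the question is how. The key input is the standard deformation-theory dictionary: $H^0(\mathbb{P}^1, \mathcal{N}_f)$ parametrizes first-order deformations of the map $f$ (with domain and target fixed), and if $H^1(\mathbb{P}^1, \mathcal{N}_f) = 0$ then the space of maps is smooth at $[f]$ of the expected dimension, so these first-order deformations are unobstructed and integrate to an actual family. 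Here $H^1(\mathbb{P}^1,\mathcal{N}_f) = H^1(\mathbb{P}^1,\mathcal{O}(-n))\oplus H^1(\mathbb{P}^1,V)$; the second summand vanishes because $V$ is globally generated on $\mathbb{P}^1$ (hence a sum of $\mathcal{O}(a_i)$ with $a_i\geq 0$), but $H^1(\mathbb{P}^1,\mathcal{O}(-n))$ vanishes only when $n = 1$. This is exactly why the hypothesis bifurcates: for $n=1$ the deformation space is honestly smooth and we get a covering family essentially for free; for $n > 1$ we instead have to argue that the globally generated part $V$ alone produces enough deformations to sweep out a divisor, using that $f^*\mathcal{T}_X$ is globally generated away from the single negative $\mathcal{O}(-n)$ direction.

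The heart of the argument is then to analyze where these deformations go relative to $D$. I would argue that the globally-generated subsheaf $V \subset \mathcal{N}_f$ (equivalently, the corresponding globally generated subsheaf of $f^*\mathcal{T}_X$, using the parenthetical remark in the excerpt that global generation passes between $\mathcal{N}_f$ and $f^*\mathcal{T}_X$) gives a family of deformations of $f$ that moves $f(\mathbb{P}^1)$ in a family dominating a subvariety $Z \subseteq X$ of dimension $\geq 2$ — one gains at least one dimension from the global sections of $V$ beyond the reparametrizations, since $V \neq 0$ (as $\operatorname{rank}\mathcal{N}_f = \dim X - 1 \geq 1$ and we have peeled off one $\mathcal{O}(-n)$ summand, so either $\dim X \geq 2$ forcing $V\neq 0$, or $\dim X = 1$ which is degenerate). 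Now restrict to $D$: the divisor $D$ pulls back to degree $-n$ on every member of the family. If $f(\mathbb{P}^1)$ met the smooth locus of $D$ transversally-ish at some point with positive local intersection, then nearby deformations coming from $V$ would have to maintain total intersection $-n < 0$ with $D$, which is impossible for a curve not contained in $D$ meeting $D$'s smooth locus — more precisely, a curve with negative intersection with an effective Cartier divisor $D$ and not contained in $D$ cannot exist, so $f(\mathbb{P}^1) \subseteq D$ (as a set, via its support). Combined with the fact that $D$ is effective and $\deg f^*D = -n$, the curve lies in $D$. The dichotomy "contained in $\operatorname{Sing}(D)$ or disjoint from it" then follows: if $f(\mathbb{P}^1)$ lies in $D$ but is not contained in $\operatorname{Sing}(D)$, pick a point $x \in f(\mathbb{P}^1)$ where $D$ is smooth; near $x$, $D$ is a smooth divisor and $\mathcal{N}_{D/X}|_{f(\mathbb{P}^1)}$ is a quotient of $\mathcal{N}_f$, forcing the negative summand $\mathcal{O}(-n)$ to map to it (the $V$-part being globally generated cannot surject onto a negative line bundle), so the deformations from $V$ keep $f(\mathbb{P}^1)$ inside $D$; iterating, the covering family sits inside $D$, and $f_*[\mathbb{P}^1]$ is a negative curve on the integral component of $D$ containing $f(\mathbb{P}^1)$.

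For the final clause — if $n = 1$, or if $D$ is smooth at some point of $f(\mathbb{P}^1)$, then $f_*[\mathbb{P}^1]$ is a negative curve on an integral component of $D$ — I would combine the two cases. When $D$ is smooth at a point of $f(\mathbb{P}^1)$, the previous paragraph already shows the deformations stay inside that component $D_0$ of $D$ and cover it (dimension count: the family has dimension $\geq \dim X - 1 = \dim D_0$ and maps into $D_0$), and $f_*[\mathbb{P}^1].D_0 = f_*[\mathbb{P}^1].D < 0$ since the other components meet $f(\mathbb{P}^1)$ in an effective, hence nonnegative, cycle — actually one must be slightly careful and note $f_*[\mathbb{P}^1].D_0 \leq f_*[\mathbb{P}^1].D < 0$. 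When $n = 1$, the unobstructedness from $H^1(\mathcal{N}_f) = 0$ gives a family of the expected dimension $-K_X.f_*[\mathbb{P}^1] + \dim X - 3$, and a standard argument (a curve of negative $D$-degree through a general point of $X$ would contradict $D$ effective) shows the family cannot dominate $X$, so it sweeps out a proper subvariety; that subvariety must lie in $D$ by the negativity of the intersection, and we conclude as before.

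The step I expect to be the main obstacle is making rigorous the claim that the $V$-deformations actually \emph{sweep out} a positive-dimensional family covering a component of $D$ — i.e., that the globally generated sections of $\mathcal{N}_f$ produce genuinely new curves rather than just reparametrizing or fixing the image — and handling the $n>1$ case where the deformation space need not be smooth at $[f]$, so one cannot simply cite unobstructedness and must instead work with the smooth subspace cut out by the $V$-directions or pass to a suitable component of the space of maps. Care is also needed to pin down which integral component of $D$ is swept out and to verify the intersection with that single component is still negative.
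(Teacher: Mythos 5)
Your overall strategy coincides with the paper's, and your handling of the final claim is essentially sound in outline, but the first assertion of the lemma --- that $f(\mathbb{P}^1)$ is either contained in the singular locus of $D$ or disjoint from it --- is not actually proved by what you wrote. The paragraph that claims to establish this dichotomy picks a point $x$ where $D$ is smooth, observes that near $x$ the surjection onto $f^*\mathcal{N}_{D/X}$ must be carried by the $\mathcal{O}(-n)$ summand of $\mathcal{N}_f$, and then concludes that ``the deformations from $V$ keep $f(\mathbb{P}^1)$ inside $D$.'' That conclusion belongs to the second claim, not the first: nothing in it rules out $f(\mathbb{P}^1)$ meeting both $D^{sm}$ and the singular locus. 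Moreover, the argument as phrased is local near $x$, whereas the constraint you invoke (a globally generated bundle admits no nonzero map to a negative line bundle) is a global statement on $\mathbb{P}^1$ and has no content on a small open set. The repair uses exactly the ingredient you already have, applied globally, and is the paper's argument: once $f(\mathbb{P}^1)\subseteq D$, the differentials of local equations for $D$ glue to a map of sheaves $\mathcal{N}_f \rightarrow f^*\mathcal{O}_X(D)\cong \mathcal{O}_{\mathbb{P}^1}(-n)$ defined on all of $\mathbb{P}^1$, whose vanishing locus is precisely $f^{-1}(D\setminus D^{sm})$ and which is surjective exactly where $f$ meets $D^{sm}$. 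Since this map kills $V$, it is induced by an element of $\text{Hom}(\mathcal{O}(-n),\mathcal{O}(-n))=k$, hence is either identically zero (so $f(\mathbb{P}^1)$ lies in the singular locus) or nowhere zero (so $f(\mathbb{P}^1)\subseteq D^{sm}$). You stopped one line short of this.

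For the last claim, the step you flag as the main obstacle --- that for $n>1$ the space of maps to $X$ need not be smooth at $[f]$ --- dissolves once the dichotomy is in place. In that case $f(\mathbb{P}^1)\subseteq D^{sm}$, and one deforms $f$ as a map into the relevant integral component $D'$ of $D$ rather than into $X$: the normal bundle of $f$ in $D'$ is $\ker(\mathcal{N}_f\rightarrow f^*\mathcal{O}_X(D))\cong V$, which is globally generated with vanishing $H^1$, so $f$ is free and unobstructed as a curve in $D'$ and its deformations dominate $D'$; this is what the paper means by deforming $f:\mathbb{P}^1\rightarrow D\subset X$. Your $n=1$ case (unobstructedness in $X$, the swept locus cannot dominate $X$ and is forced into $D$) and your observation that $f_*[\mathbb{P}^1].D'\leq f_*[\mathbb{P}^1].D<0$ because a general deformation meets the other components properly are both correct and match the paper.
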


\begin{proof}
Since $\text{deg}(f^*D)=-n$, $f(\mathbb{P}^1)$ is contained in some component of $D$.  %For the first part of this proof, assume that $D$ is irreducible by replacing it with this component.  
Let $D^{sm}$ denote the smooth part of $D$.

The natural map $\mathcal{N}_f\rightarrow f^*\mathcal{O}(D)$ is nonzero iff $f(\mathbb{P}^1)$ meets $D^{sm}$, and surjective iff $f(\mathbb{P}^1) \subseteq D^{sm}$.  This immediately implies the first claim.  By standard results in deformation theory \cite{Kollr1996RationalCO}, it follows that when $n=1$ or $f(\mathbb{P}^1) \subseteq D^{sm}$, deformations of $f:\mathbb{P}^1\rightarrow D\subset X$ sweep out an integral component $D'$ of $D$.  %In general, if $f(\mathbb{P}^1)$ met another irreducible component of $D$, then the component of $D$ containing $f(\mathbb{P}^1)$ would pair to less than $-n$ with it, a contradiction.
\end{proof}
We remark that it is not enough to find $f:\mathbb{P}^1\rightarrow X$ with $\mathcal{N}_f \cong V \oplus \mathcal{O}(-n)$, even when $n=1$, and hope that deformations sweep out a divisor $D$ with $f_*[\mathbb{P}^1]. D <0$.  Indeed, if $\text{deg}(V) > 0$, a general deformation of $f$ may be free.  Even when $\text{deg}(V)=0$, the divisor swept out by deformations of $f$ may be singular along each such deformation.

\begin{prop}\label{liftingProp}
Adopt the notation of Lemma \ref{deformationLemma}.  In addition, suppose $X= X_\mathbb{Z}\times_{\mathbb{Z}}\text{Spec}(k)$  for some $X_\mathbb{Z}$ quasi-projective and flat over $\mathbb{Z}$, $\text{char}(k)=p > 0$, and $D=\tilde{D}\times_{\mathbb{Z}}\text{Spec}(k)$ for some $\tilde{D}\subset X_\mathbb{Z}$, flat over %$\text{Spec}(k)\rightarrow \text{Spec}(\mathbb{Z})$.
$\mathbb{Z}_{(p)}$, whose generic fiber $\tilde{D}_{\mathbb{Q}}:=\tilde{D}\times_{\mathbb{Z}}\text{Spec}(\mathbb{Q})$ is geometrically integral.  If $n=1$ or $f(\mathbb{P}^1)$ meets the smooth locus of $D$, then $f:\mathbb{P}^1_k \rightarrow X$ lifts to a negative curve in characteristic 0 that sweeps out $\tilde{D}_{\overline{\mathbb{Q}}}$, %$\tilde{D}\times_{\mathbb{Z}}\text{Spec}(\mathbb{Q})$, 
and $D$ is likewise geometrically integral. %and the component of $\tilde{D}$ swept out by deformations of $f$ is a flat extension of the same component of $D$.  %In lieu of the extension $D=\tilde{D}\times_{\mathbb{Z}}\text{Spec}(k)$, if $n=1$, and $\text{deg}(V)=0$, this proves such a flat extension $\tilde{D}$ of (the integral component of) $D$ (swept out by deformations of $f:\mathbb{P}^1\rightarrow X$) exists.
\end{prop}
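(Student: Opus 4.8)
The plan is to lift $f$ through a moduli space of stable maps over $\mathbb{Z}_{(p)}$, using the splitting hypothesis on $\mathcal{N}_f$ to force the relevant obstruction group to vanish, and then to feed the characteristic-$0$ fibre back into Lemma \ref{deformationLemma}. Write $\beta=f_*[\mathbb{P}^1]$ and let $\mathcal{M}=\overline{M}_{0,0}(X_\mathbb{Z}/\mathbb{Z}_{(p)},\beta)$ (or a suitable $\mathrm{Hom}$-scheme if $X_\mathbb{Z}$ is not proper), so that $[f]$ is a $k$-point of $\mathcal{M}$. The first step is to show $\mathcal{M}$ is smooth over $\mathbb{Z}_{(p)}$ at $[f]$. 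If $n=1$ this is automatic: $\mathcal{N}_f\cong\mathcal{O}(-1)\oplus V$ with $V$ globally generated gives $H^1(\mathbb{P}^1_k,\mathcal{N}_f)=0$, and $H^1(\mathbb{P}^1_k,f^*\mathcal{T}_{X_k})\cong H^1(\mathbb{P}^1_k,\mathcal{N}_f)$ is the obstruction space. If instead $f(\mathbb{P}^1)$ meets $D^{\mathrm{sm}}$, the dichotomy in Lemma \ref{deformationLemma} forces $f(\mathbb{P}^1)\subseteq D^{\mathrm{sm}}$; then the surjection $\mathcal{N}_f\twoheadrightarrow f^*\mathcal{O}_X(D)=\mathcal{O}(-n)$ splits off the negative summand, since every morphism from the globally generated $V$ to $\mathcal{O}(-n)$ vanishes for degree reasons, so $\mathcal{N}_{f/D}\cong V$ is globally generated. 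Because $\tilde D$ is $\mathbb{Z}_{(p)}$-flat and $\tilde D_k=D$ is smooth along $f(\mathbb{P}^1_k)$, $\tilde D$ is actually smooth over $\mathbb{Z}_{(p)}$ there, and $H^1(\mathbb{P}^1_k,f^*\mathcal{T}_{\tilde D})=0$ makes the relevant component of $\overline{M}_{0,0}(\tilde D/\mathbb{Z}_{(p)},\beta)$ smooth over $\mathbb{Z}_{(p)}$ at $[f]$. Either way, the irreducible component $\mathcal{M}^\circ$ through $[f]$ is $\mathbb{Z}_{(p)}$-flat, hence has a point $[f_\mathbb{Q}]$ in characteristic $0$ specializing to $[f]$.

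The second step identifies what $\mathcal{M}^\circ$ sweeps out. Since $\beta.\tilde D=-n$ is locally constant on $\mathcal{M}^\circ$, every curve it parametrizes lies in $\tilde D$, so the evaluation map from the universal curve $\mathcal{U}^\circ$ factors through $\tilde D$; put $W=\overline{\mathrm{ev}(\mathcal{U}^\circ)}\subseteq\tilde D$. Then $W$ is irreducible and dominates $\operatorname{Spec}\mathbb{Z}_{(p)}$, hence is $\mathbb{Z}_{(p)}$-flat. By Lemma \ref{deformationLemma} the deformations of $f$ sweep out an integral component of $D$, so $\dim W_k=\dim\tilde D-1$; flatness gives $\dim W_\mathbb{Q}=\dim\tilde D_\mathbb{Q}$, and since $\tilde D_\mathbb{Q}$ is geometrically integral this forces $W_\mathbb{Q}=\tilde D_\mathbb{Q}$. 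Thus the universal curve over $\mathcal{M}^\circ$, base-changed to $\overline{\mathbb{Q}}$, is a family of integral curves of class $\beta$ dominating $\tilde D_{\overline{\mathbb{Q}}}$; as $\deg f_\mathbb{Q}^*\tilde D=-n<0$ is constant in the family, $\beta$ is a negative curve class on $\tilde D_{\overline{\mathbb{Q}}}$ and the base change of $f_\mathbb{Q}$ is the desired lift. For the last clause, note that $\mathbb{Z}_{(p)}$-flatness of $\tilde D$ together with geometric integrality of $\tilde D_\mathbb{Q}$ already forces $\tilde D$ to be integral (its structure sheaf injects into that of $\tilde D_\mathbb{Q}$), so $W=\tilde D$; when $\mathcal{M}$ is proper over $\mathbb{Z}_{(p)}$—as for $X_\mathbb{Z}=\overline{M}_{0,n,\mathbb{Z}}$—the image is closed, and $\mathrm{ev}(\mathcal{U}^\circ_k)=\mathrm{ev}(\mathcal{U}^\circ)\cap X_k$ shows the family over $k$ covers all of $D$. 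Combined with the fact from Lemma \ref{deformationLemma} that the deformations of $f$ cover a single integral component, and with the irreducibility of $\mathcal{M}^\circ$, this forces $D=\tilde D_k$ to be irreducible; it is reduced because $\tilde D$ is, and running the argument over $\overline{k}$ gives geometric integrality.

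I expect the deformation-theoretic bookkeeping behind the lift to be the main obstacle, especially the case where $f(\mathbb{P}^1)\subseteq\mathrm{Sing}(D)$: there one cannot work inside $\tilde D$ and must rely on the vanishing $H^1(\mathbb{P}^1_k,\mathcal{N}_f)=0$, which is available only because $n=1$; and in the complementary case one must check that $\mathbb{Z}_{(p)}$-flatness of $\tilde D$ plus smoothness of $\tilde D_k$ along $f(\mathbb{P}^1_k)$ really yields smoothness of $\tilde D$ over $\mathbb{Z}_{(p)}$. A secondary subtlety, which I would handle with care, is transferring geometric integrality from $\tilde D_\mathbb{Q}$ down to $\tilde D_k$: flatness integrates the total space but not the special fibre, so the covering family of rational curves and the irreducibility of the relevant component of the moduli space are genuinely needed for that last assertion.
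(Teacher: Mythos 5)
Your proposal is correct and follows essentially the same route as the paper: lift $[f]$ through the morphism/moduli space over $\mathbb{Z}_{(p)}$ using unobstructedness (as a map to $X$ when $n=1$, as a map to $\tilde D$ when the curve meets $D^{\mathrm{sm}}$, after splitting off $V\cong\mathcal{N}_{f/D}$), observe that the component through $[f]$ is flat over $\mathbb{Z}_{(p)}$, and identify the divisor swept out in characteristic $0$ with $\tilde D_{\mathbb{Q}}$ via its geometric integrality. The only divergence is at the very end, where you invoke properness of the moduli space to see that the special fibre of the swept locus is all of $D$; the paper instead uses that $\tilde D$, being $\mathbb{Z}_{(p)}$-flat, is the closure of its generic fibre, so $\tilde D=\overline{\mathrm{ev}(U)}$ and hence $D=\overline{\mathrm{ev}(U)}_{k}$ — an argument that requires no properness and therefore covers arbitrary quasi-projective $X_{\mathbb{Z}}$.
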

\begin{figure}[ht]
\includegraphics[width=0.55\linewidth]{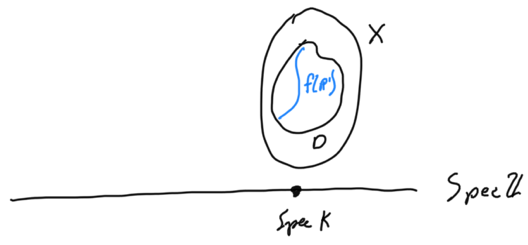}
\caption{A visual depiction of Proposition \ref{liftingProp}.}
\end{figure}
\begin{proof}
%To see the second paragraph's claim, let $D$ be the fiber of $\tilde{D}\subset X_\mathbb{Z}$ over $\text{Spec}(k)$.  If $n=1$, then $f:\mathbb{P}^1\rightarrow X_{\mathbb{Z}} \times_\mathbb{Z} \text{Spec}(k)$ is a smooth point of $Hom_{\mathbb{Z}}(\mathbb{P}^1, X_{\mathbb{Z}})$ over $\mathbb{Z}$.  Otherwise, if $f(\mathbb{P}^1)$ meets $D^{sm}$, then $f:\mathbb{P}^1\rightarrow \tilde{D} \times_\mathbb{Z} \text{Spec}(k)$ is a smooth point of $Hom_{\mathbb{Z}_{(p)}}(\mathbb{P}^1, \tilde{D})$ over $\mathbb{Z}_{(p)}$.  In either case, $f$ lifts to $\tilde{f}:\mathbb{P}^1_A\rightarrow X_\mathbb{Z}\times \text{Spec}(A)$ for $A$ a local domain with residue field $k'$ a finite seperable extension of $k$ and fraction field $K$ of characteristic 0.  In fact, in either case, generic deformations of $f$ as a map into either $X$ or $D$ are unobstructed, and consequently smooth points of one of the above morphism spaces.  These deformations therefore lift similarly to $f$.  This shows that a generic $k'$-point of $X$ in the image of some $\tilde{f}|_{k'}$ is the specialization of an $A$-point of $X_\mathbb{Z}$, flat over $\mathbb{Z}$.
If $n=1$, let $Y=X_{\mathbb{Z}}\times_{\mathbb{Z}}\mathbb{Z}_{(p)}$, so that $f:\mathbb{P}^1\rightarrow Y \times_{\mathbb{Z}_{(p)}} \text{Spec}(k)$ is a smooth point of $\text{Mor}_{\mathbb{Z}_{(p)}}(\mathbb{P}^1, Y)$ over $\mathbb{Z}_{(p)}$.  Otherwise, if $f(\mathbb{P}^1)$ meets $D^{sm}$, instead let $Y=\tilde{D}$, so that $f:\mathbb{P}^1\rightarrow Y \times_\mathbb{Z} \text{Spec}(k)$ is a smooth point of $\text{Mor}_{\mathbb{Z}_{(p)}}(\mathbb{P}^1, Y)$ over $\mathbb{Z}_{(p)}$.  In both cases, $f$ lifts to $\tilde{f}:\mathbb{P}^1_A\rightarrow Y\times \text{Spec}(A)$ for $A$ a local domain with residue field $k'$ (a finite seperable extension of $k$) and fraction field $K$ of characteristic 0.  In fact, generic deformations of $f$ as a map into $Y$ are unobstructed, and consequently smooth points of $\text{Mor}_{\mathbb{Z}_{(p)}}(\mathbb{P}^1, Y)$.  These deformations therefore lift similarly to $f$.  %This shows that a generic $k'$-point of $X$ in the image of some $\tilde{f}|_{k'}$ is the specialization of an $A$-point of $X_\mathbb{Z}$, flat over $\mathbb{Z}$.
This shows that the irreducible component $Z \subset \text{Mor}_{\mathbb{Z}_{(p)}}(\mathbb{P}^1, Y)$ containing $[f]$ is flat over $\mathbb{Z}_{(p)}$.  Let $U\rightarrow Z$ be the universal family, and $\text{ev}:U\rightarrow Y\rightarrow X_\mathbb{Z}$ be the evaluation map.  Since $\mathcal{N}_f\cong V\oplus \mathcal{O}(-n)$ with $V$ globally generated, and $f$ was unobstructed as a map to $Y$, we see that $\overline{\text{ev}(U)}$ is a relative Cartier divisor in $X_{\mathbb{Z}}$ over $\mathbb{Z}_{(p)}$.

Since $\tilde{D}\times \text{Spec}(A)$ is a flat extension of $D_{k'}=D\times_{k}\text{Spec}(k')$, we see $\text{deg}(\tilde{f}|_{K}^*(\tilde{D}_K))=\text{deg}(\tilde{f}|_{k'}^*(\tilde{D}_{k'}))=\text{deg}(f^*D)=-n$.  Ergo, deformations of $\tilde{f}|_{K}$ over characteristic 0 are contained in $\tilde{D}_K$.  It follows that $\overline{\text{ev}(U)}_K$ is a geometrically integral component of $\tilde{D}_K$.  %Analogously, deformations of $\tilde{f}$ as $A$-morphisms sweep out an integral component $\tilde{D}'$ of $\tilde{D}_{\mathbb{Z}_{(p)}}$.  This component is flat over $\mathbb{Z}_{(p)}$, since a generic point of its fiber over $\mathbb{Z}/(p)$ is the image of a map $\tilde{f}:\mathbb{P}^1_A\rightarrow X_\mathbb{Z}\times \text{Spec}(A)$ for some domain $A$ dominating $\mathbb{Z}_{(p)}$.  The fiber of $\tilde{f}$ over $k'=A/pA$ is a map $\tilde{f}|_{k'}:\mathbb{P}^1_{k'}\rightarrow X_\mathbb{Z}\times \text{Spec}(k')$, which must be contained in $D'\times_{k}\text{Spec}(k')$.  By the paragraph above, the specialization of $\tilde{D}'$ to $\text{Spec}(k)$ must contain $D'$.  This shows the specialization of $\tilde{D}'$ to $\text{Spec}(k)$ must be exactly $D'$.
Since $\tilde{D}_\mathbb{Q}$ itself is geometrically integral, we must have $\overline{\text{ev}(U)}_K = \tilde{D}_K$.  As $D_{k'}$ is contained in the closure of $\tilde{D}_K\subset Y$, this proves $\overline{\text{ev}(U)}_{k'}=D_{k'}$ as well.  It follows that $D_{k'}$ (and therefore $D$ itself) is geometrically integral.
%\textbf{Note:}  Currently, this lemma omits all reference to galois actions.  For $X_{\mathbb{Z}}=\overline{M}_{0,7}$, this is irrelevant.  However, for other spaces, the natural map $\text{Pic}(X_{\mathbb{Z}}\times \mathbb{Q})\rightarrow \text{Pic}(X_{\mathbb{Z}}\times \overline{\mathbb{Q}})$ is not surjective.  Take, for example, the blow up of $X=\mathbb{P}^2_\mathbb{Z}$ at $V(x_0, x_1^2 +x_2^2)$.  Over $\mathbb{Q}$, the picard rank of $X$ is 2.  However, over $\overline{\mathbb{Q}}$, the picard rank of $X$ is 3.  Moreover, the picard rank of $X$ is not constant over different characteristics.  In the final version of this paper, we will address these concerns.  For $X=\overline{M}_{0,7}$, however, these concerns are mute.
\end{proof}

This proposition has a few important consequences for $X=\overline{M}_{0,7}$, listed below.  Foremost, we use it to prove Theorem \ref{divisorThm}. %I may need extra assumptions like I have for $\overline{M}_{0,7}$, i.e. all effective divisor classes are effective over $\mathbb{Q}$.

\begin{proof}[Proof of Theorem \ref{divisorThm}]
As stated after Theorem \ref{divisorThm}, there are 38 $S_7$-inequivalent divisors in Tables \ref{ExtDivs0}, \ref{ExtDivs1}, and \ref{boundaryCurves}.  %Divisors in Table \ref{boundaryCurves} are irreducible components of $\overline{M}_{0,7}\setminus M_{0,7}$, which are contracted by various $\overline{M}_{0,7}\rightarrow\mathbb{P}^4$.  Di
Divisors in Table \ref{boundaryCurves} are merely components of the boundary $\overline{M}_{0,7}\setminus M_{0,7}$.  These are extreme in $\overline{\text{Eff}}(\overline{M}_{0,7})$ over any characteristic.  In Tables \ref{ExtDivs0} and \ref{ExtDivs1}, the third column of each row lists negative curves on the corresponding divisor $D$.  More specifically, these curve classes admit representatives $f:\mathbb{P}^1\rightarrow D\subset \overline{M}_{0,7}$ over a finite field that satisfy the hypotheses of Lemma \ref{deformationLemma} and Proposition \ref{liftingProp}.  The component $Z \subset \text{Mor}_{\mathbb{Z}}(\mathbb{P}^1, \overline{M}_{0,7})$ containing $[f]$ dominates $\text{Spec}(\mathbb{Z})$.  General deformations of $f$ have normal bundles as balanced as $\mathcal{N}_{f}$, and intersect $D^{sm}$ if $f(\mathbb{P}^1)$ does.  Together, this implies that for all but finitely many primes $p$, there are curves over a field of characteristic $p$ satisfying the hypotheses of \ref{deformationLemma} and \ref{liftingProp}.  These divisors are therefore extreme in $\overline{\text{Eff}}(\overline{M}_{0,7})$ over characteristic $p$ for all but finitely many $p$.  All but the last divisor in Table \ref{ExtDivs0} have such curve classes, proving our claim.
\end{proof}

\begin{rem}
Let $\pi:\overline{M}_{0,7}\rightarrow \mathbb{P}^4$ be the Kapranov morphism described by Sections \ref{Notation} and \ref{M07alg}.  If $\tilde{D}\subset \overline{M}_{0,7}$ is a rigid, irreducible, nonboundary divisor in characteristic 0, $|\tilde{D}|$ has a unique element, which may be represented as the strict transform of $\pi(\tilde{D})\subset \mathbb{P}^4$, given by a polynomial $g$ with coefficients in $\mathbb{Z}$.  $\tilde{D}_{\mathbb{Q}}$ is geometrically integral iff $g$ is irreducible over $\mathbb{Q}$.  Reducing this polynomial $g$ modulo $p$, we recover a flat extension $D\rightarrow \tilde{D}$ as above.  It is much easier to find curves $f:\mathbb{P}^1\rightarrow D$ because the underlying field is finite.  The proposition above shows that, under the correct hypotheses, such curves lift to curves in characteristic 0.  We may therefore prove $\tilde{D}$ is extreme over characteristic 0 by studying curves in finite characteristic.
\end{rem}
%\begin{rem}\label{lifting_remark}
%%talk about impossibility of -1,0,0 curve for rigid divisor char p which doesn't lift.
%In Proposition \ref{liftingProp}, if no $\tilde{D}$ is given but $n=1$ and $\text{deg}(V)=0$, the splitting type of $\mathcal{N}_g$ for a general deformation $g$ of $f$ is the same as $\mathcal{N}_f$.  Therefore, the proof above allows us to construct $\tilde{D}$ as the reduced closure of the image of such deformations.  This relative Cartier divisor $\tilde{D}$ must specialize to a multiple $mD$ of $D$ over $k$.  As $\mathcal{N}_f\cong \mathcal{N}_g$ maps nontrivially to $\mathcal{O}_{\mathbb{P}^1}(g^*\tilde{D})\cong \mathcal{O}_{\mathbb{P}^1}(f^*\tilde{D}) = \mathcal{O}(-m)$, we must have $m = 1$.  This shows that, if such a curve $f:\mathbb{P}^1\rightarrow X$ exists, then the divisor class $D$ is necessarily effective over characteristic 0.  %$f:\mathbb{P}^1\rightarrow X_{\mathbb{Z}} \times_\mathbb{Z} \text{Spec}(k)$ is a smooth point of $Hom_\mathbb{Z}(\mathbb{P}^1, X_{\mathbb{Z}})$ over $\mathbb{Z}$.  Since a generic deformation $g$ of $f$ has normal bundle $\mathcal{N}_g$ with the same properties, we construct $\tilde{D}$ as closure of the image of these deformations.  $\tilde{D}$ is integral, dominates $\mathbb{Z}$, and is therefore flat over $\mathbb{Z}$.  As before, the specialization of $\tilde{D}$ to $\text{Spec}(k)$ is necessarily the component of $D$ swept out by deformations of $f$.
%\end{rem}
Proposition \ref{liftingProp} also implies the following corollary, which restricts the splitting type of $\mathcal{N}_f$ for a negative curve $f:\mathbb{P}^1\rightarrow X = X_{\mathbb{Z}}\times_{\mathbb{Z}} \text{Spec}(k)$ on a divisor $D$ which does not lift to characteristic 0.  This may be phrased as proving that if such a curve $f:\mathbb{P}^1\rightarrow X$ exists, the divisor class $D$ is necessarily effective over characteristic 0.
%which shows that if a divisor $D\subset X = X_{\mathbb{Z}}\times_{\mathbb{Z}} \text{Spec}(k)$ admits a negative curve of anticanonical degree $1$, and 
\begin{cor}\label{lifting_remark}
Let $X_\mathbb{Z}$ be quasi-projective and flat over $\mathbb{Z}$, $X = X_\mathbb{Z} \times_\mathbb{Z} \text{Spec}(k)$ be smooth, and $k$ be a field with $\text{char}(k)=p>0$.  Suppose $f:\mathbb{P}^1\rightarrow X$ has normal bundle
$$\mathcal{N}_f\cong \mathcal{O}_{\mathbb{P}^1}(-1) \oplus \mathcal{O}_{\mathbb{P}^1}^{\text{dim}(X) -2}.$$
Let $D\subset X$ be an integral divisor with $\text{deg}(f^*D)=-1$.  Then $D$ lifts to a relative Cartier divisor $\tilde{D}$ over $\mathbb{Z}_{(p)}$.
\end{cor}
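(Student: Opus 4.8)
The plan is to derive Corollary \ref{lifting_remark} directly from Proposition \ref{liftingProp} by running that proposition in reverse: rather than starting with a flat model $\tilde{D}$ and producing a characteristic-0 lift of the curve, we exploit the fact that the lift exists because $X_{\mathbb{Z}}$ is already flat over $\mathbb{Z}$, and then use the geometry of the deformation to manufacture the desired relative Cartier divisor $\tilde{D}$. First I would observe that since $\mathcal{N}_f \cong \mathcal{O}(-1)\oplus\mathcal{O}^{\dim X - 2}$ has the $\mathcal{O}(-1)$-summand with $n=1$, the curve $f$ is a smooth (unobstructed) point of $\mathrm{Mor}_{\mathbb{Z}_{(p)}}(\mathbb{P}^1, X_{\mathbb{Z}})$ over $\mathbb{Z}_{(p)}$; this is exactly the situation in the $n=1$ branch of the proof of Proposition \ref{liftingProp}, where one takes $Y = X_{\mathbb{Z}}\times_{\mathbb{Z}}\mathbb{Z}_{(p)}$. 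Hence the irreducible component $Z\subset \mathrm{Mor}_{\mathbb{Z}_{(p)}}(\mathbb{P}^1, X_{\mathbb{Z}})$ through $[f]$ is smooth along $[f]$ and flat (indeed dominant) over $\mathbb{Z}_{(p)}$, and $f$ lifts to a family $\tilde{f}:\mathbb{P}^1_A\to X_{\mathbb{Z}}\times\mathrm{Spec}(A)$ over a local domain $A$ with residue field a finite separable extension of $k$ and fraction field $K$ of characteristic 0.

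Next I would let $U\to Z$ be the universal family and $\mathrm{ev}:U\to X_{\mathbb{Z}}$ the evaluation map, and set $\tilde{D} := \overline{\mathrm{ev}(U)}$, the closure of the image. Because $\mathcal{N}_f$ has the balanced form $\mathcal{O}(-1)\oplus\mathcal{O}^{\dim X - 2}$ and $f$ is unobstructed, the same argument as in Proposition \ref{liftingProp} shows that $\overline{\mathrm{ev}(U)}$ is a \emph{relative} Cartier divisor in $X_{\mathbb{Z}}$ over $\mathbb{Z}_{(p)}$: the deformations of $f$ cover a codimension-one locus (the $\mathcal{O}^{\dim X - 2}$ accounts for the $\dim X - 2$ moduli of deformations fixing no point, which together with the $\mathbb{P}^1$ itself sweeps a divisor) and this is compatible with base change over $\mathbb{Z}_{(p)}$ since $Z$ is flat there. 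Then I would restrict to the closed fiber: $\tilde{D}\times_{\mathbb{Z}_{(p)}}\mathrm{Spec}(k)$ contains $\overline{\mathrm{ev}(U)}_k$, which is the divisor swept out by the characteristic-$p$ deformations of $f$; since $\mathrm{deg}(f^*D) = -1 < 0$, the curve $f(\mathbb{P}^1)$ and all its deformations lie inside $D$, so $\overline{\mathrm{ev}(U)}_k$ is contained in $D$. By Lemma \ref{deformationLemma} (the $n=1$ case), $f_*[\mathbb{P}^1]$ is a negative curve sweeping out an integral component $D'$ of $D$, and as $D$ is assumed integral, $D' = D$; hence $\overline{\mathrm{ev}(U)}_k = D$. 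Therefore $\tilde{D}$ is a relative Cartier divisor over $\mathbb{Z}_{(p)}$ restricting to $D$ over $k$, which is the claim.

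The one point requiring care — and the main potential obstacle — is the claim that $\overline{\mathrm{ev}(U)}$ is genuinely a relative \emph{Cartier divisor} over $\mathbb{Z}_{(p)}$, i.e.\ that it is flat over $\mathbb{Z}_{(p)}$ of relative codimension exactly one and Cartier, rather than something lower-dimensional or ill-behaved in mixed characteristic. This is where the precise splitting type $\mathcal{O}(-1)\oplus\mathcal{O}^{\dim X - 2}$ (as opposed to a more negative or unbalanced bundle) is essential: it pins down the dimension of the family $Z$ near $[f]$ via $h^0(\mathcal{N}_f) = \dim X - 1$, so that $\dim Z = \dim X - 1 + \dim\mathrm{Aut}(\mathbb{P}^1)$-worth-of-reparametrization data minus the $3$ from $\mathrm{PGL}_2$, giving a $2$-dimensional family of \emph{unparametrized} curves through a general point of $X$ — wait, more simply, the relative dimension of $U$ over $\mathbb{Z}_{(p)}$ is $\dim X$, and $\mathrm{ev}$ is generically finite onto a codimension-one image precisely because the $\mathcal{O}(-1)$ summand forces deformations to be ``rigid in one direction.'' I would invoke the standard deformation-theoretic dictionary of \cite{Kollr1996RationalCO} for this, exactly as Proposition \ref{liftingProp} does, and note that flatness of $Z$ over $\mathbb{Z}_{(p)}$ propagates to flatness of $\tilde{D} = \overline{\mathrm{ev}(U)}$ over $\mathbb{Z}_{(p)}$ since the construction commutes with the (flat) base changes to $K$ and to $k$. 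A secondary subtlety is that one should check $\tilde{D}$ is Cartier and not merely Weil; but $X_{\mathbb{Z}}$ is a blow-up of $\mathbb{P}^4_{\mathbb{Z}}$ in the application (and more generally one can take $X_{\mathbb{Z}}$ smooth over $\mathbb{Z}_{(p)}$, or at least with the relevant local rings factorial), so Weil divisors in the fibers that come from a flat family are automatically Cartier. I expect the write-up to be short, essentially a citation of the internals of the proof of Proposition \ref{liftingProp} with the roles of ``given $\tilde{D}$'' and ``produced $\tilde{D}$'' swapped.
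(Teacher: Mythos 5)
Your overall route is the paper's: run the $n=1$ branch of Proposition \ref{liftingProp} with $Y = X_{\mathbb{Z}}\times_{\mathbb{Z}}\mathbb{Z}_{(p)}$, take $\tilde{D}$ to be the closure of the image of the universal family over the component of $\text{Mor}_{\mathbb{Z}_{(p)}}(\mathbb{P}^1,X_{\mathbb{Z}})$ through $[f]$, and identify its special fiber using Lemma \ref{deformationLemma} and the integrality of $D$. That much is fine, and your worry about $\overline{\text{ev}(U)}$ being a genuine relative Cartier divisor is already disposed of inside the proof of Proposition \ref{liftingProp}, as you note.

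However, there is a genuine gap at the final step. You conclude that $\tilde{D}$ ``restricts to $D$ over $k$'' from the set-theoretic fact that $\overline{\text{ev}(U)}_k$ is supported on $D$. But $\tilde{D}$ is constructed as the (reduced) closure of the image, and while its generic fiber is integral, nothing you have said prevents its scheme-theoretic special fiber from being non-reduced: as a Cartier divisor, $\tilde{D}|_{X_k} = mD$ for some $m\geq 1$, and a priori $m>1$ is possible. This is not a pedantic point in this paper --- e.g.\ $\boldsymbol{D_5}=2Q$ is effective over characteristic $0$ while $Q$ is not, so ``a multiple of $D$ lifts'' is strictly weaker than ``$D$ lifts,'' and the application in Remark \ref{reducible_remark} needs the stronger statement. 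The paper closes this with a one-line normal-bundle argument you omit: a general deformation $g$ of $f$ still has $\mathcal{N}_g\cong\mathcal{O}(-1)\oplus\mathcal{O}^{\dim X-2}$ and maps nontrivially to $\mathcal{O}_{\mathbb{P}^1}(g^*\tilde{D})\cong\mathcal{O}(-m)$ (the map is nonzero because $g(\mathbb{P}^1)$ meets the smooth locus of $\tilde{D}_k$); since the most negative summand of $\mathcal{N}_g$ is $\mathcal{O}(-1)$, a nonzero map to $\mathcal{O}(-m)$ forces $m\leq 1$, hence $m=1$. You should add this step.
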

\begin{proof}
In Proposition \ref{liftingProp}, if no $\tilde{D}$ is given but $n=1$ and $\text{deg}(V)=0$, the splitting type of $\mathcal{N}_g$ for a general deformation $g$ of $f$ is the same as $\mathcal{N}_f$.  Therefore, the proof of Proposition \ref{liftingProp} allows us to construct $\tilde{D}$ as the reduced closure of the image of such deformations.  This relative Cartier divisor $\tilde{D}$ must specialize to a multiple $mD$ of $D$ over $k$.  As $\mathcal{N}_f\cong \mathcal{N}_g$ maps nontrivially to $\mathcal{O}_{\mathbb{P}^1}(g^*\tilde{D})\cong \mathcal{O}_{\mathbb{P}^1}(f^*\tilde{D}) = \mathcal{O}(-m)$, we must have $m = 1$.
\end{proof}
Negative curves of anticanonical degree 1 whose normal bundles have the above splitting type are the easiest classes to compute representatives for on $\overline{M}_{0,7}$.  Corollary \ref{lifting_remark} helps explain the strange behavior of negative curves we found on divisors over characteristic 2.  This is discussed below as well as in Remarks \ref{reducible_remark}, \ref{chardif_remark}, and \ref{strange behavior}.
\begin{ex}
In Table \ref{ExtDivs2}, the degree 9 extreme divisor $D=\boldsymbol{D_{37}}$ on $\overline{M}_{0,7,\mathbb{F}_2}$ %over $\mathbb{F}_2$ 
does not lift to an effective divisor over $\mathbb{Q}$; however, $\boldsymbol{D_{31}} = D+E_{012}$ does lift.  If $c$ is the unique negative curve listed in table \ref{ExtDivs2} for $\boldsymbol{D_{37}}$, then $c.(D+E_{012}) = -1$.  This shows $D+E_{012}$ is rigid over $\mathbb{Q}$, and we prove it is extreme in $\text{Eff}(\overline{M}_{0,7})$.  However, finding a negative curve for $D+E_{012}$ is challenging.  Any such curve must not deform to a negative curve in characteristic 2 by Proposition \ref{liftingProp}, as $D+E_{012}$ is reducible over $\mathbb{F}_2$.
\end{ex}
\begin{ex}
In Table \ref{ExtDivs0}, the divisor $\boldsymbol{D_5}$ is swept out by a unique negative curve class with normal bundle $\mathcal{O}^2 \oplus \mathcal{O}(-2)$.  General deformations of this curve to characteristic 2 have normal bundle $\mathcal{O}(1)\oplus \mathcal{O}(-1)\oplus \mathcal{O}(-2)$.  This reflects the fact that $\frac{1}{2}\boldsymbol{D_5}$ is an effective divisor over characteristic 2, but not characteristic 0.  It is unknown whether $\frac{1}{2}\boldsymbol{D_5}$ is extreme in $\overline{\text{Eff}}(\overline{M}_{0,7,\mathbb{F}_2})$.
\end{ex}
%We also identify BLANK other divisors that may be extreme, but for which we did not find any negative curves.  In contrast, there are two rigid, irreducible quintic divisors we find that are not extreme rays of $\text{Eff}(\overline{M}_{0,7})$.  We include a short list of curve classes that appear to be negative curves for some unfound extreme divisors on $\overline{M}_{0,7}$.  These lists are supplemented by a list of low degree nef curves which cannot be expressed as positive linear combinations of known negative curves.  These nef curves should be included in future computations.

\section{Proof of Theorem \ref{charThm}}\label{Section4}

Theorem \ref{charThm} follows from the more specific statement below. The divisors $\boldsymbol{D_{31}}$ and $\boldsymbol{D_{38}}$ appear in Tables \ref{ExtDivs0} and \ref{ExtDivs2} respectively, and were found using Algorithm \ref{sectionAlgExtDiv}. %{\color{red} Mention the algorithm that let you find this is coming later.}

\begin{thm}\label{charThm2}
For all $n\geq 7$, there is an explicit $D\in N^1(\overline{M}_{0,n})$ such that $D$ is effective and extreme in $\overline{\text{Eff}}(\overline{M}_{0,n})$ over characteristic $2$, but $D\not\in \overline{\text{Eff}}(\overline{M}_{0,n})$ over characteristic $0$.  For $n=7$, this explicit class is
\begin{align*}
%D=\boldsymbol{D_{38}}=&12H-8E_{0}-8E_{1}-8E_{2}-6E_{3}-6E_{4}-6E_{5}-5E_{01}-5E_{02}-5E_{03}-4E_{04}-5E_{05}-5E_{12}-4E_{13}-5E_{14}\\
%&-5E_{15}-5E_{23}-5E_{24}-4E_{25}-2E_{34}-2E_{35}-2E_{45}-2E_{012}-3E_{013}-3E_{014}-2E_{015}-2E_{023}-3E_{024}\\
%&-3E_{025}-E_{034}-E_{035}-E_{045}-3E_{123}-2E_{124}-3E_{125}-E_{134}-E_{135}-E_{145}-E_{234}-E_{235}-E_{245}-E_{345}. 
\boldsymbol{D_{38}}=&12H-8E_{0}-8E_{1}-8E_{2}-6E_{3}-6E_{4}-6E_{5}-5E_{01}-5E_{02}-5E_{03}-4E_{04}\\
&-5E_{05}-5E_{12}-4E_{13}-5E_{14}-5E_{15}-5E_{23}-5E_{24}-4E_{25}-2E_{34}-2E_{35}-2E_{45}\\
&-2E_{012}-3E_{013}-3E_{014}-2E_{015}-2E_{023}-3E_{024}-3E_{025}-E_{034}-E_{035}-E_{045}\\
&-3E_{123}-2E_{124}-3E_{125}-E_{134}-E_{135}-E_{145}-E_{234}-E_{235}-E_{245}-E_{345}. 
\end{align*}
For $n >7$, the explicit class is a pullback of $\boldsymbol{D_{38}}$ under a sequence of forgetful maps $\phi : \overline{M}_{0,n}\rightarrow\overline{M}_{0,n-1}$.

Likewise, for all $n\geq 7$, there is an explicit $F\in N^1(\overline{M}_{0,n})$ such that $F\in \text{Eff}(\overline{M}_{0,n})$ is extreme over characteristic $0$ but not over characteristic $2$.  For $n=7$, this explicit class is 
\begin{align*} %F=\boldsymbol{D_{31}}=&9H - 5\sum_{i=0}^5 E_i - 3\sum_{0\leq i < j \leq 5} E_{ij} - E_{012}-2E_{013}-E_{014}-E_{015}-E_{023}-2E_{024}-E_{025}-E_{034}-2E_{035}-2E_{045}\\
%&-E_{123}-E_{124}-2E_{125}-2E_{134}-E_{135}-2E_{145}-2E_{234}-2E_{235}-E_{245}-E_{345}.
\boldsymbol{D_{31}}=&9H - \sum_{i=0}^5 5E_i - \sum_{0\leq i < j \leq 5} 3E_{ij} - E_{012}-2E_{013}-E_{014}-E_{015}\\
&-E_{023}-2E_{024}-E_{025}-E_{034}-2E_{035}-2E_{045}-E_{123}-E_{124}\\
&-2E_{125}-2E_{134}-E_{135}-2E_{145}-2E_{234}-2E_{235}-E_{245}-E_{345}.
\end{align*}
For $n >7$, the explicit class is a pullback of $F$ under a sequence of forgetful maps $\phi : \overline{M}_{0,n}\rightarrow\overline{M}_{0,n-1}$.
\end{thm}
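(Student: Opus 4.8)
The plan is to reduce everything to the case $n=7$ and then treat the two explicit classes $\boldsymbol{D_{38}}$ and $\boldsymbol{D_{31}}$ separately, using the machinery of Sections \ref{Section2} and \ref{Section3}. For the reduction to $n=7$, one notes that a forgetful map $\phi:\overline{M}_{0,n}\to\overline{M}_{0,n-1}$ admits sections (the boundary sections) and that pulling back an effective divisor yields an effective divisor, while pulling back along $\phi$ composed with pushing forward along a section recovers the original class up to boundary; hence $\phi^*D$ is pseudoeffective over a field $k$ if and only if $D$ is. Since this equivalence is insensitive to the characteristic of $k$, both the "effective over characteristic $2$'' and the "not pseudoeffective over characteristic $0$'' assertions propagate from $n=7$ to all $n\geq 7$. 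So the heart of the matter is $n=7$.

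For $\boldsymbol{D_{31}}$: I would exhibit, over a suitable finite field of characteristic $\neq 2$ (or directly via the $\mathbb Z$-model of Section \ref{M07alg}), the unique hypersurface section in $|\pi(\boldsymbol{D_{31}})|\subset\mathbb P^4$ cut out by an explicit degree-$9$ polynomial $g$ with integer coefficients whose strict transform has class $\boldsymbol{D_{31}}$; irreducibility of $g$ over $\mathbb Q$ gives geometric integrality of $(\boldsymbol{D_{31}})_{\mathbb Q}$. To get extremality over characteristic $0$ I would produce the negative curve(s) listed in the tables for $\boldsymbol{D_{31}}$: a map $f:\mathbb P^1\to\boldsymbol{D_{31}}\subset\overline{M}_{0,7}$ over a finite field with $\mathcal N_f\cong V\oplus\mathcal O(-n)$, $V$ globally generated, $\deg(f^*\boldsymbol{D_{31}})=-n$, and either $n=1$ or $f(\mathbb P^1)$ meeting the smooth locus; then Lemma \ref{deformationLemma} and Proposition \ref{liftingProp} lift this to a negative curve in characteristic $0$ sweeping out $(\boldsymbol{D_{31}})_{\overline{\mathbb Q}}$, which by the standard separating-hyperplane argument forces $\boldsymbol{D_{31}}$ to be extreme in $\overline{\mathrm{Eff}}(\overline{M}_{0,7})$. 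Failure over characteristic $2$: here I would show $\pi(\boldsymbol{D_{31}})$ reduces modulo $2$ to $\pi(\boldsymbol{D_{37}})\cup(\text{plane }\Delta_{012})$ so that $(\boldsymbol{D_{31}})_{\mathbb F_2}=\boldsymbol{D_{37}}+E_{012}$ is reducible; since $\boldsymbol{D_{37}}$ and $E_{012}$ generate genuinely distinct rays, $\boldsymbol{D_{31}}$ sits in the interior of a $2$-dimensional subcone of $\overline{\mathrm{Eff}}(\overline{M}_{0,7,\mathbb F_2})$ and is not extreme there.

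For $\boldsymbol{D_{38}}$: over characteristic $2$ I would directly write down an effective representative (a degree-$12$ hypersurface section produced by Algorithm \ref{sectionAlgExtDiv}) and a negative curve $f:\mathbb P^1\to\boldsymbol{D_{38}}\subset\overline{M}_{0,7,\mathbb F_2}$, and run it through Lemma \ref{deformationLemma} to conclude extremality in $\overline{\mathrm{Eff}}(\overline{M}_{0,7,\mathbb F_2})$. The nontrivial direction is showing $\boldsymbol{D_{38}}\notin\overline{\mathrm{Eff}}(\overline{M}_{0,7})$ over characteristic $0$: for this I would exhibit a collection $\mathcal C$ of nef curve classes — free curves $c$ valid over characteristic $0$ — with $c\cdot\boldsymbol{D_{38}}<0$ for at least one $c\in\mathcal C$; since nef curves pair nonnegatively with every pseudoeffective divisor, this single negative pairing certifies $\boldsymbol{D_{38}}\notin\overline{\mathrm{Eff}}(\overline{M}_{0,7})$ in characteristic $0$. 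Concretely the curve would itself be constructed as a free map $\mathbb P^1\to\overline{M}_{0,7}$ with globally generated normal bundle (so that it deforms and is genuinely nef), checked against the coordinates of $\boldsymbol{D_{38}}$ by a direct intersection computation in the basis $H,E_i,E_{ij},E_{ijk}$.

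The main obstacle is the last point: certifying $\boldsymbol{D_{38}}\notin\overline{\mathrm{Eff}}$ over characteristic $0$ requires producing a provably nef curve class pairing negatively with $\boldsymbol{D_{38}}$, and "provably nef'' is the delicate part — one must exhibit a free representative (normal bundle globally generated, or at least that the deformations of $f$ dominate $\overline{M}_{0,7}$) rather than merely a curve of the right numerical type, since a numerically negative pairing with a non-nef curve says nothing. Everything else (irreducibility of explicit polynomials, reduction mod $2$ of explicit hypersurfaces, splitting types of normal bundles of explicit maps, and the forgetful-map reduction) is either routine or already packaged in Lemma \ref{deformationLemma}, Proposition \ref{liftingProp}, and Corollary \ref{lifting_remark}.
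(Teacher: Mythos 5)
Your proposal diverges from the paper at the two places that carry the real content, and in both cases the divergence is a gap rather than an alternative route. First, for showing $\boldsymbol{D_{38}}\not\in\overline{\text{Eff}}(\overline{M}_{0,7})$ over characteristic $0$, you propose to exhibit a provably nef curve class pairing negatively with $\boldsymbol{D_{38}}$. Such a class exists abstractly by duality, but you do not produce one, and the paper's own data indicates why this is not merely "delicate" but likely infeasible: the natural curve class $c$ with $c\cdot\boldsymbol{D_{38}}=-1$ is \emph{not} nef over characteristic $0$ — by Remark \ref{chardif_remark} its irreducible representatives sweep out a divisor $D'$ of degree $>23$, so $c$ is a negative curve on $D'$. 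The paper's Proposition \ref{chardif} is built precisely to avoid needing a nef certificate: it only asks that $c$ pair nonnegatively with every integral effective divisor \emph{other than} $\boldsymbol{D_{38}}$ over $\overline{\mathbb{F}}_2$ (verified via a nodal representative over $\mathbb{F}_{32}$) together with a representative over an auxiliary field ($\mathbb{F}_{101}$) whose normal bundle is $V\oplus\mathcal{O}(-1)$; one then lifts to characteristic $0$, observes that the lift is either nef or negative on some $D'\neq\boldsymbol{D_{38}}$, and runs a limiting argument ($D_n+a_nD'\to -E$ would be a line in $\overline{\text{Eff}}$) to conclude. Without either producing the free curve or reproducing this cone argument, your proof of the central claim is incomplete.

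Second, your treatment of $\boldsymbol{D_{31}}$ over characteristic $0$ cannot be carried out as written: you propose to "produce the negative curve(s) listed in the tables for $\boldsymbol{D_{31}}$," but Table \ref{ExtDivs0} records \emph{none known} for $\boldsymbol{D_{31}}$, and the paper explains that any such curve cannot specialize to a negative curve in characteristic $2$ (since $\boldsymbol{D_{31}}$ is reducible there), which is exactly why the finite-field search fails. Accordingly the theorem only asserts extremality in $\text{Eff}$, not $\overline{\text{Eff}}$, and the paper proves it by a different mechanism (Proposition \ref{chardif4}): using the characteristic-$2$ decomposition $F=F'+E_{012}$ and the curve $c=2c_1+c_2$ with $c\cdot F=c\cdot F'<0$, the restriction map $H^0(n(F-F'))\to H^0(nF)$ is surjective, forcing $h^0(nF)=1$ for all $n$. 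Your argument for non-extremality over characteristic $2$ is fine, but the characteristic-$0$ half needs this rigidity argument, not a negative curve. Finally, your reduction to $n=7$ only addresses (pseudo)effectivity — and even there the stated justification via restriction to a section does not show pseudoeffectivity descends along $\phi^*$ (the correct mechanism is pushforward of movable curves, as the paper notes) — while the theorem also claims \emph{extremality} of the pullbacks for $n>7$, which the paper establishes separately via the Chen--Coskun index inequality and the section $\text{gl}$, and via the inductive construction of lifted characteristic-$2$ pairs in Proposition \ref{chardif3}.
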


We prove the above theorem in this section.  %To show $D$ is not effective over characteristic 0, we find a curve class $c$ satisfying $c.D =-1$ that admit integral representatives $f:\mathbb{P}^1_{k'}\rightarrow \overline{M}_{0,7}$ with normal bundles $\mathcal{N}_f \cong \mathcal{O}(-1)\oplus V$ for some globally generated $V$.  Since $h^0(D) = 0$, 
Propositions \ref{chardif} and \ref{chardif4} provide general criteria for showing the (pseudo)-effective cones of the fibers of a morphism $X\rightarrow \mathbb{Z}$ differ over generic and special fibers.  This requires natural isomorphisms $N^1(X_k) \cong N^1(X_{\overline{\mathbb{Q}}})$ for all fields $k$.  %Such isomorphisms for the varieties $X=\overline{M}_{0,n}$ may be obtained from Kapranov's 

Let $k$ be a field of positive characteristic and $D\in \text{Eff}(X_k)$ be the class of an integral divisor such that $h^0(D,X_{\overline{\mathbb{Q}}}) = 0$.  In Proposition \ref{chardif}, we show how the existence of integral curves $f:\mathbb{P}^1_{k'} \rightarrow X_{k'}$ of a special class $c$ prove $D\not\in \overline{\text{Eff}}(X_{\overline{\mathbb{Q}}})$.  Here, $k'$ may be any field.  For computational purposes, we used finite fields of characteristic different from $k$.  While it would be convenient to choose $k'$ with $\text{char}(k')=\text{char}(k)$, Remark \ref{reducible_remark} shows how the anticanonical degree of $c$ may be too small for such a choice.

\begin{prop}\label{chardif}
Let $X\rightarrow\mathbb{Z}$ be a variety with natural identifications $N^1(X_k)\cong N^1(X_{\overline{\mathbb{Q}}})=:N^1(X)$ for every field $k$.  Suppose $D\in N^1(X)$ is the class of an integral divisor on $X_k$ for some field $k$ of positive characteristic, but $h^0(D, X_{\overline{\mathbb{Q}}})=0$.  If $c$ is any curve class satisfying
\begin{enumerate}
\item $c.D=-1$,
\item $c.D'\geq 0$ for any integral effective Cartier divisor $D'\neq D$ over $\overline{k}$,
\item There exists a field $k'$ and a morphism $f:\mathbb{P}^1_{k'}\rightarrow X_{k'}$ such that $f_*[\mathbb{P}^1]=c$ and either $\mathcal{N}_f$ is globally generated or $\mathcal{N}_{f}\cong V \oplus \mathcal{O}(-1)$ for some globally generated $V$,
%\item There exists $f:\mathbb{P}^1_{k'}\rightarrow \overline{M}_{0,7,k'}$ for some field $k'$ such that $f_*[\mathbb{P}^1]=\alpha$ and either $\mathcal{N}_f$ is globally generated or $\mathcal{N}_{f}\cong V \oplus \mathcal{O}(-1)$ for some globally generated $V$,
\end{enumerate}
then $D\not\in \overline{\text{Eff}}(X_{\overline{\mathbb{Q}}})$.
\end{prop}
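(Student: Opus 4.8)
The plan is to use hypothesis (3) to produce an honest curve of class $c$ in characteristic $0$, read off from its normal bundle which part of $\overline{\text{Eff}}(X_{\overline{\mathbb{Q}}})$ the functional $c.(-)$ can ``see'', and then play this against (1)--(2) over $\overline{k}$, exploiting that $\overline{\text{Eff}}$ is a pointed cone whose formation is compatible with reduction mod $p$. First I would lift $f$: since $\mathcal{N}_f$ is globally generated or of the form $V\oplus\mathcal{O}(-1)$ with $V$ globally generated, $H^1(\mathbb{P}^1,\mathcal{N}_f)=0$, so $[f]$ is an unobstructed point of $\text{Mor}_{\mathbb{Z}_{(p')}}(\mathbb{P}^1,X_{\mathbb{Z}})$, where $p'=\text{char}(k')$. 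Exactly as in the proof of Proposition \ref{liftingProp}, the component $Z$ through $[f]$ is smooth over $\mathbb{Z}_{(p')}$, hence dominates $\text{Spec}\,\mathbb{Z}_{(p')}$, so it carries a point $g:\mathbb{P}^1_{\overline{\mathbb{Q}}}\to X_{\overline{\mathbb{Q}}}$ with $g_*[\mathbb{P}^1]=c$ (if $p'=0$, base change directly). By semicontinuity of the splitting type in the family, $\mathcal{N}_g$ is again globally generated, or $\mathcal{N}_g\cong V'\oplus\mathcal{O}(-1)$ with $V'$ globally generated.

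If $\mathcal{N}_g$ is globally generated then $g$ is free: its deformations dominate $X_{\overline{\mathbb{Q}}}$, a general deformation is contained in no prescribed effective divisor, so $c.E\ge0$ for every effective $E$, hence for every $E\in\overline{\text{Eff}}(X_{\overline{\mathbb{Q}}})$ by linearity and closedness; since $c.D=-1<0$ under $N^1(X_{\overline{\mathbb{Q}}})\cong N^1(X)$, this already gives $D\notin\overline{\text{Eff}}(X_{\overline{\mathbb{Q}}})$ (hypothesis (2) is not needed). So assume $\mathcal{N}_g\cong V'\oplus\mathcal{O}(-1)$. By Lemma \ref{deformationLemma} (with $n=1$) and the analysis behind Corollary \ref{lifting_remark}, the deformations of $g$ sweep out an integral divisor $\Delta\subset X_{\overline{\mathbb{Q}}}$ with $c.\Delta=\deg g^*\Delta=-1$; moreover for any integral effective $E\ne\Delta$ a general deformation of $g$ meets a general point of $\Delta\setminus E$, so $c.E\ge0$. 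Hence $\Delta$ is the only integral effective divisor over $\overline{\mathbb{Q}}$ on which $c$ is negative, and with $\mathcal{G}:=\{\,\alpha\in\overline{\text{Eff}}(X_{\overline{\mathbb{Q}}}) : c.\alpha\ge0\,\}$ one gets $\overline{\text{Eff}}(X_{\overline{\mathbb{Q}}})=\mathbb{R}_{\ge0}[\Delta]+\mathcal{G}$, the Minkowski sum being closed because $\overline{\text{Eff}}$ is pointed.

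Now suppose for contradiction $D\in\overline{\text{Eff}}(X_{\overline{\mathbb{Q}}})$, say $[D]=t[\Delta]+\gamma$ with $t\ge0$, $\gamma\in\mathcal{G}$. Pairing with $c$ gives $-1=-t+c.\gamma$ with $c.\gamma\ge0$, so $t\ge1$ and $[D]-[\Delta]=(t-1)[\Delta]+\gamma\in\overline{\text{Eff}}(X_{\overline{\mathbb{Q}}})$. Taking flat closures of effective divisors in $X_{\mathbb{Z}}$ and reducing mod $p$ shows $\overline{\text{Eff}}(X_{\overline{\mathbb{Q}}})\subseteq\overline{\text{Eff}}(X_{\overline{k}})$, so $[D]-[\Delta]\in\overline{\text{Eff}}(X_{\overline{k}})$. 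Conversely, specialize $\Delta$ to characteristic $p$: its limit $\overline{\Delta}$ is effective over $\overline{k}$ with $c.\overline{\Delta}=c.[\Delta]=-1<0$, so by (1)--(2) the class $D$ occurs as a component of $\overline{\Delta}$, say $\overline{\Delta}=MD+R$ with $M\ge1$ an integer and $R$ effective over $\overline{k}$; hence $[\Delta]-[D]=(M-1)[D]+[R]\in\overline{\text{Eff}}(X_{\overline{k}})$ as well. As $\overline{\text{Eff}}(X_{\overline{k}})$ is pointed, $[D]-[\Delta]$ and $[\Delta]-[D]$ both lying in it force $[D]=[\Delta]$; but then $D$ is represented over $\overline{\mathbb{Q}}$ by the effective divisor $\Delta$, contradicting $h^0(D,X_{\overline{\mathbb{Q}}})=0$.

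The main obstacle I anticipate is the deformation-theoretic input in the $\mathcal{N}_g\cong V'\oplus\mathcal{O}(-1)$ case: showing that the deformations of $g$ sweep out an \emph{integral} divisor $\Delta$ whose intersection with $c$ is \emph{exactly} $-1$ (this is what makes $t$ and $M$ honestly $\ge1$ so the pointedness argument closes), together with checking that divisor classes, intersection numbers and effectivity all reduce mod $p$ compatibly on the flat model $X_{\mathbb{Z}}$ — issues already addressed by Lemma \ref{deformationLemma}, Proposition \ref{liftingProp} and Corollary \ref{lifting_remark}. A secondary subtlety worth flagging: one only ever concludes $D$ is \emph{pseudo}-effective, not effective, over $\overline{\mathbb{Q}}$, and the structural identity $\overline{\text{Eff}}(X_{\overline{\mathbb{Q}}})=\mathbb{R}_{\ge0}[\Delta]+\mathcal{G}$ is precisely what converts ``$D\in\overline{\text{Eff}}$ and $c.D=-1$'' into ``$D-\Delta\in\overline{\text{Eff}}$'' without first knowing $D$ itself is effective.
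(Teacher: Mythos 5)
Your proposal is correct and follows essentially the same route as the paper: lift $f$ to characteristic $0$ via unobstructedness, split into the nef case and the case where the lift sweeps out a divisor $\Delta$ (the paper's $D'$) that is the unique integral effective divisor over $\overline{\mathbb{Q}}$ pairing negatively with $c$, conclude $D-\Delta\in\overline{\text{Eff}}(X_{\overline{\mathbb{Q}}})$, then specialize to characteristic $p$ and invoke pointedness. Your closed Minkowski-sum decomposition $\overline{\text{Eff}}=\mathbb{R}_{\ge 0}[\Delta]+\mathcal{G}$ is just a cleaner packaging of the paper's sequence argument $(D_n+a_nD')\to D-D'$, and your final contradiction ($[D]=[\Delta]$ versus $h^0(D,X_{\overline{\mathbb{Q}}})=0$) is equivalent to the paper's ``$\pm E$ both pseudoeffective'' step.
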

\begin{defn}[\textit{Characteristic $p$ Pair}]
%Let $X\rightarrow\mathbb{Z}$ be a variety as in Proposition \ref{chardif}.  We call a divisor and curve pair $(D,c)$ as in  with natural identifications $N^1(X_k)\cong N^1(X_{\overline{\mathbb{Q}}})=:N^1(X)$ for every field $k$
We call a divisor and curve pair $(D,c)$ as in Proposition \ref{chardif} a \textit{characteristic} $\text{char}(k)$ \textit{pair}.  We say $(D,c)$ is defined over $k$.
\end{defn}
\begin{proof}
Let $c,D,k$, and $f:\mathbb{P}^1_{k'}\rightarrow X_{k'}$ be as stated above.  By general results in deformation theory, $[f]$ is a smooth point of $\text{Mor}_{\text{Spec}(\mathbb{Z})}(\mathbb{P}^1,X)\rightarrow \text{Spec}(\mathbb{Z})$.  It therefore lifts to a map $\tilde{f}:\mathbb{P}^1\rightarrow X_{k}$ for some field $k$ of characteristic $0$.  Since the splitting type of $\mathcal{N}_{\tilde{f}}$ is at least as balanced as $\mathcal{N}_f$, it follows that either $\tilde{f}_*(\mathbb{P}^1)$ is nef, or deformations of $\tilde{f}$ sweep out a divisor $D'$ with $\tilde{f}_*(\mathbb{P}^1).D' = -1$.  In the latter case, $\tilde{f}_*(\mathbb{P}^1)$ is a negative curve class for $D'$ over characteristic $0$.  Note that $D' \neq D$, since $h^0(D, X_{\overline{\mathbb{Q}}})=0$.  In all cases, either $\tilde{f}_*(\mathbb{P}^1)$ is nef and separates $D$ from all generators of $\text{Eff}(X_{\overline{\mathbb{Q}}})$ (classes of integral effective Cartier divisors), or $\tilde{f}_*(\mathbb{P}^1)$ pairs nonnegatively with all but one generator $D'$ of $\text{Eff}(X_{\overline{\mathbb{Q}}})$.  It suffices to prove our claim in the latter case, as the former is trivial.

The specialization of $D'$ to characteristic $\text{char}(k)$ must be reducible, since $c.D'=-1$ but $D'\neq D$.  Therefore $D' = D + E$ for some nonzero $E\in \text{Eff}(X_{k})$.  If some sequence $(D_n)\rightarrow D$, with $D_n\in \text{Eff}(X_{\overline{\mathbb{Q}}})$, then for all $n >> 0$, $a_n := c.D_n < 0$.  This implies $(D_n + a_n D')\in \text{Eff}(X_{\overline{\mathbb{Q}}})$, as $c$ is a negative curve class for $D'$ over characteristic 0.  However, $(D_n + a_n D')\rightarrow D - D' = -E \in \overline{\text{Eff}}(X_{\overline{\mathbb{Q}}})\subseteq \overline{\text{Eff}}(X_{k})$.  This is a contradiction, since $\overline{\text{Eff}}(X_{k})$ contains no lines.  Thus, $D\not\in \overline{\text{Eff}}(X_{\overline{\mathbb{Q}}})$.
\end{proof}

\begin{lem}\label{chardif2}
Let $D=\boldsymbol{D_{38}}$ be the divisor in the second row of Table \ref{ExtDivs2}, and $c = 4l - e_{03} -e_{05} -e_{14} -e_{15} -e_{23} -e_{24} -e_{013} -e_{014} -e_{024} -e_{025} -e_{123} -e_{125} -e_{345}$.  Then $(D,c)$ is a characteristic 2 pair.  % satisfy the criteria of Proposition \ref{chardif}. %{\color{red} Can you make it easier to reference this important divisor? I think it's better to give it a name earlier in the section. Also, I think it's better to state the conclusion of this divisor rather than refer to a previous proposition for the statement.}
In particular, $\boldsymbol{D_{38}}\not\in \overline{\text{Eff}}(\overline{M}_{0,7,\overline{\mathbb{Q}}})$
\end{lem}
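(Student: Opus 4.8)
The plan is to verify the three numbered conditions of Proposition \ref{chardif} for the pair $(D,c) = (\boldsymbol{D_{38}}, c)$, since Proposition \ref{chardif} then immediately gives $\boldsymbol{D_{38}}\not\in\overline{\text{Eff}}(\overline{M}_{0,7,\overline{\mathbb{Q}}})$. The setup of Section \ref{Notation} already supplies the needed ingredient: $\overline{M}_{0,7}$ is realized as a blow-up of $\mathbb{P}^4_{\mathbb{Z}}$, flat over $\mathbb{Z}$, with $N^1(\overline{M}_{0,7,k})\cong N^1(\overline{M}_{0,7,\overline{\mathbb{Q}}})$ canonically for all $k$. So the first thing I would state is that $\boldsymbol{D_{38}}$ is the class of an integral divisor over $\mathbb{F}_2$ — this comes from the construction in Algorithm \ref{sectionAlgExtDiv} and the accompanying data in Table \ref{ExtDivs2} — and that $h^0(\boldsymbol{D_{38}}, \overline{M}_{0,7,\overline{\mathbb{Q}}}) = 0$; the latter is a finite linear-algebra computation (the relevant graded piece of the Cox ring, i.e. the space of degree-$12$ forms on $\mathbb{P}^4$ vanishing to the prescribed orders along the points, lines, and planes, is zero over $\mathbb{Q}$).

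Next I would check conditions (1) and (2), which are purely numerical. Using the intersection pairing recorded in Section \ref{Notation} ($l.H = 1$, $e_i.E_i = e_{ij}.E_{ij} = e_{ijk}.E_{ijk} = -1$, all other pairings zero), computing $c.\boldsymbol{D_{38}}$ is a direct dot-product: the coefficient of $H$ in $\boldsymbol{D_{38}}$ is $12$, and $c$ has $H$-coefficient $4$, contributing $48$, against which the thirteen $-e_{\bullet}$ terms of $c$ pick out thirteen coefficients of $\boldsymbol{D_{38}}$ whose sum must be $49$; I would simply tabulate these. For (2) — that $c.D'\geq 0$ for every integral effective Cartier divisor $D'\neq \boldsymbol{D_{38}}$ over $\overline{\mathbb{F}_2}$ — the efficient route is to show $c$ lies in the cone $\mathfrak{M}$ cut out by the known extreme divisors, or rather its dual: it suffices that $c$ pairs nonnegatively with a generating set of $\overline{\text{Eff}}(\overline{M}_{0,7,\overline{\mathbb{F}_2}})$, so I would pair $c$ against all the divisors in Tables \ref{ExtDivs0}, \ref{ExtDivs1}, \ref{ExtDivs2}, and \ref{boundaryCurves} and check the signs. (Strictly, one only needs nonnegativity against the integral effective Cartier divisors, and since any such is a nonnegative combination of extreme effective divisors, pairing against the finite list of extreme rays suffices, with the single exception $\boldsymbol{D_{38}}$ itself handled by condition (1).)

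Condition (3) is the substantive geometric step and the main obstacle: I must exhibit, over some field $k'$, a morphism $f:\mathbb{P}^1_{k'}\to \overline{M}_{0,7,k'}$ with $f_*[\mathbb{P}^1] = c$ whose virtual normal bundle $\mathcal{N}_f$ is either globally generated or of the form $V\oplus\mathcal{O}(-1)$ with $V$ globally generated. Following the strategy used throughout the paper and flagged in Remark \ref{reducible_remark}, I would take $k'$ to be a convenient finite field of characteristic $\neq 2$ (so that the small anticanonical degree of $c$ is not an obstruction), write down the image curve explicitly as the strict transform of a rational quartic in $\mathbb{P}^4$ through the six points with the prescribed incidences to the lines $l_{ij}$ and planes $\Delta_{ijk}$ dictated by the $-e_\bullet$ terms of $c$, and then compute $\mathcal{N}_f$ using the subsidiary algorithm of Section \ref{M07alg} (the same normal-bundle computation invoked in the proof of Theorem \ref{divisorThm}). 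The degree of $\mathcal{N}_f$ is forced: $\dim\overline{M}_{0,7} = 4$, so $\mathcal{N}_f$ has rank $3$ and $\deg\mathcal{N}_f = -K_{\overline{M}_{0,7}}.c - 2$; one checks $-K.c = 1$ so $\deg\mathcal{N}_f = -1$, meaning the only splitting types compatible with (3) are $\mathcal{O}(a)\oplus\mathcal{O}(b)\oplus\mathcal{O}(-1)$ with $a,b\geq 0$ — and the computation should confirm this (indeed the balanced type $\mathcal{O}\oplus\mathcal{O}\oplus\mathcal{O}(-1)$ is the expected outcome, matching Corollary \ref{lifting_remark}). The delicate point is that this normal-bundle computation is a specialization argument: it is carried out over a finite field, and one must know the chosen curve is general enough in its Hilbert scheme component that its normal bundle is as balanced as the generic member — this is exactly the kind of semicontinuity bookkeeping done in the proof of Theorem \ref{divisorThm}, and I would appeal to it rather than redo it. Once (1), (2), (3) are in hand, Proposition \ref{chardif} finishes the proof, and the final sentence $\boldsymbol{D_{38}}\notin\overline{\text{Eff}}(\overline{M}_{0,7,\overline{\mathbb{Q}}})$ is simply its conclusion.
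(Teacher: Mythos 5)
Your treatment of conditions (1) and (3) of Proposition \ref{chardif} matches the paper's: $c.\boldsymbol{D_{38}}=-1$ is a direct pairing computation, $h^0(\boldsymbol{D_{38}},\overline{M}_{0,7,\overline{\mathbb{Q}}})=0$ is the linear-algebra check via Algorithm \ref{divAlg}, and for (3) the paper does exactly what you propose: it exhibits an explicit rational quartic over $\mathbb{F}_{101}$ (characteristic $\neq 2$, as you anticipate and as Remark \ref{reducible_remark} forces) whose strict transform has $\mathcal{N}_f\cong\mathcal{O}^2\oplus\mathcal{O}(-1)$; your degree count $-K.c=1$, $\deg\mathcal{N}_f=-1$ is correct.

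The genuine gap is in your argument for condition (2). You propose to verify $c.D'\geq 0$ for all integral effective Cartier $D'\neq D$ over $\overline{\mathbb{F}}_2$ by pairing $c$ against the finite list of extreme divisors in Tables \ref{ExtDivs0}--\ref{ExtDivs2} and \ref{boundaryCurves}, on the grounds that every integral effective divisor is a nonnegative combination of extreme rays. This fails for two reasons. First, that list is not known to be complete: the paper explicitly leaves open whether $\text{Eff}(\overline{M}_{0,7})$ is even polyhedral, so an unlisted extreme integral divisor could a priori pair negatively with $c$. Second, even granting a complete list, the reduction is not valid: an \emph{integral} divisor $D'\neq D$ can have a numerical decomposition $[D']=a[D]+(\text{other rays})$ with $a>0$ (this is precisely the situation exploited in the proof of Proposition \ref{chardif}, where $D'=D+E$ over the special fiber), in which case nonnegativity against the other rays tells you nothing about the sign of $c.D'$. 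The paper's route around this is geometric, not numerical: it produces a \emph{nodal} representative of $c$ with components of classes $c-e_{345}$ and $e_{345}$, verifies (via an explicit curve over $\mathbb{F}_{32}$ with $\mathcal{N}_g\cong\mathcal{O}^2\oplus\mathcal{O}(-2)$ and Lemma \ref{deformationLemma}) that these components move in families sweeping out $D$ and $E_{345}$ respectively, and concludes that any integral $D'$ other than $D$ and $E_{345}$ contains no component of a general such nodal curve and hence satisfies $c.D'\geq 0$; the cases $D'=E_{345}$ ($c.E_{345}=1$) and $D'=D$ are checked by hand. Some argument of this covering-family type is needed; the finite numerical check cannot replace it.
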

\begin{figure}[ht]
\includegraphics[width=0.6\linewidth]{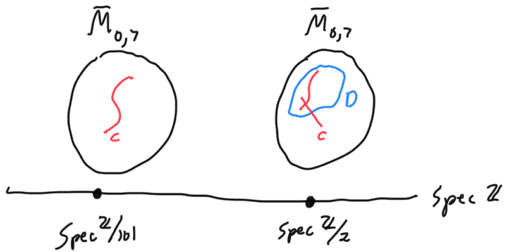}
\caption{A picture of Lemma \ref{chardif2}.}
\end{figure}
\begin{proof}
By Table \ref{ExtDivs2}, $D\in\text{Eff}(\overline{M}_{0,7,\overline{\mathbb{F}}_2})$ is an extreme effective divisor, swept out by rational curves of class $c-e_{345}$.  A quick check shows that $h^0(D)=0$ over characteristic $0$.  To see that $c.D' \geq 0$ for any integral Cartier divisor $D'\neq D$ over $\overline{\mathbb{F}}_2$, we will construct a nodal representative of $c$.

Let $\pi : \overline{M}_{0,7}\rightarrow\mathbb{P}^4$ be iterated blow-up of Kapranov's construction.  Over $\mathbb{F}_{32}$, the curve class $c-e_{345}$ is representable by the strict transform of a map $\mathbb{P}^1\rightarrow\mathbb{P}^4$ given on $\mathbb{A}^1$ by %$$t\rightarrow [t^4 + t^3 + (z5^4 + z5 + 1)*t^2 + (z5^4 + z5^3 + z5^2 + 1)*t + z5^4, z5*t^4 + (z5^4 + z5^3 + 1)*t^3 + (z5^3 + z5^2 + z5 + 1)*t^2 + (z5^4 + z5^3 + z5^2 + z5)*t + z5^3 + z5^2 + z5 + 1, z5^2*t^4 + (z5^2 + z5 + 1)*t^3 + (z5^4 + z5^2)*t^2 + (z5^2 + z5)*t + z5^3 + z5^2 + z5 + 1, z5^3*t^4 + (z5^2 + z5)*t^3 + (z5^4 + z5^3 + z5^2 + z5)*t^2 + t + z5^3 + z5^2 + z5 + 1, (z5^4 + z5^3 + z5^2 + 1)*t^4 + (z5^3 + z5)*t^3 + (z5^4 + z5^3 + z5^2 + z5)*t^2 + (z5^4 + z5^3)*t + z5^3 + z5^2 + z5 + 1].$$
\begin{align*}t\rightarrow [&t^4 + t^3 + (z^4 + z + 1)t^2 + (z^4 + z^3 + z^2 + 1)t + z^4, \\
&zt^4 + (z^4 + z^3 + 1)t^3 + (z^3 + z^2 + z + 1)t^2 + (z^4 + z^3 + z^2 + z)t + z^3 + z^2 + z + 1, \\
&z^2t^4 + (z^2 + z + 1)t^3 + (z^4 + z^2)t^2 + (z^2 + z)t + z^3 + z^2 + z + 1, \\
&z^3t^4 + (z^2 + z)t^3 + (z^4 + z^3 + z^2 + z)t^2 + t + z^3 + z^2 + z + 1, \\
&(z^4 + z^3 + z^2 + 1)t^4 + (z^3 + z)t^3 + (z^4 + z^3 + z^2 + z)t^2 + (z^4 + z^3)t + z^3 + z^2 + z + 1] \end{align*}
where $z$ is a root of the Conway Polynomial of degree $5$ over $\mathbb{F}_2$, $x^5 + x^2 + 1$.  Let $g:\mathbb{P}^1\rightarrow\overline{M}_{0,7}$ be this strict transform.  Since $(c-e_{345}).D = -2$, $\mathcal{N}_g \cong \mathcal{O}^2 \oplus \mathcal{O}(-2)$, and $D$ is nonsingular along $g(\mathbb{P}^1)$, it follows (see Lemma \ref{deformationLemma}) that $c-e_{345}$ is a negative curve class on $D$ over characteristic 2.  Similarly, since the curve class $e_{345}$ is a negative curve class on $E_{345}$, $c.E_{345} =1$, and $c.D = -1$, the curve class $c=(c-e_{345})+e_{345}$ is a ``nodal negative curve class'' on $D$ over characteristic 2; that is, every integral effective Cartier divisor $D'\neq D$ over characteristic $2$ satisfies $c.D' \geq 0$.

Over $\mathbb{F}_{101}$, the curve class $c$ is representable by the strict transform of a map $\mathbb{P}^1\rightarrow\mathbb{P}^4$ given on $\mathbb{A}^1$ by %$$t\rightarrow [33t^4 + 88t^3 + 15t^2 + 32t + 28, t^4 + 16t^3 + 84t^2 + 75t + 6, t^4 + 92t^3 + 15t^2 + 53t + 28, t^4 + 91t^3 + 88t^2 + 75t + 28, t^4 + 88t^3 + 62t^2 + 3t + 28].$$
\begin{align*} t\rightarrow [&33t^4 + 88t^3 + 15t^2 + 32t + 28, \\
&t^4 + 16t^3 + 84t^2 + 75t + 6, \\
&t^4 + 92t^3 + 15t^2 + 53t + 28, \\
&t^4 + 91t^3 + 88t^2 + 75t + 28, \\
&t^4 + 88t^3 + 62t^2 + 3t + 28]. \end{align*}
Let $f:\mathbb{P}^1\rightarrow\overline{M}_{0,7}$ be this strict transform.  Computations show $\mathcal{N}_f \cong \mathcal{O}^2 \oplus \mathcal{O}(-1)$.  With $k'=\mathbb{F}_{101}$, this proves $D,c$ satisfy all criteria of Proposition \ref{chardif}. Therefore, $(D,c)$ is a characteristic 2 pair.
\end{proof}

\begin{rem}{\label{reducible_remark}}
There are no irreducible curves of class $c$ (from Lemma \ref{chardif2}) over characteristic 2 with normal bundle $\mathcal{N}_{f}\cong V \oplus \mathcal{O}(-1)$ for some globally generated $V$.  Indeed, necessarily $\text{deg}(V) = 0$, so by Corollary \ref{lifting_remark} $D=\boldsymbol{D_{38}}$ would be effective over characteristic 0.  By Theorem \ref{charThm}, this is a contradiction.
\end{rem}

\begin{rem}\label{chardif_remark}
It is notable that every representative of $c-e_{345}$ (as above) over $\mathbb{F}_{32}$ is the strict transform, under $\pi :\overline{M}_{0,7}\rightarrow \mathbb{P}^4$, of a curve tangent to $\pi(E_{345})$ at their unique point of intersection.  These strict transforms meet $E_{345}$ at a single point with multiplicity two.  We may obtain a nodal curve of class $c$ by attaching the $\mathbb{P}^1$-fiber of $\pi$ over this unique intersection point.  The resulting curve has a normal bundle that restricts to $\mathcal{O}^2\oplus \mathcal{O}(-1)$ on the component of class $c-e_{345}$, and to $\mathcal{O}^3$ on the component of class $e_{345}$.  Using Hilbert schemes instead of Morphism schemes, it follows that this curve is unobstructed and lifts to a rational curve over characteristic $0$.  Since a generic deformation of the component of class $c-e_{345}$ has no lift to characteristic 0, a generic lift of this nodal curve must smooth the node.  

It is natural to expect that irreducible curves of class $c$ sweep out a divisor $D'$ of class $D+E_{345}$ over characteristic 0; however, this does not happen.  Though we could not find $D'$ explicitly, we verified that it must be the strict transform of degree $d>23$ divisor in $\mathbb{P}^4$.  The specialization of $D'$ to characteristic 2 must be reducible, and the locus $D'\setminus D$ must be swept out by reducible curves of class $c$ (for example, the union of two conics, one of which sweeps out a component of $D'\setminus D$, the other contained in $D$).  In particular, $c$ admits other nodal representatives over characteristic 2, whose components sweep out divisors other than $D$. %{\color{red} I think this might actually be an essential part of the proof of the above. You should rethink the ordering of this.}  %One example of such a curve is the union of lines $(l-e_3 -e_{14} -e_{025}) + (l-e_5 - e_{24} -e_{013}) + (e_3 -e_{03}) + (e_5 -e_{05}) + (l-e_{15} -e_{23}) + (l-e_{014} -e_{024} -e_{123} -e_{125} -e_{345})$ DOESNT WORK
\end{rem}

\begin{rem}\label{strange behavior}
Let $M\subset \overline{M}_{0,0}(\overline{M}_{0,7,\overline{\mathbb{F}}_2},c-e_{345})$ be a component of the Kontsevich space on $\overline{M}_{0,7}$ generically parameterizing negative curves on $\boldsymbol{D_{38}}$ of class $c-e_{345}$.  As Remark \ref{chardif_remark} indicates, it appears that a generic curve paramterized by $M$ is not transverse to $E_{345}$.  If true, this is quite anomolous.  By \cite[Proposition~2.8]{beheshti2020moduli}, over characteristic 0 and on a smooth variety, free rational curves general in their deformation classes intersect any chosen divisor transversely.  The negative curves $M$ parameterizes are not free in $\overline{M}_{0,7}$, but are free in $\boldsymbol{D_{38}}$.  Recall that the smooth locus of $\boldsymbol{D_{38}}$ contains the image of each negative curve.  Consequently, the curves $c-e_{345}$ and $\boldsymbol{D_{38}}$ would realize a scenario impossible over characteristic 0: a deformation class of free curves whose general member is not transverse to the divisor $E_{345}|_{\boldsymbol{D_{38}}}$.  %The nodal curves with components of class $c-e_{345}$ and $e_{345}$ over characteristic 2 are specializations of curves transverse to $E_{345}$ in characteristic 0.  We suspect a relationship between fa
\end{rem}

Proposition \ref{chardif} and Lemma \ref{chardif2} prove Theorem \ref{charThm} when $n=7$.  To prove $\overline{\text{Eff}}(\overline{M}_{0,n})$ is different over characteristic 2 and characteristic 0 for $n > 7$, one could apply an argument of Castravet, Laface, Tevelev, and Ugaglia (Lemma 2.2 in \cite{castravet2020blownup}).  Recall that $\overline{\text{Mov}}_1(X)$ is dual to $\overline{\text{Eff}}(X)$ for any smooth, projective $X$ \cite{Boucksom_2012}.  Since a composition of forgetful maps $\pi : \overline{M}_{0,n}\rightarrow \overline{M}_{0,7}$ is surjective, %is rational contraction, instead of surjective, enough?  I think so, but maybe the curves can't move off of a codim 2 subset 
Corollary 3.12 of \cite{fulger2016zariski} shows $\pi_*(\overline{\text{Mov}}_1(\overline{M}_{0,n}))=\overline{\text{Mov}}_1(\overline{M}_{0,7})$.  If $\overline{\text{Eff}}(\overline{M}_{0,n})$ were the same over characteristic $2$ and characteristic $0$ for some $n > 7$, by duality the same is true of $\overline{\text{Mov}}_1(\overline{M}_{0,n})$, but then $\pi_*(\overline{\text{Mov}}_1(\overline{M}_{0,n}))=\overline{\text{Mov}}_1(\overline{M}_{0,7})$ and the duality of $\overline{\text{Eff}}(\overline{M}_{0,7})$ implies $\overline{\text{Eff}}(\overline{M}_{0,7})$ is the same over characteristic $0$ and characteristic $2$, a contradiction.  This is a clean argument, but does not provide an explicit difference between $\overline{\text{Eff}}(\overline{M}_{0,n})$ over characteristic $2$ and characteristic $0$.  To obtain this, we must lift curves from $\overline{M}_{0,7}$.  We proceed inductively on $n$.  

%Below, we let $\delta_I\subset \overline{M}_{0,n}$ be the boundary divisor generically parameterizing nodal curves with one component marked by $I\subset \{ 0,\ldots , n-1\}$.

%\begin{lem}
%Let $k$ be a field of infinite order, fix a $\psi$-class $\psi_i$ on $\overline{M}_{0,n}$, and let $\phi: \overline{M}_{0,n+1} \rightarrow \overline{M}_{0,n}$ be a forgetful morphism.  Suppose $f:\mathbb{P}^1_k\rightarrow \overline{M}_{0,n}$ has image meeting $M_{0,n}\subset \overline{M}_{0,n}$ and normal bundle $\mathcal{N}_f$.  There is a lifting $\tilde{f}:\mathbb{P}^1_k\rightarrow \overline{M}_{0,n+1}$ of $f$ meeting $M_{0,n+1}\subset \overline{M}_{0,n+1}$ such that
%\begin{enumerate}
%    \item $\tilde{f}(\mathbb{P}^1_k)$ meets 
%\end{enumerate}
%whose normal bundle fits into an exact sequence $0\rightarrow \mathcal{O}(\psi . f_*[\mathbb{P}^1])\rightarrow \mathcal{N}_{\tilde{f}} \rightarrow \mathcal{N}_f \rightarrow 0$, where $d=\psi . f_*[\mathbb{P}^1] > 0$
%\end{lem}

\begin{prop}\label{chardif3}
Suppose $(D,c)$ is a characteristic $p$ pair on $\overline{M}_{0,n}$ defined over a field $k$.  
%Suppose $(D,c)$ is a divisor and curve pair on $\overline{M}_{0,n}$ satisfying the conditions of Proposition \ref{chardif}.  
Suppose further that over $\overline{k}$, $c$ has a nodal representative with two components $f:\mathbb{P}^1 \rightarrow \overline{M}_{0,n}$ and $g:\mathbb{P}^1 \rightarrow \overline{M}_{0,n}$ satisfying the following conditions:
\begin{enumerate}
\item $D$ is nonsingular along $f(\mathbb{P}^1)$ and $\mathcal{N}_{f}\cong V \oplus \mathcal{O}(f_*[\mathbb{P}^1].D)$ for some globally generated $V$, %and $\mathcal{N}_g \cong V' \oplus \mathcal{O}(-1)$ for some globally generated $V$ and $V'$,
%\item $D$ is nonsingular along $f(\mathbb{P}^1)$ (so that deformations of $f$ sweep out $D$) %and deformations of $g$ sweep out a
\item $g(\mathbb{P}^1) \subset E \cong \overline{M}_{0,n-1}\times \overline{M}_{0,4}$ is the $\mathbb{P}^1$-fiber of the first projection map over a point in $M_{0,n-1}\subset \overline{M}_{0,n-1}$, and $E\subset \overline{M}_{0,n}$ is an irreducible boundary divisor,
\item $f(\mathbb{P}^1)\cap M_{0,n} \neq \emptyset$. %and $f(\mathbb{P}^1)\cap (M_{0,n-1}\times \overline{M}_{0,4}) \subset E \neq \emptyset$. %maybe not needed?
\end{enumerate}
Let $\phi : \overline{M}_{0,n+1}\rightarrow \overline{M}_{0,n}$ be a forgetful morphism.  Then there exists a curve class $\tilde{c}\in N_1(\overline{M}_{0,n+1})$ with representatives over $\overline{k}$ and $\overline{k'}$, mapping to the corresponding representatives of $c$ via composition with $\phi$, such that %$(\phi^*D,\tilde{c})$ are a divisor and curve pair on $\overline{M}_{0,n+1}$ satisfying both the conditions of Proposition \ref{chardif} and the additional conditions of this proposition.
$(\phi^*D,\tilde{c})$ is a characteristic $p$ pair on $\overline{M}_{0,n+1}$ satisfying the additional conditions of this proposition.
\end{prop}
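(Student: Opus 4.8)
The plan is to lift the nodal representative $f\cup g$ of $c$ along the forgetful map $\phi$, which I regard as the universal curve over $\overline{M}_{0,n}$. First the formal points: $\phi^{*}D$ is integral over $\overline{k}$, since it is flat over the integral divisor $D$, which is not a boundary divisor (as $h^{0}(D,\overline{M}_{0,n,\overline{\mathbb{Q}}})=0$), so the generic fibre of $\phi^{*}D\to D$ is a smooth marked $\mathbb{P}^{1}$; moreover $h^{0}(\phi^{*}D,\overline{M}_{0,n+1,\overline{\mathbb{Q}}})=h^{0}(D,\overline{M}_{0,n,\overline{\mathbb{Q}}})=0$ by the projection formula and $\phi_{*}\mathcal{O}=\mathcal{O}$, and $\phi^{*}D$ is not a boundary divisor. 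Let $x_{0}=f(p_{f})=g(p_{g})\in E$ be the node of $f\cup g$, and let $C^{\mathrm{big}}$ be the component of $C_{x_{0}}=\phi^{-1}(x_{0})$ carrying the markings outside the distinguished three-element block. Pick a general point $w\in C^{\mathrm{big}}$, let $z\in\overline{M}_{0,n+1}$ be $C_{x_{0}}$ marked at $w$, and pull the universal curve back along $f$ to the surface $q\colon\mathcal{C}_{f}\to\mathbb{P}^{1}$ with its $n$ tautological sections; choose a section $\tilde{s}$ of $q$ which (i) avoids the tautological sections and the nodes of the finitely many singular fibres, (ii) passes through $z$ over $x_{0}$, and (iii) has very large self-intersection, and let $\tilde{f}\colon\mathbb{P}^{1}\to\overline{M}_{0,n+1}$ be the induced map, so $\phi\circ\tilde{f}=f$. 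By (iii) the sequence $0\to\mathcal{O}(\tilde{s}^{2})\to\mathcal{N}_{\tilde{f}}\to\mathcal{N}_{f}\to0$ splits, so $\mathcal{N}_{\tilde{f}}\cong(\mathcal{O}(\tilde{s}^{2})\oplus V)\oplus\mathcal{O}(\tilde{f}_{*}[\mathbb{P}^{1}].\phi^{*}D)$ with globally generated first summand, where $\tilde{f}_{*}[\mathbb{P}^{1}].\phi^{*}D=f_{*}[\mathbb{P}^{1}].D$ by the projection formula; by (i) and the nonsingularity of $D$ along $f(\mathbb{P}^{1})$, $\phi^{*}D$ is nonsingular along $\tilde{f}(\mathbb{P}^{1})$, while $\tilde{f}(\mathbb{P}^{1})\cap M_{0,n+1}\neq\emptyset$ because $f(\mathbb{P}^{1})\cap M_{0,n}\neq\emptyset$; and running $f$ over its $D$-covering family with a section of fixed self-intersection readded shows that $\tilde{f}_{*}[\mathbb{P}^{1}]$ is represented by a family of curves covering $\phi^{*}D$. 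Now let $\tilde{E}\subset\overline{M}_{0,n+1}$ be the boundary divisor whose partition is that of $E$ with the new marking adjoined to the larger block, so $\phi(\tilde{E})=E$, $\phi^{*}E=\tilde{E}+\tilde{E}'$ ($\tilde{E}'$ putting the new marking in the three-element block), $\phi|_{\tilde{E}}$ is a forgetful map times $\mathrm{id}_{\overline{M}_{0,4}}$, and $\tilde{E}$ is irreducible. Marking $w$ on the curve $y$ gives a point $\tilde{y}$ in the interior of the first factor of $\tilde{E}$; set $\tilde{g}=\{\tilde{y}\}\times\overline{M}_{0,4}$. Then $\phi\circ\tilde{g}=g$, $\tilde{g}$ is a $\mathbb{P}^{1}$-fibre of the first projection of $\tilde{E}$ over an interior point, $\mathcal{N}_{\tilde{g}}$ is an extension of $\mathcal{O}(\tilde{g}.\tilde{E})=\mathcal{O}(-1)$ by a trivial bundle, hence $\cong W\oplus\mathcal{O}(-1)$ with $W$ trivial, and $\tilde{g}_{*}[\mathbb{P}^{1}]$ is represented by fibres covering $\tilde{E}$. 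Finally $\tilde{f}$ and $\tilde{g}$ meet only at $z$, transversely (both pass through $z$ exactly once, as $f\cup g$ has a genuine node at $x_{0}$), so $\tilde{f}\cup_{z}\tilde{g}$ is a nodal rational curve over $\overline{k}$ lying over $f\cup g$; put $\tilde{c}:=\tilde{f}_{*}[\mathbb{P}^{1}]+\tilde{g}_{*}[\mathbb{P}^{1}]$.

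Next I check that $(\phi^{*}D,\tilde{c})$ is a characteristic $p$ pair. Condition (1): $\tilde{c}.\phi^{*}D=\phi_{*}\tilde{c}.D=(f_{*}[\mathbb{P}^{1}]+g_{*}[\mathbb{P}^{1}]).D=c.D=-1$. For condition (2), let $D''\neq\phi^{*}D$ be integral effective Cartier over $\overline{k}$ and split $\tilde{c}.D''=\tilde{f}_{*}[\mathbb{P}^{1}].D''+\tilde{g}_{*}[\mathbb{P}^{1}].D''$. As $\tilde{f}_{*}[\mathbb{P}^{1}]$ is represented by curves covering the integral divisor $\phi^{*}D$, the general such curve is not contained in $D''$, so $\tilde{f}_{*}[\mathbb{P}^{1}].D''\geq0$; likewise $\tilde{g}_{*}[\mathbb{P}^{1}].D''\geq0$ whenever $D''\neq\tilde{E}$, since $\tilde{g}_{*}[\mathbb{P}^{1}]$ is represented by fibres covering $\tilde{E}$. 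Because $\tilde{E}\neq\phi^{*}D$, this settles every $D''\notin\{\phi^{*}D,\tilde{E}\}$. For $D''=\tilde{E}$: $\tilde{g}_{*}[\mathbb{P}^{1}].\tilde{E}=\tilde{g}.\tilde{E}=-1$; and $\tilde{f}(\mathbb{P}^{1})$ is disjoint from $\tilde{E}'$ (the section meets $C^{\mathrm{big}}$-type components at every point of $f^{-1}(E)$, and off $f^{-1}(E)$ the fibre has no three-element-block bubble), so by the projection formula $\tilde{f}_{*}[\mathbb{P}^{1}].\tilde{E}=\tilde{f}_{*}[\mathbb{P}^{1}].\phi^{*}E=f_{*}[\mathbb{P}^{1}].E$; finally $f_{*}[\mathbb{P}^{1}].E=c.E-g_{*}[\mathbb{P}^{1}].E=c.E+1\geq1$, because $c.E\geq0$ ($E$ is an integral effective Cartier divisor other than $D$ and $(D,c)$ is a characteristic $p$ pair). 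Hence $\tilde{c}.\tilde{E}=(c.E+1)+(-1)=c.E\geq0$, proving (2). The additional conditions of the proposition hold by construction: $\phi^{*}D$ is nonsingular along $\tilde{f}(\mathbb{P}^{1})$ with $\mathcal{N}_{\tilde{f}}\cong V'\oplus\mathcal{O}(\tilde{f}_{*}[\mathbb{P}^{1}].\phi^{*}D)$ for $V'$ globally generated; $\tilde{g}(\mathbb{P}^{1})\subset\tilde{E}$ is a $\mathbb{P}^{1}$-fibre of the first projection over an interior point of an irreducible boundary divisor of the stated product type; and $\tilde{f}(\mathbb{P}^{1})\cap M_{0,n+1}\neq\emptyset$.

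It remains to furnish, for condition (3), an irreducible $\tilde{f}'\colon\mathbb{P}^{1}_{k''}\to\overline{M}_{0,n+1,k''}$ of class $\tilde{c}$ with $\mathcal{N}_{\tilde{f}'}$ globally generated or of the form $V\oplus\mathcal{O}(-1)$ with $V$ globally generated; this is the delicate point. By Corollary~\ref{lifting_remark} (cf.\ Remark~\ref{reducible_remark}), no such representative of $\tilde{c}$ can exist in characteristic $p$: one with $\mathcal{N}\cong\mathcal{O}(-1)\oplus\mathcal{O}^{\dim-2}$ would force $\phi^{*}D$ to lift to a relative Cartier divisor over $\mathbb{Z}_{(p)}$, contradicting $h^{0}(\phi^{*}D,\overline{M}_{0,n+1,\overline{\mathbb{Q}}})=0$; so smoothing $\tilde{f}\cup\tilde{g}$ over $\overline{k}$ will not do. Instead I lift the irreducible representative $f^{\circ}$ of $c$ supplied (over a field $k'$, necessarily of characteristic $\neq p$) by the hypothesis that $(D,c)$ is a characteristic $p$ pair, adjoining the $(n+1)$st point as a sufficiently positive general section exactly as for $\tilde{f}$; the resulting $\tilde{f}'$ has $\mathcal{N}_{\tilde{f}'}\cong V\oplus\mathcal{O}(-1)$ with $V$ globally generated and meets $M_{0,n+1}$. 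One must then check $[\tilde{f}']=\tilde{c}$: both $\tilde{f}'$ and $\tilde{f}\cup\tilde{g}$ arise by adjoining the $(n+1)$st point, as a section of the universal curve, to representatives of the single class $c$, so carrying out the construction relatively over $\text{Spec}(\mathbb{Z})$ --- after normalizing the section self-intersections to a common value, which changes a lift only by a multiple of the class of a $\phi$-fibre and preserves both the covering property and the normal-bundle shape --- places $\tilde{f}'$ and $\tilde{f}\cup\tilde{g}$ in one flat family, hence forces their classes to agree. I expect this coordination --- matching the characteristic $p$ nodal lift with the characteristic $\neq p$ irreducible lift so that they name the same class $\tilde{c}$ --- to be the principal obstacle; the remaining work is the intersection-theoretic bookkeeping above together with the standard deformation theory of sections of the universal curve.
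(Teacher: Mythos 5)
Your overall strategy is the paper's: realize the new marking as a section of the universal curve over each component, obtain an exact sequence $0\to(\text{nonnegative line bundle})\to\mathcal{N}_{\tilde f}\to\mathcal{N}_f\to 0$, attach a fiber of the appropriate component of $\phi^*E$ to rebuild the node, and check the characteristic-$p$-pair axioms by pushing intersection numbers down along $\phi$. Your verification of condition (2) for $D''=\tilde E$ is in fact spelled out more carefully than in the paper. However, there is a genuine gap at exactly the point you flag as ``the principal obstacle,'' and your proposed fix does not work.

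The statement requires a \emph{single} class $\tilde c$ admitting the nodal representative over $\overline{k}$ and the irreducible representative over $\overline{k'}$. Your lift depends on a choice of section $\tilde s$ of ``very large self-intersection,'' and you reconcile the two lifts by claiming that normalizing $\tilde s^2$ changes the class only by a multiple of the $\phi$-fiber. That is false: for a fixed $f$, the class of a section of $\mathcal{C}_f\to\mathbb{P}^1$ in $N_1(\overline{M}_{0,n+1})$ is determined by its class in $\mathrm{NS}(\mathcal{C}_f)$, which records not just $\tilde s^2$ but also which component of each reducible fiber the section meets and how often it crosses each tautological section $\sigma_i$ (i.e.\ its intersection with each $\delta_{\{i,n+1\}}$). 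Two sections with equal self-intersection can differ in all of these, so their classes differ by much more than a multiple of the fiber; and your char-$p$ and char-$\neq p$ sections are chosen independently over different surfaces, so there is no a priori flat family relating them. (Relatedly, your parenthetical assertion that $\tilde f(\mathbb{P}^1)$ avoids $\tilde E'$ is an extra constraint on which fiber components the section meets, not a consequence of avoiding nodes and tautological sections.) The paper closes this gap by making the lift canonical: in Kapranov coordinates $\phi$ is the blow-down of $\mathrm{Bl}_{\mathrm{pt}}\mathbb{P}^{n-2}\to\mathbb{P}^{n-3}$, and the lift of $t\mapsto[h_0:\cdots:h_{n-3}]$ is the strict transform of $t\mapsto[h_0:\cdots:h_{n-3}:h_{n-2}]$ with $h_{n-2}$ a \emph{general constant}. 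One then checks directly that this lift meets $\tilde E_I$ only for $n-1\in I$, at exactly the points and with exactly the multiplicities with which $h$ meets $E_{I\setminus\{n-1\}}$; this pins down every intersection number of $\tilde c$ in terms of those of $c$, uniformly over every field, so the two representatives automatically share one class, the kernel of $\mathcal{N}_{\tilde h}\to\mathcal{N}_h$ is $\mathcal{O}(d)$ with $d=\deg\pi_*c\geq 0$ fixed (no large-degree twist needed), and the disjointness from the unwanted component of $\phi^*E$ comes for free. Your argument needs this canonical choice of section (or an equivalent explicit determination of $\tilde c$) substituted for the large-self-intersection section and the normalization claim.
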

\begin{rem}
Under slightly different assumptions, we may remove reference to the representatives over $k'$ and $\overline{k'}$.  When $\text{deg } V = 0$, Corollary \ref{lifting_remark} implies $f_*[\mathbb{P}^1].D < -1$ since $D$ is not effective in characteristic 0.  For degree reasons, we must have $f_*[\mathbb{P}^1].D = -2$ and $g_*[\mathbb{P}^1].D =1$.  As in Remark \ref{chardif_remark}, as long as the image of $f$ is transverse to the image of $g$ at a point of intersection, we may lift this nodal representative to characteristic 0 using the Hilbert scheme, smoothing the node in the process.  A general lift provides a representative of $c$ over characteristic 0 satisfying condition \ref{chardif}(3).  We may likewise lift and smooth the nodal representative for $\tilde{c}$.  
\end{rem}

\begin{proof}
The map $\phi$ is a blow-up of the projection $\text{Bl}_{\text{pt}}\mathbb{P}^{n-2}\rightarrow \mathbb{P}^{n-3}$, where the preimage of $\text{pt}=[0:\cdots : 0:1]\in\mathbb{P}^{n-2}$ under $\overline{M}_{0,n+1}\rightarrow \mathbb{P}^{n-2}$ is the locus of curves whose $n^{th}$ and $(n+1)^{st}$ marking collide. Let $\pi : \overline{M}_{0,n}\rightarrow \mathbb{P}^{n-3}$ denote the iterated blow-up %.
of $\mathbb{P}^{n-3}$ at $n-1$ linearly general points $p_i$, $0\leq i \leq n-2$, followed by the strict transforms of the linear subspaces they span, in order of increasing dimension.  Likewise, let $\tilde{\pi}:\overline{M}_{0,n+1}\rightarrow \mathbb{P}^{n-2}$ denote the analogous iterated blow-up of $\mathbb{P}^{n-2}$ at $n$ linearly general points $\tilde{p_i}$, $0\leq i \leq n-1$, with $\tilde{p}_{n-1}=\text{pt}$ (followed by the strict transforms of the linear spaces they span), chosen such that $\tilde{p_i}$ maps to $p_i$ under $\text{Bl}_{\text{pt}}\mathbb{P}^{n-2}\rightarrow \mathbb{P}^{n-3}$.  Let $E_I$ and $\tilde{E}_I$ denote the irreducible exceptional divisors lying over the linear span of $\{p_i \text{ for } i\in I\}$ and $\{\tilde{p_i} \text{ for } i\in I\}$, respectively.  Note that $\phi^*(E_I)=\tilde{E}_I\cup \tilde{E}_{I\cup \{n-1\}}$.

Let $h:\mathbb{P}^1\rightarrow \overline{M}_{0,n,k'}$ be the representative of $c= h_*[\mathbb{P}^1]$ described by condition \ref{chardif}(3).  The map $\pi \circ h : \mathbb{P}^1 \rightarrow \mathbb{P}^{n-3}$ is given by $t\rightarrow [h_0(t) : h_1(t) : \cdots : h_{n-3}(t)]$, where we may assume $\pi \circ h(t=\infty)\in M_{0,n}\subset \mathbb{P}^{n-3}$.  Consider $\tilde{h}:\mathbb{P}^1\rightarrow\overline{M}_{0,n+1}$ given by the strict transform of $\tilde{\pi}\circ \tilde{h} : \mathbb{P}^1\rightarrow \mathbb{P}^{n-2}$, $t\rightarrow  [h_0(t) : h_1(t) : \cdots : h_{n-3}(t): h_{n-2}]$, where $h_{n-2} \in \overline{k'}$ is a general constant.  Note that $\phi \circ \tilde{h} = h$, as desired.

First, we argue that for general $h_{n-2}$, $\tilde{h}(\mathbb{P}^1)$ does not intersect $\tilde{E}_I$ with $n-1\not\in I$, and intersects those $\tilde{E}_I$ with $I=I'\cup  \{n-1\}$ only at points $t=t_0$ where $h(\mathbb{P}^1)$ intersects $E_{I'}$, and with the same multiplicity.  If $p_i$ and $\tilde{p_i}$ are the $i^{th}$ coordinate points for $i\leq n-3$ and $\tilde{p}_{n-2}=[1:\cdots : 1]$, this happens when $h_{n-3}\neq 0$ and  $h_{n-3}\neq h_i(t_0)$ for all roots $t_0$ of $h_i(t) - h_j(t)$.%, and $h_{n-3}\neq 0$.

Next, we show that $\mathcal{N}_{\tilde{h}}$ satisfies condition \ref{chardif}(3).  Let $d= \text{deg}(\pi_*(c))$, and consider the pullback of the Euler sequences on $\mathbb{P}^{n-3}$ and $\mathbb{P}^{n-2}$ by $\pi\circ h$ and $\tilde{\pi}\circ\tilde{h}$.  Since $\phi \circ \tilde{h} = h$ and $\phi$ is the blow-up of the projection $\pi_{\text{pt}}: \text{Bl}_{\text{pt}}\mathbb{P}^{n-2}\rightarrow \mathbb{P}^{n-3}$, it follows that $\pi_{\text{pt}}\circ \tilde{\pi}\circ\tilde{h} = \pi \circ h$ and there is a commutative diagram of Euler-sequence pullbacks, extending the natural map $\theta_2: (\tilde{\pi}\circ\tilde{h})^*\mathcal{T}_{\mathbb{P}^{n-2}} \rightarrow (\pi_{\text{pt}}\circ \tilde{\pi}\circ\tilde{h})^* \mathcal{T}_{\mathbb{P}^{n-3}}$.
\[ \begin{tikzcd}
0 \arrow{r} & \mathcal{O}_{\mathbb{P}^1} \arrow{r} \arrow[swap]{d}{\text{id}} & \oplus_{n-1}\mathcal{O}_{\mathbb{P}^1}(d) \arrow{r} \arrow{d}{\theta_1} & (\tilde{\pi}\circ\tilde{h})^*\mathcal{T}_{\mathbb{P}^{n-2}} \arrow{r} \arrow{d}{\theta_2} & 0 \\%
0 \arrow{r} & \mathcal{O}_{\mathbb{P}^1} \arrow{r}& \oplus_{n-2}\mathcal{O}_{\mathbb{P}^1}(d) \arrow{r} & (\pi \circ h)^* \mathcal{T}_{\mathbb{P}^{n-3}} \arrow{r} & 0
\end{tikzcd}
\]
The map of middle terms 
$\theta_1: \oplus_{n-1}\mathcal{O}(d)\rightarrow \oplus_{n-2}\mathcal{O}(d)$ is projection onto the first $n-2$ factors, by the choice of $\text{pt}$.  Moreover, the natural map $\mathcal{T}_{\mathbb{P}^1}\rightarrow (\pi\circ h)^* \mathcal{T}_{\mathbb{P}^{n-3}}$ is the composition of $\mathcal{T}_{\mathbb{P}^1}\rightarrow (\tilde{\pi}\circ \tilde{h})^* \mathcal{T}_{\mathbb{P}^{n-2}}$ with $\theta_2$, which defines a surjection $\theta_3: \mathcal{N}_{\tilde{\pi}\circ \tilde{h}}\rightarrow \mathcal{N}_{\pi \circ h}$.  By repeated application of the snake lemma, we find $\ker{\theta_3}\cong \ker{\theta_1}\cong \mathcal{O}(d)$.

We claim that $\mathcal{N}_{\tilde{h}}\subset \mathcal{N}_{\tilde{\pi}\circ \tilde{h}} $ is the preimage of $\mathcal{N}_{h}\subset \mathcal{N}_{\pi\circ h} $ under $\theta_3$. Indeed, this follows directly from the observation that each exceptional locus $\tilde{\pi}(\tilde{E}_I)$ which $\tilde{\pi}\circ\tilde{h}(\mathbb{P}^1)$ intersects is a linear space contracted by $\pi_{\text{pt}}$.  More concretely, Algorithm \ref{curveAlg} describes how to replace $\mathcal{N}_{\tilde{h}}$ and $\mathcal{N}_{h}$ with subsheaves $\tilde{\mathcal{F}}\subset \oplus_{n-1}\mathcal{O}(d)$ and $\mathcal{F}\subset \oplus_{n-2}\mathcal{O}(d)$, respectively.  There, it is easy to see that $\tilde{\mathcal{F}}$ is the preimage of $\mathcal{F}$ under $\theta_1$.  Thus, we obtain an exact sequence $$0\rightarrow \mathcal{O}(d)\rightarrow \mathcal{N}_{\tilde{h}} \rightarrow \mathcal{N}_{h} \rightarrow 0.$$
As $d\geq 0$, $h^1(\mathcal{N}_{\tilde{h}}) = h^1(\mathcal{N}_{h}) = 0$ and  $h^1(\mathcal{N}_{\tilde{h}}(-1)) = h^1(\mathcal{N}_{h}(-1)) \leq 1$.  Thus, $\mathcal{N}_{\tilde{h}}$ satisfies condition \ref{chardif}(3).

Since $\tilde{c} = \tilde{h}_*[\mathbb{P}^1]$ and $\tilde{h}_*[\mathbb{P}^1].\phi^*(D)=(\phi \circ \tilde{h})_*[\mathbb{P}^1].D= h_*[\mathbb{P}^1].D=-1$, $\tilde{c}$ also satisfies condition \ref{chardif}(1).  Finally, constructing a nodal representative satisfying the hypotheses of this proposition will show $\tilde{c}$ satisfies condition \ref{chardif}(2).

To construct such a nodal representative of $\tilde{c}$, consider the representative of $c$ with components  $f:\mathbb{P}^1 \rightarrow \overline{M}_{0,n}$ and $g:\mathbb{P}^1 \rightarrow \overline{M}_{0,n}$.  We may construct $\tilde{f}$ with $\phi \circ \tilde{f} = f$ and $0\rightarrow \mathcal{O}(d)\rightarrow \mathcal{N}_{\tilde{f}} \rightarrow \mathcal{N}_{f} \rightarrow 0$ the same way we constructed $\tilde{h}$.  As $\tilde{f}_*[\mathbb{P}^1]. \phi^*(D) = f_*[\mathbb{P}^1].D$, this proves $f$ satisfies the first and third criteria of this proposition.  Let $E = E_I$.  We may pick $\pi: \overline{M}_{0,n}\rightarrow \mathbb{P}^{n-3}$ such that the restriction of $\pi$ to $E$ is the projection $\overline{M}_{0,n-1}\times \overline{M}_{0,4}\rightarrow \overline{M}_{0,n-1}$. Since $\tilde{f}_*[\mathbb{P}^1].\tilde{E}_{I\cup \{n-1\}} = f_*[\mathbb{P}^1].E > 0$, we may attach the $\mathbb{P}^1$ fiber (giving $\tilde{g}$) over a point of $\tilde{\pi}(\tilde{f}(\mathbb{P}^1)\cap \tilde{E}_{I\cup \{n-1\}})\subset \tilde{\pi}(\tilde{E}_{I\cup \{n-1\}}) = \mathbb{P}^{n-3}$.  That we may choose such a point in $M_{0,n}\subset\mathbb{P}^{n-3}$ is guaranteed by our requirements on $f$ and the construction of $\tilde{f}$.
\end{proof}

This proves the first half of Theorem \ref{charThm2}.  To prove the latter half of Theorem \ref{charThm2}, we apply the following proposition.

\begin{prop}\label{chardif4}
Let $X\rightarrow\mathbb{Z}$ be a variety with natural identifications $N^1(X_k)\cong N^1(X_{\overline{\mathbb{Q}}})=:N^1(X)$ for every field $k$.  Suppose $F\in N^1(X)$ is the class of an integral divisor on $X_{\overline{\mathbb{Q}}}$, with $h^0(F, X_{\overline{\mathbb{Q}}})=1$.  Let $k$ be a field of positive characteristic such that $h^0(F,X_k) = 1$ and $F=F_1 + F_2$, with each $F_i$ the class of an integral divisor over $k$.  If $c$ is any curve class satisfying
\begin{enumerate}
\item $c = a c_1 + b c_2$ for $a,b \geq 0$ and $c_i$ the class of a negative curve on $F_i$ over $\overline{k}$,
\item $c.F = c.F_1 < 0$, while $c.F_2 \geq 0$,
\end{enumerate}
then $F\in \text{Eff}(X_{\overline{\mathbb{Q}}})$ is extreme, while $F\in \text{Eff}(X_{k})$ is not.
\end{prop}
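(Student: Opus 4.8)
The statement contains two essentially independent claims, and I would treat them separately.

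\textbf{$F$ is not extreme over $k$.} This is immediate from the hypotheses. Since $h^0(F,X_k)=1$, the relation $F=F_1+F_2$ is a genuine decomposition of $F$ into nonzero effective classes. Condition (2) shows $F_2$ is not a nonnegative multiple of $F$: if $F_2=tF$ with $t\geq 0$, then either $t=0$, contradicting $F_2\neq 0$, or $t>0$ and $c.F_2=t\,(c.F)<0$, contradicting $c.F_2\geq 0$. Since $F$ lies in the cone spanned by the effective classes $F_1,F_2$ and $F_2\notin\mathbb{R}_{\geq 0}F$, the ray $\mathbb{R}_{\geq 0}F$ is not extreme in $\text{Eff}(X_k)$.

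\textbf{$F$ is extreme over $\overline{\mathbb{Q}}$.} Here I would apply the criterion of \cite[Corollary~6.4]{opie2016extremal}: since $F$ is the class of an integral, rigid ($h^0(F,X_{\overline{\mathbb{Q}}})=1$) effective divisor, it suffices to exhibit over $\overline{\mathbb{Q}}$ a family of generically integral curves that dominates $F$ and whose members pair negatively with $F$. The candidate is $c$, which a priori is visible only over $\overline{k}$, so the heart of the argument is to lift it. Over $\overline{k}$ I would assemble a connected nodal curve $C$ of class $c$ from $a$ general members of the covering family of negative curves of class $c_1$ on $F_1$ and $b$ general members of that of class $c_2$ on $F_2$, joined along $F_1\cap F_2$, attaching where necessary further rational curves (fibres of a Kapranov morphism over the relevant intersection points, exactly as in Remark~\ref{chardif_remark} and Proposition~\ref{chardif3}) so as to arrange $H^1(C,\mathcal{N}_{C/X})=0$ --- that is, so that $\mathcal{N}_{C/X}$ has no summand of degree $\leq -2$ on any component and the nodes impose independent conditions. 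Then $[C]$ is a smooth point of $\text{Hilb}(X_{\mathbb{Z}})$ over $\text{Spec}\,\mathbb{Z}$, the component $Z$ through $[C]$ is flat over $\mathbb{Z}_{(p)}$, and $[C]$ lifts to characteristic $0$. Since a node of a curve in the smooth variety $X$ is unobstructedly smoothable, $Z$ contains integral curves, hence a general characteristic-$0$ member of $Z$ is an \emph{integral} curve of class $c$. Note that over $\overline{k}$ the class $c$ need not admit an irreducible representative sweeping out a divisor --- the nodal curves are forced --- and it is precisely the merging of $F_1$ and $F_2$ into the single divisor $F$ over $\overline{\mathbb{Q}}$ that permits the nodes to smooth, which is the mechanism of Remark~\ref{chardif_remark}.

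Finally I would check that these characteristic-$0$ curves sweep out all of $F$. Since $c.F<0$ and $h^0(F,X_{\overline{\mathbb{Q}}})=1$, every curve of class $c$ is contained in $F$. Over $\overline{k}$, the $c_1$-subcurves of the members of $Z$ sweep out the divisor $F_1$, so the closure $W\subset X_\mathbb{Z}$ of the image of the universal curve over $Z$ contains a divisor in its special fibre; as $W$ is integral and dominates $\text{Spec}\,\mathbb{Z}_{(p)}$, Krull's Hauptidealsatz forces the special fibre to be of pure codimension $1$ in $W$, so $\dim W\geq \dim X$ and the generic fibre $W_{\overline{\mathbb{Q}}}$ has dimension $\dim W-1\geq \dim X-1$; being contained in the irreducible divisor $F$, it equals $F$. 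Thus the general characteristic-$0$ member of $Z$ is an integral curve of class $c$, these curves dominate $F$, and $c.F<0$, so $F$ is extreme in $\text{Eff}(X_{\overline{\mathbb{Q}}})$ by Opie's criterion. The main obstacle is the construction of $C$: one must verify that general members of the two covering families can in fact be glued into a connected nodal curve that is unobstructed and of exactly class $c$, which is where the precise normal bundles of the $c_i$ in $F_i$ and in $X$, and the self-intersection numbers $c_i.F_i$, enter (cf.\ the role of the $e_{345}$-fibre in Remark~\ref{chardif_remark}); the degenerate situations $b=0$ or $F_1\cap F_2=\emptyset$, in which $c$ is already a negative curve on $F_1$ and no gluing is needed, are easier variants of the same argument.
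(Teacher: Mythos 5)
Your first paragraph (non-extremality over $k$) is correct: $F=F_1+F_2$ with $F_2$ effective and, by condition (2), not proportional to $F$, so the ray is not extreme. This part needs nothing more.

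The second half is where your proposal diverges from the paper and where it has a genuine gap. The paper never lifts curves to characteristic $0$. It observes that, for an integral divisor class, extremality in $\text{Eff}(X_{\overline{\mathbb{Q}}})$ is equivalent to $h^0(nF,X_{\overline{\mathbb{Q}}})=1$ for all $n>0$; by semicontinuity $h^0(nF,X_{\overline{\mathbb{Q}}})\leq h^0(nF,X_{\overline{k}})$; and over $\overline{k}$ the hypothesis $c.(nF)=n(c.F_1)<0$, with $c$ an effective combination of covering negative curves on $F_1$ and $F_2$, forces $F_1$ into the base locus repeatedly, so that $H^0(X_{\overline{k}},nF_2)\rightarrow\cdots\rightarrow H^0(X_{\overline{k}},nF)$ is surjective and $h^0(nF,X_{\overline{k}})\leq h^0(nF_2,X_{\overline{k}})=1$, the last equality because $c_2$ is a negative curve on $F_2$. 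This is a purely numerical/cohomological argument that uses exactly the stated hypotheses.

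Your route instead tries to produce, over $\overline{\mathbb{Q}}$, a dominating family of integral curves of class $c$ on $F$. That would prove the strictly stronger statement that $F$ is extreme in $\overline{\text{Eff}}(X_{\overline{\mathbb{Q}}})$ --- which, for the relevant example $F=\boldsymbol{D_{31}}$, the paper explicitly records as unknown (Table \ref{ExtDivs0}: no negative curve is known, and $\boldsymbol{D_{31}}$ is ``possibly not extreme in $\overline{\text{Eff}}(\overline{M}_{0,7})$''). The concrete gap is the step you yourself flag as ``the main obstacle'': the hypotheses of the proposition are purely numerical ($c=ac_1+bc_2$, intersection inequalities) and do not supply a connected, unobstructed nodal representative of $c$ over $\overline{k}$. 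General members of the covering families on $F_1$ and on $F_2$ meet only along the codimension-two locus $F_1\cap F_2$, so assembling a \emph{connected} configuration imposes incidence conditions that may cut the family down to something no longer dominating $F_1$; the assertion that ``a node of a curve in a smooth variety is unobstructedly smoothable'' also requires positivity of the normal bundle at the node that you have not established; and even after smoothing you would still need the resulting family to dominate $F$. None of this is recoverable from conditions (1) and (2), so the argument does not prove the proposition as stated. The fix is to abandon the lifting strategy here and argue with $h^0$ and semicontinuity as above.
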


\begin{proof}
As $F$ is the class of an integral divisor over $\overline{\mathbb{Q}}$, the claim is equivalent to showing $h^0(nF, X_{\overline{\mathbb{Q}}})=1$ for all $n> 0$.  Indeed, if $nF = A + B$ with $A,B\in \text{Eff}(X_{\overline{\mathbb{Q}}})$ not proportional to $F$, then $h^0(nF,X_{\overline{\mathbb{Q}}})\geq 2$.

Suppose to the contrary that $h^0(nF,X_{\overline{\mathbb{Q}}})\geq 2$. for some $n$.  It follows that $h^0(nF,X_{\overline{k}})\geq 2$.  Since $ c. F = c.F_1$,  the natural map $H^0(X_{\overline{k}}, n (F - F_1)) \rightarrow H^0(X_{\overline{k}}, n F)$ is a surjection.  Hence, $h^0(X_{\overline{k}}, n (F - F_1)) \geq 2$.  But $n(F-F_1) = n F_2$, and $h^0(X_{\overline{k}}, n F_2) = 1$ for all $n\geq 0$, since $c_2$ is a negative curve for $F_2$.
\end{proof}

\begin{lem}\label{chardif5}
Let $F = \boldsymbol{D_{31}}$ be the last entry of Table \ref{ExtDivs0}, and $c =5l-e_{01}-e_{02} -e_{12}-2e_{013}-2e_{024}-2e_{035}-2e_{045}-2e_{125}-2e_{134}-2e_{145}-2e_{234}-2e_{235} - e_{345}$.  Then $F,c$ satisfy the criteria of Proposition \ref{chardif4}.
\end{lem}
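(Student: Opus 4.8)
The plan is to verify, requirement by requirement, that the pair $(F,c)$ with $F=\boldsymbol{D_{31}}$ meets the hypotheses of Proposition \ref{chardif4}, taking $X=\overline{M}_{0,7}$ with the integral model of Section \ref{Notation}, $k$ a field of characteristic $2$, and the decomposition $F=F_1+F_2$ given by $F_1=\boldsymbol{D_{37}}=\boldsymbol{D_{31}}-E_{012}$ and $F_2=E_{012}$. For condition (1) I will use $c_1=c$, $c_2=e_{012}$, $a=1$, $b=0$; since $c$ has no $e_{012}$-component, this is the only available splitting.

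I would dispose of the two $h^0$-hypotheses first. Because $\boldsymbol{D_{31}}$ has $H$-coefficient $9$, it is the strict transform under the Kapranov morphism $\pi:\overline{M}_{0,7}\rightarrow\mathbb{P}^4$ of a degree $9$ hypersurface $V(g)$, $g\in\mathbb{Z}[x_0,\dots,x_4]$; I would exhibit $g$, check it is absolutely irreducible over $\mathbb{Q}$ (so its strict transform is geometrically integral), and compute that the linear system of degree $9$ forms with the prescribed multiplicities along the $p_i$, $l_{ij}$, $\Delta_{ijk}$ (as in Section \ref{M07alg}) is one-dimensional, giving $h^0(\boldsymbol{D_{31}},\overline{M}_{0,7,\overline{\mathbb{Q}}})=1$. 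Over $k$, reducing $g$ modulo $2$ and tracking the exceptional contributions of Kapranov's construction gives the equality of effective divisors $\boldsymbol{D_{31}}=\boldsymbol{D_{37}}+E_{012}$, with $E_{012}$ a boundary divisor and $\boldsymbol{D_{37}}$ the integral extreme divisor recorded in Table \ref{ExtDivs2}; then $h^0(\boldsymbol{D_{31}},X_k)=1$ follows because any effective divisor of class $\boldsymbol{D_{31}}$ over $\overline{k}$ must contain $\boldsymbol{D_{37}}$ (the general curve of class $c$ sweeps out $\boldsymbol{D_{37}}$ and $c.\boldsymbol{D_{37}}<0$), leaving a residual in the rigid linear system $|E_{012}|$.

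For condition (1): $e_{012}$ is the fiber class of the exceptional $\mathbb{P}^2$-bundle $E_{012}$, hence a negative curve on $E_{012}$, and $c$ is a negative curve on $\boldsymbol{D_{37}}$ over $\overline{k}$ — this is the content of Table \ref{ExtDivs2}, certified exactly as in Lemma \ref{chardif2} by exhibiting an explicit $f:\mathbb{P}^1\rightarrow\boldsymbol{D_{37}}$ over a finite field of characteristic $2$, computing $\mathcal{N}_f\cong V\oplus\mathcal{O}(-1)$ with $V$ globally generated, and applying Lemma \ref{deformationLemma} (here $n=1$, so no smoothness of $\boldsymbol{D_{37}}$ need be invoked, only its integrality). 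For condition (2), a direct intersection computation in the basis of Section \ref{Notation} gives $c.E_{012}=0$ (no $e_{012}$-term) and $c.\boldsymbol{D_{31}}=45-9-37=-1$, the three summands coming from $l\cdot H$, the $e_{ij}$'s, and the $e_{ijk}$'s; hence $c.\boldsymbol{D_{37}}=c.\boldsymbol{D_{31}}-c.E_{012}=-1<0$ while $c.E_{012}=0\geq 0$. All four conditions then hold, so $(F,c)$ is as required and Proposition \ref{chardif4} applies.

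The step I expect to be the real work is the pair of claims (a) that $c$ sweeps out $\boldsymbol{D_{37}}$ over $\overline{\mathbb{F}}_2$ as a negative curve, and (b) that the characteristic $2$ specialization of $\boldsymbol{D_{31}}$ is precisely $\boldsymbol{D_{37}}+E_{012}$, with multiplicity one along $E_{012}$. Neither is formal: (a) requires producing an explicit rational curve over a finite field and computing its normal bundle, and (b) requires understanding the modulo $2$ factorization of the degree $9$ equation finely enough to rule out a larger exceptional part — this last point is also what pins the residual divisor in the $h^0(\boldsymbol{D_{31}},X_k)=1$ argument into $|E_{012}|$ rather than $|mE_{012}|$. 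The remaining items — the $\overline{\mathbb{Q}}$-computation of $h^0$ and the verifications of conditions (1) and (2) — are routine once the explicit equation and curve have been recorded.
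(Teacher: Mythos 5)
Your verification of condition (2) and of the two $h^0$ hypotheses is in line with the paper, and the intersection numbers $c.\boldsymbol{D_{31}}=c.\boldsymbol{D_{37}}=-1$, $c.E_{012}=0$ are correct. The problem is your verification of condition (1) of Proposition \ref{chardif4}, where you take $a=1$, $b=0$, $c_1=c$, so that you must show $c$ itself is a negative curve class on $F_1=\boldsymbol{D_{37}}$ over $\overline{\mathbb{F}}_2$. You assert this is ``the content of Table \ref{ExtDivs2},'' but the table records only the degree-two class $2l-e_{012}-e_{013}-e_{024}-e_{035}-e_{045}-e_{125}-e_{134}-e_{145}-e_{234}-e_{235}$ (with $\mathcal{N}_f\cong\mathcal{O}^2\oplus\mathcal{O}(-2)$) as the negative curve sweeping out $\boldsymbol{D_{37}}$; the degree-five class $c$ appears nowhere. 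Worse, the certificate you propose is numerically impossible: $(-K_{\overline{M}_{0,7}}).c=25-6-18-1=0$, so any map $f:\mathbb{P}^1\to\overline{M}_{0,7}$ of class $c$ has $\deg\mathcal{N}_f=-2$, whereas $\mathcal{N}_f\cong V\oplus\mathcal{O}(-1)$ with $V$ globally generated of rank $2$ forces $\deg\mathcal{N}_f\geq -1$. So Lemma \ref{deformationLemma} cannot be applied with $n=1$ as you propose, and since $\chi(\mathcal{N}_f)=1$, an unobstructed irreducible curve of class $c$ moves in only a one-parameter family and cannot dominate the threefold $\boldsymbol{D_{37}}$. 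Your parenthetical claim that $b=0$ is ``the only available splitting'' because $c$ has no $e_{012}$-component is also false.

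The paper closes exactly this gap by writing $c=2c_1+c_2$ with $c_1$ the recorded degree-two negative curve on $\boldsymbol{D_{37}}$ and $c_2=l+2e_{012}-e_{01}-e_{02}-e_{12}+e_{345}$ a negative curve on $E_{012}$ taken from (the $S_7$-orbit of) Table \ref{boundaryCurves}; the $e_{012}$-components of $2c_1$ and $c_2$ cancel, which is precisely the possibility you ruled out. With $a=2$, $b=1$ condition (1) holds using only curves already certified in the tables, and no new representative of a degree-five class needs to be produced. To repair your argument you should adopt this decomposition (and, in your $h^0(F,X_k)=1$ discussion, replace the appeal to curves of class $c$ sweeping out $\boldsymbol{D_{37}}$ by the degree-two curves $c_1$, which do sweep it out).
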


\begin{proof}
Note that $F\in \text{Eff}(\overline{M}_{0,7,\mathbb{Q}})$ is effective, integral, and rigid (see Table \ref{ExtDivs0}).  Its unique representative over $\mathbb{Z}$ specializes over $\mathbb{F}_2$ to a reducible divisor with two components of class $F'= F - E_{012}$ and $E_{012}$.  $F'$ appears in Table \ref{ExtDivs2} and is swept out by curves of class $c_1 =2l-e_{012}-e_{013}-e_{024}-e_{035}-e_{045}-e_{125}-e_{134}-e_{145}-e_{234}-e_{235}$.  By Table \ref{boundaryCurves}, over any characteristic, $E_{012}$ is swept out by curves of class $c_2 = l+2e_{012} -e_{01} -e_{02} -e_{12} + e_{345}$.  It follows that over characteristic 2, $F'$ is the only integral Cartier divisor which pairs negatively with curves of class $c = 2c_1 + c_2$, and $c.F = c.F' = -1$. 
\end{proof}

This proves our claim for $n=7$.  The claim for $n > 7$ follows from the fact that the pullback of and extreme ray $Z\in \text{Eff}^k(\overline{M}_{0,m})$ via a composition of forgetful morphisms $\pi: \overline{M}_{0,n}\rightarrow \overline{M}_{0,m}$ is again extreme, provided $Z$ generically parameterizes irreducible curves. %By Theorem \ref{extCycles}, this implies pullbacks of $F$ via forgetful morphisms are extreme in $\text{Eff}(\overline{M}_{0,n,\mathbb{Q}})$ for all $n > 7$, which proves .
To see this, reduce the the case $n=m+1$, note that the generic fiber of $\pi|_{\pi^{-1}(Z)}$ is an integral curve.  Thus, the index $e_\pi (\pi^{-1}(Z)) = \text{dim } \pi^{-1}(Z) - \text{dim } Z = 1$.  By Proposition 2.1 of \cite{Chen_2015}, if $\pi^{-1}(Z) = \sum a_i Z_i$ with $a_i > 0$ and $Z_i \subset \overline{M}_{0,n}$, then $e_\pi (Z_i)  \geq 1$ for all $i$. With $\text{gl}:\overline{M}_{0,m}\cong \overline{M}_{0,m}\times\overline{M}_{0,3} \rightarrow \overline{M}_{0,n}$ such that $\pi \circ \text{gl} = \text{id}$, this implies $Z=\text{gl}^* \circ \pi^* (Z) = \text{gl}^*(\sum a_i Z_i) = \sum_i a_i \text{gl}_*( \overline{M}_{0,m}). Z_i$.  As the intersection between $\text{gl}(\overline{M}_{0,m})$ and $Z_i$ is proper and $Z$ is extreme, the claim follows.

\section{Proof of Theorem \ref{extCycles} and Corollary \ref{higherCodim}}\label{Section5}
Theorem \ref{charThm2} implies $\overline{\text{Eff}}^k(\overline{M}_{0,n})$ is strictly larger over characteristic 2 than it is over characteristic 0, for all $1\leq k \leq n-6$.  This follows from the observation that an embedding of a boundary divisor $i:\overline{M}_{0,n}\rightarrow \overline{M}_{0,n+1}$ provides a section of the forgetful morphism $\overline{M}_{0,n+1}\rightarrow \overline{M}_{0,n}$.  This section identifies explicit extreme rays of $\overline{\text{Eff}}^k(\overline{M}_{0,n})$ and $\text{Eff}^k(\overline{M}_{0,n})$ over characteristic 2 that are not pseudoeffective in characteristic 0.  We begin this section with a proof of Theorem \ref{extCycles}.  We subsequently prove Corollary \ref{higherCodim}.  Lemma \ref{intersectionLem} and Proposition \ref{Proposition: codim2cycles} may be of independent interest.

As stated in our introduction, our proof of Theorem \ref{extCycles} follows work by \cite{Chen_2015}, \cite{schaffler2015cone}, and \cite{blankers2021extremality}.  Given a morphism $f: X\rightarrow Y$ between two complete varieties, for each subvariety $Z\subset X$ Chen and Coskun \cite{Chen_2015} define the index 
$$e_f(Z) = \text{dim } Z - \text{dim } f(Z).$$
Their Proposition 2.1 states that if $Z$ is numerically equivalent to an effective sum of subvarieties $Z_i$, i.e. $Z = \sum a_i Z_i \in N_k(X)$, with each $a_i >0$, then $e_f(Z_i) \geq e_f(Z)$ for all $i$.  For example, if $f:\text{Bl}_p \mathbb{P}^2 \rightarrow \mathbb{P}^2$ is the blow-up and $E$ denotes the exceptional curve, %then $e_f(E) = 1$ while $e_f(H) = 0 = e_f(H-E)$.  A line of class $H$ deforms into a union of curves of class $(H-E)$ and $E$, and \cite[Proposition~2.1]{Chen_2015}
%\parencite[Proposition~2.1]{Chen_2015}
%\autocite[Proposition~2.1]{Chen_2015}
%asserts $e_f(H-E),e_f(E)\geq e_f(H)$.
then \cite[Proposition~2.1]{Chen_2015} asserts $e_f(H-E),e_f(E)\geq e_f(H)$.  Chen and Coskun apply their result to various divisorial contractions $\overline{\mathcal{M}}_{g,n}\rightarrow Y$ to prove extremality of certain higher codimension cycles.  Below, we repeatedly reference \cite[Proposition~2.1]{Chen_2015} and its direct corollary \cite[Corollary~2.4]{Chen_2015}: for projective varieties $X$, $Y$, and $[Z]\in \text{Eff}^k(X)$ extremal, $[Z\times Y] \in \text{Eff}^k(X\times Y)$ is extremal.

Schaffler and Blankers \cite{schaffler2015cone} \cite{blankers2021extremality} extend results in \cite{Chen_2015} using certain birational reduction maps $\pi_\mathcal{A} : \overline{\mathcal{M}}_{g,n}\rightarrow \overline{\mathcal{M}}_{g,\mathcal{A}}$ defined in \cite{Hassett_2003}.  Our proof of the case $m>3$ below is identical to the $g=0$ case of \cite[Lemma~4.4]{blankers2021extremality}; however, a different approach is required for the case $m=3$.  Here, Schaffler \cite[Lifting~Lemma~(ii)]{schaffler2015cone} introduced the idea of using the reduction map $\pi: \overline{M}_{0,n}\rightarrow \overline{LM}_n$.  We patch an oversight in the original proof that mischaracterized the exceptional locus of $\pi$ by homotoping any $Z_i$ appearing in an expression $Z \times \overline{M}_{0,3}= \sum a_i Z_i$ to a rationally equivalent $V_i \subset \overline{M}_{0,n-1}\times \overline{M}_{0,3}$.  This verifies the original statement of \cite[Lifting~Lemma~(ii)]{schaffler2015cone} holds without alteration.

\begin{proof}[Proof of Theorem \ref{extCycles}]
\textbf{Case 1:} First, suppose $m > 3$.  %Our argument for this case also appears in \cite{blankers2021extremality}. {\color{red} This sort of thing you shouldn't do inside the proof. Once we figure out how to handle it, I'd make a mention in the intro, and then explain it again right before the proof.} 
We will show $Z\times \overline{M}_{0,m}$ is extreme in $\text{Eff}^{k+1}(\overline{M}_{0,n})$ using a particular reduction map $\overline{M}_{0,n}\rightarrow \overline{M}_{0,\mathcal{A}}$.
By symmetry, we may suppose that $\overline{M}_{0,m}$ parameterizes the first $m-1$ markings.  % {\color{red} I didn't follow that previous sentence. It might be that you forgot to telegraph that you were going to use the Hassett space.}
Assigning these weight $\frac{1}{m-1}$, we attain the space $Y=\overline{M}_{0,\mathcal{A}}$ where $\mathcal{A}=(\frac{1}{m-1},\ldots, \frac{1}{m-1}, 1, \ldots , 1)$.  Let $\pi :\overline{M}_{0,n}\rightarrow Y$ be the birational reduction map. By construction, the restriction $\pi \circ i : \overline{M}_{0,n-m+2}\times \overline{M}_{0,m}\rightarrow Y$ factors as the projection $\pi_1:\overline{M}_{0,n-m+2}\times \overline{M}_{0,m}\rightarrow \overline{M}_{0,n-m+2}$ followed by an inclusion $\overline{M}_{0,n-m+2}\rightarrow Y$.

Suppose $i_*(Z\times \overline{M}_{0,m})= \sum a_i Z_i$ for $a_i > 0$ and subvarieties $Z_i \subset \overline{M}_{0,n}$.  By Proposition 2.1 in \cite{Chen_2015}, since $e_{\pi} (Z\times \overline{M}_{0,m}) = m-3 > 0$, we must have $e_{\pi} (Z_i) \geq m-3$ for all $i$. %{\color{red} You should remind us what $e_{\pi}$ is here.} 
However, the only points in $Y$ where the fiber dimension of $\pi$ is at least $m-3$ is precisely the locus $\pi \circ i (\overline{M}_{0,n-m+2}\times \overline{M}_{0,m})$.  Since $\pi^{-1}(\pi \circ i (\overline{M}_{0,n-m+2}\times \overline{M}_{0,m})) =  i (\overline{M}_{0,n-m+2}\times \overline{M}_{0,m})$, this proves each $Z_i$ must be a subvariety of $i (\overline{M}_{0,n-m+2}\times \overline{M}_{0,m})$, i.e. $Z_i = i_*(V_i)$.  Letting $\psi : \overline{M}_{0,n}\rightarrow \overline{M}_{0,n-m+2}\times \overline{M}_{0,m}$ be the product of forgetful maps (forgetting the first $m-2$ and last $n-m$ markings), we see $\psi_* \circ i_* = \text{id}_* : A(\overline{M}_{0,n-m+2}\times \overline{M}_{0,m} )\rightarrow A(\overline{M}_{0,n-m+2}\times \overline{M}_{0,m} )$.  Thus, $Z\times \overline{M}_{0,m}= \sum a_i V_i$, and by \cite[Corollary~2.4]{Chen_2015} we see $V_i = Z\times \overline{M}_{0,m}$ for all $i$.

\textbf{Case 2:} Instead, suppose $m=3$.  We have $Z\subset \overline{M}_{0,n-1} \cong \overline{M}_{0,n-m+2}\times \overline{M}_{0,m}\xrightarrow{i} \overline{M}_{0,n}$.  %Instead of contracting the second factor of $\overline{M}_{0,n-m+2}\times \overline{M}_{0,m}$ to a point by a reduction map $\overline{M}_{0,n}\rightarrow \overline{M}_{0,\mathcal{A}}$ as before,    
By symmetry, we may assume that $\overline{M}_{0,n-m+2}=\overline{M}_{0,n-1}$ parameterizes the first $n-2$ markings. Assigning these weight $\frac{1}{n-2}$, we attain the Losev-Manin space \cite{10.1307/mmj/1030132728} $\overline{LM}_n=\overline{M}_{0,\mathcal{A}}$ where $\mathcal{A}=(\frac{1}{n-2},\ldots, \frac{1}{n-2}, 1 , 1)$. Let $\pi :\overline{M}_{0,n}\rightarrow \overline{LM}_n$ be the birational reduction map.  We may suppose that $\text{dim}(Z) = \ell > 0$, as otherwise $Z$ is a point and the claim is trivial.

Suppose $i_*(Z\times \overline{M}_{0,3})= \sum a_i Z_i$ for $a_i > 0$ and $\ell$-dimensional subvarieties $Z_i \subset \overline{M}_{0,n}$.  By Proposition 2.1 in \cite{Chen_2015}, since $e_{\pi} (Z\times \overline{M}_{0,3}) = \ell > 0$, we must have $e_{\pi} (Z_i) = \ell$ for all $i$.  Thus, each $Z_i$ is contracted to a point $p_i$ by $\pi$.  By analyzing the fiber of $\pi$ over $p_i$, we will construct a rationally equivalent subvariety $V_i \subset \overline{M}_{0,n-1}\times \overline{M}_{0,3}$ for each $Z_i$.  Our claim follows directly from this fact, as $\psi_* \circ i_* = \text{id}_* : A(\overline{M}_{0,n-1})\rightarrow A(\overline{M}_{0,n-1})$ implies $Z= \sum a_i V_i$, which shows $V_i$ is proportional to $Z$ for all $i$.% Corollary 2.4 in \cite{Chen_2015} shows $V_i = Z\times \overline{M}_{0,3}$ for all $i$.

Consider the nodal curve $C$ corresponding to $\pi(Z_i)=p_i \in \overline{LM}_n$.  Let $s_i$, $1\leq i \leq n-2$ be the marked points with weight $\frac{1}{n-2}$, and $s_{n-1}, s_n$ be the marked points with weight $1$.  The curve $C$ is a chain of rational curves, $C_1, C_2, \ldots C_\alpha$, with $s_{n-1} \in C_1$ and $s_n \in C_\alpha$.
\begin{figure}[ht]
\includegraphics[width=0.7\linewidth]{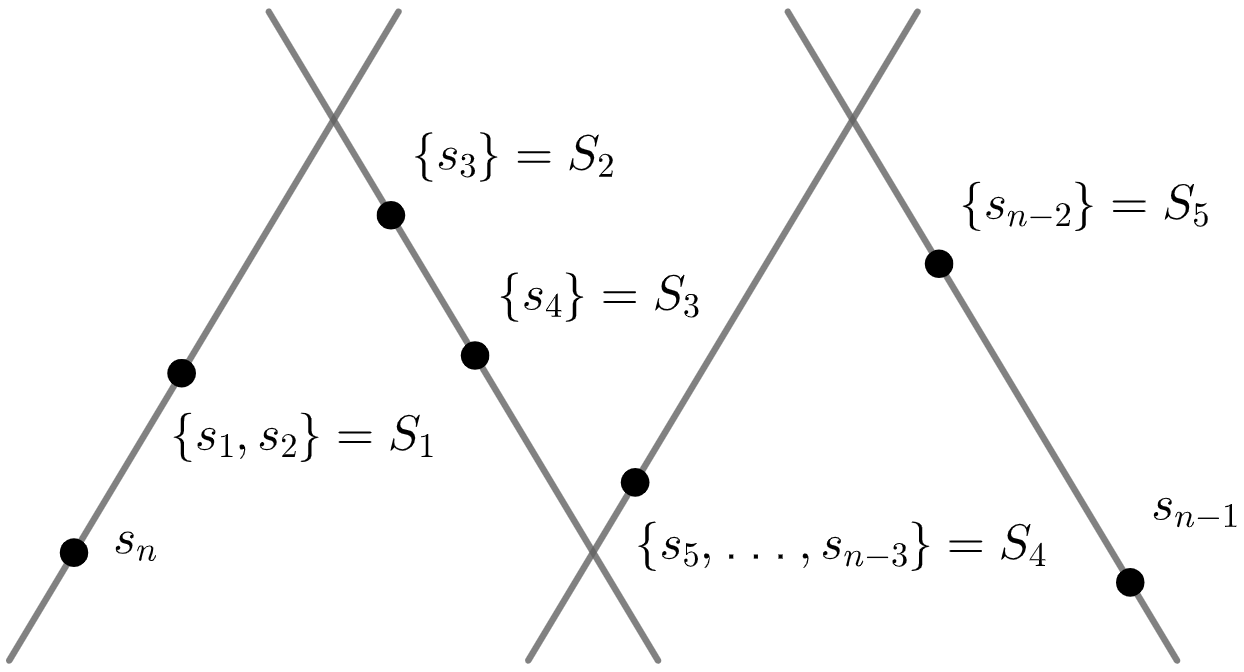}
\caption{An example of $C$ corresponding to $\pi(Z_i) \in \overline{LM}_n$.}
\label{LM image}
\end{figure}
Let $S_1, S_2, \ldots , S_l$ partition the remaining $s_i$ such that $s_i, s_j \in S_k$ iff they correspond to the same point in $C$.  It follows that the fiber $\pi^{-1}(p_i)\subset \overline{M}_{0,n}$ is isomorphic to $\prod_k \overline{M}_{0,S_k\cup \{ *_k \}}$, where if $|S_k| =1$ we let $\overline{M}_{0,S_i\cup \{ *_i \}}$ be a point.  If $C$ were irreducible and all markings of weight $\frac{1}{n-2}$ coincided, i.e. $S_1=\{1,\ldots , n-2\}$, the point $p_i$ would already be the point $\pi(\overline{M}_{0,n-1}\times \overline{M}_{0,3})$, and there would be no need to homotope $Z_i$ to $V_i$.  To obtain such a stable marked curve from $C$, we may slide the $n^{th}$ marking along each component until it coincides with the $(n-1)^{st}$ marking.  Stabilizing the result produces our desired curve.  We then translate the fiber $\pi^{-1}(p_i)$ along $C$ to obtain an appropriate $V_i \subset \overline{M}_{0,n-1}\times \overline{M}_{0,3}$.

To make this rigorous, we modify our base $C$ slightly if necessary.  If only one $S_i \subset C_\alpha$, let $C\rightarrow B$ be the contraction of $C_\alpha$; otherwise let $B = C$.  We will construct a map $B\times \prod_k \overline{M}_{0,S_k\cup \{ *_k \}} \rightarrow \overline{M}_{0,n}$ which embeds $s_{n} \times \prod_k \overline{M}_{0,S_k\cup \{ *_k \}}$ in  $\pi^{-1}(p_i)$ and $s_{n-1} \times \prod_k \overline{M}_{0,S_k\cup \{ *_k \}}$ in $\overline{M}_{0,n-1}\times \overline{M}_{0,3}$.

To define this map, we construct a family of stable curves over $B\times \prod_k \overline{M}_{0,S_k\cup \{ *_k \}}$.  The construction is natural: %{\color{red} ``Intuitive'' sounds like you're bashing you're own construction. I'd just say something like ``The construction is as follows:''}: 
over each point $p\in B$, we begin with the nodal curve $B$ with marks given by $S_k \in C$, $k \leq l$, $s_{n-1}$, and  $p$.  If $p$ is a node of $B$ or $p$ collides with $S_k$ for some $k$, we stabilize $B$ by adding an extra component in the standard way.  This gives us a family of stable curves with $l+2$ markings.  Lastly, at each point $S_k$ we glue a stable curve, parameterized by $\overline{M}_{0,S_k\cup \{ *_k \}}$, along $*_k$, by considering $B$ as embedded in $\overline{M}_{0,l+2}$ and using the appropriate clutching map.

This family corresponds to a map $\phi : B\times \prod_k \overline{M}_{0,S_k\cup \{ *_k \}} \rightarrow \overline{M}_{0,n}$.  It defines an isomorphism $s_{n} \times \prod_k \overline{M}_{0,S_k\cup \{ *_k \}} \cong \pi^{-1}(p_i)$ and maps $s_{n-1} \times \prod_k \overline{M}_{0,S_k\cup \{ *_k \}}$ into $\overline{M}_{0,n-1}\times \overline{M}_{0,3}$.  Thus, by taking $B \times Z_i \subset B\times \pi^{-1}(p_i)$, we may replace $Z_i$ with the rationally equivalent cycle $V_i = \phi_*[s_{n-1}\times Z_i ]$.
\end{proof}
%\begin{rem}
%It follows from work by \cite{Chen_2015} that if $X\subset \overline{M}_{0,n}$ is an extreme $k$-cycle, i.e. an extreme ray of $\text{Eff}_k(\overline{M}_{0,n})$, then under the embedding of $\overline{M}_{0,n}\times \overline{M}_{0,4}\rightarrow \overline{M}_{0,n+2}$ as a boundary divisor, the image of $X\times \overline{M}_{0,4}$ is an extreme $k+1$-cycle in $\overline{M}_{0,n+2}$.  Moreover, though the lifting lemma in [S15] mischaracterizes the exceptional locus of $\overline{M}_{0,n}\rightarrow \overline{M}_{0,\mathcal{A}}=LM_7$, their proof may be corrected.  From this, it follows that any embedding of $X$ under a map $i$ as defined in Corollary \ref{higherCodim} is again an extreme $k$-cycle.  Thus, each extreme divisor found in $\overline{M}_{0,7}$ provides a large number of extreme, higher codimension cycles in $\overline{M}_{0,n}$.
%\end{rem}
%{\color{red} I should reread the above proof once you add the reminder about what $e_{\pi}$ is.}
To prove Corollary \ref{higherCodim}, we need the following lemma as well.  This provides a numeric criterion that proves a given integral subvariety $V\subset\overline{M}_{0,n}$ must be contained in a specific boundary divisor.  For $I\subset \{1,\ldots ,n\}$, we let $\delta_I = \text{gl}(\overline{M}_{0,I \cup \{\bullet\}} \times \overline{M}_{0,I^{c} \cup \{*\}})$ be the boundary divisor generically parameterizing nodal curves with one component marked by $I$.

\begin{lem}\label{intersectionLem}
%Let $\pi : \overline{M}_{0,n}\rightarrow \overline{M}_{0,k}\cong \overline{M}_{0,I \cup \{\bullet\}}$ be the composition of $n-k$ forgetful maps, and suppose $\text{gl}: \overline{M}_{0,I \cup \{\bullet\}} \times \overline{M}_{0,I^{c} \cup \{*\}} \cong \overline{M}_{0,k} \times \overline{M}_{0,n-k+2} \rightarrow \overline{M}_{0,n}$ is the inclusion of the unique boundary strata such that $\pi \circ \text{gl}|_{\overline{M}_{0,k} \times \text{pt}} = \text{id}$ for all $\text{pt} \in \overline{M}_{0,n-k+2}$.  Let $\psi_i$ be the psi-class corresponding to $i\in I$. {\color{red} You don't say what $I$ is here. Or maybe it's sort of implied by the $\{\cdot \}$? I had trouble following.}  Suppose $V\subset \overline{M}_{0,n}$ is an irreducible $\ell$-fold s.t. $\pi_*(V) \neq 0$ and $V.[\pi^*(\psi_i)^\ell - \psi_i^\ell] > 0$.  Then $V \subset \text{gl}(\overline{M}_{0,k} \times \overline{M}_{0,n-k+2})$.
Let $I\subset \{1,\ldots , n\}$, $i\in I^c$, and $\pi : \overline{M}_{0,n}\rightarrow \overline{M}_{0,k}\cong \overline{M}_{0,I \cup \{\bullet\}}$ be %the composition of $n-k$ forgetful maps forgetting all markings in $I^c\setminus\{i\}$.
the map forgetting all markings in $I^c\setminus\{i\}$.  Let $\psi_i$ denote the $i^{th}$ $\psi$-class and $\text{gl}: \overline{M}_{0,I \cup \{\bullet\}} \times \overline{M}_{0,I^{c} \cup \{*\}} \rightarrow \overline{M}_{0,n}$ be the clutching morphism.  %Let $\psi_i$ be the $\psi$-class corresponding to $i\in I^c$ not forgotten by $\pi$. {\color{red} You don't say what $I$ is here. Or maybe it's sort of implied by the $\{\cdot \}$? I had trouble following.}  
Suppose $V\subset \overline{M}_{0,n}$ is an irreducible $\ell$-fold s.t. $V.[\pi^*(\psi_i)^\ell - \psi_i^\ell] > 0$.  Then $V \subset \delta_I = \text{gl}(\overline{M}_{0,I \cup \{\bullet\}} \times \overline{M}_{0,I^{c} \cup \{*\}})$ and  $\pi_*(V) \neq 0$, .
%Suppose $V\subset \overline{M}_{0,n}$ is an irreducible $\ell$-fold s.t. $\pi_*(V) \neq 0$ and $V.[\pi^*(\psi_i)^\ell - \psi_i^\ell] > 0$.  Then $V \subset \delta_I = \text{gl}(\overline{M}_{0,I \cup \{\bullet\}} \times \overline{M}_{0,I^{c} \cup \{*\}})$.
\end{lem}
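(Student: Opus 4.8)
The plan is to exploit the comparison between the $\psi$-class on $\overline{M}_{0,n}$ and the pullback of the $\psi$-class under the forgetful map $\pi$. Recall the standard comparison formula: if $\pi:\overline{M}_{0,n}\rightarrow\overline{M}_{0,k}$ forgets the markings in $I^c\setminus\{i\}$, then $\psi_i = \pi^*(\psi_i) + \sum_{J} \delta_J$, where $J$ ranges over subsets with $i\in J$ and $|J\cap I|\leq 1$ (equivalently, the boundary divisors contracted by $\pi$ that ``separate $i$ from the bulk of $I$''). In particular $\psi_i - \pi^*(\psi_i)$ is an effective sum $\sum_J \delta_J$ of boundary divisors, each of which is contained in $\delta_I$ after we group the markings appropriately — more precisely, each such $\delta_J$ with $J\cap I=\emptyset$ or $J\cap I=\{j\}$ has the property that $\delta_J\subseteq$ (a boundary stratum lying over the locus where the $I$-markings, or all but one of them, collide), and crucially $\pi^*(\psi_i)\cdot\delta_J = 0$ for each such $J$ because $\pi$ contracts the relevant $\mathbb{P}^1$-factor.

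First I would expand $\pi^*(\psi_i)^\ell - \psi_i^\ell$ using $\psi_i = \pi^*(\psi_i) + B$ with $B=\sum_J\delta_J$ effective and $\pi^*(\psi_i)\cdot B = 0$ in the Chow ring (since $\pi^*(\psi_i)$ restricted to each $\delta_J$ in $B$ is the pullback of a class from a point, hence zero — here one uses that $\pi$ contracts $\delta_J$ to a locus of dimension $< k-1$, or directly that $\delta_J$ maps into the boundary of $\overline{M}_{0,k}$ where $\psi_i$ pulls back trivially along the contracted factor). Then $\psi_i^\ell = (\pi^*\psi_i + B)^\ell = \pi^*(\psi_i)^\ell + B^\ell$ since all cross terms $\pi^*(\psi_i)^a B^{\ell-a}$ with $0<a<\ell$ vanish, giving $\pi^*(\psi_i)^\ell - \psi_i^\ell = -B^\ell$. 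So the hypothesis $V\cdot(\pi^*(\psi_i)^\ell - \psi_i^\ell) > 0$ becomes $V\cdot(-B^\ell) > 0$, i.e. $V\cdot B^\ell \neq 0$. Since $B$ is an effective divisor supported on $\bigcup_J \delta_J$ and $V$ is an irreducible $\ell$-fold, a nonzero intersection number $V\cdot B^\ell$ forces (by e.g. projection formula / the fact that $B^\ell$ restricted to $V$ is supported on $V\cap\mathrm{Supp}(B)$, which would be lower-dimensional if $V\not\subseteq\mathrm{Supp}(B)$, yet must meet $V$ in the right dimension to contribute a nonzero number — more carefully, if $V\not\subseteq\delta_J$ for every $J$ then $B|_V$ is an effective divisor on $V$ and $V\cdot B^\ell = \int_V (B|_V)^\ell$ can in principle be nonzero, so I need the sharper sign/effectivity input). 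The cleanest route: note $\pi^*(\psi_i)^\ell$ and $\psi_i^\ell$ are both nef (as $\psi_i$ and $\pi^*\psi_i$ are nef), but the inequality $\pi^*(\psi_i)^\ell\cdot V > \psi_i^\ell\cdot V$ together with $\psi_i = \pi^*\psi_i + B$, $B$ effective, is only possible if $\pi^*\psi_i|_V$ and $\psi_i|_V$ differ, which forces $B|_V\neq 0$ and moreover $V$ must meet $\mathrm{Supp}(B)$ in codimension... — so I would instead argue that $\pi_*(V)\neq 0$ first (if $\pi_*V=0$ then $\dim\pi(V)<\ell$, but then $\pi^*(\psi_i)^\ell\cdot V = (\psi_i^\ell\cdot\pi_*V)\cdot[\text{fiber degree}] = 0$ as it is a pushforward of an $\ell$-class supported in dimension $<\ell$, whereas $\psi_i^\ell\cdot V\geq 0$, contradicting positivity of the difference only if... hmm, it would give difference $\leq 0$) — so $\pi_*(V)\neq 0$, and then because $\pi^*(\psi_i)^\ell\cdot V > 0$ while $\psi_i^\ell \cdot V < \pi^*(\psi_i)^\ell\cdot V$ we need $B|_V$ to ``subtract'' from the top self-intersection, which for an effective divisor forces $V\subseteq\mathrm{Supp}(B)\subseteq\delta_I$.

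The main obstacle I anticipate is the last implication: deducing $V\subseteq\delta_I$ (and not merely $V\cap\delta_I\neq\emptyset$) from the numerical inequality, and correctly identifying $\mathrm{Supp}(B) = \bigcup_J \delta_J$ with a subset of $\delta_I$. This needs the precise book-keeping of which boundary divisors appear in the comparison formula $\psi_i = \pi^*\psi_i + \sum_J\delta_J$: exactly those $\delta_J$ with $i\in J$, $J\subseteq I^c\cup\{i\}$ and $|J|\geq 2$ — equivalently the markings being forgotten that, together with $i$, bubble off. Each such $J$ satisfies $J^c\supseteq I\setminus\{i\}=I$ (since $i\notin I$), wait — $i\in I^c$, so $J\subseteq I^c$; hence $J\cap I=\emptyset$ and $\delta_J$ parameterizes curves where a subset of $I^c$-marks (including $i$) collide, which lies over the locus in $\overline{M}_{0,k}$ where... this shows $\delta_J\subseteq\delta_{I}$? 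Not obviously — rather $\delta_J$ and $\delta_I$ are compatible ($J\subseteq I^c$) so $\delta_J\cap\delta_I$ has expected codimension $2$, and $\delta_J$ need NOT lie in $\delta_I$. So the correct statement to extract is: $B$ is supported on the divisors $\pi$ contracts, and $\pi^*(\psi_i)^\ell\cdot V > \psi_i^\ell\cdot V$ forces $\dim\pi(V)<\ell$ is false (shown above), but forces $V$ into the non-isomorphism locus of $\pi$; then a separate argument — using that the non-isomorphism locus of $\pi$, intersected with the condition coming from $i$, is $\delta_I$ — pins down $V\subseteq\delta_I$. I would handle this by writing $\pi^*(\psi_i)^\ell\cdot V - \psi_i^\ell\cdot V = \deg\big((\psi_i^\ell)|_{\pi(V)}\big)\cdot(1 - \deg(V/\pi(V)))$ type identity when $\pi_*V\neq 0$ and $\ell = \dim\pi(V)$, concluding $\deg(V/\pi(V)) = 0$, i.e. the generic fiber of $V\to\pi(V)$ is positive-dimensional, i.e. $V$ lies in the locus where $\pi$ has positive-dimensional fibers over $\pi(V)$; that locus is (a union of boundary divisors of the form) $\delta_J$, and intersecting with the constraint that $i$ participates identifies $\delta_I$ as the component, giving $V\subseteq\delta_I$ and $\pi_*(V)\neq 0$.
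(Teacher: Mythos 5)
Your opening reduction hinges on the identity $\psi_i^\ell = \pi^*(\psi_i)^\ell + B^\ell$ with $B=\psi_i-\pi^*(\psi_i)=\sum_J\delta_J$ (sum over $J$ with $i\in J\subseteq I^c$, $|J|\geq 2$), which requires all cross terms $\pi^*(\psi_i)^a\cdot B^{\ell-a}$ to vanish. That vanishing fails as soon as $\pi$ forgets more than one marking: for $J=\{i,\alpha\}$ with $|I^c|\geq 3$, the restriction of $\pi$ to $\delta_{i\alpha}$ is still dominant onto $\overline{M}_{0,k}$, so $\pi^*(\psi_i)\cdot\delta_{i\alpha}\neq 0$; and $\psi_i\cdot\delta_J\neq 0$ once $|J|\geq 3$, since $\psi_i$ restricts to a genuine $\psi$-class on the factor $\overline{M}_{0,J\cup\{*\}}$. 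The clean cancellation $(\psi_i-\delta_{i\alpha})^\ell=\psi_i^\ell+(-\delta_{i\alpha})^\ell$ is only available for a single forgetful map, which is precisely why the paper peels off one marking at a time and inducts, using $\psi_i^\ell=\pi_\alpha^*(\psi_i)^{\ell-1}(\pi_\alpha^*(\psi_i)+\delta_{i,\alpha})$ and pushing forward along $\pi_\alpha$.

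Second, even if you could place $V$ inside the support of $B$, that support is $\bigcup_{i\in J\subseteq I^c}\delta_J$, and --- as you yourself notice mid-proof --- the components $\delta_J$ with $J\subsetneq I^c$ are \emph{not} contained in $\delta_I=\delta_{I^c}$; only the single term $J=I^c$ is. So the most you would reach is ``$V\subseteq\delta_J$ for some $J$,'' not $V\subseteq\delta_I$, and your closing paragraph supplies no mechanism to exclude the other components: the proposed identity $\pi^*(\psi_i)^\ell\cdot V-\psi_i^\ell\cdot V=\deg\bigl(\psi_i^\ell|_{\pi(V)}\bigr)\cdot(1-\deg(V/\pi(V)))$ is not a correct formula, and its purported consequence (the generic fiber of $V\to\pi(V)$ is positive-dimensional) contradicts the $\pi_*(V)\neq 0$ you establish just before. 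The paper resolves exactly this ambiguity by varying which marking $\alpha\in I^c$ is forgotten first: each choice forces $V\subseteq\delta_{i,\alpha}$, $V\subseteq\delta_I$, or $V\subseteq\delta_{I\cup\{\alpha\}}$, and the pairwise disjointness of the non-$\delta_I$ options across two different choices of $\alpha$ pins down $V\subseteq\delta_I$. The one piece of your argument that does work, and more directly than in the paper, is the projection-formula step: if $\pi_*(V)=0$ then $\pi^*(\psi_i)^\ell\cdot V=\psi_i^\ell\cdot\pi_*(V)=0\leq\psi_i^\ell\cdot V$, contradicting the hypothesis, so $\pi_*(V)\neq 0$.
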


\begin{proof}
We will induct on $n$.  For $n = k+1$, $\pi^*(\psi_i) = \psi_i - \delta_{i,\alpha}$, where $\alpha$ is the marking forgotten by $\pi$ and $\delta_{i,\alpha}=\delta_I$ is the boundary divisor where the $i^{\text{th}}$ and $\alpha^{\text{th}}$ markings collide.  Since $(\psi_i - \delta_{i,\alpha})^\ell = \psi_i^\ell  + (- \delta_{i,\alpha})^\ell$, we see $V. (- \delta_{i,\alpha})^\ell > 0$.  But $ \delta_{i,\alpha} = \text{gl}(\overline{M}_{0,I \cup \{\bullet\}} \times \overline{M}_{0,I^{c} \cup \{*\}})$.  As $n-k+2 = 3$,  $\text{gl}^*( \delta_{i,\alpha}) = -\psi_{\bullet}$.  Thus $0 < V. (- \delta_{i,\alpha})^\ell = - \text{gl}_*(\text{gl}^*(V). \psi_{\bullet}^{\ell-1}))$.  However, this implies $\text{gl}^*(V). \psi_{\bullet}^{\ell-1} < 0$.  As $\psi_{\bullet}^{\ell-1}$ is nef, $\text{gl}^*(V)$ cannot be an effective $(\ell-1)$-cycle.  Thus, $V \subset \text{gl}(\overline{M}_{0,I \cup \{\bullet\}} \times \overline{M}_{0,I^{c} \cup \{*\}})$.  As $\text{gl}$ is a section of $\pi$, $\pi_*(V) \neq 0$

Suppose the lemma holds for $m < n$.  Consider a factorization of $\pi$ into $\pi = \pi' \circ \pi_\alpha$, where $\pi_\alpha : \overline{M}_{0,n}\rightarrow \overline{M}_{0,n-1}$ is a forgetful morphism forgetting $\alpha \in I^c$.  It is easy to derive from Kapranov's model that $\psi_i^\ell = \pi_\alpha^*(\psi_i)^{\ell-1}( \pi_\alpha^*(\psi_i) + \delta_{i,\alpha})$.  Thus,
\begin{align*}
0 < & \pi_{\alpha*}(V.[\pi^*(\psi_i)^\ell - \psi_i^\ell]) \text{ \hspace{6pt}  (recall } V.[\pi^*(\psi_i)^\ell - \psi_i^\ell] \text{ is a 0-cycle)}\\
 = & \pi_{\alpha*}(V.[\pi_\alpha^*(\pi'^*(\psi_i)^\ell - \psi_i^\ell) - (\pi_\alpha^*(\psi_i^{\ell-1}) . \delta_{i,\alpha})]) \\
 = & \pi_{\alpha*}(V).[\pi'^*(\psi_i)^\ell - \psi_i^\ell]  - (\pi_{\alpha*}(V.\delta_{i,\alpha}).\psi_i^{\ell-1}). \\
\end{align*}
Thus, as $\psi_i^{\ell-1}$ is nef, either 1) $V.\delta_{i,\alpha}$ is not effective, or  2) $\pi_{\alpha*}(V).[\pi'^*(\psi_i)^\ell - \psi_i^\ell] > 0$.  This statement remains true for any choice of a forgetful map $\pi_\alpha$ which factors $\pi$ (there are $n-k \geq 2$ choices for $\alpha$).  When 1) holds, we find $V\subset \delta_{i,\alpha}$.  When 2) holds, by induction we find $\pi_\alpha (V) \subset \text{gl}(\overline{M}_{0,I \cup \{\bullet\}} \times \overline{M}_{0,(I^{c}\setminus\{\alpha\}) \cup \{*\}})$.  Thus $V$ itself lies within one of the two irreducible components of $\pi_\alpha^{-1}(\text{gl}(\overline{M}_{0,I \cup \{\bullet\}} \times \overline{M}_{0,(I^{c}\setminus\{\alpha\}) \cup \{*\}}))$, which are $\delta_{I} = \text{gl}(\overline{M}_{0,I \cup \{\bullet\}} \times \overline{M}_{0,I^{c} \cup \{*\}})$ and $\delta_{I\cup \{\alpha\}}$. %= \text{gl}(\overline{M}_{0,(I \cup\{\alpha\})\cup \{\cdot\}} \times \overline{M}_{0,(I \cup\{\alpha\})^{c} \cup \{*\}})$.  %Thus, for each $\alpha \in I^c$ forgotten by $\pi$, either $V \subset \delta_{i,\alpha}$, $V \subset \delta_I = \text{gl}(\overline{M}_{0,I \cup \{\bullet\}} \times \overline{M}_{0,I^{c} \cup \{*\}})$, or $V\subset \delta_{I\cup\{\alpha\}} = \text{gl}(\overline{M}_{0,(I \cup\{\alpha\})\cup \{\cdot\}} \times \overline{M}_{0,(I \cup\{\alpha\})^{c} \cup \{*\}})$.
Thus, for each $\alpha \in I^c$ forgotten by $\pi$, either $V \subset \delta_{i,\alpha}$, $V \subset \delta_I$, or $V\subset \delta_{I\cup\{\alpha\}}$.

As mentioned, there are at least two choices for $\alpha \in I^c$ forgotten by $\pi$.  If $\alpha\neq \alpha'$,  $\delta_{i,\alpha} \cap \delta_{i,\alpha'} = \emptyset$, $\delta_{I\cup\{\alpha\}} \cap \delta_{I\cup\{\alpha'\}} = \emptyset$, and $\delta_{i,\alpha} \cap \delta_{I\cup\{\alpha'\}} = \emptyset$.  
%Since $\delta_{i,\alpha} \cap \delta_{i,\alpha'} = \emptyset$, $\text{gl}(\overline{M}_{0,I \cup\{\alpha\}\cup \{\cdot\}} \times \overline{M}_{0,(I \cup\{\alpha\})^{c} \cup \{*\}}) \cap \text{gl}(\overline{M}_{0,I \cup\{\alpha'\}\cup \{\cdot\}} \times \overline{M}_{0,(I \cup\{\alpha'\})^{c} \cup \{*\}}) = \emptyset$, and $\delta_{i,\alpha} \cap \text{gl}(\overline{M}_{0,I \cup\{\alpha'\}\cup \{\cdot\}} \times \overline{M}_{0,(I \cup\{\alpha'\})^{c} \cup \{*\}}) = \emptyset$ for $\alpha\neq \alpha'$ %(this follows directly from intersection products in Cavalieri INSERT CITATION)
%(this is immediate from intersection products on $\overline{M}_{0,n}$). 
It follows that we must have $V \subset \delta_I = \text{gl}(\overline{M}_{0,I \cup \{\bullet\}} \times \overline{M}_{0,I^{c} \cup \{*\}})$.  By induction, $\pi_*(V) = \pi'_*\circ \pi_{\alpha*}(V)\neq 0$.
\end{proof}

Lemma \ref{intersectionLem} allows us to prove the proposition below.  While the proof applies more generally for some boundary divisors $V$, it does not hold for all choices of boundary divisors $V$.  For instance, standard relations in the Chow ring of $\overline{M}_{0,n}$ show certain codimension 2 boundary strata are not rigid, a property implied by the proof below.  Notably, each codimension 2 boundary strata is extreme in $\text{Eff}^2(\overline{M}_{0,n})$ by Theorem \ref{extCycles}, even when not rigid.

\begin{prop}\label{Proposition: codim2cycles}
Suppose $V\subset \overline{M}_{0,n}$ is a nonboundary divisor swept out by negative curves of class $c$.  Let $i:\overline{M}_{0,n}\rightarrow \overline{M}_{0,n+1}$ be the embedding of a boundary divisor.  Then $i_* V \in \overline{\text{Eff}}^2(\overline{M}_{0,n+1})$ is extreme.
\end{prop}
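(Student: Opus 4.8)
The plan is to combine the extremality of $i_*V$ in $\text{Eff}^2(\overline{M}_{0,n+1})$, which is essentially immediate from Theorem \ref{extCycles}, with the numerical confinement criterion of Lemma \ref{intersectionLem}, exploiting the retraction that the boundary embedding provides.

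First I would fix coordinates. Up to the $S_{n+1}$-action (which also permutes the markings of $V$ and preserves the hypothesis that $V$ is swept out by negative curves), we may assume $i$ identifies $\overline{M}_{0,n}\cong\overline{M}_{0,n}\times\overline{M}_{0,3}$ with the boundary divisor $\delta:=\delta_{\{n,n+1\}}$. Writing $\pi:\overline{M}_{0,n+1}\to\overline{M}_{0,n}$ for the morphism forgetting the $(n+1)$-st marking, we have $\pi\circ i=\mathrm{id}$, so $i$ is a section of $\pi$ and $\pi_*\circ i_*=\mathrm{id}$ on cycle classes. Since $V$ carries a covering family of integral curves of class $c$ with $c.V<0$, the standard criterion (cf.\ \cite[Corollary~6.4]{opie2016extremal}) shows $V$ is rigid and spans an extreme ray of $\overline{\text{Eff}}(\overline{M}_{0,n})$; being an honest effective class it is then also extreme in $\text{Eff}^1(\overline{M}_{0,n})$, so Theorem \ref{extCycles} (case $m=3$) gives that $i_*V$ is extreme in $\text{Eff}^2(\overline{M}_{0,n+1})$. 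The remaining task is to promote this to the closure.

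To do so, take an arbitrary decomposition $[i_*V]=\alpha+\beta$ with $\alpha,\beta\in\overline{\text{Eff}}^2(\overline{M}_{0,n+1})$; I want $\alpha,\beta\in\mathbb{R}_{\ge 0}[i_*V]$. Pushing forward by $\pi$ and using that proper pushforward sends $\overline{\text{Eff}}^2(\overline{M}_{0,n+1})$ into $\overline{\text{Eff}}^1(\overline{M}_{0,n})$, together with $\pi_*[i_*V]=[V]$, gives $\pi_*\alpha+\pi_*\beta=[V]$ with both summands pseudoeffective; extremality of $[V]$ forces $\pi_*\alpha=s[V]$ and $\pi_*\beta=(1-s)[V]$ for some $s\in[0,1]$. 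It now suffices to show $\alpha$ — and symmetrically $\beta$ — is supported on $\delta$, i.e.\ equals $i_*\alpha'$ for some $\alpha'\in\overline{\text{Eff}}^1(\overline{M}_{0,n})$; granting this, $\alpha'=\pi_*i_*\alpha'=\pi_*\alpha=s[V]$, hence $\alpha=s\,i_*[V]=s[i_*V]$. For the confinement I would use the codimension-$\ell$ class $\mu:=\pi^*(\psi)^{\ell}-\psi^{\ell}$ of Lemma \ref{intersectionLem}, where $\ell=\dim V=n-4$ and $\psi=\psi_n$. Since $\psi\cdot\delta=0$, the cross terms telescope and one computes $\mu=-\,i_*(\eta^{\ell-1})$ for a $\psi$-class $\eta$ on $\overline{M}_{0,n}$ (the $\psi$-class at the node of $\delta$); as $\eta$ is big and nef and $V$, being nonboundary, is not contracted by the Kapranov morphism $\pi_\eta$, we get $[i_*V].\mu=\int_V\eta^{\ell}=\deg\bigl(\pi_\eta(V)\bigr)\ge 1>0$. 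Lemma \ref{intersectionLem} asserts that an irreducible codimension-$2$ subvariety $W$ with $W.\mu>0$ must lie in $\delta$ and dominate $\overline{M}_{0,n}$; equivalently every irreducible $W\not\subset\delta$ has $W.\mu\le 0$. Feeding this in — after replacing $\eta$ by an ample class to make the inequality strict on components meeting $\delta$, and using the constraint $\pi_*\alpha=s[V]$ with the rigidity of $V$ to control the components lying over $V$ and the components disjoint from $\delta$ — forces every component of every effective cycle approximating $\alpha$ to lie in $\delta$ and, after pushing to $\overline{M}_{0,n}$ and comparing with $[V]$, to coincide with $i_*V$; passing to the limit yields $\alpha=i_*\alpha'$ as required.

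The main obstacle is exactly this confinement step. Extremality in $\text{Eff}^2$ is cheap, but in the closure $\alpha$ and $\beta$ are only limits of effective cycles, so Lemma \ref{intersectionLem} must be used in concert with the pushforward identity $\pi_*\alpha=s[V]$ and the rigidity of $V$ to keep the approximating cycles from acquiring, in the limit, components off $\delta$ or components disjoint from $\delta$. It is precisely the rigidity of $V$ — supplied here by the covering family of negative curves — that makes this work: as remarked after Lemma \ref{intersectionLem}, for a boundary divisor $V$ whose image $i_*V$ is a non-rigid codimension-$2$ stratum the same chain of implications would (falsely) force $i_*V$ to be rigid, so the argument genuinely depends on the hypothesis on $V$.
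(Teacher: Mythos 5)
You have assembled the right ingredients --- the section $\pi$ with $\pi\circ i=\mathrm{id}$, the class $\pi^*(\psi)^{\ell}-\psi^{\ell}$ together with Lemma \ref{intersectionLem}, and the negative curve class $c$ --- but the way you combine them leaves a genuine gap at exactly the step you flag as the ``main obstacle.'' Your strategy is to decompose $[i_*V]=\alpha+\beta$ in $\overline{\text{Eff}}^2$, push forward by $\pi$, and then confine each summand to $\delta$ by pairing against $\mu=\pi^*(\psi)^{\ell}-\psi^{\ell}$. The difficulty is that $\mu$ is not nef: Lemma \ref{intersectionLem} only tells you that an integral $\ell$-fold $W\not\subset\delta$ satisfies $W.\mu\le 0$, while integral $\ell$-folds inside $\delta$ other than $i_*V$ can pair arbitrarily positively with $\mu$. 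Nothing in your argument excludes a decomposition in which components off $\delta$ (with nonpositive $\mu$-degree) are compensated by components inside $\delta$ (with large positive $\mu$-degree); the constraint $\pi_*\alpha=s[V]$ does not help because $\pi_*$ kills every component contracted by $\pi$. The sentence ``after replacing $\eta$ by an ample class \dots forces every component \dots to lie in $\delta$'' is where the proof is missing: no such forcing is established, Lemma \ref{intersectionLem} is stated for the specific class $\pi^*(\psi)^{\ell}-\psi^{\ell}$ and does not survive replacing $\eta$ by an ample class, and ``components of cycles approximating a pseudoeffective class'' is not a notion that is stable under limits.

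The fix --- and the route the paper takes --- is to never decompose the pseudoeffective class at all. One writes down a single numerical functional $\tau=\psi^{\ell}-(1+c.V)\,\phi^{*}\psi^{\ell}+\phi^{*}(c)$ and checks directly that $D.\tau\ge 0$ for \emph{every} integral $\ell$-fold $D\ne i_*V$ by a three-way case split: if $\phi^*(c).D\ge 0$, use nefness of $\psi^{\ell}$ and $\phi^{*}\psi^{\ell}$; if $\phi^*(c).D<0$ and $D.[\phi^{*}\psi^{\ell}-\psi^{\ell}]\le 0$, use that $c-(c.V)\psi^{\ell}$ is nef on $\overline{M}_{0,n}$; in the remaining case Lemma \ref{intersectionLem} places $D$ inside $i(\overline{M}_{0,n})$ with $\pi_*D\ne 0$, and then $c.\phi_*D<0$ forces $\phi_*D$ to be a multiple of $V$, hence $D=i_*V$. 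Since $\overline{\text{Eff}}^2$ is the closed cone generated by classes of integral $\ell$-folds, a functional that is nonnegative on all of them except $i_*V$ gives extremality in the closure with no limiting argument. Your three ingredients are precisely the three terms of $\tau$; what is missing is the realization that they must be added into one inequality rather than applied sequentially to components of approximating cycles.
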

\begin{rem}
Note that the proof of Proposition \ref{chardif3} allows us to lift negative curves on a divisor $V\subset \overline{M}_{0,n}$ to negative curves on $\phi^*V$, where $\phi : \overline{M}_{0,n+1}\rightarrow \overline{M}_{0,n}$ is a forgetful morphism.  In particular, we will use the liftings $\tilde{c}$ of the negative curve $c$ on $\boldsymbol{D_{38}}$ to prove Corollary \ref{higherCodim}.
\end{rem}
\begin{proof}

Let $\phi : \overline{M}_{0,n+1}\rightarrow \overline{M}_{0,n}$ be a forgetful map such that $\phi \circ i = \text{id}$.  Furthermore, for $I\subset \{1,\ldots , n+1\}$ such that $i(\overline{M}_{0,n-1})=\delta_I$ and $|I|=2$, let $\psi$ be the $\psi$-class corresponding to the marking in $I$ not forgotten by $\phi$.  The Kapranov map associated to $\psi$ realizes $V$ as the strict transform of a degree $d>0$ hypersurface.  With $\ell = \text{dim }V = n-4$, it follows that $\psi^{\ell} . V = \phi^*(\psi)^\ell . i_*(V) = d$, while $\psi^\ell . i_*(V) = 0$.  By the projection formula, $\phi^*(c).i_*(V)= c.V \leq -1$.

We show $i_*V\in \overline{\text{Eff}}^2(\overline{M}_{0,n+1})$ is extreme by proving every other integral $\ell$-fold $D$ satisfies $D . [\psi^\ell -(1+c.V)\phi^*\psi^\ell + \phi^*(c)] \geq 0$.  Let $D\subset \overline{M}_{0,n+1}$ be an irreducible, reduced $\ell$-fold.  Since $\psi^\ell$ and $\phi^*\psi^\ell$ are nef, if $\phi^*(c).D \geq 0$, our claim holds.  So, we may assume $\phi^*(c).D < 0$.  Note that $c -(c.V)\psi^\ell$ is a nef curve class on $\overline{M}_{0,n}$.  If $D . [\phi^*(\psi)^\ell - \psi^\ell] \leq 0$, then our claim follows from $D. [\phi^*(\psi)^\ell - \psi^\ell] \leq 0 \leq D.\phi^*(c -(c.V)\psi^\ell)$.  Hence, we may also assume $D . [\phi^*(\psi)^\ell - \psi^\ell] > 0$.  By Lemma \ref{intersectionLem}, this shows $D'\subset \text{gl}(\overline{M}_{0,n} \times \overline{M}_{0,3}) = i(\overline{M}_{0,n})$.  Since $c.\phi_*(D)=\phi^*(c).D <0$, $\phi_*(D)$ is a nonzero multiple of $V\subset \overline{M}_{0,n}$.  This implies $D = i_*V$.
\end{proof}

The proposition below proves Corollary \ref{higherCodim}.  %The proof is similar to our proof of Proposition \ref{chardif}.

\begin{prop}
Let $\pi: \overline{M}_{0,n-k+1}\rightarrow \overline{M}_{0,7}$ be the composition of $n-k-6$ forgetful maps, and $i: \overline{M}_{0,n-k+1}\rightarrow \overline{M}_{0,n}$ be a composition of $k-1$ embeddings $\overline{M}_{0,n-k+m}\rightarrow \overline{M}_{0,n-k+m+1}$ of boundary divisors.  With $F=\boldsymbol{D_{31}}$ as in Theorem \ref{charThm2}, $i_*\pi^*F \in \text{Eff}^k(\overline{M}_{0,n})$ is extreme over characteristic 0 but not over characteristic 2.  If $k\leq 2$, then with $D=\boldsymbol{D_{38}}$ as in Theorem \ref{charThm2},  $i_*\pi^*D\in \overline{\text{Eff}}^k(\overline{M}_{0,n,\overline{\mathbb{F}}_2})$ is effective and extreme, but $i_*\pi^*D\not\in \overline{\text{Eff}}^k(\overline{M}_{0,n,\overline{\mathbb{Q}}})$.

%Let $\pi : \overline{M}_{0,k+4}\rightarrow \overline{M}_{0,7}$ the composition of $k-3$ forgetful maps, and $i:\overline{M}_{0,k+4}\rightarrow \overline{M}_{0,n}$ a composition of $n-k-4$ embeddings $\overline{M}_{0,k+m}\rightarrow \overline{M}_{0,k+m+1}$ of boundary divisors.  With $D$ as in Theorem \ref{charThm2}, $i_*(\pi^*(D))\in N_k(\overline{M}_{0,n})$ satisfies $i_*(\pi^*(D))\in \overline{\text{Eff}}_k(\overline{M}_{0,n})$ is extreme over characteristic 2, but $i_*(\pi^*(D))\not\in \overline{\text{Eff}}_k(\overline{M}_{0,n})$ over characteristic 0.
\end{prop}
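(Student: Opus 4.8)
The plan is to deduce this proposition from the two cases already in hand: Theorem~\ref{extCycles} for the ``extremeness'' half and Theorem~\ref{charThm2} (together with Lemma~\ref{chardif2} and Lemma~\ref{chardif5}) for the ``over which characteristic'' half. The key observation is that $i$ and $\pi$ are, respectively, a composition of boundary embeddings and a composition of forgetful maps, and that both operations are already known to preserve the relevant structures: $\pi^*$ preserves extremeness of cycles generically parameterizing irreducible varieties (the argument at the end of Section~\ref{Section4}, via \cite[Proposition~2.1]{Chen_2015} and the gluing section), while $i_*$ preserves extremeness by iterating Theorem~\ref{extCycles}. So the skeleton is: (i) $\pi^*F$ and $\pi^*D$ are extreme on $\overline{M}_{0,n-k+1}$ in the appropriate characteristics; (ii) pushing forward by $i$ keeps them extreme on $\overline{M}_{0,n}$; (iii) the ``effective over characteristic $2$ but not over characteristic $0$'' dichotomy survives both operations because pullback by a dominant morphism and pushforward by a closed embedding both commute with the arithmetic of reduction mod $p$, and neither can create or destroy global sections of the classes in question.

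\textbf{Step-by-step.} First I would record that $\pi^*F$ generically parameterizes irreducible subvarieties: the generic fiber of $\pi$ restricted to $\pi^{-1}(F)$ is an integral variety (a product of $\overline{M}_{0,3}$'s and a copy of an $\overline{M}_{0,m}$), exactly as in the inductive argument closing Section~\ref{Section4}, so the pullback of the extreme class $F$ remains extreme on $\overline{M}_{0,n-k+1}$ over characteristic $0$; the identical argument applied fiberwise over $\mathbb{Z}$ shows $\pi^*F$ fails to be extreme over characteristic $2$ precisely because $F$ does, since the splitting $F = F_1+F_2$ over $\mathbb{F}_2$ from Lemma~\ref{chardif5} pulls back to a splitting $\pi^*F = \pi^*F_1 + \pi^*F_2$, and the negative curve classes $c_1,c_2$ lift to $\overline{M}_{0,n-k+1}$ by the construction in the proof of Proposition~\ref{chardif3}. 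Second, apply Theorem~\ref{extCycles} repeatedly along the chain of boundary embeddings realizing $i$: each embedding has the form $\overline{M}_{0,a-b+2}\times\overline{M}_{0,b}\hookrightarrow \overline{M}_{0,a}$, and Theorem~\ref{extCycles} says that pushing an extreme cycle through it (crossed with the appropriate $\overline{M}_{0,b}$) stays extreme, raising codimension by one each time, so after $k-1$ steps $i_*\pi^*F$ is extreme in codimension $(1)+(k-1)=k$ on $\overline{M}_{0,n}$ over characteristic $0$. Third, for the failure over characteristic $2$: $i_*$ is injective on effective cycles and commutes with specialization, so $i_*\pi^*F = i_*\pi^*F_1 + i_*\pi^*F_2$ over $\mathbb{F}_2$ exhibits $i_*\pi^*F$ as a nontrivial effective sum, hence not extreme there. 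For the $D=\boldsymbol{D_{38}}$ statement with $k\le 2$: by Lemma~\ref{chardif2}, $D\notin\overline{\text{Eff}}(\overline{M}_{0,7,\overline{\mathbb{Q}}})$, and since $\pi^*$ and $i_*$ both take non-pseudoeffective classes to non-pseudoeffective classes (pushforward by a finite unramified map of an effective cycle is effective, so conversely $i_*\pi^*D$ pseudoeffective would force $D$ pseudoeffective by intersecting with a complementary nef class and using that $\pi$ is dominant with the gluing section), $i_*\pi^*D\notin\overline{\text{Eff}}^k(\overline{M}_{0,n,\overline{\mathbb{Q}}})$; over $\overline{\mathbb{F}}_2$, $D$ is effective and extreme in $\overline{\text{Eff}}(\overline{M}_{0,7,\overline{\mathbb{F}}_2})$ (Table~\ref{ExtDivs2}), so $\pi^*D$ is effective and extreme by the same pullback argument, and then $i_*\pi^*D$ is extreme in $\overline{\text{Eff}}^{k}(\overline{M}_{0,n,\overline{\mathbb{F}}_2})$ by at most one application of Theorem~\ref{extCycles} (for $k\le 1$ directly, for $k=2$ via one boundary embedding), the restriction $k\le 2$ being exactly what is needed for the \emph{closed} cone statement $i_*\pi^*D\in\overline{\text{Eff}}^{k}$ rather than merely $\text{Eff}^{k}$ — mirroring the $k\le 1$ clause of Corollary~\ref{higherCodim}.

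\textbf{Main obstacle.} The delicate point is the interplay between the \emph{closed} pseudoeffective cone and the \emph{non-closed} effective cone under pushforward, which is why the statement bifurcates at $k\le 2$ (resp.\ $k\le 1$ in Corollary~\ref{higherCodim}). Theorem~\ref{extCycles} is a statement about $\text{Eff}^k$, not $\overline{\text{Eff}}^k$, so iterating it freely gives extremeness in the non-closed cone but says nothing about the closure; to get $i_*\pi^*D\in\overline{\text{Eff}}^{k}(\overline{M}_{0,n,\overline{\mathbb{F}}_2})$ one must invoke Proposition~\ref{Proposition: codim2cycles} — which upgrades to the \emph{closed} cone precisely for a nonboundary divisor pushed through \emph{one} boundary embedding — and this is the step that caps $k$ at $2$. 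The corresponding subtlety for the $\overline{\mathbb{Q}}$ side is verifying that $i_*\pi^*D$ is genuinely \emph{external} to $\overline{\text{Eff}}^k$, not merely non-extreme; here I would pair it against the class $\psi^\ell-(1+c.V)\phi^*\psi^\ell+\phi^*(c)$ produced in the proof of Proposition~\ref{Proposition: codim2cycles}, lifted appropriately through $\pi$, to exhibit a linear functional separating $i_*\pi^*D$ from the whole pseudoeffective cone — the existence of such a separating functional being exactly what the proof of Proposition~\ref{chardif} furnishes downstairs on $\overline{M}_{0,7}$, and what must be checked to survive the pullback–pushforward. Everything else is bookkeeping: tracking codimensions, checking that the forgetful maps compose to a dominant map with a gluing section, and confirming that the negative curve classes $c$, $c_1$, $c_2$ lift via Proposition~\ref{chardif3}.
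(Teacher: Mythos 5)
Your proposal is correct and follows essentially the same route as the paper: Theorems \ref{extCycles} and \ref{charThm2} together with Lemma \ref{chardif5} handle $F$; Theorem \ref{charThm2} handles $D$ for $k=1$; and the lifted negative curves of Proposition \ref{chardif3} feed into Proposition \ref{Proposition: codim2cycles} to give the closed-cone extremality for $k=2$, which is exactly the bottleneck you identify. The one place the paper is simpler is the externality of $i_*\pi^*D$ over $\overline{\mathbb{Q}}$: rather than lifting a separating functional, it just pushes forward along the forgetful map $\phi$ with $\phi\circ i=\mathrm{id}$, so pseudoeffectivity of $i_*\pi^*D$ would force $\pi^*D\in\overline{\text{Eff}}(\overline{M}_{0,n-1,\overline{\mathbb{Q}}})$, contradicting Theorem \ref{charThm2}.
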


\begin{proof}
The statement about $i_*\pi^*F\in \text{Eff}^k(\overline{M}_{0,n})$ follows immediately from Theorems \ref{extCycles}, \ref{charThm2}, and Lemma \ref{chardif5}, as $F= (F-E_{012}) + E_{012}$ is reducible over characteristic 2.  For $k=1$, the statement about $D$ is Theorem \ref{charThm2}.  To prove the statement about $D$ for $k=2$, we use the negative curves of class $\tilde{c}$ on $\overline{M}_{0,n-k+1}\cong \overline{M}_{0,n-1}$ (constructed in Proposition \ref{chardif3}) that sweep out $\pi^*D$.  It follows immediately from Proposition \ref{Proposition: codim2cycles} that $i_*\pi^*D\in \overline{\text{Eff}}^k(\overline{M}_{0,n,\overline{\mathbb{F}}_2})$ is effective and extreme.  Were $i_*\pi^*D\in \overline{\text{Eff}}^k(\overline{M}_{0,n,\overline{\mathbb{Q}}})$, then by forgetting markings we find $\pi^*D\in \overline{\text{Eff}}(\overline{M}_{0,n,\overline{\mathbb{Q}}})$, which contradicts Theorem \ref{charThm2}.
\end{proof}

%Lastly, we prove Corollary \ref{nonPolyhedral}.  %We remark that if $\overline{\text{Eff}}(\overline{M}_{0,n})$ has infinitely many extreme rays $v$ with a 

%\begin{proof}
%$\overline{\text{Eff}}(\overline{M}_{0,n})$ is non-polyhedral for $n\geq 10$ by \cite{castravet2020blownup}.  In particular, $\text{Eff}(\overline{M}_{0,n})$ has infinitely many extreme rays.  Lifting these to extreme cycles of higher codimension with Theorem \ref{extCycles}, we prove our claim.
%\end{proof}

%\section{Studying Effective Cones of Divisors}\label{Algorithms}
\section{Finding New Extreme Divisors on Arbitrary Varieties}\label{Algorithms}

As described in Section \ref{Notation}, let $X$ be a variety, $\mathcal{D}$ be a finite set of extreme divisors on $X$, and $\mathcal{C}$ be a finite collection of negative curves sweeping out divisors in $\mathcal{D}$.  We let $\mathfrak{E}$ be the cone generated by $\mathcal{D}$, and
$$\mathfrak{M}=\{ D\in N^1(X) | D.c \geq 0 \text{ for all } c\in \mathcal{C}\}.$$
%$X,\mathcal{D},\mathcal{C},\mathfrak{E},$ and $\mathfrak{M}$ be as defined in Section \ref{Notation}. {\color{red} I'm tempted to add a quick reminder here about what the notation means.}
To determine whether $\mathcal{D}$ contains every extreme divisor on $X$, we may check whether $\mathfrak{M}$ is contained in $\mathfrak{E}$.  If it is, this proves $\mathfrak{E}=\text{Eff}(X)$, since $\text{Eff}(X)\subseteq \mathfrak{E}+\mathfrak{M}$.  Otherwise, we may search rays in $\mathfrak{M}\setminus\mathfrak{E}$ to find new extreme divisors on $X$.  %For fixed collections $\mathcal{D}$ and $\mathcal{C}$ of divisors and negative curves on a variety $X$, 
Subsection \ref{sectionAlgExtDiv} describes algorithms for finding new extreme divisors and negative curves on $X$.  These rely upon subsidiary algorithms that test effectiveness of divisor classes and determine splitting types of $\mathcal{N}_{f}$ for general maps $f:\mathbb{P}^1\rightarrow X$ of fixed numeric class.
\begin{description}
\item[Algorithm \ref{divAlg}] Given $D\in N^1(X)$,\footnote{Technically, we need to lift $D$ to $\text{Pic}(X)$, but this is simple to do when $X$ is a blow-up of projective space.} compute $h^0(\mathcal{O}(D),X)$ and return a general member $D\in H^0(\mathcal{O}(D),X)$, factored into irreducible components.
\item[Algorithm \ref{curveAlg}] Given $c\in N_1(X)$, find a representative map $f:\mathbb{P}^1\rightarrow X$ with $f_*[\mathbb{P}^1]=c$ and compute the splitting type of $\mathcal{N}_f$.  If a divisor $D$ is also given, find $f(\mathbb{P}^1)$ meeting $D^{sm}$.
\end{description}
%\textbf{Algorithm \ref{divAlg}}: Given $D\in N^1(X)$, compute $h^0(\mathcal{O}(D),X)$ and return a general member $D\in H^0(\mathcal{O}(D),X)$, factored into irreducible components.
%\textbf{Algorithm \ref{curveAlg}}: Given $c\in N_1(X)$, find a general map $f:\mathbb{P}^1\rightarrow X$ with $f_*[\mathbb{P}^1]=c$ and compute the splitting type of $\mathcal{N}_f$.  If a divisor $D$ is also given, find $f(\mathbb{P}^1)$ meeting $D^{sm}$.
Section \ref{M07alg} describes our implementations of these algorithms for $X=\overline{M}_{0,7}$. %(, which?) 
These implementations are adaptable to other blow-ups $X$ of $Y\subset \mathbb{P}^n$ whose Cartier class group $\text{CaCl}(Y)$ is generated by hypersurface sections.

Wherever possible, it is best to incorporate symmetries of $X$ in the algorithms below. %(and techniques in Section \ref{NegTech}).  
For $X=\overline{M}_{0,7}$, the natural action of $S_7$ on $\overline{M}_{0,7}$ was used to simplify linear computations involving $\mathfrak{E}$ and $\mathfrak{M}$, and to pick a simplest representative from a given orbit of divisors or curves as input to Algorithms \ref{divAlg} and \ref{curveAlg}.  This is outlined in section \ref{symmetry}.

\textbf{Overview:} The algorithm below uses linear programming to automatically generate guesses for new extreme divisors and negative curves on $X$.  Figure \ref{Ext Divs Alg} provides an example. 
\begin{figure}[ht]
\includegraphics[width=0.8\linewidth]{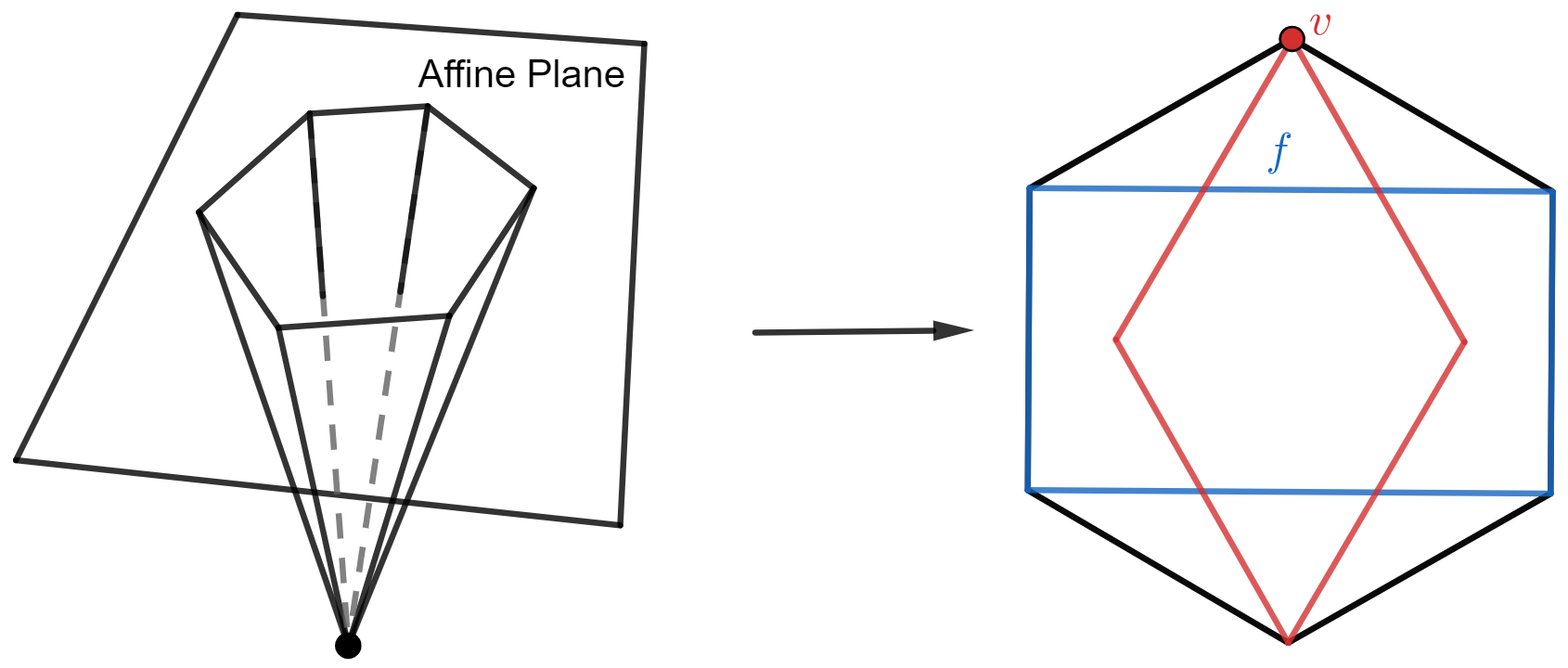}
\caption{An affine hyperplane section of $N^1(X)$ when $\rho(X) =3$, with $\mathfrak{E}$ outlined in blue, $\mathfrak{M}$ outlined in red, and $\text{Eff}(X)$ outlined in black.}
\label{Ext Divs Alg}
\end{figure}
By running a linear program minimizing the facet $f$, we identify the ray $v\in \mathfrak{M}$.  Repeating this procedure for other facets of $\mathfrak{E}$ generates further guesses $(v,f)$ for $(\text{extreme divisor}, \text{ negative curve})$ pairs on $X$.  Frequently, only one of $v$ or $f$ is an accurate guess.  We include protocols for finding a corresponding divisor or curve in such circumstances.

\subsection{Algorithm for Finding New Extreme Divisors}\label{sectionAlgExtDiv}  %Suppose $X$ admits an algorithmic test for effectiveness of its divisor classes.  
To find new extreme divisors on $X$, generate a list of facets of $\mathfrak{E}$.  We used a polyhedral software package\footnote{see http://mathieudutour.altervista.org/Polyhedral/index.html } based on cdd\footnote{see https://github.com/cddlib } for this purpose.    
%To find new extreme divisors on $X$, first generate a list of either facets of $\mathfrak{E}$ or extreme rays of $\mathfrak{M}$, depending on which is easier to compute.  As frequently $\mathfrak{E}$ is simpler than $\mathfrak{M}$, and the choice results in essentially dual algorithms, we suppose that a list of facets of $\mathfrak{E}$ is generated.  
For each facet $f\in N_1(X)$ generated this way, run a linear program minimizing $f.v$ for $v\in\mathfrak{M}$.  If $f.v\geq 0$, then $f$ is a nef curve class; otherwise, if $f.v < 0$, then $v$ is a candidate for a new extreme divisor on $X$ and $f$ is a candidate for a negative curve class on $v$.  Multiple facets of $\mathfrak{E}$ may pair negatively with $v$, so we record pairs $(v,F)$, where $F$ is a set of such facets.  As the speed of Algorithms \ref{divAlg} and \ref{curveAlg} may vary greatly with input, there are two protocols for testing such data: \textit{divisor-first} and \textit{curve-first}.  We found that testing facets $f$ of anticanonical degree $f.(-K_X) = 1,2$, or $3$ using the \textit{curve-first} protocol was the most successful approach on $\overline{M}_{0,7}$.

\textbf{Divisor-First Protocol}: Test integer multiples of $v$ for effectiveness using Algorithm \ref{divAlg}.  If some multiple $nv$ is effective, pick $D\in |nv|$ and factor it into irreducible components.  If $f\in F$ pairs negatively with exactly one irreducible component $D'$ of $D$, then $f$ is a candidate for a negative curve on $D'$, which would therefore be an extreme divisor on $X$.  %If testing with Algorithm \ref{curveAlg} shows $f$ is a negative curve via Lemma \ref{deformationLemma}, we may add $D'$ and $f$ to $\mathcal{D}$ and $\mathcal{C}$, respectively, and repeat this algorithm.
Test $f$ with Algorithm \ref{curveAlg}. If the result shows $f$ is a negative curve via Lemma \ref{deformationLemma}, we may add $D'$ and $f$ to $\mathcal{D}$ and $\mathcal{C}$, respectively, and repeat this algorithm.

Certain irreducible components $D'$ of $D$ may not pair negatively with any facets $f\in F$ which Algorithm \ref{curveAlg} confirms are negative curves.  If small integer multiples of $D'$ are rigid and linear programming shows $D'$ is not contained in $\mathfrak{E}$, we may search for negative curves on $D'$ by finding more facets of $\mathfrak{E}$ that pair negatively with $D'$, or similarly by minimizing $f.D'$ for $f\in \mathfrak{E}^*$.  Testing integer points near such outputs $f$ may produce negative curves for $D'$ as well.

\begin{rem}
If $D$ is an integral and rigid divisor class but $D+E$ is not rigid, then any negative curve on $D$ pairs positively with $E$.  This may be used to inform guesses for negative curves on $D$.  On $\overline{M}_{0,7}$, we tested the rigidity of $D+E_I$ to deduce which boundary divisors $E_I$ negative curves on $D$ need meet.  %To find negative curves for $D \subset \overline{M}_{0,7}$, we used 
\end{rem}

\textbf{Curve-First Protocol}: %Given a pair $(v,F)$, instead of testing whether $v\in\text{Eff}(X)$ as above, we may instead test facets $f\in F$ to determine if any are negative curves.  
Given a pair $(v,F)$, test facets $f\in F$ with Aglorithm \ref{curveAlg}.  If Algorithm \ref{curveAlg} returns a map $g:\mathbb{P}^1\rightarrow X$ with ($g_*[\mathbb{P}^1]=f$ and) $\mathcal{N}_g\cong \mathcal{O}(-n) \oplus V$ for some globally generated $V$, then $f$ is likely a negative curve class.  If it is, then the divisor it sweeps out belongs to the base locus of any effective divisor with which it pairs negatively.  So, it suffices to find any divisor $D\in \text{Eff}(X)$ satisfying $f.D < 0$.  If $v\notin \text{Eff}(X)$, we may check nearby integer points or use linear programming to find other $v\in\mathfrak{M}_{f<0}$.  Alternatively, computing the image of the universal family over $\text{Mor}_{f}(\mathbb{P}^1,X)$ may work for projective $X$ and low degree $f$.  This may be approximated by identifying multiple maps $g:\mathbb{P}^1\rightarrow X$, and finding a integral divisor $D$ that contains each image.  If some $g(\mathbb{P}^1)$ intersects $D^{sm}$ %and $D$ is singular along at least $(f.D)+1$ points of $g(\mathbb{P}^1)$, then $D$ is likely the divisor swept out by deformations of $g$.  This may be confirmed by closer inspection of the singular locus $D$ and splitting type of $\mathcal{N}_{g}$ (see Lemma \ref{deformationLemma}).
and $f.D = -n$, then deformation of $g$ sweep out $D$ and $f=g_*[\mathbb{P}^1]$ is a negative curve class on $D$.  As before, we may add $D$ and $f$ to $\mathcal{D}$ and $\mathcal{C}$, respectively, and repeat this algorithm.

%Computing the image $\text{ev}(U)\subset X$ of the universal family over the component of $\text{Mor}(\mathbb{P}^1,X)$ containing $[g]$ would determine this, but even for projective $X$ this requires the expensive (often impractical) computation of an elimination ideal.  Instead, if $f$ is a negative curve class, then the divisor it sweeps out will belong to the baselocus of any effective divisor with which it pairs negatively.  So, it suffices to find some divisor $D\in \text{Eff}(X)$ satisfying $f.D < 0$.  If $v\notin \text{Eff}(X)$, as above we may check nearby integer points or use linear programming to find other rays in $\mathfrak{M}$ pairing negatively with $f$.  
%If $v\notin \text{Eff}(X)$, we may check nearby integer points or other extreme rays of $\mathfrak{M}$ as before.  If $f$ is a negative curve class, the divisor it sweeps out will belong to the baselocus of any effective divisor with which it pairs negatively. For this reason, finding any such effective divisor suffices.  Alternatively, we may try computing the image of the universal family over the component of $\text{Mor}(\mathbb{P}^1,X)$ containing $[g]$.  This was done for low degree curves on $\overline{M}_{0,7}$.

\begin{rem}\label{addnefCurves}
For $X=\overline{M}_{0,7}$ and $f$ a curve of small degree under some Kapranov map, Algorithm \ref{curveAlg} frequently returned $g:\mathbb{P}^1\rightarrow \overline{M}_{0,7}$ with $\mathcal{N}_g$ globally generated, which shows that $f$ is nef.  Examples of such classes appear in Table \ref{nefCurves}.  These nef classes are of interest because they are necessarily extreme rays of $\text{Nef}_1(X)$.  Moreover, these nef classes are not expressible as positive linear combinations of negative curves in $\mathcal{C}$, so adding them to $\mathcal{C}$ shrinks $\mathfrak{M}$.  

All algorithms and statements in this section remain true if $\mathcal{C}$ is extended to include nef curves.%, other than applications of Section \ref{NegTech} via Remark \ref{alg1} (see Remark \ref{allowNef}).  
\end{rem}

\textbf{Cleanup}:  After finding more negative curves and appending them to $\mathcal{C}$, some curves in $\mathcal{C}$ may become redundant as inequalities defining $\mathfrak{M}$.  Before continuing computations, we recommend running linear programs to identify and discard these curves, as they are merely positive sums of other negative curves.  Incorporating an action of $\text{Aut}(X)$ here is crucial, as we may partition $\mathcal{C}$ into $\text{Aut}(X)$-orbits and test one representative from each orbit for redundancy.

\begin{rem}
Rather than generating a list of facets of $\mathfrak{E}$, we may initially generate extreme rays of $\mathfrak{M}$.  The algorithm for processing such a list is essentially dual to the algorithm above, but is not recommended because $\mathfrak{M}$ is often more complex than $\mathfrak{E}$.  Additionally, instead of the above approach, one may enumerate small integer points of $\mathfrak{M}$ (or $\mathfrak{E}^*$) and test those outside of $\mathfrak{E}$ (or $\mathfrak{M}^*$).  This may be used to find initial extreme divisors and negative curves on $X$, and worked for $X=\overline{M}_{0,7}$. %Using this approach on $X=\overline{M}_{0,7}$, we rediscovered all previously known extreme divisors and found two new divisors.
\end{rem}

\begin{rem}
\textit{What if $\text{Eff}(X)$ is not polyhedral?}  If $\text{Eff}(X)$ is not polyhedral, sometimes it is possible to use extreme rays of $\text{Eff}(X)$ to find an infinite-order pseudo-automorphism of $X$.  If $X$ is a blow-up of $\mathbb{P}^n$, this may be done algorithmically on a computer.  We refer the reader to \cite{he2021birational} for an example.
\end{rem}

%Pruned:  To find new extreme divisors on $X$, iteratively generate facets of $\mathfrak{E}$.  For each facet $f\in N_1(X)$ generated this way, run a linear program minimizing $f.v$ for $v\in\mathfrak{M}$.  If $f.v\geq 0$, then $f$ is a nef curve class; otherwise, if $f.v < 0$, then $v$ is a candidate for a new extreme divisor on $X$ and $f$ is a candidate for a negative curve class on $v$.  We may test integer multiples of $v$ for effectiviness.  If some multiple $nv$ is effective, pick $D\in |nv|$ and factor it into irreducible components.  If $f$ pairs negatively with exactly one irreducible component of $D$, $f$ is a candidate for a negative curve on that component, which would therefore be an extreme divisor on $X$.  If testing with Algorithm BLANK shows $f$ is a negative curve via Lemma \ref{deformationLemma}, we may add $D'$ and $f$ to $\mathcal{D}$ and $\mathcal{C}$, respectively, and repeat this algorithm.

%\subsection{Algorithm for Divisors on $\overline{M}_{0,7}$}
\section{Algorithms on $\overline{M}_{0,7}$}\label{M07alg}
This section describes the algorithms used to test divisor and curve classes on $\overline{M}_{0,7}$.\footnote{Code for these algorithms may be found at https://github.com/EricJovinelly/M07algorithms}  Let the 6 points $p_i\in \mathbb{P}^4$ (mentioned in Section \ref{Notation}) be
$$p_0= [1:0:0:0:0],\hspace{7pt} p_1= [0:1:0:0:0],\hspace{7pt} \ldots \hspace{7pt},\hspace{7pt} p_4= [0:0:0:0:1],\hspace{7pt} p_5= [1:1:1:1:1].$$
%Let these 6 points be 
%\begin{align*}
%p_0=& [1:0:0:0:0]\\
%p_1=& [0:1:0:0:0]\\
%\vdots & \\
%p_4= & [0:0:0:0:1]\\
%p_5= & [1:1:1:1:1].
%\end{align*}

% The space of sections of the divisor class
%$$D= dH -\sum_{0\leq i\leq 5} m_i E_i - \sum_{0\leq i < j \leq 5} m_{ij} E_{ij} -\sum_{0\leq i < j < k\leq 5} m_{ijk} E_{ijk}$$
%can be identified with the subspace of $\text{H}^0(\mathbb{P}^4 , \mathcal{O}(d))$ that vanishes at $p_i$ with multiplicity at least $m_i$, vanishes along $l_{ij}$ with multiplicity at least $m_{ij}$, and vanishes along $\Delta_{ijk}$ with multiplicity at least $m_{ijk}$.  Similarly, when $m_i, m_{ij}, m_{ijk}\geq 0$, the space of irreducible curves numerically equivalent to
%$$c= dl -\sum_{0\leq i\leq 5} m_i e_i - \sum_{0\leq i < j \leq 5} m_{ij} e_{ij} -\sum_{0\leq i < j < k\leq 5} m_{ijk} e_{ijk}$$
%may be identified with the space of irreducible curves of degree $d$ in $\mathbb{P}^4$ passing through $p_i$, $l_{ij}$, and $\Delta_{ijk}$ with multiplicities given by $m_i$, $m_{ij}$, and $m_{ijk}$

\subsection{Algorithm for Divisors}\label{divAlg} 
Given a divisor class 
$$D= dH -\sum_{0\leq i\leq 5} m_i E_i - \sum_{0\leq i < j \leq 5} m_{ij} E_{ij} -\sum_{0\leq i < j < k\leq 5} m_{ijk} E_{ijk},$$
the following describes an efficient procedure to test whether $D$ is effective, and to split a member of $|D|$ into integral components.  
\begin{rem}
This algorithm ran on a data set of 26,000 divisors.  When $d\leq 20$, the run time was usually a few milliseconds.  When $d\sim 30$, the algorithm took a few seconds.  As $d\rightarrow 50$, the run time grew to a few hundred seconds.  For $d\geq 40$, this algorithm is only practical to run on small sets of divisors.
\end{rem}

The space of sections of $D$ is identified via $\overline{M}_{0,7}\rightarrow\mathbb{P}^4$ with the subspace of $\text{H}^0(\mathbb{P}^4 , \mathcal{O}(d))$ that vanishes at $p_i$ with multiplicity at least $m_i$, vanishes along $l_{ij}$ with multiplicity at least $m_{ij}$, and vanishes along $\Delta_{ijk}$ with multiplicity at least $m_{ijk}$.  Our goal is to determine whether $\text{H}^0(\overline{M}_{0,7}, \mathcal{O}(D))\subset  \text{H}^0(\mathbb{P}^4 , \mathcal{O}(d))$ contains a nonzero element.  Let 
$$f(x_0,\ldots, x_4)=\sum_{I=(i_0,\ldots , i_4), |I|=d} a_I x^I$$
be a generic global section of $\mathcal{O}_{\mathbb{P}^4}(d)$.  %Our goal is to find $f\in \text{H}^0(\overline{M}_{0,7}, \mathcal{O}(D))\subset  \text{H}^0(\mathbb{P}^4 , \mathcal{O}(d))$. %by efficiently determining conditions on the coefficients $a_I$ that guarantee membership.  
The subspace $\text{H}^0(\overline{M}_{0,7}, \mathcal{O}(D))\subset  \text{H}^0(\mathbb{P}^4 , \mathcal{O}(d))$ is defined by the following linear conditions imposed by $m_h, m_{hj}, m_{hjk}$ on the coefficients $a_I$:
%The coefficients $m_h, m_{hj}, m_{hjk}$ impose linear conditions on the $a_I$'s which may be described as follows:
\begin{itemize}
\item if $5\not\in \{h,j,k\}$, the linear conditions are monomial and specify the vanishing of $a_I$ with $i_h + i_j + i_k > d - m_{hjk}$ (similarly for $m_{h}$ and $m_{hj}$).
\item if $5\in \{h,j,k\}$, permute the marked points so that it isn't (this corresponds to replacing $f(x_0,\ldots ,x_4)$ with $g=f(x_\alpha -x_0, \ldots , x_\alpha, \ldots , x_\alpha - x_4)$ for some $\alpha\notin \{h,j,k\}$) and read off the corresponding monomial conditions on $g$ in terms of the $a_I$.
\end{itemize}
Computing $g$ becomes expensive as $d$ increases.  If $b_I$ is the coefficient of $x^I$ in $g$, and $\alpha =4$, then
%$$b_I = (-1)^{j_0 + j_1 + j_2 + j_3}\sum_{J\leq I} {i_0 \choose j_0}{i_1 \choose j_1}{i_2 \choose j_2}{i_3\choose j_3} a_J$$
$$b_I = (-1)^{i_0 + i_1 + i_2 + i_3}\sum_{J\geq I} {j_0 \choose i_0}{j_1 \choose i_1}{j_2 \choose i_2}{j_3\choose i_3} a_J$$
where $J\geq I$ if $j_\beta \geq i_\beta$ for all $\beta\neq 4$.  The formula is similar if $\alpha\neq 4$.  Looping over the indices of $b_I$ that must vanish and storing the corresponding linear constraints in a matrix $M$ allows us to compute the linear series of $D$ as a subspace of $\mathcal{O}(d)$ given by the kernel of $M$. % Since the binomial coefficients above are used many times, we recommend computing them in advance.  It is also helpful to first perform computations over $\mathbb{Z}/2$, as this is considerably faster.  
If $|D|$ contains a nonzero element, factor this polynomial over $\mathbb{Q}$.  The class of the strict transform of each irreducible component may be easily computed by looking at its monomials and permuting marked points (as above).

\textbf{Computational Improvements}: Preprocess $D$ by finding an $S_7$-representative minimizing $d$, $m_5$, $m_{i5}$, and $m_{ij5}$ using Section \ref{symmetry}.  When computing $H^0(\overline{M}_{0,7}, \mathcal{O}(D))$, first perform computations over $\mathbb{Z}/2$.  This is considerably faster, and reliably detects whether $|D|$ is nonempty over characteristic 0.  To construct the matrix $M$, we recommend the following general procedure.  First, generate multi-indices $I = (i_0, \ldots , i_4)$, $|I| = d$, corresponding to monomials whose vanishing is \textit{not} imposed by the multiplicities $m_h, m_{hj}, m_{hjk}$ (with $h,j,k\neq 5$).  These form a basis for the domain of $M$.  The multiplicities $m_5, m_{h5}, m_{hj5}$ with $h,j \neq 4$ impose monomial conditions after the coordinate change $g_4 = f(x_4 - x_0, \ldots, x_4 -x_3, x_4)$.  We may obtain all conditions they impose by iterating through the monomials $x^I$ of $g_4$ whose vanishing is imposed by some $m_J$ with $5\in J$ and $4\notin J$, but not imposed by any multiplicity $m_J$ with $5\notin J$.  For each such $x^I$,  \textit{efficiently}\footnote{Fixing $i_0, i_1$, and $i_2$ in $I = (i_0, \ldots , i_4)$, the expression for each $b_I$ differs only by the values of ${j_3 \choose i_3 }$ for each $a_J$, and so may be computed from the same expression.} generate the expression for $b_I$ above and append it to $M$ as a row.
Since the binomial coefficients above are used many times, we recommend computing them in advance.  Similarly, the multiplicities $m_{45}, m_{h45}$, with $3\neq h$ impose monomial conditions after the coordinate change $g_3 = f(x_3 - x_0, \ldots, x_3 -x_2, x_3, x_3 - x_4)$, so we may likewise iterate though vanishing monomials $x^I$ of $g_3$; however, we need not store the expression for $b_I$ whose vanishing is implied by some multiplicity $m_J$ with $\{4,5\} \not\subseteq J$.  Lastly, the multiplicity $m_{345}$ imposes monomial conditions in coordinates $g_2 = f(x_2 -x_0, x_2 - x_1, x_2, x_2-x_3, x_2 -x_4)$, and we record the necessary constraints as before.

Constructing the aforementioned matrix $M$ is quite expensive.  There are many optimizations that reduce the number of $b_I$ necessary to consider, %and the number of $a_I$ that may contribute to each $b_I$.  
and which streamline their computation.  For instance, curve classes in Table \ref{boundaryCurves} can reveal certain $b_I$ are redundant row entries to $M$, without computing $b_I$.  %A document on my webpage contains the script for this algorithm, and an explanation of these optimizations.  
Through meticulous coding, we were able to test divisor classes with $d\leq 50$, and even higher.  %Foremost among them is grouping the conditions imposed by $m_5, m_{i5},$ and $m_{ij5}$ by whether they become monomial after swapping $5$ with $\alpha\in\{2,3,4\}$.  From there, ..... IDK how much is appropriate to write here.  It's complicated.  Say can test up to degree 50 divisor classes (probably higher).

\subsection{Algorithm for Curves}\label{curveAlg}
%Even if $f(\mathbb{P}^1)\subset l_{ij} \text{ or } \Delta_{ijk}$, we may compute the tangent space of $\text{Mor}(\mathbb{P}^1, \mathbb{P}^4 ; B)$ at $[f]$ analogously to our computation of $\mathcal{N}_{\tilde{f}}$ and see that such a curve either deforms outside of $l_{ij}$ or $\Delta_{ijk}$, or has at least 2 negative factors in its normal bundle.
%
%(break in notation)
%
%  If $f(\mathbb{P}^1)$ intersects $p_i$, $l_{ij}$, or $\Delta_{ijk}$ at more than the specified number of points (counted with multiplicity), then we may attach curves of class $e_i$, $e_{ij}$, and $e_{ijk}$ to the strict transform of $f(\mathbb{P}^1)$ to attain a nodal curve $C$ and a map $\tilde{f}:C\rightarrow \overline{M}_{0,7}$ realizing the curve class $c$.  Since $e_i$, $e_{ij}$, and $e_{ijk}$ are negative curves for $E_i$, $E_{ij}$, and $E_{ijk}$ respectively, and ..... phrase this as computing the tangent space to .... at .....
%
%, as then we may attach multiples of $e_i$, $e_{ij}$, and $e_{ijk}$ to the strict transform to attain a nodal curve of class $c$. When $f: C\rightarrow \mathbb{P}^4$ intersects $\l_{ij}$ or $\Delta_{ijk}$ along their intersections with another contracted boundary divisor, we will need to attach additional components to the strict transform of $f(C)$ to attain the appropriate class $c$.
%
%
%
Given a curve class 
$$c= dl -\sum_{0\leq i\leq 5} m_i e_i - \sum_{0\leq i < j \leq 5} m_{ij} e_{ij} -\sum_{0\leq i < j < k\leq 5} m_{ijk} e_{ijk}$$
with $m_i, m_{ij}, m_{ijk}\geq 0$, the following algorithm finds a representative map $f:\mathbb{P}^1\rightarrow \overline{M}_{0,7}$ with $f_*[\mathbb{P}^1] = c$ and computes the splitting type of $\mathcal{N}_f$.  If a divisor $D$ is also given, we find $f(\mathbb{P}^1)$ meeting $D^{sm}$.

\begin{rem}
This algorithm ran on a data set of $55,000$ curves.  The run time varies widely but generally increases with $d$ and the anticanonical degree of $c$.  When $d\leq 4$, the algorithm usually finishes within seconds, but when $d=6$ and $(-K_{\overline{M}_{0,7}}).c =3$, the algorithm often took minutes.  We were able to compute representatives for nef curves of class $c$ with $d\leq 6$ and  $(-K_{\overline{M}_{0,7}}).c \leq 3$.  Given a divisor $D$, we were able to compute representatives for negative curves on $D$ of class $c$ with $d\leq 8$.
%but is quite unstable.  This stems from the use of Singular's algorithms MinAssGTZ, factstd, and Groebner basis calculations when finding a representative map.   random choice and the unstable complexity of Groebner basis calculations.  
%To find a representative map, this algorithm makes frequent use of Singular's algorithms MinAssGTZ, factstd, and Groebner basis calculations.  The complexity of these algorithms varies widely with random choices, but generally grows with the degree of 
%The run time generally increases with $d$ and the anticanonical degree of $c$, but is quite unstable.  
%The use of Singular's algorithms MinAssGTZ, facstd, and Groebner basis calculations 
\end{rem}

\begin{rem}{\textit{Instability.}} To find representative maps $f$, this algorithm makes frequent use of Singular's \cite{DGPS} algorithms MinAssGTZ, facstd, and Groebner basis calculations.  As the complexity of these algorithms fluctuates wildly with random choices we make, this portion of our algorithm takes variable time.  Computations may hang or finish in milliseconds for the same curve class with $d\geq 5$.  Running multiple computations in parallel helps to find quicker solutions and avoid hanging Groebner basis computations.
%The instability of this algorithm stems from the use of Singular's algorithms MinAssGTZ, facstd, and Groebner basis calculations.  These are used to find representative maps $f$.  Due to random choice, this portion of the algorithm takes variable time when run repeatedly with the same inputs.  Computations may hang or finish in milliseconds for the same curve class with $d\geq 5$.  Running parallel computations helps to avoid hanging Groebner basis computations  This is partially by design.  This fluctuation in run time allows us 

%Finding a representative map $f$ may finish in milliseconds for a curve class $c_1$ with $d=6$, or may hang for a curve class $c_2$ with $d=5$.  Running the algorithm multiple times on the same curve class takes wildly different times.  Inputting these classes again may produce wildly different times  
\end{rem}

%The algorithm for curve classes is complicated and computationally unstable compared to the algorithm for divisors.  Moreover, it changes slightly depending on whether we have an explicit divisor we expect the curve to sweep out.  A precursory algorithm is written below, and mathematical nuances and computational optimizations are fleshed out afterward.

\begin{rem} The algorithm is bottlenecked by our ability to find a representative map $\mathbb{P}^1\rightarrow \overline{M}_{0,7}$.  All other computations in the algorithm take insignificant time.  Methods in numeric algebraic geometry could allow for the computation of representatives for higher degree curves \cite{HAUENSTEIN2017499} \cite{doi:10.1080/10586458.2013.737640}.
\end{rem}

\textbf{Overview:} Let $\pi:\overline{M}_{0,7}\rightarrow\mathbb{P}^4$ denote the Kapranov map.  The space of irreducible curves numerically equivalent to $c$ may be identified with the space of irreducible curves of degree $d$ in $\mathbb{P}^4$ passing through $p_i$, $l_{ij}$, and $\Delta_{ijk}$ with multiplicities given by $m_i$, $m_{ij}$, and $m_{ijk}$.
\begin{figure}[ht]
\includegraphics[width=0.3\linewidth]{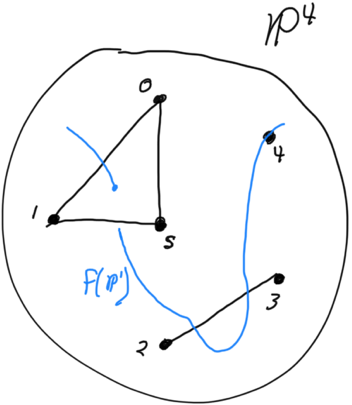}
\caption{An example of $f:\mathbb{P}^1\rightarrow \mathbb{P}^4$ meeting $\pi(E_4), \pi(E_{23})$, and $\pi(E_{015})$.}
\end{figure}
This algorithm tries to find a rational curve $f:\mathbb{P}^1\rightarrow \mathbb{P}^4$ whose strict transform $\tilde{f}:\mathbb{P}^1\rightarrow \overline{M}_{0,7}$ is of class $c$, and computes the splitting type of the virtual normal bundle $\mathcal{N}_{\tilde{f}}:=\text{coker}(\mathcal{T}_{\mathbb{P}^1}\rightarrow \tilde{f}^* \mathcal{T}_{\overline{M}_{0,7}})$.  By Lemma \ref{deformationLemma}, if $\mathcal{N}_{\tilde{f}}\cong \mathcal{O}_{\mathbb{P}^1}(-n)\oplus V$ with $V$ globally generated, and $D$ is an effective divisor such that $\text{deg}(\tilde{f}^*D)=-n$, verifying that $\pi(D)$ is nonsingular at a single point of $f(\mathbb{P}^1)$ proves that $c$ is a negative curve on (an integral component of) $D$.  If $n=1$, we may skip this check.  When $\mathcal{N}_{\tilde{f}}$ itself is globally generated, this algorithm shows that $c$ is nef.  Computations are done over a finite field, usually an extension of $\mathbb{Z}/101$. 

%  Thus, we reduce to finding a degree $d$ map $f:\mathbb{P}^1\rightarrow\mathbb{P}^4$ intersecting $p_i$, $l_{ij}$, and $\Delta_{ijk}$ at $m_i$, $m_{ij}$, and $m_{ijk}$ distinct marked points (respectively).  

\textbf{Finding a Representative Map:} To find a degree $d$ map $f:\mathbb{P}^1\rightarrow\mathbb{P}^4$ intersecting $p_i$, $l_{ij}$, and $\Delta_{ijk}$ at $m_i$, $m_{ij}$, and $m_{ijk}$ distinct marked points (respectively), let 
%Let 
$\mathbb{P}^1=\text{Proj}(k[s,t])$, and $\mathbb{P}^4=\text{Proj}(k[x_0,\dots , x_4])$.  %A degree $d$ map 
Such a map $f:\mathbb{P}^1\rightarrow \mathbb{P}^4$ is given by $[s:t]\rightarrow [f_0(s,t): \dots : f_4(s,t)]$, with each $f_i$ homogeneous of degree $d$ (sharing no common root).  Setting $s=1$, we may write each $f_i$ as 
$$f_i(t) =a_i \prod_{j=1}^d (t - r_{i,j})$$
The condition that $f$ meets the appropriate exceptional divisor $\pi (E_I)$ at $t=z$ can be expressed as
\begin{itemize}
\item if $5\not\in I$, then $f_i (z) = 0$ for all $i\not\in I$
\item if $5\in I$, then $f_i(z) - f_j (z) = 0$ for all $i,j\not\in I$
\end{itemize}
Let $z_{I,1},\dots z_{I,m_{I}}$ be the $m_{I}$ marked points along which $f(\mathbb{P}^1)$ meets $\pi (E_I)$.  Provided that we localize away from solutions where $a_i = 0$, the $f_i$ share a common root, or $z_{I,i}= z_{J,j}$ for some $(I,i)\neq (J,j)$, the above expressions give polynomial equations whose solutions correspond to $f:\mathbb{P}^1\rightarrow \mathbb{P}^4$ with the desired properties.  We describe efficient means of finding such a solution later under \textit{Computational Improvements}.

\textbf{Computing $\mathcal{N}_{\tilde{f}}$:} Given a map $f:\mathbb{P}^1\rightarrow \mathbb{P}^4$ as above, instead of computing $\mathcal{N}_{\tilde{f}}$, we compute the splitting type of a related bundle $V$, which is the extension of a globally generated vector bundle on $\mathbb{P}^1$ by $\mathcal{N}_{\tilde{f}}$.  This allows us to infer all necessary information about $\mathcal{N}_{\tilde{f}}$.  To find $V$, consider $\mathcal{N}_{\tilde{f}}$ as a subsheaf of $\mathcal{N}_{f}:=\text{coker}(\mathcal{T}_{\mathbb{P}^1}\rightarrow f^* \mathcal{T}_{\mathbb{P}^4})$.  Since $\mathcal{T}_{\mathbb{P}^1}$ is ample, the preimage of $\mathcal{N}_{\tilde{f}}\subset \mathcal{N}_{f}$ in $f^*\mathcal{T}_{\mathbb{P}^4}$, say $V'$, will have the same number (and degree) of negative factors in its splitting type as $\mathcal{N}_{\tilde{f}}$.  The same statement is true for the preimage of $V'$, denoted by $V$, in the middle term of the Euler sequence for $\mathbb{P}^4$:

$$0\rightarrow f^*\mathcal{O}_{\mathbb{P}^4}\rightarrow f^*(\oplus_5 \mathcal{O}_{\mathbb{P}^4}(1))\rightarrow f^* \mathcal{T}_{\mathbb{P}^4}\rightarrow 0.$$
%\[ \begin{tikzcd}
%0 \arrow{r} & \mathcal{O}_{\mathbb{P}^1} \arrow{r} \arrow[swap]{d}{\text{id}} & \oplus_{n-1}\mathcal{O}_{\mathbb{P}^1}(d) \arrow{r} \arrow{d}{\theta_1} & (\tilde{\pi}\circ\tilde{h})^*\mathcal{T}_{\mathbb{P}^{n-2}} \arrow{r} \arrow{d}{\theta_2} & 0 \\%
%0 \arrow{r} & \mathcal{O}_{\mathbb{P}^1} \arrow{r}& \oplus_{n-2}\mathcal{O}_{\mathbb{P}^1}(d) \arrow{r} & (\pi \circ h)^* \mathcal{T}_{\mathbb{P}^{n-3}} \arrow{r} & 0
%\end{tikzcd}
%\]

Thus, we replace $\mathcal{N}_{\tilde{f}}$ with its preimage $V$ under the map $\oplus_5 \mathcal{O}_{\mathbb{P}^1}(d) \cong f^*(\oplus_5 \mathcal{O}_{\mathbb{P}^4}(1)) \rightarrow f^*\mathcal{T}_{\mathbb{P}^4} \rightarrow \mathcal{N}_f$.  Note that the natural map $\mathcal{T}_{\mathbb{P}^1}\rightarrow f^* \mathcal{T}_{\mathbb{P}^4}$ extends to a map between Euler sequences by considering each as a quotient of $\mathbb{A}^n\setminus{0}$ by $k^{\times}$.  The map  $\oplus_2 \mathcal{O}_{\mathbb{P}^1}(1)\rightarrow \oplus_5 \mathcal{O}_{\mathbb{P}^1}(d)$ is multiplication by the affine Jacobian.

%To summarize, we replace $\mathcal{N}_{\tilde{f}}$ with it's preimage $V$ under the map $\oplus_5 \mathcal{O}_{\mathbb{P}^1}(d) \rightarrow f^*\mathcal{T}_{\mathbb{P}^4} \rightarrow \mathcal{N}_f$

To compute $V$, consider that by the identification of $\mathcal{N}_{\tilde{f}}\subset\mathcal{N}_{f}$, its preimage in $\oplus_5 \mathcal{O}_{\mathbb{P}^1}(d)$ is the intersection of subsheaves, corresponding to $z_{I,i}$, by whose quotient $\oplus_5 \mathcal{O}_{\mathbb{P}^1}(d)$ becomes a reduced torsion sheaf supported at $t=z_{I,i}$ in directions normal to both $f(\mathbb{P}^1)$ and $\pi (E_I)$.   As indicated, the tangent direction to $f(\mathbb{P}^1)$ at $t=z_{I,i}$ is the image of the natural map $\oplus_2 \mathcal{O}_{\mathbb{P}^1}(1)\rightarrow \oplus_5 \mathcal{O}_{\mathbb{P}^1}(d)$.  The tangent directions to $\pi (E_I)$ are also easy to identify as linear subspaces of $\mathbb{A}^5$, since the 5 copies of $\mathcal{O}_{\mathbb{P}^1}(d)\cong f^* \mathcal{O}_{\mathbb{P}^4}(1)$ correspond to 5 linear coordinate functions on $\mathbb{A}^5$.  On the affine patch $s=1$, $\oplus_5 \mathcal{O}_{\mathbb{P}^1}(d)\cong \oplus_5 k[t]=: M$, and the submodule corresponding to $z_{I,i}$ is spanned by $(t-z_{I,i})M$ and the indicated tangent vectors. Intersect these using Singular's algorithms to obtain the vector bundle $V$.  Frequently, we may even identify the splitting type of $\mathcal{N}_{\tilde{f}}$ from $V$: over characteristic $p$, if $p|\text{deg}(f)$, then $V$ is an extension of $\mathcal{O}\oplus \mathcal{O}(2)$ by $\mathcal{N}_{\tilde{f}}$; otherwise, it extends $\oplus_2 \mathcal{O}(1)$ by  $\mathcal{N}_{\tilde{f}}$.
\begin{rem}
If rational curves of class $c$ are free curves or negative curves on a nonboundary divisor effective over characteristic 0, then \cite[Proposition~2.8]{beheshti2020moduli} implies a general such curve is transverse to each boundary divisor at every point of intersection (see Remark \ref{strange behavior}).  Therefore, in these cases our transversality assumptions do not prevent the identification of a general $f:\mathbb{P}^1\rightarrow \mathbb{P}^4$ with strict transform of class $c$.  When $c$ is a negative curve on an effective divisor over characteristic $p$ that is not effective over characteristic 0, general representatives $\tilde{f}:\mathbb{P}^1\rightarrow \overline{M}_{0,7}$ of $c$ may not be transverse to each boundary divisor (see Remark \ref{strange behavior}).  In these cases, we may still compute $\mathcal{N}_{\tilde{f}}$.  To do this, we replace $(t-z_{I,i})M$ with $(t-z_{I,i})^{k_{I,i}}M$ and the tangent direction of $f(\mathbb{P}^1)$ with the $k_{I,i}^{th}$ Hasse derivative of $f$ at $t=z_{I,i}$, where $k_{I,i}$ is the multiplicity with which $f$ and $\pi(E_I)$ meet at $t=z_{I,i}$.
%Sometimes, we cannot identify maps $f:\mathbb{P}^1\rightarrow \mathbb{P}^4$ transverse to the contraction of each boundary strata.  As Remark \ref{strange behavior} demonstrates, this never happens for free classes or for negative curves sweeping out divisors that lift to characteristic 0.  However, if a divisor does not lift, we may still find the nor bund in char 0 partials elswehere divide out until nonzero, take direction. mult by numb times divided.
\end{rem}
\textbf{Verifying $\tilde{f}_*[\mathbb{P}^1]=c$}:  There are a few potential problems with the above approach.   What if $f(\mathbb{P}^1)$ is contained in some $\Delta_{ijk}$, intersects boundary divisors at additional points (counting multiplicity), or hits a boundary divisor along one of its boundary strata? %Localizing away from solutions where an unexpectedly large number of $a_i$ or $r_{i,j}$ coincide (exceeding relations implied by $f(z_{I,i})\in \pi (E_I)$) prevents the first issue from occuring, provided we only consider curve classes $c$ that pair nonnegatively with all boundary divisors on $\overline{M}_{0,7}$ and that are not the pullbacks of curve classes from $\overline{LM}_7$.  This is true of any negative curve on any nonboundary extreme divisor. %this happens only when $a_i=a_j=0$ or $f_i=f_j=f_k$ for some $i,j,k$.  We explicitly avoid $a_i=0$, and the latter can't happen, as then $m_{hl} + m_{h} + m_l + m_{hl5} > d$
%If $5\notin I$, this also ensures $f(\mathbb{P}^1)$ intersects $\pi(E_I)$ only at the intended points with the intended multiplicities, and prevents $f(z_{I,i})$ from lying inside a previously blown up boundary strata (i.e. $f(z_{I,i})\in \pi(E_J)$ with $J\subsetneq I$).  If $5\in I$, this similarly prevents $f(z_{I,i})\in \pi(E_J)$ with $J\subseteq I\setminus\{5\}$.  Unexpected tangency to $\pi(E_I)$ at $f(z_{I,i})$ changes the degree of $V$, which is easy to spot.  It is also easy to spot $f(z_{I,i})\in \pi(E_J)$ with $5\in J\subsetneq I$.  This leaves unexpected intersections with $\pi(E_J)$ where $5\in J$.  Either check for the absence of such points by factoring each $f_i - f_j$, or argue by taking the total transform of $f:\mathbb{P}^1\rightarrow\mathbb{P}^4$ that they are irrelevant. (I droll on here--this paragraph can be shortened).
Localizing away from solutions where an unexpectedly large number of $a_i$ or $r_{i,j}$ coincide (exceeding relations implied by $f(z_{I,i})\in \pi (E_I)$) prevents most of these issues.  To ensure the strict transform $\tilde{f}:\mathbb{P}^1\rightarrow \overline{M}_{0,7}$ represents $c$, we can simply check that various $\text{gcd}(f_i : i \in I)$ and $\text{gcd}(f_i - f_j : i,j\in I)$ have the expected degree.  This verifies that none of the above problems occur.

%This leaves unexpected intersections with $\pi(E_J)$ and $f(z_{I,i})\in \pi(E_J)$ with $5\in J\subsetneq I$.  At such points, we may attach curves of class $e_J$ or $e_J - e_I$ %$e_5$, $e_{i5}$, $e_{ij5}$, $e_5 -e_{i5}$, $e_5 -e_{ij5}$, and $e_{i5} -e_{ij5}$ 
%to attain a nodal curve of class $c$.  Unless $f:\mathbb{P}^1\rightarrow \mathbb{P}^4$ is tangent to $\pi (E_J)$ at such a point, the resulting comb of rational curves has a globally generated normal bundle on each of its teeth, which implies their smoothing by Harris, Graber, Starr I think.  At least this should be true if its a -1 curve on the variety.  Since we assumed $f(\mathbb{P}^1)$ was not tangent to $\pi (E_I)$ at marked points of intersection, it is not tangent to $\pi (E_J)\subset \pi (E_I)$ either at such points, and we are left considering unmarked points of intersection.  Here, we merely take the total transform of $f$ instead,

\textbf{Computational Improvements}: Preprocess $c$ by finding an $S_7$-representative minimizing $d$, $m_5$, $m_{i5}$, and $m_{ij5}$ using Section \ref{symmetry}.  When finding a representative of $c$, it is expensive to identify a solution to $f(z_{I,i})\in \pi(E_I)$ satisfying $a_i\neq 0$ for all $i$ and $r_{i,\alpha} = r_{j,\beta}$ iff $i\neq j$ and $r_{i,\alpha} = z_{I,h}$ for some $I\ni i,j$.  Localizing away from all bad solutions is impractical, as for more complicated $c$ this introduces far too many variables.  As a start, we may use $\text{Aut}(\mathbb{P}^1)$ to fix three roots as $0,1,\infty$.  %If $D$ is an integral effective divisor and $c.D<0$, specify $f(t=\infty)\in D^{sm}\cap M_{0,7}\subset\mathbb{P}^4$;\footnote{If we expect $c$ to be free, we may fix $f(t=\infty) \in M_{0,7}\subset\mathbb{P}^4$ as a random point.} otherwise set $z_{I,i}=\infty$ for $I\ni 5$ and act later by $\text{Aut}(\mathbb{P}^1)$ to move all marked points into the affine open $s=1$.
If given a divisor $D$, specify $f(t=\infty)\in D^{sm}\cap M_{0,7}\subset\mathbb{P}^4$ to be a random point; similarly, if we expect $c$ to be nef and $f$ to be free, we may fix $f(t=\infty) \in M_{0,7}\subset\mathbb{P}^4$ as a random point.  This fixes the variables $a_i$, which improves performance and avoids bad solutions.  Otherwise set $z_{I,i}=\infty$ for $I\ni 5$ and act later by $\text{Aut}(\mathbb{P}^1)$ to move all marked points into the affine open $s=1$.

To further reduce the ambient dimension of the problem, note that if $5\not\in I$, then each $z_{I,i}$ is a common root of $f_j$ for all $j\not\in I$.  As long as $c$ pairs nonnegatively with each boundary divisor on $\overline{M}_{0,7}$, no more than $d$ $z_{I,i}$ will be identified as roots of $f_j$ in this way, and so we may set distinct $r_{j,\alpha}=z_{I,\beta}$ for each $\beta$ and $I\not\ni 5$ (and each $j\not\in I$).  To account for $I\ni 5$, let $g_{ij}(t)= \frac{f_i - f_j}{\text{gcd}(f_i,f_j)}$ and $q(t)=\prod_{5\in I \not\ni i,j} \prod_{k=1}^{m_I} (t-z_{I,k})$.  If $g_{ij}=\alpha q + r(t)$ with $\text{deg}(r) < \text{deg}(q)$, then the condition that $f(z_{I,\beta})\in\pi(E_I)$ is equivalent to $r\equiv 0$ as a polynomial in $t$.  This yields polynomial conditions in $a_i, r_{i,j},$ and $z_{I,i}$ defining a subscheme $X\subset \text{Spec}(k[a_i, r_{i,j}, z_{I,i}])$.  Let $X_0\subset X$ be the component corresponding to nondegenerate solutions $f:\mathbb{P}^1\rightarrow\mathbb{P}^4$.  At this step (and others) in the algorithm, we may try processing $X$ with a factorizing Groebner basis algorithm to remove degenerate components.

If $c$ is a negative curve and $D$ is an integral divisor with $c.D=-n$ (set $n=3$ if no such divisor is known, or $n=0$ if we expect $c$ to be nef), then the expected dimension of $X_0$ is $e(X_0)=5d + (n-2) -3\sum m_i -2\sum m_{ij} -\sum m_{ijk}$.  Let $X'$ be the intersection of $X$ with $e(X_0)$ random coordinate hyperplanes of the form $z_{I,i}=\alpha$, $r_{i,j}=\alpha$, or $a_i=\alpha$ for distinct $1\neq\alpha\in k^{\times}$.  If $X'$ isn't 0 dimensional, cut with more such hyperplanes to obtain a 0-dimensional $X''$.  Likely, every point in $X''$ corresponds to a degenerate solution.  If so, localize $X'$ away from a few equations $z_{I,i}=z_{J,j}$ (or other equations defining degenerate components of solutions) until it contains none of these points, and recompute its dimension.  Repeating this process, or choosing other random hyperplanes to define $X'$, until $\text{dim }X'=0$ almost always yields nondegenerate solutions, as is expected.\footnote{Paul Larsen \cite{Larsen2012FultonsCF} shows that any curve class pairing nonnegatively with each boundary divisor is in $NE(\overline{M}_{0,7})$.  Moreover, the expected dimension of the Kontsevich space for each such rational curve class is always at least 1.}  Given a zero dimensional $X'$ containing nondegenerate points, base extend to the splitting field of one of its closed points, and take $f:\mathbb{P}^1\rightarrow \mathbb{P}^4$ to be one of these solutions.

%\begin{rem}
%\textbf{Note:} 
%This algorithm makes frequent use of Singular's algorithms MinAssGTZ, facstd, and Groebner basis calculations.  The complexity of these calculations varies widely with random choices, but generally grows with the degree of the curve.  We were able to compute representatives for rational curves in $\mathbb{P}^4$ of degree at most 6 without a divisor $D$, and at most 8 with a divisor $D$.  Methods in numeric algebraic geometry could allow for the computation of higher degree curves.
%\end{rem}

%\begin{rem}
%The above algorithm found representatives for rational curves in $\mathbb{P}^4$ of degree at most 6 without a divisor $D$, and at most 8 with a divisor $D$.
%We believe that the above argument shows any curve class $c\in N_1(\overline{M}_{0,7})$ that satisfies $c.B\geq 0$, for all boundary divisors $B\subset \overline{M}_{0,7}$, is representable by a rational, nodal curve.
%\end{rem}

%Moreover, we may need to extend the field of definition of $f$ to find a closed point realizing a solution.   
%We see that if $5\not\in I$, then each $z_{I,i}$ is a common root of $f_j$ for all $j\not\in I$.  As long as $c$ pairs nonnegatively with each boundary divisor on $\overline{M}_{0,7}$, no more than $d$ $z_{I,i}$ will be identified as roots of $f_j$ in this way, and so we may set distinct $r_{j,k}=z_{I,i}$ for each $i$ and $I\not\ni 5$ (and each $j$).  To account for $I\ni 5$, let $g_{ij}= f_i - f_j$ and $q=\prod_{5\in I \not\ni i,j} \prod_{k=1}^{m_I} (t-z_{I,k})$.  The condition that $f(\mathbb{P}^1)$ meets $\pi (E_I)$ at each prescribed point becomes ...

\subsection{Symmetry on $\overline{M}_{0,7}$}\label{symmetry} 
Let $\overline{M}_{0,7}$ parameterize rational curves with markings labeled $\{0,\ldots ,6\}$.  $S_7$ acts naturally on $\overline{M}_{0,7}$ by permutation of the 7 marked points.  We wish to describe this action on $\overline{M}_{0,7}$ as a blow-up of $\mathbb{P}^4$.  Recall that this identification, due to Kapranov, is given by the map $\overline{M}_{0,7}\rightarrow \mathbb{P}^4$ corresponding to the complete linear system of a $\psi$-class $\psi_i$.  We let $i=6$, so that the exceptional divisors are $E_I = \delta_{I\cup\{6\}}$ with $1\leq |I| \leq 3$.  By fixing an automorphism of $\mathbb{P}^4$, we may assume $E_i$ is contracted to the point $p_i$ given at the beginning of Section \ref{M07alg}.  Under this identification, the $S_7$-action is well known and completely described by the action of $\text{Sym}(\{0,\ldots , 5\})$ and the transposition $(56)$.
%
%Under this identification, the natural action of $S_6 < S_7$ permuting the $6$ markings $\{0,\ldots ,5\}$ coincides with the action of $S_6$ on $\mathbb{P}^4$ permuting $p_i$ (and therefore the labelings of $E_i, E_{ij}, \text{ and } E_{ijk}$).  The transposition swapping markings $5$ and $6$ resolves the Cremona transform $[x_0:\ldots : x_4]\rightarrow [\frac{1}{x_0}:\ldots :\frac{1}{x_4}]$ on $\mathbb{P}^4$.  This completely describes the $S_7$ action.
\begin{lem}
With the above map $\overline{M}_{0,7}\rightarrow \mathbb{P}^4$, the natural action of $S_6 < S_7$ permuting the $6$ markings $\{0,\ldots ,5\}$ coincides with the action of $S_6$ on $\mathbb{P}^4$ permuting $p_i$ (and therefore the labelings of $E_i, E_{ij}, \text{ and } E_{ijk}$).  The transposition swapping markings $5$ and $6$ resolves the Cremona transform $[x_0:\ldots : x_4]\rightarrow [\frac{1}{x_0}:\ldots :\frac{1}{x_4}]$ on $\mathbb{P}^4$.  This acts on $\text{Pic}(\overline{M}_{0,7})$ by permutation of boundary divisors.
\end{lem}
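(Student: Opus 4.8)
The plan is to treat the three assertions in turn; the $S_6$-claim and the Cremona-claim carry the content, while the statement about $\mathrm{Pic}$ is then formal.

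For the $S_6$-claim, the point is that $S_6=\mathrm{Sym}\{0,\dots,5\}<S_7$ fixes the marking $6$, hence fixes the class $\psi_6$; it therefore acts projectively on $H^0(\overline{M}_{0,7},\psi_6)$, and Kapranov's morphism $\kappa_6\colon\overline{M}_{0,7}\to\mathbb{P}^4=\mathbb{P}\!\left(H^0(\psi_6)^{\vee}\right)$ is equivariant for the resulting map $S_6\to\mathrm{PGL}_5$. First I would observe that $\sigma\in S_6$ carries $\delta_{\{i,6\}}$ to $\delta_{\{\sigma(i),6\}}$, and that these are exactly the divisors $\kappa_6$ contracts to the points $p_i$; hence the induced automorphism of $\mathbb{P}^4$ sends $p_i$ to $p_{\sigma(i)}$. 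Since $6=4+2$ points in linearly general position in $\mathbb{P}^4$ admit, for each permutation, a \emph{unique} projective transformation realizing it, this pins the $S_6$-action down to the standard permutation action on the configuration $p_0=[1{:}0{:}0{:}0{:}0],\dots,p_4=[0{:}0{:}0{:}0{:}1],\ p_5=[1{:}1{:}1{:}1{:}1]$ fixed in Section~\ref{M07alg}; in particular it permutes $E_i,E_{ij},E_{ijk}$ by the corresponding permutation of indices.

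For $\tau=(56)$ I would compare the Kapranov maps for markings $5$ and $6$. Because $\tau$ swaps these markings, $\tau^{*}\psi_6=\psi_5$, so $\kappa_6\circ\tau$ is the morphism defined by the complete system $|\psi_5|$, and the induced birational self-map $\phi:=(\kappa_6\circ\tau)\circ\kappa_6^{-1}$ of $\mathbb{P}^4$ is the one to identify. (i) Using the classical fact (see e.g.\ \cite{castravet2010hypertrees}) that Kapranov's embeddings for two markings differ by a standard Cremona transformation, or by a direct Chow-ring computation, one gets
\[
\psi_5\;=\;4H-3\sum_{i=0}^{4}E_i-2\!\!\sum_{0\le i<j\le4}\!\!E_{ij}-\!\!\sum_{0\le i<j<k\le4}\!\!E_{ijk}.
\]
Since $h^{0}(\overline{M}_{0,7},\psi_5)=5$ ($\psi_5$ being a Kapranov polarization), the inclusion $H^0(\psi_5)\hookrightarrow H^0(4H)\cong H^0(\mathbb{P}^4,\mathcal{O}(4))$ identifies the $\kappa_6$-pushforward of $|\psi_5|$ with the $5$-dimensional space of quartics vanishing to order $3$ at $p_0,\dots,p_4$, to order $2$ along the ten lines $\overline{p_ip_j}$ and to order $1$ along the ten planes $\overline{p_ip_jp_k}$ ($i,j,k\le4$); a direct check shows this space is $\langle\widehat{x}_i:=\prod_{j\le4,\,j\ne i}x_j\rangle$, so $\phi=g\circ\mathrm{Cr}$ for some $g\in\mathrm{PGL}_5$, where $\mathrm{Cr}=[x_0^{-1}{:}\cdots{:}x_4^{-1}]$. (ii) Since $\tau(\delta_{\{i,6\}})=\delta_{\{i,5\}}$ for $i\le4$ and $\kappa_6(\delta_{\{i,5\}})$ is the strict transform of the hyperplane through $\{p_j:j\le4,\ j\ne i\}$, namely $\{x_i=0\}$, the map $\phi$ carries the exceptional divisor over $p_i$ onto $\{x_i=0\}$; comparing with the local form of $\mathrm{Cr}$ near $p_i$ forces $g$ to preserve each coordinate hyperplane, hence to be diagonal. (iii) As $\tau$ fixes $\delta_{\{5,6\}}$, which $\kappa_6$ contracts to $p_5=[1{:}\cdots{:}1]$, we get $\phi(p_5)=p_5$; a diagonal element fixing $[1{:}\cdots{:}1]$ is scalar, so $g=\mathrm{id}$ and $\phi=\mathrm{Cr}$. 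This is exactly the statement that $\tau$ resolves the Cremona transform. The $\mathrm{Pic}$-statement is then immediate: every $\sigma\in S_7$ relabels marked points, hence permutes boundary divisors via $\sigma^{*}\delta_I=\delta_{\sigma(I)}$, and in particular $\tau$ does; since the boundary classes generate $\mathrm{Pic}(\overline{M}_{0,7})$ (by Keel's presentation of its Chow ring), this permutation determines the action on $\mathrm{Pic}$, even though in the Kapranov basis it is not obviously a permutation (for instance $\tau^{*}H=\psi_5$, the degree-$4$ class above, and $\tau^{*}E_i=\delta_{\{i,5\}}$ for $i\le4$).

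I expect the main obstacle to be steps (ii)--(iii) above, namely eliminating the projective ambiguity $g$, together with the bookkeeping justifying that $\kappa_6$ pushes $|\psi_5|$ forward to \emph{precisely} the stated linear system of multiple quartics. Both require a careful analysis of the multiplicities of strict transforms along the iterated-blow-up centers of Kapranov's construction and of how the exceptional loci over $p_0,\dots,p_4$ (and the lines and planes through them) meet the indeterminacy locus of $\mathrm{Cr}$; the class formula for $\psi_5$ and the equality $h^0(\psi_5)=5$ are standard but should be cross-checked against Kapranov's construction.
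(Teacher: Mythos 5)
The paper states this lemma with no proof at all, treating the description of the $S_7$-action on Kapranov's model as classical, so there is nothing of the authors' to compare against; your argument is a correct and complete verification. The $S_6$ part is clean: equivariance of the $\psi_6$-Kapranov map, the fact that $\sigma$ sends $\delta_{\{i,6\}}$ to $\delta_{\{\sigma(i),6\}}$, and rigidity of six linearly general points in $\mathbb{P}^4$ pin the induced automorphism down to the coordinate permutation. For $\tau=(56)$, your class formula for $\psi_5$ agrees with the relation $\psi_n=(n-1)\psi_{n+1}-\sum(n-1-|I|)E_I$ that the paper itself invokes in its Chen--Coskun discussion, and the identification of the pushed-forward system $|\psi_5|$ with $\langle \prod_{j\le 4,\,j\ne i}x_j\rangle$ is right: the five monomials visibly meet the multiplicity conditions and $h^0(\psi_5)=5$ forces them to span. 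The two steps you flag as delicate are indeed the only ones needing care, and you close them correctly: $\tau(\delta_{\{i,6\}})=\delta_{\{i,5\}}$, whose image under $\kappa_6$ is $\{x_i=0\}$ (consistent with the relation $\delta_{\{i,5\}}=H-\sum_{J\subset\{0,\dots,4\}\setminus\{i\}}E_J$ recorded elsewhere in the paper), forces the residual $g\in\text{PGL}_5$ to preserve every coordinate hyperplane and hence be diagonal, while $\tau(\delta_{\{5,6\}})=\delta_{\{5,6\}}$ together with the choice $p_5=[1:\cdots:1]$ kills the diagonal ambiguity. The final statement about $\text{Pic}$ is, as you note, formal once one knows the boundary classes are permuted and span $\text{Pic}(\overline{M}_{0,7})$; your remark that this permutation is \emph{not} a permutation of the Kapranov basis (e.g.\ $\tau^*H=\psi_5$) is exactly the point the paper exploits when it computes $d'=4d-\sum_{j\ne k}m_j$ for $S_7$-translates of divisor classes.
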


%\begin{rem}[\textit{Symmetry on Boundary Divisors and Negative Curves}] 
%The $S_n$ action on $\overline{M}_{0,n}$ realized as a blow-up of $\mathbb{P}^{n-3}$ is completely analogous.  If $\text{Eff}(\overline{M}_{0,k})$ is polyhedral for all $k<n$, how can we describe conditions that guarantee $D\in ...$
%The $S_n$ action on $\overline{M}_{0,n}$ realized as a blow-up of $\mathbb{P}^{n-3}$ is analogous.  Given a boundary divisor $\overline{M}_{0,S\cup\{\cdot\}}\times \overline{M}_{0,S'\cup \{\cdot\}}\hookrightarrow \overline{M}_{0,7}$, the permutation action of $\text{Sym}(S)\times\text{Sym}(S')$ as a subgroup of $\text{Sym}(S \cup S')=S_7$ and $\text{Sym}(S\cup \{\cdot\})\times \text{Sym}(S'\cup \{\cdot\})$ agree.  When $\overline{M}_{0,S\cup\{\cdot\}}\times \overline{M}_{0,S'\cup \{\cdot\}}\hookrightarrow \overline{M}_{0,7}\twoheadrightarrow \mathbb{P}^4$ is a contraction, the iterated blow-up $\overline{M}_{0,7}\rightarrow\mathbb{P}^4$ realizes $\overline{M}_{0,S\cup\{\cdot\}}\times \overline{M}_{0,S'\cup \{\cdot\}}$ as an iterated blow-up of $\mathbb{P}^{|S|-2}\times \mathbb{P}^{|S'|-2}$ where $\text{Sym}(S)\times\text{Sym}(S')$ acts via automorphisms of $\mathbb{P}^{|S|-2}\times \mathbb{P}^{|S'|-2}$ (permutation of $p_i\in \mathbb{P}^{|S|-2}$ and $p_j\in \mathbb{P}^{|S'|-2}$).
%\end{rem}

\textbf{Identifying a Canonical $S_7$-Representative:} As divisor and curve classes of higher degree in $\mathbb{P}^4$ and with higher multiplicities along $p_5$, $l_{i5}$, and $\Delta_{ij5}$ are more difficult to compute, $S_7$ symmetry was used to choose a representative minimizing the $\mathbb{P}^4$-degree $d$ and the multiplicites $m_5, m_{i5}, m_{ij5}$.  %The stabilizer of each boundary divisor is of the form $S_k \times S_{7-k} < S_7$, and $S_1\times S_6 < S_7$ stabilizes the $\mathbb{P}^4$.  If $G$ is the intersection of any number of such stabilizers, then $G= S_{k_1} \times \ldots \times S_{k_n}< S_7$, and the intersection $G$ with any additional stabilizer $S_k \times S_{7-k}$ is the direct product of $S_{k_i'}\times S_{k_i''}=S_{k_i}\cap (S_k \times S_{7-k}) < S_7$.
The stabilizer of each boundary divisor is of the form $S_k \times S_{7-k} < S_7$, while the stabilizer of the $\mathbb{P}^4$-degree is the stabilizer of marking $6$, $S_1\times S_6 < S_7$.  %We may use ${7 \choose k}$ transpositions\footnote{If the stabilizer $S_k \times S_{7-k}$ is $\text{Sym}\{i_1,\ldots ,i_k\} \times \text{Sym}\{j_1, \ldots j_{7-k}\}$, then these transpositions swap up to $\text{min}(k,7-k)$ $i_h$ with some $j_h$.} to traverse cosets of these stabilizers and find all representatives minimizing (degree or) multiplicity along a chosen boundary divisor.  This may be extended to fixing multiplicities along many boundary divisors at once.  
If $S_k \times S_{7-k}$ is $\text{Sym}\{i_1,\ldots ,i_k\} \times \text{Sym}\{j_1, \ldots j_{7-k}\}$, then we may use the ${7 \choose k}$ transpositions $\tau$ that swap at most $\text{min}(k,7-k)$ $i_h$ with some $j_h$ to traverse cosets of $S_k \times S_{7-k}\subset S_7$.  Acting on a divisor or curve class by each transversal $\tau$, we find a collection of $S_7$-representatives minimizing (degree or) multiplicity along a chosen boundary divisor.  This may be extended to fixing multiplicities along many boundary divisors at once.  
\begin{lem}\label{symmetry lem}
Let $G$ be the intersection of stabilizers of markings $i$ or boundary divisors $\delta_I$ on $\overline{M}_{0,7}$.  $G= S_{k_1} \times \ldots \times S_{k_n}< S_7$ is the subgroup stabilizing partitions $I\cup I^c = \{0, \ldots , 6\}$ corresponding to markings $\{i\} = I$ or boundary divisors $\delta_I$.  The intersection $G'$ of $G$ with any additional stabilizer $\text{Sym}(J) \times \text{Sym}(J^c)$ is the direct product of all $S_{k_i'}\times S_{k_i''}=S_{k_i}\cap (\text{Sym}(J) \times \text{Sym}(J^c)) < S_7$.  Cosets of $G'<G$ are traversed by $\prod_{i=1}^n {k_i \choose k_i'}$ transpositions $\tau$ given by the products of transversals of each $S_{k_i'}\times S_{k_i''} < S_{k_i}$. 
\end{lem}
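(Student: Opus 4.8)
The plan is to strip away the geometry and reduce the statement to elementary facts about Young subgroups of $S_7$. The only geometric input is the standard fact --- consistent with the preceding lemma --- that $\sigma\in S_7$ carries the boundary divisor $\delta_I$ to $\delta_{\sigma(I)}$ and permutes the markings tautologically; consequently the stabilizer of $\delta_I$ is $\mathrm{Sym}(I)\times\mathrm{Sym}(I^c)$, with no extra involution exchanging $I$ and $I^c$ since $|I|\neq|I^c|$ for the odd value $n=7$, and the stabilizer of a marking $i$ is $\mathrm{Sym}(\{i\})\times\mathrm{Sym}(\{0,\dots,6\}\setminus\{i\})$. Thus $G$ is nothing but the simultaneous stabilizer in $S_7$ of a finite family of $2$-block set partitions of $\{0,\dots,6\}$.

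First I would observe that a permutation preserves each partition in a family if and only if it preserves every block of their common refinement. Writing this common refinement as $P_1\sqcup\cdots\sqcup P_n$ with $|P_i|=k_i$, this gives $G=\prod_{i=1}^n\mathrm{Sym}(P_i)\cong S_{k_1}\times\cdots\times S_{k_n}$, the asserted Young subgroup. To intersect with one further stabilizer $\mathrm{Sym}(J)\times\mathrm{Sym}(J^c)$, note that $(g_1,\dots,g_n)\in\prod_i\mathrm{Sym}(P_i)$ fixes $J$ setwise if and only if each $g_i$ fixes $P_i\cap J$ setwise, because $J=\bigsqcup_i(P_i\cap J)$ and the $g_i$ act on the disjoint blocks $P_i$. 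Hence, with $k_i'=|P_i\cap J|$ and $k_i''=k_i-k_i'$,
\[
G'=\prod_{i=1}^n\bigl(\mathrm{Sym}(P_i\cap J)\times\mathrm{Sym}(P_i\cap J^c)\bigr)\cong\prod_{i=1}^n\bigl(S_{k_i'}\times S_{k_i''}\bigr),
\]
and under the identification $S_{k_i}=\mathrm{Sym}(P_i)$ the $i$-th factor is exactly $S_{k_i}\cap(\mathrm{Sym}(J)\times\mathrm{Sym}(J^c))$, which is the second assertion.

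Finally, since $G$ and $G'$ are compatible direct products over the blocks $P_i$, a transversal of $G'$ in $G$ is obtained by choosing for each $i$ a transversal $T_i$ of $S_{k_i'}\times S_{k_i''}$ in $S_{k_i}$ and forming all products $\tau=\prod_i\tau_i$ with $\tau_i\in T_i$; in particular $[G:G']=\prod_i\binom{k_i}{k_i'}$. For a single factor I would invoke the classical bijection between the cosets of $S_{k_i'}\times S_{k_i''}$ in $S_{k_i}$ and the $k_i'$-element subsets of $P_i$, sending a coset to the image of $P_i\cap J$: the subset obtained from $P_i\cap J$ by exchanging $r$ of its elements with $r$ elements of $P_i\cap J^c$ is the image of the product of those $r\le\min(k_i',k_i'')$ disjoint transpositions, and letting the exchanges range over all choices hits each of the $\binom{k_i}{k_i'}$ cosets exactly once. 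Taking products over $i$ yields the claimed system of representatives, where ``transposition'' in the statement abbreviates, as in the surrounding discussion, a product of at most $\sum_i\min(k_i',k_i'')$ disjoint transpositions.

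There is no serious obstacle here: the entire content is the structure of Young subgroups together with the coset count for $S_n/(S_a\times S_b)$. The only points deserving mild care are the verification that the simultaneous stabilizer of the family of partitions is exactly the stabilizer of their common refinement --- and not something larger --- and that the indicated products of disjoint transpositions form a complete and irredundant transversal; both become transparent once the bijection with $k_i'$-element subsets is set up.
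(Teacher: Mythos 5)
Your proof is correct, and it is exactly the elementary argument the paper relies on: the authors state Lemma \ref{symmetry lem} without any proof, treating it as a standard fact about Young subgroups, and the surrounding discussion (``the ${7\choose k}$ transpositions $\tau$ that swap at most $\min(k,7-k)$ $i_h$ with some $j_h$'') presupposes precisely your bijection between cosets of $S_{k_i'}\times S_{k_i''}$ in $S_{k_i}$ and $k_i'$-element subsets, realized by products of disjoint transpositions. Your two points of ``mild care'' --- that the simultaneous stabilizer of the two-block partitions equals the stabilizer of their common refinement (using $|I|\neq|I^c|$ for $n=7$ to rule out block swaps), and that the subset-exchange involutions form a complete irredundant transversal --- are the right ones, and your reading of ``transposition'' as ``product of disjoint transpositions'' matches the paper's own usage.
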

%If $G$ is the intersection of any number of such stabilizers, then $G= S_{k_1} \times \ldots \times S_{k_n}< S_7$, and the intersection $G$ with any additional stabilizer $S_k \times S_{7-k}$ is the direct product of $S_{k_i'}\times S_{k_i''}=S_{k_i}\cap (S_k \times S_{7-k}) < S_7$.  %A program for this appears on my website.
\noindent Acting on a divisor or curve class $D$ with each transversal $\tau$ in Lemma \ref{symmetry lem} fixes ($\mathbb{P}^4$-degree and) multiplicities $m_{I}$ along boundary divisors stabilized by $G$, while identifying all values of $m_J$ in the $G$-orbit of $D$.  
If $O$ is the collection of $S_7/G$-representatives of $D$ minimizing $\mathbb{P}^4$-degree and multiplicities $m_I$ stabilized by $G$, then we may obtain a corresponding collection $O'$ of $S_7/G'$-representatives of $D$, minimizing $m_J$ as well, by acting on each member of $O$ with the above transversals $\tau$.  
We use this to efficiently identify a canonical representative for each curve and divisor class in this paper, ensuring that no two classes appearing in tables in Section \ref{Tables} are $S_7$-equivalent.  %A program for this appears on my website.

\section{Proof of Theorem \ref{contractionThm} and Progress towards Conjecture \ref{losev_manin_conj}}\label{Section8}
Algorithm \ref{sectionAlgExtDiv} can be used to show $\text{Eff}(X)$ is polyhedral and generated by known extreme divisors.  Recall that $\mathfrak{E}\subset N^1(X)$ is the cone generated by known extreme effective divisors on $X$, while $\mathfrak{M}\subset N^1(X)$ is a cone bounded by curve classes (see Section \ref{Notation}).  If linear programming generates no new guesses for extreme divisors on $X$, this shows $\mathfrak{M}\subseteq \mathfrak{E}$, which implies $\mathfrak{E}=\text{Eff}(X)$.  We apply this to $X= \overline{M}_{0,\mathcal{A}}$ for $\mathcal{A}=(\frac{1}{3},\frac{1}{3},\frac{1}{3},\frac{1}{3},\frac{1}{3},\frac{1}{3},1)$ and $X= \text{Bl}_e \overline{LM}_7$.  Both spaces may be realized as intermediate blow-ups factorizing $\overline{M}_{0,7}\rightarrow \mathbb{P}^4$.  We prove $\text{Eff}(\overline{M}_{0,\mathcal{A}})$ is polyhedral and generated by divisors in Table \ref{cone22}.  We conjecture that Table \ref{cone27} lists all extreme divisors on $\text{Bl}_e \overline{LM}_7$.

\subsection{$\overline{M}_{0,\mathcal{A}}$}  The Hassett space $\overline{M}_{0,\mathcal{A}}$ for $\mathcal{A}=(\frac{1}{3},\frac{1}{3},\frac{1}{3},\frac{1}{3},\frac{1}{3},\frac{1}{3},1)$ is the iterated blow-up of $\mathbb{P}^4$ along 6 linearly general points, $p_i$, followed by the strict transforms of the 15 lines they span.  As with $\overline{M}_{0,7}$, we let $H$ denote the pullback of the hyperplane class, and $E_i, E_{ij}$ denote the exceptional divisors lying over the point $p_i$ and line $\overline{p_i p_j}$, for $0\leq i,j \leq 5$.  Via the contraction $\phi : \overline{M}_{0,7}\rightarrow \overline{M}_{0,\mathcal{A}}$, we obtain a natural map $\phi_*: N^1(\overline{M}_{0,7})\rightarrow N^1(\overline{M}_{0,\mathcal{A}})$ which takes $H$ to $H$, $E_i$ to $E_i$, $E_{ij}$ to $E_{ij}$, and sends $E_{ijk}$ to 0.  The map $\phi^* : N_1(\overline{M}_{0,\mathcal{A}})\rightarrow N_1(\overline{M}_{0,7})$ injects $N_1(\overline{M}_{0,\mathcal{A}})$ as the subspace spanned by $l, e_i, \text{ and } e_{ij}$.  By permuting the 6 points $p_i$, we obtain a natural action of $S_6$ on $\overline{M}_{0,\mathcal{A}}$.  The divisors in Table \ref{cone22} are projections via $\phi_*$ of divisors $S_7$-equivalent to those in Tables \ref{ExtDivs0}, \ref{ExtDivs1}, and \ref{boundaryCurves}.  They generate $\text{Eff}(\overline{M}_{0,\mathcal{A}})$.

\begin{longtable} [c] { || p{29em} | p{5em} || }
\hline
\multicolumn{2} { | c | }{\textbf{Table \ref{cone22}:} Extreme Divisor Classes on $\overline{M}_{0,\mathcal{A}}$ for $\mathcal{A}=(\frac{1}{3},\frac{1}{3},\frac{1}{3},\frac{1}{3},\frac{1}{3},\frac{1}{3},1)$}\label{cone22}\\
\hline
Divisor & Size of $S_6$ Orbit \\
\hline
\hline
\endfirsthead
\hline
\multicolumn{2}{ | c | }{\textbf{Table \ref{cone22}:} Extreme Divisor Classes on $\overline{M}_{0,\mathcal{A}}$ for $\mathcal{A}=(\frac{1}{3},\frac{1}{3},\frac{1}{3},\frac{1}{3},\frac{1}{3},\frac{1}{3},1)$}\\
\hline
Divisor & Size of $S_6$ Orbit \\
\hline
\hline
\endhead
\hline
\endfoot
\hline
\hline
\multicolumn{2}{ | c | }{End of Extreme Divisor Classes on $\overline{M}_{0,\mathcal{A}}$ for $\mathcal{A}=(\frac{1}{3},\frac{1}{3},\frac{1}{3},\frac{1}{3},\frac{1}{3},\frac{1}{3},1)$}\\
\hline
\endlastfoot
\hline  $E_{5}$  &  6  \\ 
\hline  $E_{45}$  &  15  \\ 
\hline  $H-E_{0}-E_{1}-E_{2}-E_{3}-E_{01}-E_{02}-E_{03}-E_{12}-E_{13}-E_{23}$  &  15  \\ 
\hline  $2H-2E_{0}-E_{1}-E_{2}-E_{3}-E_{4}-E_{5}-E_{01}-E_{02}-E_{03}-E_{04}-E_{05}-E_{12}-E_{13}-E_{24}-E_{34}$  &  90  \\ 
\hline  $2H-E_{0}-E_{1}-E_{2}-E_{3}-E_{4}-E_{5}-E_{01}-E_{02}-E_{03}-E_{12}-E_{14}-E_{25}-E_{34}-E_{35}-E_{45}$  &  60  \\ 
\hline  $4H-3E_{0}-3E_{1}-3E_{2}-2E_{3}-2E_{4}-2E_{5}-2E_{01}-2E_{02}-2E_{03}-2E_{04}-E_{05}-2E_{12}-2E_{13}-2E_{14}-E_{15}-2E_{23}-2E_{24}-E_{25}$  &  60  \\ 
\hline  $4H-2E_{0}-2E_{1}-2E_{2}-2E_{3}-2E_{4}-2E_{5}-2E_{01}-2E_{02}-2E_{03}-2E_{14}-2E_{15}-2E_{24}-2E_{25}-2E_{34}-2E_{35}$  &  10  \\ 
\hline  $4H-3E_{0}-2E_{1}-2E_{2}-2E_{3}-2E_{4}-2E_{5}-2E_{01}-2E_{02}-2E_{03}-2E_{04}-E_{05}-2E_{12}-2E_{13}-2E_{25}-2E_{35}-E_{45}$  &  360  \\ 
\hline  $6H-4E_{0}-3E_{1}-3E_{2}-3E_{3}-3E_{4}-3E_{5}-3E_{01}-3E_{02}-E_{03}-E_{04}-E_{05}-3E_{13}-3E_{14}-2E_{23}-2E_{24}-2E_{25}-3E_{35}-3E_{45}$  &  360  \\ 
\end{longtable}
%The divisors appearing here are projections of divisors in the $S_7$ equivalence class of $E_5, E_{45}, E_5, D_1, D_2, $
%A TABLE WILL APPEAR HERE WITH A LIST OF THE ORBITS OF EXTREME DIVISORS UNDER AN $S_6$ ACTION.
\begin{proof}[Proof of Theorem \ref{contractionThm}]
This proof is computer-based.  Generate the $S_7$-orbits of divisor classes listed in Tables \ref{ExtDivs0}, \ref{ExtDivs1}, and \ref{boundaryCurves}.  Their pushforward under the reduction map $\phi: \overline{M}_{0,7}\rightarrow \overline{M}_{0,\mathcal{A}}$ generates a cone $\mathfrak{E}\subset N^1(\overline{M}_{0,\mathcal{A}})$.  After computing a dual description of $\mathfrak{E}$, lift each facet $f$ to an element of $\phi^*(f)\in N_1(\overline{M}_{0,7})$.  Linear programming shows each $\phi^*(f)$ is a nonnegative linear combination of curve classes in Table \ref{boundaryCurves}. % and \ref{nefCurves}.  This proves $\mathfrak{E}=\text{Eff}(X)$ over characteristic 0.
Since such curve classes only pair negatively with the boundary divisors on $\overline{M}_{0,7}$ over all characteristics, this proves $\phi^*(f)$ is nef over any characteristic.
\end{proof}

\subsection{$\text{Bl}_e \overline{LM}_7$}
The Losev-Manin space $\overline{LM}_7$ is the iterated blow-up of $\mathbb{P}^4$ at 5 points, $p_1, \ldots , p_5$, followed by the strict transforms of the 10 lines $l_{ij} = \overline{p_i p_j}$, and lastly the strict transforms of the 10 planes $\Delta_{ijk} = \Delta p_i p_j p_k$.  The complement of these blown-up strata is an embedded torus, with identity $e=p_0$.  We blow up $p_0$ to obtain $\text{Bl}_e \overline{LM}_7$ with exceptional divisors $E_0$ lying over $e=p_0$, $E_i$ lying over $p_i$, $E_{ij}$ lying over $l_{ij}$, and $E_{ijk}$ lying over $\Delta_{ijk}$, for $1\leq i,j,k \leq 5$.

We may also realize $\text{Bl}_e \overline{LM}_7$ as the Hassett Space $\overline{M}_{0,\mathcal{A}}$ for $\mathcal{A} = (1,\frac{1}{4}, \frac{1}{4}, \frac{1}{4}, \frac{1}{4}, \frac{1}{4}, 1)$.  This yields a natural contraction map $\phi : \overline{M}_{0,7}\rightarrow \text{Bl}_e \overline{LM}_7$.  The induced map $\phi_* : N^1 (\overline{M}_{0,7}) \rightarrow N^1(\text{Bl}_e \overline{LM}_7)$ is the quotient map sending $E_{0i}$ and $E_{0ij}$ to 0.  The divisor $E_0$ parameterizes nodal curves wherein one component contains the first and last marking (the heavy markings).  By permuting marked points of equal weight, we obtain a natural action of $S_2 \times S_5$ on $\text{Bl}_e \overline{LM}_7$, which agrees with the action of $S_2 \times S_5 < S_7$ on $\overline{M}_{0,7}$.

As before, we generate $S_7$-orbits of divisor classes in Tables \ref{ExtDivs0}, \ref{ExtDivs1}, and \ref{boundaryCurves}, and split them into $S_2\times S_5$ sub-orbits.  After taking the pushforward of all such divisors to $N^1(\text{Bl}_e \overline{LM}_7)$ by $\phi_*$, we study the cone $\mathfrak{E}$ they generate.  We conjecture $\mathfrak{E} = \text{Eff}(\text{Bl}_e \overline{LM}_7)$.  
As $\mathfrak{E}$ is $S_2 \times S_5$ invariant, we may determine which $S_2 \times S_5$-orbits consist of extreme rays of $\mathfrak{E}$ by testing one representative from each orbit for redundancy (as a generator of $\mathfrak{E}$) using linear programming.  The extreme orbits we find appear below.% in Table \ref{cone27}.

%We split these $S_7$-orbits into $S_2\times S_5$ sub-orbits.  As the cone $\mathfrak{E}$ is invar determine which induced $S_2 \times S_5$ orbit call the cone they generate $\mathfrak{E}$. and determine which $S_2 $

%There is a natural contraction map $\phi': \overline{M}_{0,7}\rightarrow \overline{LM}_7$ identifies $\overline{LM}_7$ with the Hassett space $\overline{M}_{0,\mathcal{A}}$ for $\mathcal{A}= (1, ....., 1)$.  This factors through $\phi : \overline{M}_{0,7}\rightarrow \text{Bl}_e \overline{LM}_7$.  The divisor $E_0$ is the image of the boundary divisor ... and corresponds to the point BLANK in the Losev Manin space.  By permuting marked points of equal weight, we obtain a natural $S_2 \times S_5$ action on $\overline{LM}_7$ that extends to the blow up $\text{Bl}_e \overline{LM}_7$.

%The blow-up of the Losev-Manin space $\overline{LM}_7$ at the identity of its embedded torus, $\text{Bl}_e \overline{LM}_7$
%\subsection{$\text{Bl}_e \overline{LM}_7$}
%The Losev-Manin space $\overline{LM}_7$ is the blow up of $\mathbb{P}^4$ at 5 points, $p_1, \ldots , p_5$, followed by the strict transforms of the 
%The blow-up of the Losev-Manin space $\overline{LM}_7$ at the identity of its embedded torus, $\text{Bl}_e \overline{LM}_7$
%A TABLE WILL APPEAR HERE CONTAINING THE S2 X S5 ORBITS OF DIVISORS FOR THE BLOW UP OF THE LM SPACE.  IF WE FAIL TO VERIFY THE CONJECTURE, ANOTHER TABLE OF UNVERIFIED CURVE CLASSES WILL APPEAR.  I ALSO PLAN TO INCLUDE A LIST OF NEF CURVES FOR THE DUAL DESCRIPTION.

\begin{longtable} [c] { || p{29em} | p{5em} || }
\hline
\multicolumn{2} { | c | }{\textbf{Table \ref{cone27}:} Extreme Divisor Classes on $\text{Bl}_e(\overline{LM}_7)$}\label{cone27}\\
\hline
Divisor & Size of $S_2 \times S_5$ Orbit \\
\hline
\hline
\endfirsthead
\hline
\multicolumn{2}{ | c | }{\textbf{Table \ref{cone27}:} Extreme Divisor Classes on $\text{Bl}_e(\overline{LM}_7)$}\\
\hline
Divisor & Size of $S_2 \times S_5$ Orbit \\
\hline
\hline
\endhead
\hline
\endfoot
\hline
\hline
\multicolumn{2}{ | c | }{End of Extreme Divisor Classes on $\text{Bl}_e(\overline{LM}_7)$}\\
\hline
\endlastfoot
\hline  $E_{345}$  &  20  \\ 
\hline  $E_{0}$  &  1  \\ 
\hline  $E_{5}$  &  10  \\ 
\hline  $H-E_{0}-E_{1}-E_{2}-E_{3}-E_{12}-E_{13}-E_{23}-E_{123}$  &  10  \\ 
\hline  $2H-2E_{0}-E_{1}-E_{2}-E_{3}-E_{4}-E_{5}-E_{12}-E_{13}-E_{24}-E_{34}$  &  30  \\ 
\hline  $2H-E_{0}-2E_{1}-E_{2}-E_{3}-E_{4}-E_{5}-E_{12}-E_{13}-E_{14}-E_{15}-E_{23}-E_{24}-E_{35}-E_{45}-E_{123}-E_{124}-E_{135}-E_{145}$  &  15  \\ 
\hline  $3H-E_{0}-2E_{1}-2E_{2}-2E_{3}-2E_{4}-E_{5}-2E_{12}-2E_{13}-2E_{14}-E_{23}-E_{24}-E_{25}-E_{34}-E_{35}-E_{45}-E_{123}-E_{124}-E_{134}-E_{235}-E_{245}-E_{345}$  &  20  \\ 
\hline  $3H-2E_{0}-2E_{1}-2E_{2}-2E_{3}-E_{4}-E_{5}-E_{12}-E_{13}-E_{14}-E_{15}-E_{23}-E_{24}-E_{25}-E_{34}-E_{35}-E_{124}-E_{125}-E_{134}-E_{135}-E_{234}-E_{235}$  &  20  \\ 
\hline  $10H-7E_{0}-6E_{1}-6E_{2}-6E_{3}-6E_{4}-6E_{5}-4E_{12}-4E_{13}-3E_{14}-3E_{15}-3E_{23}-4E_{24}-3E_{25}-3E_{34}-4E_{35}-4E_{45}-E_{123}-E_{124}-2E_{125}-2E_{134}-E_{135}-2E_{145}-2E_{234}-2E_{235}-E_{245}-E_{345}$  &  12  \\ 
\end{longtable}

\begin{lem}
The divisors in Table \ref{cone27} are pushforwards of divisors $S_7$-equivalent to $E_{01}$, $E_0$, $E_0$, $E_0$, $\boldsymbol{D_1}$, $\boldsymbol{D_1}$, $\boldsymbol{D_3}$, $\boldsymbol{D_3}$, and $\boldsymbol{D_{28}}$.  They are extreme in $\overline{\text{Eff}}(\text{Bl}_e \overline{LM}_7)$.
\end{lem}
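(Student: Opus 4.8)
The statement to prove has two parts: the identification of each entry of Table~\ref{cone27} as a $\phi_*$-image, and extremality. Recall that $\phi\colon\overline{M}_{0,7}\to\text{Bl}_e\overline{LM}_7$ contracts exactly the divisors $E_{0i}$ and $E_{0ij}$, and that $\phi_*$ on $N^1$ is the quotient killing those classes and fixing $H$, $E_i$, $E_{ij}$ and $E_{ijk}$ with $1\le i,j,k\le 5$. For the first part I would, for each of the nine source classes $E_{01}$, $E_0$, $\boldsymbol{D_1}$, $\boldsymbol{D_3}$, $\boldsymbol{D_{28}}$, use Section~\ref{symmetry} to find the permutation $\tau\in S_7$ placing it in the relevant $S_2\times S_5$-suborbit, then delete the $E_{0i}$- and $E_{0ij}$-coefficients of $\tau$ applied to that source and compare with the table entry. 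For instance $E_{345}$ is the boundary divisor $\delta_{\{0,1,2\}}$ on $\overline{M}_{0,7}$, hence $S_7$-equivalent to $E_{01}=\delta_{\{0,1,6\}}$, and is not contracted, so $\phi_*E_{345}=E_{345}$; likewise the boundary divisor $\delta_{\{4,5\}}$ has class $H-E_0-E_1-E_2-E_3-E_{01}-E_{02}-E_{03}-E_{12}-E_{13}-E_{23}-E_{012}-E_{013}-E_{023}-E_{123}$, is $S_7$-equivalent to $E_0$, and its $\phi_*$-image is precisely the fourth row $H-E_0-E_1-E_2-E_3-E_{12}-E_{13}-E_{23}-E_{123}$. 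The remaining rows are handled identically from the explicit classes of $\boldsymbol{D_1}$, $\boldsymbol{D_3}$ and $\boldsymbol{D_{28}}$ in Tables~\ref{ExtDivs0} and~\ref{ExtDivs1}; this step is a finite mechanical check.

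For extremality I would use the criterion recalled in Section~\ref{Section2} (\cite[Corollary~6.4]{opie2016extremal}): a class $D'$ represented by an effective divisor and swept out by a family of generically integral curves of some class $c'$ with $c'.D'<0$ is extreme in $\overline{\text{Eff}}$, and such a $D'$ is automatically rigid and irreducible because $c'.(nD')<0$ forces every member of $|nD'|$ to contain $D'$, so $h^0(nD')=1$ for all $n$. (That $\overline{\text{Eff}}(\text{Bl}_e\overline{LM}_7)=\text{Eff}(\text{Bl}_e\overline{LM}_7)$ is rational polyhedral, by He's log-Fano result \cite{HeZhuang2020Bgob}, is not needed here but reassures that the extreme rays determine the cone.) The four boundary-type entries are immediate: $E_{345}$, $E_0$, $E_5$ are swept out by the ruling classes $e_{345}$, $e_0$, $e_5$ of these exceptional divisors, each pairing $-1$ with the divisor it covers, and $H-E_0-E_1-E_2-E_3-E_{12}-E_{13}-E_{23}-E_{123}$ — the strict transform of the hyperplane $\langle p_0,p_1,p_2,p_3\rangle$ — is swept out by strict transforms of lines through $p_0$ in that hyperplane, of class $l-e_0-e_{123}$, which pair $-1$ with it.

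For the five entries pushed forward from $\boldsymbol{D_1}$, $\boldsymbol{D_3}$, $\boldsymbol{D_{28}}$ I would transport the negative curves recorded for those divisors in Tables~\ref{ExtDivs0} and~\ref{ExtDivs1}. A general member $\gamma$ of the covering family of $\tau\boldsymbol{D_i}$ on $\overline{M}_{0,7}$ maps isomorphically to a curve that sweeps out $\phi_*(\tau\boldsymbol{D_i})$; writing $\phi^*\phi_*(\tau\boldsymbol{D_i})=\tau\boldsymbol{D_i}+\Gamma$ with $\Gamma\ge 0$ supported on $\bigcup(E_{0i}\cup E_{0ij})$, one gets $\phi_*(\gamma).\phi_*(\tau\boldsymbol{D_i})=\gamma.\tau\boldsymbol{D_i}+\gamma.\Gamma$. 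When $\tau$ can be chosen so that the recorded negative-curve class has no $e_{0i}$- or $e_{0ij}$-component, its general representative $\gamma$ is disjoint from $\Gamma$, so $\gamma.\Gamma=0$ and $\phi_*(\gamma).\phi_*(\tau\boldsymbol{D_i})=\gamma.\tau\boldsymbol{D_i}<0$. Otherwise I would run Algorithm~\ref{curveAlg} directly on $\text{Bl}_e\overline{LM}_7$ — which, being a blow-up of $\mathbb{P}^4$, is covered by the methods of Section~\ref{M07alg} — to produce a covering family of rational curves pairing negatively with the divisor in question.

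The main obstacle is precisely this last step: controlling how the recorded covering families on $\overline{M}_{0,7}$ meet the contracted divisors $E_{0i}$ and $E_{0ij}$, i.e. bounding $\gamma.\Gamma$. If each relevant negative-curve class can be represented, after an admissible $S_7$-twist, by curves avoiding those divisors, the descent is automatic; otherwise one must supply a new covering family on $\text{Bl}_e\overline{LM}_7$ with strictly negative intersection. The boundary-type cases and the $\phi_*$-identification are routine; the genuine content is confirming that each of the five non-boundary classes of Table~\ref{cone27} really carries a covering family pairing negatively with it.
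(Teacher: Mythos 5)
Your proposal matches the paper's proof in both structure and substance: the identification of each table entry is a mechanical symmetry computation on degrees and multiplicities (the paper uses the explicit Cremona formula $d'=4d-\sum_{j\neq k}m_j$, you use Section \ref{symmetry}), and extremality is obtained by descending the recorded negative curves through $\phi$, using exactly the observation that — except for the two $\boldsymbol{D_1}$ entries — those curve classes involve no $e_{0i}$ or $e_{0ij}$ and hence are pullbacks via $\phi^*$, so the projection formula preserves negativity. The paper likewise singles out the two $\boldsymbol{D_1}$ pushforwards, whose negative curves do meet the contracted divisors, and disposes of them with the same brevity you do ("follows easily"), so the "main obstacle" you flag is no more of a gap than the published argument.
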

%The divisors appearing in this table are pushforwards of divisors $S_7$-equivalent to $E_{01}$, $E_0$, $E_0$, $E_0$, $\boldsymbol{D_1}$, $\boldsymbol{D_1}$, $\boldsymbol{D_3}$, $\boldsymbol{D_3}$, and $\boldsymbol{D_{28}}$.
\begin{proof}
Other divisors $D'$ in the $S_7$-orbit of $\boldsymbol{D_i} = (dH -\sum m_j E_j - \ldots)$ have $\mathbb{P}^4$ degree $d' = 4d - \sum_{j\neq k} m_j$ for some $k$, and multiplicities $m_k' = m_k$ and $m_j' = 3d - \sum_{h \neq j,k} m_h$.  This allows us to determine which divisors pushforward to the divisors contained in Table \ref{cone27}.  We can see that each pushforward above is extreme in $\overline{\text{Eff}}(\text{Bl}_e \overline{LM}_7)$ by studying negative curves on that divisor.  Aside from the two pushforwards of $\boldsymbol{D_1}$, the negative curves on each $\boldsymbol{D_i}$ are pullbacks via $\phi^*$ of curves on $\text{Bl}_e \overline{LM}_7$.  The extremality of both pushforwards of $\boldsymbol{D_1}$ follows easily.
\end{proof}

As in the proof of Theorem \ref{contractionThm}, we may attempt to prove $\mathfrak{E} = \text{Eff}(\text{Bl}_e \overline{LM}_7)$ by generating a dual description for $\mathfrak{E}$ and lifting each facet $f$ to $\phi^*(f) \in N_1(\overline{M}_{0,7})$.  If each $\phi^*(f)$ were a positive linear combination of curves in Tables \ref{ExtDivs0}, \ref{ExtDivs1}, \ref{nefCurves}, and \ref{boundaryCurves}, then it would follow that $\mathfrak{E} = \text{Eff}(\text{Bl}_e \overline{LM}_7)$ over characteristic 0 and all but finitely many prime characteristics.  However, linear programming shows this does not hold.  %Instead, all facets of $\mathfrak{E}$ may be expressed....
%However, linear programming shows curves in Table \ref{LMcurves} cannot be expresse
Instead, Table \ref{LMcurves} lists 154 facets of $\mathfrak{E}$ that may or may not be nef.  %Demonstrating these curve classes are nef would verify Conjecture \ref{losev_manin_conj}.
Conjecture \ref{losev_manin_conj} holds if and only if each curve class in Table \ref{LMcurves} is nef.

\begin{rem}
Demonstrating that curve classes $c$ in Table \ref{LMcurves} are nef may be difficult to do directly by finding a representative map, especially when $c.(-K_{\text{Bl}_e \overline{LM}_7})$ is large.  Instead, it is possible that $c=c_1 + c_2$, where $c_1$ is a negative curve on some divisor in Table \ref{ExtDivs0}, \ref{ExtDivs1}, or \ref{boundaryCurves}, and $c_2$ is another such negative curve, or a nef curve on $\overline{M}_{0,7}$.  Such expressions have already been observed for many other facets of $\mathfrak{E}$, and appear more likely to exist when $c.(-K_{\text{Bl}_e \overline{LM}_7})$ is larger. Indeed, when $c.(-K_{\text{Bl}_e \overline{LM}_7}) \geq 3$, any free curve $f:\mathbb{P}^1\rightarrow \text{Bl}_e \overline{LM}_7$ of class $c$ deforms to a stable map $g:C\rightarrow \text{Bl}_e \overline{LM}_7$ from a nodal curve with at least one free component.  This follows from viewing $f$ as the strict transform of $\mathbb{P}^1\xrightarrow{f} \text{Bl}_e \overline{LM}_7 \xrightarrow{\pi} \overline{LM}_7$.  All deformations of $f$ meet the exceptional locus $E_0$ of $\pi$, which is contracted to a point.  By applying Mori's Bend and Break to a family of deformations of $f$ passing through a general point of $\text{Bl}_e \overline{LM}_7$, we obtained the desired map $g:C\rightarrow \text{Bl}_e \overline{LM}_7$.
%It is unlikely that curve classes in Table \ref{LMcurves} with high anticanonical degree are
\end{rem}
%\fontsize{10}{11}\selectfont
\setlength\LTleft{-6.75em}
% [inline block 0: 1 envs, 20416 chars -> data_tex | \begin{longtable}  { || p{41.4em} | p{3.6em} || } \hline...]

%\fontsize{10}{10}\selectfont
%\fontsize{10}{12}\selectfont
\normalsize

%\newpage

\section{Tables of Divisors and Curves on $\overline{M}_{0,7}$}\label{Tables}

This section contains all curves and divisors found using our algorithms.  No two curve or divisor classes appearing in tables in this paper are equivalent under the natural $S_7$ action on $\overline{M}_{0,7}$.

We identify 31 new extreme divisors and 58 negative curve classes sweeping out these divisors in Tables \ref{ExtDivs0} and \ref{ExtDivs1}.  We also verify that the divisor $\boldsymbol{D_5}$, first found by \cite{doran2016simplicial}, is an extreme ray of $\overline{\text{Eff}}(\overline{M}_{0,7})$.  Before our work, there were only 5 $S_7$-distinct divisors known to be extreme rays of $\overline{\text{Eff}}(\overline{M}_{0,7})$: the two orbits of boundary divisors containing $E_0$ and $E_{01}$, the hypertree divisors $\boldsymbol{D_1}$ and $\boldsymbol{D_2}$, and Opie's divisor $\boldsymbol{D_3}$.  All divisors appearing in Tables \ref{ExtDivs0} and \ref{ExtDivs1} are extreme over characteristic 0 and characteristic $p$ for all but finitely many primes $p$.  %The divisor $\boldsymbol{D_5}$ first appeared \cite{doran2016simplicial}, who conjectured it was extreme in $\overline{\text{Eff}}(\overline{M}_{0,7})$.  We identify

Over characteristic 2, we identify two further extreme divisors that are not effective over characteristic 0.  These divisors are listed in Table \ref{ExtDivs2} as $\boldsymbol{D_{37}}$ and $\boldsymbol{D_{38}}$.  We use $\boldsymbol{D_{38}}$ to prove Theorem \ref{charThm}.  The last divisor in Table \ref{ExtDivs0}, $\boldsymbol{D_{31}} = \boldsymbol{D_{37}} + E_{012}$, is extreme in $\text{Eff}(\overline{M}_{0,7})$ over characteristic 0 but is reducible in characteristic 2.  
%We identify 31 new extreme divisors and 58 negative curve classes sweeping out these divisors in Tables \ref{ExtDivs0} and \ref{ExtDivs1}.  We also verify that the divisor found by \cite{doran2016simplicial} is extreme using Lemma \ref{deformationLemma}. Furthermore, in table \ref{ExtDivs2} we identify two extreme divisors over characteristic 2 which are not effective over characteristic 0, and prove Theorem \ref{charThm} using one of these.  If $D$ is the class of the other divisor in table \ref{ExtDivs2}, then $D+E_{012}$ is effective, irreducible, and rigid over characteristic 0.  We show $D+E_{012}$ is in fact extreme in $\text{Eff}(\overline{M}_{0,7})$ over characteristic 0.

We also include a short list of rigid, integral divisors on $\overline{M}_{0,7}$ that are not extreme (Table \ref{notExt}), curve classes that sweep out unknown divisors over characteristic 0 (Table \ref{unpairedNeg}), and low degree nef curves which cannot be expressed as positive linear combinations of known negative curves (Table \ref{nefCurves}).  The proof of Theorem \ref{nefCurvesThm} appears before Table \ref{nefCurves}.

\textbf{On Proper Transform of Chen-Coskun Divisors:} As both $\boldsymbol{D_1}$ and $\boldsymbol{D_3}$ are proper transforms $\Lambda_a$ of Chen-Coskun divisors $D_a$ \cite{opie2016extremal} \cite{chen2013extremal}, it is natural to wonder whether other $\boldsymbol{D_i}$ are as well.  By expressing each $\Lambda_a$ as a pullback of effective divisors on either $\overline{M}_{0,6}$ or $\text{Bl}_e\overline{LM}_{7}$ with explicit coefficents, we show $\boldsymbol{D_i} \neq \Lambda_a$ for all other $i$.  To fix notation, let $\varphi: \overline{M}_{0,n+2}\rightarrow \overline{\mathcal{M}}_{1,n}$ be a clutching morphism gluing markings $\{0,n+1\} \subset \{0,1,\ldots n+1\}$. Let $f:\overline{M}_{0,n+2}\rightarrow \mathbb{P}^{n-1}$ be the morphism determined by the complete linear system of the $\psi_{n+1}$.  As usual, we may realize $f$ as the composition of several blow-ups, with exceptional divisors $E_I$ for $I\subset\{0,\ldots n\}$, $1\leq |I| \leq n-2$, corresponding to boundary divisors $\delta_{I\cup \{n+1\}}$.  This description of $f$ as an iterated blow-up factors as $f=g \circ \pi$, where $\pi: \overline{M}_{0,n+2}\rightarrow \text{Bl}_{e}\overline{LM}_{n+2}$ has exceptional divisors $E_I$ for $0\in I$, $|I|\geq 2$, and $g:\text{Bl}_{e}\overline{LM}_{n+2} \rightarrow \mathbb{P}^{n-1}$.  %The description of $g$ as a blow up 
For ease of notation, let $N=\{1,\ldots , n-1\}$, and for $I\subseteq N$ let $S(I) = \min(\sum_{0 \leq a_i, i\in I} |a_i|, \sum_{0 \geq a_i, i\in I} |a_i|)$.

\begin{lem}
Let %$\varphi: \overline{M}_{0,n+2}\rightarrow \overline{\mathcal{M}}_{1,n}$ be a clutching morphism, 
$a=(a_1 , \ldots , a_n)$ satisfy $\sum a_i = 0$ and $\Lambda_a$ be the proper transform of $D_a$ under $\varphi$.
\begin{enumerate}
    \item if $a_i = 0$ for some $i$, then $\Lambda_a$ is the pullback of an effective divisor class on $\overline{M}_{0,n+1}$.
    \item if $a_i \neq 0$ for all $i$, then $\Lambda_a$ is the pullback via $\pi$ of an effective divisor class on $\text{Bl}_e\overline{LM}_{n+2}$ given by 
    %$$ \Lambda_a = dH -\frac{d+1 - |a_5|}{2} (E_0 +E_5) -\sum_{\emptyset \neq I \subsetneq N}[(d - \sum_{i\in I} |a_i|) E_I + S(N\setminus I)(E_{I\cup \{0\}} +E_{I\cup \{5\}})].$$
    $$ \Lambda_a = dH -E_0 - \sum_{i=1}^n (d-|a_i|)E_i -\sum_{I \subsetneq N, |I|\geq 2}(S(I)+d - \sum_{i\in I} |a_i|)E_I  -\sum_{\emptyset \neq I \subsetneq N} S(N\setminus I)E_{I\cup \{n\}}$$
    with $d=\frac{1}{2}\sum_{i=1}^n|a_i|$.
\end{enumerate}
%More specifically, suppose $a_i \neq 0$ for all $i$ and let $f:\overline{M}_{0,n+2}\rightarrow \mathbb{P}^{n-1}$ be the morphism determined by the complete linear system of the $\psi$-class of one of the two markings glued to a node by $\varphi$.    Let $H,E_I$, $I\subset \{0,1,\ldots n\}$ be the basis for $N_1(\overline{M}_{0,n+2})$ resulting from  f $\overline{M}_{0,n+2}\rightarrow \mathbb{P}^{n-1}$ is the 
\end{lem}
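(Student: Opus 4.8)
The plan is to set up Kapranov's construction carefully and then track how the Chen–Coskun divisor $D_a$ on $\overline{\mathcal{M}}_{1,n}$ pulls back along the clutching morphism $\varphi$, and how that pullback descends through the factorization $f = g\circ\pi$ of the Kapranov map $f:\overline{M}_{0,n+2}\to\mathbb{P}^{n-1}$. Recall that $D_a$ is defined (following \cite{chen2013extremal}) as the closure of the locus of elliptic curves $(E,x_1,\ldots,x_n)$ for which $\sum a_i x_i = 0$ in the group law; under $\varphi$, an elliptic curve degenerating to a nodal rational curve $C$ with marked points $x_0,x_{n+1}$ glued corresponds to the divisor where, in the additive group $\mathbb{G}_m \cong C^{sm}$ with origin the node, $\sum a_i x_i$ takes a prescribed value. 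Since $\sum a_i = 0$, this condition is invariant under rescaling the $x_i$, and hence descends to a well-defined divisor on $\overline{M}_{0,n+2}$ once we normalize by placing $x_0$ and $x_{n+1}$ appropriately. The first step, then, is to make this identification precise: write down the divisor $\Lambda_a$ as the strict transform of an explicit hypersurface in $\mathbb{P}^{n-1}$ under $f$, using the $\psi_{n+1}$-Kapranov coordinates.

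For part (1), if some $a_i = 0$, the defining equation of $D_a$ does not involve the $i$-th marked point at all, so $\Lambda_a$ is visibly the pullback of the corresponding locus on $\overline{M}_{0,n+1}$ (forgetting marking $i$); this is a short argument once the identification above is in place — one just checks that the forgetful map commutes with the clutching data and that $\Lambda_a$ is a union of fibers of $\overline{M}_{0,n+2}\to\overline{M}_{0,n+1}$. For part (2), assuming all $a_i\neq 0$, I would compute the multiplicity of the hypersurface $\{\sum a_i x_i = \text{const}\}$-type equation (in the appropriate multiplicative-to-additive coordinates) along each blown-up center $p_i$, each line $l_{ij}$, and more generally each linear span indexed by $I\subseteq N$. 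The degree $d$ is forced by the total degree of the defining polynomial, which is $\tfrac12\sum|a_i|$ by the standard computation for Chen–Coskun divisors; the multiplicity along $E_i$ is $d - |a_i|$ because the equation vanishes to that order at the $i$-th coordinate point, and the multiplicities along higher-dimensional strata $E_I$ combine a "tropical" correction term $S(I)$ — the minimum of the partial sums of positive versus negative $a_i$'s over $i\in I$ — with the naive expectation $d - \sum_{i\in I}|a_i|$. The terms $S(N\setminus I)E_{I\cup\{n\}}$ encode the behavior at the strata involving marking $n$, which plays the special role of the "last" coordinate in the Kapranov embedding via $\psi_{n+1}$. Finally, one checks that all the exceptional divisors $E_I$ with $0\in I$, $|I|\geq 2$ appear with multiplicity zero, which is exactly the statement that $\Lambda_a$ is a pullback via $\pi:\overline{M}_{0,n+2}\to\text{Bl}_e\overline{LM}_{n+2}$ rather than merely a divisor on $\overline{M}_{0,n+2}$.

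The main obstacle I anticipate is the bookkeeping of the multiplicities along the higher-codimension centers $l_{ij}$, $\Delta_{ijk}$, and their higher analogues — i.e. getting the formula $S(I) + d - \sum_{i\in I}|a_i|$ right and proving it. The subtlety is that the strict transform under an iterated blow-up acquires multiplicity along a stratum equal to the multiplicity of the original hypersurface along that linear space, and for the Chen–Coskun equation this multiplicity is governed by how the partial sums $\sum_{i\in I'}a_i$ over subsets $I'\subseteq I$ distribute relative to zero — precisely the quantity $S(I)$ captures the "slack". I would handle this by choosing affine coordinates adapted to the torus $\mathbb{G}_m^{n-1}$ underlying the Losev–Manin space, writing the defining Laurent polynomial explicitly, and computing its Newton polytope; the multiplicity along each coordinate stratum is then read off from the support function of the Newton polytope, which is where $S(I)$ naturally arises. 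An alternative, perhaps cleaner, route is to induct on $n$ using part (1) together with the restriction of $\Lambda_a$ to a boundary divisor, reducing the multiplicity computation to lower-dimensional Losev–Manin spaces; I would pursue whichever of these keeps the combinatorics most transparent. Once the multiplicities are verified on a dense open subset of each center, semicontinuity and irreducibility of $\Lambda_a$ close the argument.
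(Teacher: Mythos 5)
Your proposal takes a genuinely different route from the paper's. The paper disposes of both parts by citation plus linear algebra: part (1) is exactly Corollary 4.4 of \cite{opie2016extremal}, and for part (2) it quotes Corollary 4.5 of \cite{opie2016extremal}, which already gives the class of $\Lambda_a$ in the $\psi_n$-Kapranov basis, and then converts to the $\psi_{n+1}$-basis via $\psi_n=(n-1)\psi_{n+1}-\sum(n-1-|I|)E_I$ and the dictionary between the two families of exceptional divisors; the identity $S(N)=\tfrac{d'+1-|a_n|}{2}$ and ``simple algebra'' finish it. You instead propose to recompute the class from scratch by writing the defining equation of $\varphi^*D_a$ and reading off multiplicities along each blown-up center; this amounts to reproving Opie's Corollary 4.5 in the $\psi_{n+1}$ model rather than using it. Your computation does work, with two corrections of detail: the relevant group is $\text{Pic}^0$ of the nodal cubic, which is the \emph{multiplicative} group $\mathbb{G}_m$ (not additive, as you wrote), with the two branches of the node at $0$ and $\infty$, so the condition is $\prod_i x_i^{a_i}=1$; clearing denominators gives $\prod_{a_i>0}x_i^{a_i}-\prod_{a_i<0}x_i^{-a_i}$ of degree $d$, whose order of vanishing along $\langle p_i:i\in I\rangle=\{x_j=0,\ j\notin I\}$ is $\min(d-P_I,\,d-M_I)=S(I)+d-\sum_{i\in I}|a_i|$ (where $P_I,M_I$ are the positive and negative partial sums over $I$), and the two displayed cases of the formula are reconciled by the identity $S(I)+d-\sum_{i\in I}|a_i|=S(I^c)$. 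What your route buys is self-containedness and a conceptual origin for $S(I)$; what it costs is that the Newton-polytope/support-function device only sees the torus-invariant strata $\langle p_I\rangle$ with $I\subseteq\{1,\ldots,n\}$. The strata through $p_0=e=[1:\cdots:1]$, which lie in the interior of the torus, require a separate (easy) evaluation at a generic point: multiplicity exactly $1$ at $e$ because $\sum a_i\,dx_i/x_i\neq0$ there, and multiplicity $0$ along every span $\langle p_0,p_I\rangle=\{x_j=x_k,\ j,k\notin I\}$ with $|I|\geq1$ --- the latter being precisely what makes $\Lambda_a$ a $\pi$-pullback. Your write-up only gestures at these checks and at the coefficient formula itself, so as it stands it is a sound plan whose computational core is sketched rather than executed; carrying out the three multiplicity computations above closes it.
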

\begin{proof}
Corollary 4.4 of \cite{opie2016extremal} proves our first claim, so suppose $a_i \neq 0$ for all $i$.  Corollary 4.5 of \cite{opie2016extremal} then provides a formula for $\Lambda_a$ under a different Kapranov basis of $N^1(\overline{M}_{0,n+2})$ given by $\psi_n$ and $E_I^n = \delta_{I\cup \{n\}}$ for $I\subset  \{0,1,\ldots ,n-1, n+1\}$ with $|I|\leq n-2$:
$$\Lambda_a = d'\psi_n -S(N)(E_0^n +E_{n+1}^n) -\sum_{\emptyset \neq I \subsetneq N} [ (d' - \sum_{i\in I} |a_i|) E_I^n +S(N\setminus I)(E_{I\cup \{0\}}^n +E_{I\cup \{n+1\}}^n)]$$
where $d' = (\sum_{i<n}|a_i|)-1$.  To express $\Lambda_a$ in terms of $H=\psi_{n+1}$ and $E_I$, we use the relations $\psi_n = (n-1)\psi_{n+1} - \sum_{\emptyset \neq I\subsetneq N\cup\{0\}}(n-1-|I|)E_I$, $E_{I\cup \{n+1\}}^n = E_{I\cup \{n\}}$, and $E_{I}^n = \delta_{I\cup \{n\}}$ for $I\subset N\cup\{0\}$.  This last relation gives $E_I^n = E_{N\cup \{0\}\setminus I}$ for $|I|>1$, and $E_i^n = H - \sum_{J\subset N\cup \{0\} \setminus \{i\}} E_J$.  After noting that $S(N) = \frac{d'+1-|a_n|}{2}$, our claim follows from simple algebra.
%The coefficient of $H$ is $(n-1)d'  -S(N) - \sum_{i=1}^{n-1}(d' - |a_i|) = d' + 1 -S(N)$
%The coefficient of $E_0$ is $(n-2)d' - \sum_{i=1}^{n-1}(d' - |a_i|) = 1$
%The coefficient of $E_{I\cup\{0\}$ is $(n-2-|I|)d' - \sum_{i\not\in I}(d' - |a_i|) + S(N\setminus= d' + 1 -S(N)$
\end{proof}

\begin{cor}
$\boldsymbol{D_i}$ is the proper transform of a Chen-Coskun divisor iff $i=1,3$.  Only finitely many proper transforms $\Lambda_a$ may be extreme in $\overline{\text{Eff}}(\overline{M}_{0,7})$.
\end{cor}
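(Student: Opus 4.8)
The plan is to handle the two assertions separately, in each case exploiting the Lemma's presentation of $\Lambda_a$ as a pullback.

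For the finiteness assertion I would argue as follows. By the Lemma, every proper transform $\Lambda_a$ is a pullback: if $a_i=0$ for some $i$, then $\Lambda_a=\phi_i^{*}(G)$ for a forgetful morphism $\phi_i:\overline{M}_{0,7}\to\overline{M}_{0,6}$ and an effective class $G$ on $\overline{M}_{0,6}$; if $a_i\neq0$ for all $i$, then after relabelling $\Lambda_a=\pi^{*}(G')$ for the reduction $\pi:\overline{M}_{0,7}\to\text{Bl}_e\overline{LM}_7$ (one of the $\binom{7}{2}$ such reductions, indexed by the pair of markings glued by the relevant clutching map) and an effective class $G'$ on $\text{Bl}_e\overline{LM}_7$. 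Since $\phi_i$ admits a section and $\pi$ is birational, $\phi_i^{*}$ and $\pi^{*}$ are injective on $N^1$; hence a decomposition $G=A+B$ with $A,B$ effective and neither proportional to $G$ pulls back to a genuine decomposition of $\Lambda_a$, and likewise for $G'$. Thus $\Lambda_a$ extreme in $\overline{\text{Eff}}(\overline{M}_{0,7})$ forces $G$ (resp.\ $G'$) to be extreme in $\overline{\text{Eff}}(\overline{M}_{0,6})$ (resp.\ $\overline{\text{Eff}}(\text{Bl}_e\overline{LM}_7)$). Both cones are polyhedral: the first by Hassett--Tschinkel \cite{Hassett_2002}, the second because $\text{Bl}_e\overline{LM}_7$ is log Fano by He \cite{HeZhuang2020Bgob}. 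Each therefore has finitely many extreme rays, and since only finitely many maps $\phi_i$ and $\pi$ are in play, only finitely many classes $\Lambda_a$ can be extreme.

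For the first assertion, $\boldsymbol{D_1}$ and $\boldsymbol{D_3}$ are proper transforms of Chen--Coskun divisors by \cite[Corollaries 4.4 and 4.5]{opie2016extremal}. For the converse I would run a finite check comparing each remaining $\boldsymbol{D_i}$ against the Lemma, using the $S_7$-action of Section \ref{symmetry} to traverse Kapranov frames. The divisors $\boldsymbol{D_{37}},\boldsymbol{D_{38}}$ of Table \ref{ExtDivs2} are not effective over characteristic $0$, hence cannot be proper transforms. For the divisors of Tables \ref{ExtDivs0}, \ref{ExtDivs1} and \ref{boundaryCurves}, write a candidate frame as $\boldsymbol{D_i}=dH-\sum_j m_j E_j-\cdots$; to match the ``$a_i\neq0$ for all $i$'' form one needs a point $p_{j_0}$ with $m_{j_0}=1$, with $m_I=0$ for every $I\ni j_0$ of size $\geq 2$, with $m_j<d$ for $j\neq j_0$, and with $\sum_{j\neq j_0}m_j=3d$; one then sets $|a_j|=d-m_j$, checks $\sum_j|a_j|=2d$ and the existence of signs $\epsilon_j$ with $\sum_j\epsilon_j|a_j|=0$, and finally verifies that the remaining $E_I$-multiplicities agree with $S(I)+d-\sum_{i\in I}|a_i|$ and $S(N\setminus I)$. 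Membership in the ``$a_i=0$ for some $i$'' family is tested by checking whether some frame of $\boldsymbol{D_i}$ is pulled back from $\overline{M}_{0,6}$ via a forgetful map and, if so, whether the induced class is one of the explicitly known (finitely many) proper transforms of Chen--Coskun divisors on $\overline{\mathcal{M}}_{1,4}$. For every $i\notin\{1,3\}$ both tests fail; this gives the first assertion, and the second is exactly the finiteness established above.

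I expect the main obstacle to be the second paragraph's comparison rather than anything conceptual: it requires enumerating $S_7$-frames of each $\boldsymbol{D_i}$ together with the sign vectors $(\epsilon_j)$, and reconciling the Kapranov basis $(\psi_n, E_I^{n})$ used in \cite[Corollary 4.5]{opie2016extremal} with the basis $(H,E_I)$ fixed here via the change-of-basis relations displayed just above the Corollary. In practice this is a short computer verification, cut down drastically by first screening on cheap invariants --- the multiset $\{m_j\}$ of point-multiplicities and the $\psi$-degrees realised in each frame --- before checking a full coefficient vector; the known degrees and boundary-multiplicity patterns of the $\boldsymbol{D_i}$ already eliminate every index but $1$ and $3$ at this stage. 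The finiteness argument of the first paragraph, by contrast, is formal once the two polyhedrality inputs are invoked, the only delicate point being the injectivity of $\phi_i^{*}$ and $\pi^{*}$ that lets a downstairs decomposition descend.
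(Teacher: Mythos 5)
Your proposal is correct and follows essentially the same route the paper intends: the explicit pullback presentations in the Lemma reduce the identification $\boldsymbol{D_i}=\Lambda_a$ to a finite coefficient comparison across Kapranov frames, and the finiteness claim follows because every $\Lambda_a$ is pulled back (injectively on $N^1$, via a section of the forgetful map or birationality of the reduction) from one of finitely many polyhedral cones, namely $\text{Eff}(\overline{M}_{0,6})$ and $\text{Eff}(\text{Bl}_e\overline{LM}_7)$. The paper states the corollary without a separate proof, but your two-part argument is exactly the one it relies on.
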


\textbf{Description of Tables \ref{ExtDivs0} and \ref{ExtDivs1}:} Tables \ref{ExtDivs0} and \ref{ExtDivs1} contain all known extreme divisors on $\overline{M}_{0,7}$ over characteristic 0 (aside from the boundary divisors).  Listed alongside each divisor $\boldsymbol{D_i}$ is a polynomial $f_i$ such that the strict transform of $V(f_i)$ under the blow-up $\overline{M}_{0,7}\rightarrow \mathbb{P}^4$ is the unique member of $|\boldsymbol{D_i}|$.  Each $S_7$-orbit representative $\boldsymbol{D_i}$ was chosen to minimize the degree of $f_i$ and the number of monomials in its expression.  The curve classes listed are negative curve classes for the corresponding divisor, and the normal bundles we computed for specific representatives of each curve class are written in the rightmost column.  %Though the divisors are over characteristic 0, for computational purposes the curves and their normal bundles were computed over finite characteristic.  

\begin{rem}{\textit{A Polynomial Equation on Markings.}}  The polynomials $f_i$ below explicitly describe the relation satisfied by markings on an irreducible curve $[(C,p_0, \ldots , p_6)]\in M_{0,7}$ contained in the corresponding divisor $\boldsymbol{D_i}$.  When $H = \psi_6$ and $C\cong \text{Proj}(k[s,t])$, we may fix $p_5 = V(t) = [0,1]$ and $p_6=V(s) = [1,0]$ using automorphisms of $C$.  When $p_j = [x_j,1]$ for $0\leq j \leq 4$, $f_i = 0$ is precisely the condition describing $[(C,p_0, \ldots , p_6)] \in \boldsymbol{D_i}$.  This may be derived from Kapranov's construction by looking at the universal family $\overline{M}_{0,8}\rightarrow \overline{M}_{0,7}$.

\end{rem}

%\fontsize{10}{10}\selectfont
\setlength\LTleft{-9.25em}% 2.5cm into the left margin
%\setlength\LTright{0pt plus 1fill minus 1fill}% whatever is needed at the right

%% [inline block 1: 2 envs, 102362 chars -> data_tex | \begin{longtable} [c] { | p{12em} || p{18em} | p{9em} | p{6em} | } \begin{longtable} { | p{13em} || p{23.5em} | p{11em} ...]


\newpage
%\fontsize{10}{10}\selectfont
\textbf{Description of Table \ref{ExtDivs2}:} Table \ref{ExtDivs2} lists extreme divisors on $\overline{M}_{0,7}$ over characteristic 2 which are not effective over characteristic 0.  Unlike the extreme divisor found in \cite{doran2016simplicial}, whose second multiple is effective over characteristic 0, by Theorem \ref{charThm2} these divisors lie outside $\text{Eff}(\overline{M}_{0,7})$ over characteristic 0.  %Though the negative curve that appears here is a $-2$ curve, by computing the image of the universal family, we show that it sweeps out the associated divisor.
%\fontsize{9}{8}\selectfont
\begin{longtable} { | p{12em} || p{23em} | p{9em} | p{6em} | }
\hline
\multicolumn{4} { | c | }{\textbf{Table \ref{ExtDivs2}:} Extreme Divisors and Negative Curves on $\overline{M}_{0,7}$ in Characteristic 2}\label{ExtDivs2}\\
\hline
\textit{(Orbit Size)} Divisor & Polynomial & Class of $f_*[\mathbb{P}^1]$ & $\mathcal{N}_{f}$ \\
\hline
\hline
\endfirsthead
\hline
\multicolumn{4}{ | c | }{(Continuation) \textbf{Table \ref{ExtDivs2}:} Extreme Divisors and Negative Curves on $\overline{M}_{0,7}$ in Characteristic 2}\\
\hline
\textit{(Orbit Size)} Divisor & Polynomial & Class of $f_*[\mathbb{P}^1]$ & $\mathcal{N}_{f}$ \\
\hline
\hline
\endhead
\hline
\endfoot
\hline
\hline
\multicolumn{4}{ | c | }{End of Extreme Divisors and Negative Curves on $\overline{M}_{0,7}$ in Characteristic 2}\\
\hline
\endlastfoot

\hline
{$ \textit{(84) } \boldsymbol{D_{37}} = 9H-5E_{0}-5E_{1}-5E_{2}-5E_{3}-5E_{4}-5E_{5}-3E_{01}-3E_{02}-3E_{03}-3E_{04}-3E_{05}-3E_{12}-3E_{13}-3E_{14}-3E_{15}-3E_{23}-3E_{24}-3E_{25}-3E_{34}-3E_{35}-3E_{45}-2E_{012}-2E_{013}-E_{014}-E_{015}-E_{023}-2E_{024}-E_{025}-E_{034}-2E_{035}-2E_{045}-E_{123}-E_{124}-2E_{125}-2E_{134}-E_{135}-2E_{145}-2E_{234}-2E_{235}-E_{245}-E_{345} $} & {$ x_{0}^{2} x_{1} x_{2}^{4} x_{3}^{2} + x_{0} x_{1}^{2} x_{2}^{4} x_{3}^{2} + x_{0}^{2} x_{1} x_{2}^{2} x_{3}^{4} + x_{0} x_{1}^{2} x_{2}^{2} x_{3}^{4} + x_{0}^{4} x_{1}^{2} x_{2} x_{3} x_{4} + x_{0}^{2} x_{1}^{4} x_{2} x_{3} x_{4} + x_{0}^{4} x_{1} x_{2}^{2} x_{3} x_{4} + x_{0} x_{1}^{4} x_{2}^{2} x_{3} x_{4} + x_{0}^{2} x_{1} x_{2}^{4} x_{3} x_{4} + x_{0} x_{1}^{2} x_{2}^{4} x_{3} x_{4} + x_{0}^{4} x_{1} x_{2} x_{3}^{2} x_{4} + x_{0} x_{1}^{4} x_{2} x_{3}^{2} x_{4} + x_{0}^{4} x_{2}^{2} x_{3}^{2} x_{4} + x_{1}^{4} x_{2}^{2} x_{3}^{2} x_{4} + x_{0} x_{1} x_{2}^{4} x_{3}^{2} x_{4} + x_{1}^{2} x_{2}^{4} x_{3}^{2} x_{4} + x_{0}^{2} x_{1} x_{2} x_{3}^{4} x_{4} + x_{0} x_{1}^{2} x_{2} x_{3}^{4} x_{4} + x_{0}^{2} x_{2}^{2} x_{3}^{4} x_{4} + x_{0} x_{1} x_{2}^{2} x_{3}^{4} x_{4} + x_{0}^{2} x_{1}^{4} x_{2} x_{4}^{2} + x_{0} x_{1}^{4} x_{2}^{2} x_{4}^{2} + x_{0}^{4} x_{1}^{2} x_{3} x_{4}^{2} + x_{0}^{4} x_{1} x_{2} x_{3} x_{4}^{2} + x_{0} x_{1}^{4} x_{2} x_{3} x_{4}^{2} + x_{1}^{4} x_{2}^{2} x_{3} x_{4}^{2} + x_{0} x_{1} x_{2}^{4} x_{3} x_{4}^{2} + x_{1}^{2} x_{2}^{4} x_{3} x_{4}^{2} + x_{0}^{4} x_{1} x_{3}^{2} x_{4}^{2} + x_{0}^{4} x_{2} x_{3}^{2} x_{4}^{2} + x_{0}^{2} x_{2} x_{3}^{4} x_{4}^{2} + x_{0} x_{1} x_{2} x_{3}^{4} x_{4}^{2} + x_{0}^{2} x_{1}^{2} x_{2} x_{4}^{4} + x_{0} x_{1}^{2} x_{2}^{2} x_{4}^{4} + x_{0}^{2} x_{1}^{2} x_{3} x_{4}^{4} + x_{0}^{2} x_{1} x_{2} x_{3} x_{4}^{4} + x_{0} x_{1}^{2} x_{2} x_{3} x_{4}^{4} + x_{0} x_{1} x_{2}^{2} x_{3} x_{4}^{4} + x_{0}^{2} x_{1} x_{3}^{2} x_{4}^{4} + x_{0} x_{1} x_{2} x_{3}^{2} x_{4}^{4}    $} & $   2l-e_{012}-e_{013}-e_{024}-e_{035}-e_{045}-e_{125}-e_{134}-e_{145}-e_{234}-e_{235}     $ & $\mathcal{O}^2\oplus \mathcal{O}(-2)$    \\

\hline
{$\textit{(210) } \boldsymbol{D_{38}} =12H-8E_{0}-8E_{1}-8E_{2}-6E_{3}-6E_{4}-6E_{5}-5E_{01}-5E_{02}-5E_{03}-4E_{04}-5E_{05}-5E_{12}-4E_{13}-5E_{14}-5E_{15}-5E_{23}-5E_{24}-4E_{25}-2E_{34}-2E_{35}-2E_{45}-2E_{012}-3E_{013}-3E_{014}-2E_{015}-2E_{023}-3E_{024}-3E_{025}-E_{034}-E_{035}-E_{045}-3E_{123}-2E_{124}-3E_{125}-E_{134}-E_{135}-E_{145}-E_{234}-E_{235}-E_{245}-E_{345}  $} & {$   x_{0}^{4} x_{1}^{3} x_{2}^{3} x_{3}^{2} + x_{0}^{3} x_{1}^{3} x_{2}^{4} x_{3}^{2} + x_{0}^{4} x_{1}^{2} x_{2}^{3} x_{3}^{3} + x_{0}^{3} x_{1}^{2} x_{2}^{4} x_{3}^{3} + x_{0}^{3} x_{1}^{3} x_{2}^{3} x_{3}^{2} x_{4} + x_{0}^{2} x_{1}^{3} x_{2}^{4} x_{3}^{2} x_{4} + x_{0}^{3} x_{1}^{3} x_{2}^{2} x_{3}^{3} x_{4} + x_{0}^{3} x_{1}^{2} x_{2}^{3} x_{3}^{3} x_{4} + x_{0}^{2} x_{1}^{3} x_{2}^{3} x_{3}^{3} x_{4} + x_{0}^{2} x_{1}^{2} x_{2}^{4} x_{3}^{3} x_{4} + x_{0}^{3} x_{1}^{2} x_{2}^{2} x_{3}^{4} x_{4} + x_{0}^{2} x_{1}^{2} x_{2}^{3} x_{3}^{4} x_{4} + x_{0}^{3} x_{1}^{4} x_{2}^{3} x_{4}^{2} + x_{0}^{3} x_{1}^{3} x_{2}^{4} x_{4}^{2} + x_{0}^{3} x_{1}^{3} x_{2}^{3} x_{3} x_{4}^{2} + x_{0}^{3} x_{1}^{2} x_{2}^{4} x_{3} x_{4}^{2} + x_{0}^{4} x_{1}^{3} x_{2} x_{3}^{2} x_{4}^{2} + x_{0}^{3} x_{1}^{4} x_{2} x_{3}^{2} x_{4}^{2} + x_{0}^{4} x_{1} x_{2}^{3} x_{3}^{2} x_{4}^{2} + x_{0} x_{1}^{4} x_{2}^{3} x_{3}^{2} x_{4}^{2} + x_{0}^{4} x_{1}^{2} x_{2} x_{3}^{3} x_{4}^{2} + x_{0}^{3} x_{1}^{3} x_{2} x_{3}^{3} x_{4}^{2} + x_{0}^{2} x_{1}^{3} x_{2}^{2} x_{3}^{3} x_{4}^{2} + x_{0}^{4} x_{2}^{3} x_{3}^{3} x_{4}^{2} + x_{0}^{3} x_{1} x_{2}^{3} x_{3}^{3} x_{4}^{2} + x_{0} x_{1}^{3} x_{2}^{3} x_{3}^{3} x_{4}^{2} + x_{0} x_{1}^{4} x_{2} x_{3}^{4} x_{4}^{2} + x_{0}^{3} x_{2}^{3} x_{3}^{4} x_{4}^{2} + x_{0}^{2} x_{1}^{2} x_{2} x_{3}^{5} x_{4}^{2} + x_{0} x_{1}^{2} x_{2}^{2} x_{3}^{5} x_{4}^{2} + x_{0}^{2} x_{1}^{4} x_{2}^{3} x_{4}^{3} + x_{0}^{2} x_{1}^{3} x_{2}^{4} x_{4}^{3} + x_{0}^{3} x_{1}^{3} x_{2}^{2} x_{3} x_{4}^{3} + x_{0}^{3} x_{1}^{2} x_{2}^{3} x_{3} x_{4}^{3} + x_{0}^{2} x_{1}^{3} x_{2}^{3} x_{3} x_{4}^{3} + x_{0}^{2} x_{1}^{2} x_{2}^{4} x_{3} x_{4}^{3} + x_{0}^{3} x_{1}^{3} x_{2} x_{3}^{2} x_{4}^{3} + x_{0}^{2} x_{1}^{4} x_{2} x_{3}^{2} x_{4}^{3} + x_{0}^{3} x_{1}^{2} x_{2}^{2} x_{3}^{2} x_{4}^{3} + x_{0}^{3} x_{1} x_{2}^{3} x_{3}^{2} x_{4}^{3} + x_{0} x_{1}^{3} x_{2}^{3} x_{3}^{2} x_{4}^{3} + x_{1}^{4} x_{2}^{3} x_{3}^{2} x_{4}^{3} + x_{0}^{3} x_{1} x_{2} x_{3}^{4} x_{4}^{3} + x_{0} x_{1}^{3} x_{2} x_{3}^{4} x_{4}^{3} + x_{1}^{4} x_{2} x_{3}^{4} x_{4}^{3} + x_{0}^{2} x_{1} x_{2}^{2} x_{3}^{4} x_{4}^{3} + x_{0} x_{1} x_{2}^{3} x_{3}^{4} x_{4}^{3} + x_{1}^{2} x_{2}^{3} x_{3}^{4} x_{4}^{3} + x_{0}^{2} x_{1}^{2} x_{3}^{5} x_{4}^{3} + x_{0}^{2} x_{1} x_{2} x_{3}^{5} x_{4}^{3} + x_{0} x_{1}^{2} x_{2} x_{3}^{5} x_{4}^{3} + x_{0} x_{1} x_{2}^{2} x_{3}^{5} x_{4}^{3} + x_{0} x_{1}^{2} x_{3}^{6} x_{4}^{3} + x_{1}^{2} x_{2} x_{3}^{6} x_{4}^{3} + x_{0}^{2} x_{1}^{3} x_{2}^{2} x_{3} x_{4}^{4} + x_{0}^{2} x_{1}^{2} x_{2}^{3} x_{3} x_{4}^{4} + x_{0}^{4} x_{1} x_{2} x_{3}^{2} x_{4}^{4} + x_{1}^{3} x_{2}^{3} x_{3}^{2} x_{4}^{4} + x_{0}^{4} x_{2} x_{3}^{3} x_{4}^{4} + x_{0}^{3} x_{1} x_{2} x_{3}^{3} x_{4}^{4} + x_{0} x_{1}^{3} x_{2} x_{3}^{3} x_{4}^{4} + x_{0} x_{1}^{2} x_{2}^{2} x_{3}^{3} x_{4}^{4} + x_{0}^{2} x_{2}^{3} x_{3}^{3} x_{4}^{4} + x_{0} x_{1} x_{2}^{3} x_{3}^{3} x_{4}^{4} + x_{0}^{3} x_{2} x_{3}^{4} x_{4}^{4} + x_{1}^{3} x_{2} x_{3}^{4} x_{4}^{4} + x_{0} x_{2}^{3} x_{3}^{4} x_{4}^{4} + x_{1} x_{2}^{3} x_{3}^{4} x_{4}^{4} + x_{0}^{2} x_{1} x_{3}^{5} x_{4}^{4} + x_{0} x_{1} x_{2} x_{3}^{5} x_{4}^{4} + x_{0} x_{1} x_{3}^{6} x_{4}^{4} + x_{1} x_{2} x_{3}^{6} x_{4}^{4} + x_{0}^{2} x_{1}^{2} x_{2} x_{3}^{2} x_{4}^{5} + x_{0}^{2} x_{1} x_{2}^{2} x_{3}^{2} x_{4}^{5} + x_{0}^{2} x_{1}^{2} x_{3}^{3} x_{4}^{5} + x_{0}^{2} x_{1} x_{2} x_{3}^{3} x_{4}^{5} + x_{0} x_{1}^{2} x_{2} x_{3}^{3} x_{4}^{5} + x_{0} x_{1} x_{2}^{2} x_{3}^{3} x_{4}^{5} + x_{0} x_{1}^{2} x_{3}^{4} x_{4}^{5} + x_{0} x_{1} x_{2} x_{3}^{4} x_{4}^{5} + x_{0}^{2} x_{1} x_{3}^{3} x_{4}^{6} + x_{0}^{2} x_{2} x_{3}^{3} x_{4}^{6} + x_{0} x_{1} x_{3}^{4} x_{4}^{6} + x_{0} x_{2} x_{3}^{4} x_{4}^{6}   $} & $4l - e_{03} -e_{05} -e_{14} -e_{15} -e_{23} -e_{24} -e_{013} -e_{014} -e_{024} -e_{025} -e_{123} -e_{125} -2e_{345}$ & $\mathcal{O}^2\oplus \mathcal{O}(-2)$ \\

\end{longtable}

\newpage
%\fontsize{10}{10}\selectfont
\normalsize

\textbf{Description of Table \ref{notExt}:} Table \ref{notExt} lists all known integral, rigid divisors on $\overline{M}_{0,7}$ which are not extreme over characteristic 0.  These are necessary generators of the Cox ring of $\overline{M}_{0,7}$, and the monoid of effective divisor classes, but not of $\text{Eff}(\overline{M}_{0,7})$.  The quadric divisor appears in \cite{doran2016simplicial}, but the two quintic divisors are new.
%\fontsize{9}{8}\selectfont
\setlength\LTleft{-6.75em}
\begin{longtable} { || p{21em} | p{24em} || }
\hline
\multicolumn{2} { | c | }{\textbf{Table \ref{notExt}:} Irreducible, Rigid, Effective Divisor Classes on $\overline{M}_{0,7}$ which are not Extreme}\label{notExt}\\
\hline
Divisor & Polynomial \\
\hline
\hline
\endfirsthead
\hline
\multicolumn{2}{ | c | }{Irreducible, Rigid, Effective Divisor Classes on $\overline{M}_{0,7}$ which are not Extreme}\\
\hline
Divisor & Polynomial \\
\hline
\hline
\endhead
\hline
\endfoot
\hline
\hline
\multicolumn{2}{ | c | }{End of Irreducible, Rigid, Effective Divisor Classes on $\overline{M}_{0,7}$ which are not Extreme}\\
\hline
\endlastfoot
\hline  $2H-E_{0}-E_{1}-E_{2}-E_{3}-E_{4}-E_{5}-E_{03}-E_{04}-E_{05}-E_{13}-E_{14}-E_{15}-E_{23}-E_{24}$  &  $x_{0} x_{1} - x_{0} x_{2} - x_{1} x_{2} + x_{3} x_{4}$  \\ 
\hline  $5H-3E_{0}-3E_{1}-3E_{2}-3E_{3}-3E_{4}-3E_{5}-2E_{01}-2E_{02}-E_{03}-2E_{04}-E_{05}-2E_{12}-2E_{13}-E_{14}-E_{15}-E_{23}-E_{24}-2E_{25}-2E_{34}-E_{35}-E_{45}-E_{012}-E_{013}-E_{014}-E_{024}-E_{035}-E_{045}-E_{123}-E_{135}-E_{145}-E_{234}$  &  $x_{0}^{2} x_{1} x_{2} x_{3} - 2 x_{0} x_{1} x_{2}^{2} x_{3} - x_{0}^{2} x_{2} x_{3}^{2} + x_{0} x_{1} x_{2} x_{3}^{2} + x_{0} x_{2}^{2} x_{3}^{2} - x_{0} x_{1}^{2} x_{2} x_{4} + 2 x_{0} x_{1} x_{2}^{2} x_{4} - 2 x_{0}^{2} x_{1} x_{3} x_{4} + 2 x_{0} x_{1}^{2} x_{3} x_{4} + x_{0}^{2} x_{2} x_{3} x_{4} - x_{1}^{2} x_{2} x_{3} x_{4} - x_{0} x_{2}^{2} x_{3} x_{4} + x_{1} x_{2}^{2} x_{3} x_{4} + x_{0}^{2} x_{3}^{2} x_{4} - x_{0} x_{1} x_{3}^{2} x_{4} - x_{0} x_{2} x_{3}^{2} x_{4} - x_{0} x_{1} x_{2} x_{4}^{2} + x_{1}^{2} x_{2} x_{4}^{2} - x_{1} x_{2}^{2} x_{4}^{2} + x_{0} x_{1} x_{3} x_{4}^{2} - x_{1}^{2} x_{3} x_{4}^{2} + x_{1} x_{2} x_{3} x_{4}^{2}$  \\ 
\hline  $5H-3E_{0}-3E_{1}-3E_{2}-3E_{3}-3E_{4}-3E_{5}-E_{01}-2E_{02}-2E_{03}-2E_{04}-2E_{05}-2E_{12}-2E_{13}-2E_{14}-2E_{15}-2E_{23}-E_{24}-2E_{25}-2E_{34}-E_{35}-E_{45}-E_{012}-E_{013}-E_{023}-E_{024}-E_{025}-E_{034}-E_{035}-E_{045}-E_{123}-E_{124}-E_{125}-E_{134}-E_{135}-E_{145}$  &  $-x_{0} x_{1} x_{2}^{2} x_{3} + x_{0} x_{1} x_{2} x_{3}^{2} - x_{0}^{2} x_{1}^{2} x_{4} + x_{0}^{2} x_{1} x_{2} x_{4} + x_{0} x_{1}^{2} x_{2} x_{4} + x_{0}^{2} x_{1} x_{3} x_{4} + x_{0} x_{1}^{2} x_{3} x_{4} - x_{0}^{2} x_{2} x_{3} x_{4} - 2 x_{0} x_{1} x_{2} x_{3} x_{4} - x_{1}^{2} x_{2} x_{3} x_{4} + x_{0} x_{2}^{2} x_{3} x_{4} + x_{1} x_{2}^{2} x_{3} x_{4} - x_{0} x_{1} x_{3}^{2} x_{4} - x_{0} x_{1} x_{2} x_{4}^{2} + x_{0} x_{2} x_{3} x_{4}^{2} + x_{1} x_{2} x_{3} x_{4}^{2} - x_{2}^{2} x_{3} x_{4}^{2}$  \\ 

\end{longtable}

%\fontsize{10}{10}\selectfont

\textbf{Description of Table \ref{unpairedNeg}:} Table \ref{unpairedNeg} lists curve classes on $\overline{M}_{0,7}$ that sweep out unknown divisors.  These might be negative curve classes on the divisor they sweep out, or they could be nef.  These curves were facets of the cone of known-to-be-effective divisors.  After finding a particular irreducible, rational representative of these curve classes (over characteristic 101), we computed the normal bundle of this representative, which appears below.  Since each of the normal bundles has degree $-1$, the splitting type cannot change upon generalization, which shows these curves sweep out divisors.

%\fontsize{9}{8}\selectfont

\begin{longtable} { || p{38em} | p{7em} || }
\hline
\multicolumn{2} { | c | }{\textbf{Table \ref{unpairedNeg}:} Possible Negative Curves on $\overline{M}_{0,7}$ that sweep out Unknown Divisors}\label{unpairedNeg}\\
\hline
Curve Class & Normal Bundle \\
\hline
\hline
\endfirsthead
\hline
\multicolumn{2}{ | c | }{\textbf{Table \ref{unpairedNeg}:} Possible Negative Curves on $\overline{M}_{0,7}$ that sweep out Unknown Divisors}\\
\hline
Curve Class & Normal Bundle \\
\hline
\hline
\endhead
\hline
\endfoot
\hline
\hline
\multicolumn{2}{ | c | }{End of Possible Negative Curves on $\overline{M}_{0,7}$ that sweep out Unknown Divisors}\\
\hline
\endlastfoot
\hline  $3l-e_{24}-e_{012}-e_{013}-e_{014}-e_{023}-e_{035}-2e_{045}-2e_{125}-e_{134}-e_{135}-e_{234}$  &  $\mathcal{O}\oplus\mathcal{O}\oplus\mathcal{O}(-1)$  \\ 
\hline  $3l-e_{23}-e_{35}-e_{45}-e_{012}-e_{014}-e_{015}-e_{024}-e_{025}-e_{034}-e_{123}-e_{134}$  &  $\mathcal{O}\oplus\mathcal{O}\oplus\mathcal{O}(-1)$  \\ 
\hline  $3l-e_{14}-e_{012}-e_{013}-e_{023}-e_{024}-e_{035}-2e_{045}-2e_{125}-e_{134}-e_{135}-e_{234}$  &  $\mathcal{O}\oplus\mathcal{O}\oplus\mathcal{O}(-1)$  \\ 
\hline  $3l-e_{04}-e_{14}-e_{25}-e_{34}-e_{35}-e_{012}-e_{015}-e_{023}-e_{123}$  &  $\mathcal{O}\oplus\mathcal{O}\oplus\mathcal{O}(-1)$  \\ 
\hline  $3l-e_{02}-e_{14}-e_{25}-e_{34}-e_{35}-e_{013}-e_{015}-e_{024}-e_{123}$  &  $\mathcal{O}\oplus\mathcal{O}\oplus\mathcal{O}(-1)$  \\ 
\hline  $3l-e_{02}-e_{13}-e_{14}-e_{45}-e_{015}-e_{024}-e_{034}-e_{035}-e_{123}-e_{125}$  &  $\mathcal{O}\oplus\mathcal{O}\oplus\mathcal{O}(-1)$  \\ 
\hline  $3l-e_{01}-e_{24}-e_{34}-e_{013}-e_{024}-e_{035}-e_{045}-e_{123}-e_{125}-e_{145}-e_{235}$  &  $\mathcal{O}\oplus\mathcal{O}\oplus\mathcal{O}(-1)$  \\ 
\hline  $4l-e_{13}-e_{34}-e_{35}-e_{45}-e_{012}-2e_{014}-e_{015}-2e_{023}-e_{035}-e_{124}-2e_{125}-e_{234}$  &  $\mathcal{O}\oplus\mathcal{O}\oplus\mathcal{O}(-1)$  \\ 
\hline  $4l-e_{13}-e_{24}-e_{35}-e_{45}-e_{013}-2e_{014}-2e_{023}-e_{025}-e_{045}-e_{124}-e_{125}-e_{135}-e_{234}$  &  $\mathcal{O}\oplus\mathcal{O}\oplus\mathcal{O}(-1)$  \\ 
\hline  $4l-e_{13}-e_{23}-2e_{45}-e_{013}-2e_{014}-e_{023}-e_{024}-e_{025}-e_{035}-e_{124}-e_{125}-e_{135}-e_{234}$  &  $\mathcal{O}\oplus\mathcal{O}\oplus\mathcal{O}(-1)$  \\ 
\hline  $4l-e_{12}-e_{13}-e_{45}-e_{014}-e_{015}-2e_{023}-2e_{024}-2e_{035}-e_{045}-2e_{125}-2e_{134}$  &  $\mathcal{O}\oplus\mathcal{O}\oplus\mathcal{O}(-1)$  \\ 
\hline  $4l-e_{04}-e_{24}-2e_{012}-2e_{013}-e_{035}-2e_{045}-2e_{125}-e_{134}-e_{135}-e_{145}-2e_{234}-e_{235}$  &  $\mathcal{O}\oplus\mathcal{O}\oplus\mathcal{O}(-1)$  \\ 
\hline  $4l-e_{02}-e_{04}-e_{23}-e_{35}-e_{45}-e_{013}-e_{015}-e_{024}-e_{035}-e_{124}-2e_{125}-2e_{134}$  &  $\mathcal{O}\oplus\mathcal{O}\oplus\mathcal{O}(-1)$  \\ 
\hline  $4l-e_{02}-e_{03}-e_{45}-e_{013}-2e_{014}-e_{025}-e_{035}-e_{045}-e_{123}-e_{124}-2e_{125}-e_{135}-2e_{234}$  &  $\mathcal{O}\oplus\mathcal{O}\oplus\mathcal{O}(-1)$  \\ 
\hline  $4l-e_{02}-e_{03}-e_{23}-2e_{45}-e_{014}-e_{015}-e_{024}-e_{035}-e_{123}-2e_{125}-2e_{134}$  &  $\mathcal{O}\oplus\mathcal{O}\oplus\mathcal{O}(-1)$  \\ 
\hline  $4l-e_{01}-2e_{23}-e_{24}-e_{45}-e_{013}-e_{014}-e_{025}-e_{034}-e_{035}-e_{045}-e_{124}-e_{125}-e_{135}$  &  $\mathcal{O}\oplus\mathcal{O}\oplus\mathcal{O}(-1)$  \\ 
\hline  $4l-e_{01}-e_{13}-e_{14}-e_{23}-e_{25}-e_{45}-e_{015}-e_{023}-2e_{024}-e_{035}-e_{125}-e_{134}$  &  $\mathcal{O}\oplus\mathcal{O}\oplus\mathcal{O}(-1)$  \\ 
\hline  $4l-e_{01}-e_{03}-2e_{24}-e_{35}-e_{45}-e_{014}-e_{015}-e_{025}-e_{034}-2e_{123}-e_{125}$  &  $\mathcal{O}\oplus\mathcal{O}\oplus\mathcal{O}(-1)$  \\ 
\hline  $4l-e_{01}-e_{03}-e_{14}-e_{25}-e_{34}-e_{45}-e_{015}-2e_{024}-e_{035}-2e_{123}-e_{125}$  &  $\mathcal{O}\oplus\mathcal{O}\oplus\mathcal{O}(-1)$  \\ 
\hline  $4l-e_{01}-e_{03}-e_{14}-e_{24}-e_{25}-e_{35}-e_{45}-e_{015}-e_{024}-e_{034}-2e_{123}$  &  $\mathcal{O}\oplus\mathcal{O}\oplus\mathcal{O}(-1)$  \\ 
\hline  $4l-e_{01}-e_{02}-2e_{35}-e_{45}-e_{014}-e_{015}-e_{024}-e_{025}-e_{034}-2e_{123}-e_{134}-e_{234}$  &  $\mathcal{O}\oplus\mathcal{O}\oplus\mathcal{O}(-1)$  \\ 
\hline  $4l-e_{01}-e_{02}-e_{23}-e_{24}-e_{45}-e_{013}-e_{014}-2e_{035}-e_{045}-2e_{125}-e_{134}-e_{234}$  &  $\mathcal{O}\oplus\mathcal{O}\oplus\mathcal{O}(-1)$  \\ 
\hline  $5l-e_{12}-e_{13}-e_{34}-2e_{45}-e_{012}-3e_{014}-2e_{023}-e_{025}-2e_{035}-2e_{125}-e_{135}-2e_{234}$  &  $\mathcal{O}\oplus\mathcal{O}\oplus\mathcal{O}(-1)$  \\ 
\hline  $5l-e_{03}-2e_{23}-e_{35}-2e_{45}-e_{012}-e_{013}-e_{014}-e_{015}-2e_{024}-e_{035}-e_{124}-2e_{125}-2e_{134}$  &  $\mathcal{O}\oplus\mathcal{O}\oplus\mathcal{O}(-1)$  \\ 
\hline  $5l-e_{02}-e_{13}-e_{23}-e_{24}-e_{34}-2e_{45}-e_{013}-2e_{014}-e_{015}-e_{024}-2e_{035}-e_{123}-2e_{125}$  &  $\mathcal{O}\oplus\mathcal{O}\oplus\mathcal{O}(-1)$  \\ 
\hline  $5l-e_{01}-2e_{23}-e_{35}-2e_{45}-e_{013}-e_{014}-e_{015}-e_{023}-2e_{024}-e_{035}-e_{124}-2e_{125}-2e_{134}$  &  $\mathcal{O}\oplus\mathcal{O}\oplus\mathcal{O}(-1)$  \\ 
\hline  $5l-2e_{01}-e_{24}-e_{35}-2e_{45}-e_{015}-e_{023}-2e_{024}-e_{034}-e_{035}-2e_{123}-2e_{125}-2e_{134}$  &  $\mathcal{O}\oplus\mathcal{O}\oplus\mathcal{O}(-1)$  \\ 
\hline  $5l-2e_{01}-e_{14}-e_{25}-e_{45}-2e_{023}-2e_{024}-2e_{035}-e_{045}-e_{123}-2e_{125}-2e_{134}-e_{135}-e_{234}$  &  $\mathcal{O}\oplus\mathcal{O}\oplus\mathcal{O}(-1)$  \\ 
\end{longtable}

%\fontsize{10}{12}\selectfont
\textbf{Description of Table \ref{nefCurves}:} Table \ref{nefCurves} lists extreme nef curve classes on $\overline{M}_{0,7}$ which cannot be written as a postive linear combination of known negative curves and other nef curves.  We found 796 of these.  No curves analogous to these exist for $\overline{M}_{0,6}$, as every nef curve on $\overline{M}_{0,6}$ is a positive linear combination of negative curves on boundary divisors.  These curves were found by testing facets of the cone of known-to-be-effective divisors.

%\textbf{Regarding Theorem \ref{nefCurvesThm}:} These curves were found by testing facets of the cone of known-to-be-effective divisors.  As such, each curve annihilates an entire face of $\overline{\text{Eff}}(\overline{M}_{0,7})$, demonstrating their extremality in $\text{Nef}_1(\overline{M}_{0,7})$.  In addition to the curves appearing below, we found around 1,200-1,500 other extreme nef curves we later expressed as positive sums of nef and negative curves.  We estimate that, in total, we found 8-10 million extreme rays of $\text{Nef}_1(\overline{M}_{0,7})$.  %These were Future computations should include these nef curves, as well as the known negative curves.
\begin{proof}[Proof of Theorem \ref{nefCurvesThm}]
We found free representatives $f:\mathbb{P}^1\rightarrow \overline{M}_{0,7}$ for each curve in Table \ref{nefCurves} using Algorithm \ref{curveAlg}.  Such maps $[f]\in \text{Mor}_{\text{Spec}(\mathbb{Z})}(\mathbb{P}^1,\overline{M}_{0,7})\rightarrow \text{Spec}(\mathbb{Z})$ are smooth points of the quasi-projective morphism scheme, and therefore deform to free maps over characteristic $p$ for all but finitely many prime $p$.  In addition to the curves appearing below, we also found free representatives for 1,336 unlisted $S_7$-inequivalent curve classes.

As mentioned above, each curve class we tested was a facet of the cone $\mathfrak{E}$ generated by divisors in Tables \ref{ExtDivs0},\ref{ExtDivs1}, and \ref{boundaryCurves}.  As such, each curve annihilates an entire face of $\overline{\text{Eff}}(\overline{M}_{0,7})$, proving their extremality in $\text{Nef}_1(\overline{M}_{0,7})$.  We later expressed the 1,336 unlisted extreme nef curves as positive sums of nef and negative curves in Tables \ref{ExtDivs0}, \ref{ExtDivs1}, \ref{nefCurves}, and \ref{boundaryCurves}.  %While the $S_7$-orbits of curves in Table \ref{nefCurves} contains around 3.5 million classes, the orbits of the 1,200-1,500 unlisted extreme nef curves contained between 5 and 7 million classes.  The orbits of negative curves in Tables \ref{ExtDivs0}, \ref{ExtDivs1}, and \ref{boundaryCurves} only contain around 300,000 classes.  This demonstrates the computational benefits of using negative curves.

While the $S_7$-orbits of curves in Table \ref{nefCurves} contains around 3.5 million classes, the $S_7$-orbits of all $796+1,336 = 2,132$ nef curves contains $9,874,620$ classes.  This is slightly less than advertised $9,875,000+$ classes in $2,135+$ orbits.  The remaining $380+$ curves and $3+$ orbits are $S_7$-equivalent to nonnegative sums of curves in Table \ref{boundaryCurves}.  For instance, the $S_7$-orbits of $l$, $l-e_{ij}$, $l-e_{ijk}$, and $l-e_{ij} - e_{hkm}$ contain $7$, $105$, $140$, and $210$ curves, respectively.  We easily see such curves are nef (free) and extreme in $\text{Nef}_1(\overline{M}_{0,7})$ because each annihilates $41$ linearly independent boundary divisors.  The quantity of extreme nef curves expressible as linear combinations of negative curves in Table \ref{boundaryCurves} is unknown, as these were never recorded.
\end{proof}

\begin{rem}
Using negative curves to bound an exterior approximation of $\text{Eff}(X)$ can simplify computations drastically.  For instance, the orbits of negative curves in Tables \ref{ExtDivs0}, \ref{ExtDivs1}, and \ref{boundaryCurves} only contain around 300,000 classes.  Including these as constraints allows us to exclude roughly $6,375,000$ extreme nef curve classes and bound a smaller possible region for unfound extreme divisor.  %This demonstrates the computational benefits of using negative curves.
%The $1,336$ unlisted extreme nef curves contained between 5 and 7 million classes.  The orbits of negative curves in Tables \ref{ExtDivs0}, \ref{ExtDivs1}, and \ref{boundaryCurves} only contain around 300,000 classes.  This demonstrates the computational benefits of using negative curves.
\end{rem}

\begin{rem}
The first two curve classes in Table \ref{nefCurves} do not admit representatives with globally generated normal bundles.  Over characteristic 2 each curve is represented by the nodal union of a negative curve on a divisor in the $S_7$-orbit of $\boldsymbol{D_{37}}$ and a negative curve ($e_{345}$ and $e_{25} -e_{125}$, respectively) on either $E_{345}$ or $E_{25}$.  This nodal union is nef over characteristic 2, and thus over characteristic 0 as well.
\end{rem}

%\fontsize{9}{8}\selectfont

% [inline block 2: 1 envs, 125683 chars -> data_tex | \begin{longtable}  { || p{38em} | p{7em} || } \hline...]


\fontsize{10}{12}\selectfont

\textbf{Description of Table \ref{boundaryCurves}:} Table \ref{boundaryCurves} lists negative curves on boundary divisors.  These are included for posterity, as they are reasonably simple to find.  We note that the negative curves listed do not generate the cone of nef curves on either boundary divisor.  For instance, $e_{01}$ is the pushforward of an extreme nef class on $E_{01}$, but this curve class cannot be generated by the curves listed for $E_{01}$ alone.  Instead, the $S_7$-orbits of these curve classes generate the pushforwards of all nef classes from each boundary divisor.

%\fontsize{9}{8}\selectfont

\begin{longtable}  { || p{10em} | p{35em} || }
\hline
\multicolumn{2} { | c | }{\textbf{Table \ref{boundaryCurves}:} Boundary Divisors and Negative Curves on $\overline{M}_{0,7}$}\label{boundaryCurves}\\
\hline
\textit{(Orbit Size)} Boundary Divisor & Negative Curve \\
\hline
\hline
\endfirsthead
\hline
\multicolumn{2}{ | c | }{(Continuation) \textbf{Table \ref{boundaryCurves}:} Boundary Divisors and Negative Curves on $\overline{M}_{0,7}$}\\
\hline
\textit{(Orbit Size)} Boundary Divisor & Negative Curve \\
\hline
\hline
\endhead
\hline
\endfoot
\hline
\hline
\multicolumn{2}{ | c | }{End of Boundary Divisors and Negative Curves on $\overline{M}_{0,7}$}\\
\hline
\endlastfoot

\multirow{3}{10em}{\textit{(35) }$E_{01}$} &  $l-e_0-e_1+e_{01}$ \\
\cline{2-2}  & $e_{01}-e_{012}$ \\
\cline{2-2}  & $2e_{01}-e_{012}-e_{013}-e_{014}-e_{015}$ \\
\hline
\multirow{31}{10em}{\textit{(21) } $E_0$ } & $e_0 - e_{01}$\\
\cline{2-2}  & $e_0 - e_{012} -e_{034}$\\
\cline{2-2}  & $2e_0- e_{01} -e_{02} -e_{034} -e_{035}$\\
\cline{2-2}  & $2e_0- e_{013} -e_{014}-e_{015} -e_{023}-e_{024}-e_{025}$\\
\cline{2-2}  & $2e_0 - e_{012} -e_{013} -e_{024} -e_{035} -e_{045}$\\
\cline{2-2}  & $3e_0- e_{01} -e_{02} -e_{03} -e_{04} -e_{05}$\\
\cline{2-2}  & $3e_0 -2e_{01} - e_{023}- e_{024}- e_{025}- e_{034}- e_{035}- e_{045}$\\
\cline{2-2}  & $3e_0- e_{01}-e_{03} -e_{014}-e_{015} -e_{023}-e_{024}-e_{025}$\\
\cline{2-2}  & $3e_0 -e_{01} -e_{02} -e_{03} -e_{014} -e_{025} -e_{045}$\\
\cline{2-2}  & $3e_0 -e_{03} -e_{05} -e_{012} -e_{014} -e_{024} -e_{025} -e_{034}$\\
\cline{2-2}  & $3e_0 -2e_{05} -e_{04} -e_{012} -e_{013} -e_{023}$\\
\cline{2-2}  & $3e_0  -e_{013} -e_{014} -2e_{023} -e_{025} -e_{034} -2e_{045}$\\
\cline{2-2}  & $3e_0 -e_{05}  -e_{013} -e_{015} -e_{024} -e_{025} -2e_{034}$\\
\cline{2-2}  & $4e_0 -2e_{01} -e_{02} -e_{023} -e_{025} -e_{034} -e_{035} -e_{04} -e_{045}$\\
%\multirow{18}{10em}{$E_0$ (cont.) } & $4e_0 -2e_{01} -e_{02} -e_{023} -e_{025} -e_{034} -e_{035} -e_{04} -e_{045}$\\
\cline{2-2}  & $4e_0 -e_{01} -e_{02} -e_{03} -e_{015} -e_{023} -e_{025} -e_{034} -2e_{045}$\\
\cline{2-2}  & $4e_0 -2e_{01} -e_{04} -e_{023} -2e_{025} -e_{034} -e_{035} -e_{045}$\\
\cline{2-2}  & $4e_0- e_{01} -e_{02} -e_{03} -2e_{05} -e_{014} -e_{024} -e_{034}$\\
\cline{2-2}  & $4e_0- e_{04} -e_{05} -e_{014} -e_{015} -e_{023} -2e_{025} -2e_{034}$\\
\cline{2-2}  & $4e_0 - 2e_{05} -e_{013} -e_{014} -e_{015} -e_{023} -e_{024} -e_{025} -2e_{034}$\\
\cline{2-2}  & $4e_0 -2e_{05} -e_{01} -e_{013} -e_{023} -e_{024} -e_{025} -2e_{034}$\\
\cline{2-2}  & $5e_0 - 2e_{01} -e_{012} -e_{013} -e_{023} -2e_{025} -2e_{034} -3e_{045}$\\
\cline{2-2}  & $5e_0 -2e_{01} -e_{023} -e_{024} -e_{025} -e_{03} -e_{034} -e_{04} -2e_{05}$\\
\cline{2-2}  & $5e_0-2e_{05} -2e_{014} -e_{015} -e_{023} -e_{024} -2e_{025} -3e_{034}$\\
\cline{2-2}  & $5e_0 -2e_{01} -2e_{02} -e_{014} -e_{023} -e_{025} -2e_{034} -e_{035} -2e_{045}$\\
\cline{2-2}  & $5e_0 - 2e_{03} -e_{04} -2e_{05} -2e_{012} -e_{013} -e_{014} -e_{024} -e_{025}$\\
\cline{2-2}  & $5e_0 - 2e_{01} -e_{02} -e_{03} -e_{04} -e_{05} -e_{023} -e_{025} -e_{034} -e_{045}$\\
\cline{2-2}  & $5e_0-e_{01} -e_{02} -3e_{05} -e_{013} -e_{014} -e_{023} -e_{024} -2e_{034}$\\
\cline{2-2}  & $6e_0 - 2e_{02} -e_{04} -3e_{05} -2e_{013} -e_{014} -e_{023} -2e_{034} $\\
\cline{2-2}  & $6e_0- 2e_{01} -2e_{02} -e_{03} -e_{05} -e_{014} -e_{023} -e_{025} -2e_{034} -2e_{045}$\\
\cline{2-2}  & $7e_0- 2e_{01} -4e_{05} -e_{013} -e_{014} -2e_{023} -2e_{024} -e_{025} -3e_{034}$\\
\cline{2-2}  & $7e_0 -2e_{01} -2e_{04} -2e_{05} -e_{012} -e_{013} -3e_{023} -2e_{025} -2e_{034} -e_{045}$ \\
\end{longtable}

%\section{Conclusion}
%Though our methods have not shown that $\text{Eff}(\overline{M}_{0,7})$ is polyhedral, if it is, it is reasonable that further computations with them might find all extreme divisors on $\overline{M}_{0,7}$, and sufficiently many nef and negative curves to prove they generate $\text{Eff}(\overline{M}_{0,7})$.  Not only could further computation with the authors' current algorithms yield more results, but improvements of these algorithms to incoporate parrallel processing, nearby integer integer point checks, and timeout conditions for unstable Groebner basis computations in the identification of a random map $\mathbb{P}^1\rightarrow \overline{M}_{0,7}$ (of varying numeric class) would make even more complicated computations feasible.  %Additionally, we may find negative curves for potential extreme divisors on $\overline{M}_{0,7}$ by testing facets of the cone generated by other known extreme divisors, as any such facet that pairs negatively with the potential extreme divisor in question is a good candidate for a negative curve class.  This simple test, among other simple improvements, has not been implemented.  It is worth mentioning that all computations concerning curves and divisor classes were performed on a 7 year old laptop.

%Even if $\text{Eff}(\overline{M}_{0,7})$ is not polyhedral, further computations could identify patterns among extreme divisors that allow us to identify infinitely many of them. Our computations were halted merely for the purposes of time.

\vskip.5in

\printbibliography
%\begin{thebibliography}{100}
%\bibitem{Test} J. Smith, \emph{Test Title}, Test Publisher, New York, 19--. 

%\end{thebibliography}

\end{document}